\documentclass[11pt]{amsart}

\usepackage{palatino} 

\textwidth = 6.0 in
\textheight = 8.87 in
\oddsidemargin = 0.25 in
\evensidemargin = 0.25 in
\topmargin = -0.25 in

\setlength{\parskip}{4pt}
\usepackage{amsmath,amssymb,amsthm,amsfonts,amscd,flafter,epsf, epsfig,graphicx,verbatim,pinlabel,mathrsfs}
\usepackage[all]{xy}
\usepackage{epsf}
\usepackage[abs]{overpic}
\usepackage{epstopdf}
\usepackage{color}
\usepackage{mathtools}

\definecolor{red}{rgb}{1,0,0}

\def\tt{\overline{t}}
\def\SS{\mathcal{S}}
\def\ZZ{\mathcal{Z}}
\def\XX{\mathcal{X}}
\def\Sat{\mathit{Sat}}

\newcommand\Trans{\mathit{Trans}}
\newcommand\Leg{\mathit{Leg}}
\newcommand\Lc{\mathcal{L}}
\newcommand\Q{\mathcal{Q}}
\newcommand\CC{\mathcal{C}}
\def\RR{\mathcal{R}}
\def\R{{\mathbb{R}}}
\newcommand\mer{\mathbf{m}}
\newcommand\lon{\mathbf{l}}
\newcommand\K{\mathcal{K}}
\newcommand\Pat{\mathcal{P}}
\newcommand\U{\mathcal{U}}
\newcommand\W{\mathcal{W}}
\newcommand\B{\mathcal{B}}
\newcommand\St{{\mathit {St}}}

\newcommand\tb{{\mathit {tb}}}
\newcommand\self{{\mathit {sl}}}
\newcommand\reltb{{\mathit {reltb}}}
\newcommand\relsl{{\mathit {relsl}}}
\newcommand\relrot{{\mathit {relrot}}}

\newcommand\Rold{{D\times I}}

\newcommand\rot{{\mathit{rot}}}

\def\co{\colon\thinspace}

\newcommand\Xkl{X(k,l)}
\newcommand\Skl{S(k,l)}
\newcommand\Zkl{Z(k,l)}

\def\Z{{\mathbb{Z}}}

\def\dfn#1{{\em #1}}
\def\co{\colon\thinspace}

\newtheorem{theorem}{Theorem}[section]
\newtheorem{lemma}[theorem]{Lemma}

\newtheorem{corollary}[theorem]{Corollary}

\theoremstyle{definition}
\newtheorem{definition}[theorem]{Definition}

\newtheorem{remark}[theorem]{Remark}

\newtheorem{example}[theorem]{Example}

\title{Legendrian satellites}

\author[John Etnyre]{John Etnyre}
\address{Department of Mathematics \\ Georgia Institute of Technology}
\email{etnyre@math.gatech.edu}

\author[Vera V\'ertesi]{Vera V\'ertesi}
\address{IRMA \\ Universit\'e de Strasbourg}
\email{vertesi@unistra.fr}

\begin{document}
\begin{abstract}
In this paper we study Legendrian knots in the knot types of satellite knots. In particular, we classify Legendrian Whitehead patterns and learn a great deal about Legendrian braided patterns. We also show how the classification of Legendrian patterns can lead to a classification of the associated satellite knots if the companion knot is Legendrian simple and uniformly thick. This leads to new Legendrian and transverse classification results for knots in the 3-sphere with its standard contact structure as well as a more general perspective on some previous classification results. 
\end{abstract}
\maketitle


\section{Introduction}
\label{sec:introduction}

In \cite{EtnyreHonda03} Honda and the first authors proved the first ``structure theorem" about Legendrian knots by showing how the classification of Legendrian representatives of a knot type behave under the topological operation of connected sum and used this structure theorem to show several new qualitative features of Legendrian knots. We call this a structure theorem since it shows the structure of Legendrian knots under a general topological construction, even if the actual classification of Legendrian knots is not known. This paper concerns another such structure theorem. Specifically we will consider the behavior of Legendrian knots under the satellite operation and several associated results. 

We begin by establishing some notation. Throughout this paper, when not stated otherwise, we will be considering Legendrian knots in the standard contact structure $\xi_{std}$ on $S^3$ (or equivalently the standard structure on $\R^3$ so that we may draw front diagrams to represent our knots). It will be important at times to distinguish a specific knot from its knot type, that is the isotopy equivalence class determined by a knot. We will use calligraphic letters, such as $\K$ and $\Lc$, to denote a knot type (smooth or Legendrian depending on context) and roman letters, such as $K$ and $L$, to denote specific knots. So the notation $K\in \K$ indicates that $K$ is a representative of the knot type $\K$. Given a smooth knot type $\K$ we will denote the set of Legendrian knots realizing this knot type by $\Leg(\K)$ and the set of transverse knots realizing this knot type by $\Trans(\K)$.  Similarly if integers $t$ and $r$ are given then $\Leg(\K;t,r)$ denotes the subset of $\Leg(\K)$ containing Legendrian knots with Thurston-Bennequin invariant $t$ and rotation number $r$. (If only one integer is given, $\Leg(\K;t)$, it will specify the Thurston-Bennequin invariant.)

Now consider a smooth knot type $\Pat$ in $V=D^2\times S^1$, which we will call a {\em pattern}, and  a smooth knot type $\K$ in $S^3$, that we will call the {\em companion knot}. From this data we can fix an identification of $V$ with a neighborhood of a representative $K$ of $\K$ (this depends on a framing of $\K$, see Section~\ref{subsec:satknot} for a more precise definition but here we assume that the Seifert framing on $\K$ is used to make this identification) and consider the image of a representative $P$ of $\Pat$ under this identification. This gives a new knot $P(K)$ in $S^3$  called the {\em satellite of $K$ with pattern $P$}. We denote the resulting knot type $\Pat(\K)$ and as the notation suggests, one can think of a pattern as giving a function on the set of knot types. 

Turning back to contact geometry, recall that (the interior of) $V$ can be thought of as the 1-jet space of $S^1$ and as such has a standard contact structure $\xi_V=\ker(dz-y\, d\theta)$, where $(y,z)$ are coordinates on $\R^2$ and $\theta$ is the angular coordinate on $S^1$. Projecting out the $y$ coordinate is called the front projection, and one may easily see that front projections in $V$ have many of the same properties of front projections in $\R^3$. See Figure~\ref{fig:patex}. 
\begin{figure}[h]
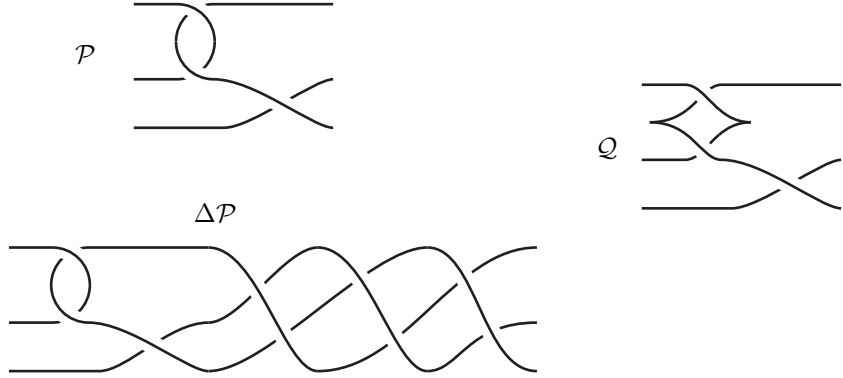

\small
\begin{overpic}
{pattern}
\put(25,120){$\Pat$}
\put(70,60){$\Delta\Pat$}
\put(222,85){$\Q$}
\end{overpic}
\caption{In the upper left is a pattern $\Pat$. In all the diagrams identify the right and left hand sides to obtain a knot in $D^2\times S^1$. In the lower left is $\Delta\Pat$ and on the right is a Legendrian knot type $\Q$ realizing the smooth knot type $\Pat$.}
\label{fig:patex}
\end{figure}
It is known that any Legendrian knot $L$ has a neighborhood $\nu(L)$ contactomorphic to $(V,\xi_V)$, see Section~\ref{sec:braidsatellite} for more details. Now given a Legendrian knot $Q$ in $V$ representing a pattern $\Pat$ and a Legendrian knot $L$ in $S^3$ then we denote by $Q(L)$ the image of $Q$ under the above contactomorphism. This operation is well-defined on Legendrian isotopy classes and is called the {\em Legendrian satellite operation}. It is important to notice that the contactomorphism used in this definition takes the product framing on $V$ to the contact framing on the neighborhood of $L$ and not the Seifert framing. Thus if the underlying smooth knots types of $L$ and $Q$ are $K$ and $P$, respectively, then $Q(L)$ is not in the smooth knot type of the smooth satellite $P(K)$ defined above, but in the knot type of $(\Delta^{tb(L)}P)(K)$, where $tb(L)$ is the Thurston-Bennequin invariant of $L$ (that is the contact framing of $L$ relative to the Seifert framing) and $\Delta$ is the result of applying a full right handed twist to $P$. See Figure~\ref{fig:patex}. A front diagram of a Legendrian satellite is shown in Figure~\ref{fig:satexample}. The front diagram is created by taking enough copies of the front diagram for the companion Legendrian knot, translated in the vertical direction, so that the front diagram for the pattern can be inserted in some portion of the diagram as shown in the figure. See Section~\ref{subsec:legsatellites} for more details on the construction. 
\begin{figure}[h]
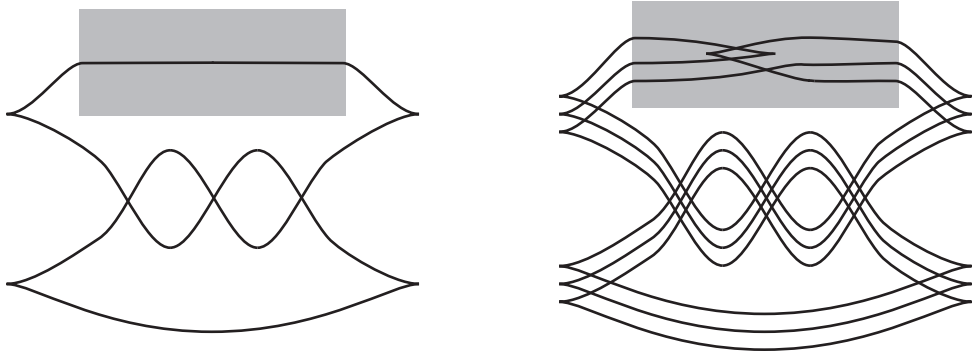

\small
\begin{overpic}
{satexample}
\end{overpic}
\caption{A Legendrian right handed trefoil $\Lc$ is shown on the left. On the right is the Legendrian satellite $\Q(\Lc)$ for the Legendrian pattern $\Q$ shown in Figure~\ref{fig:patex}. Notice that since $tb(\Lc)=1$ the Legendrian knot $\Q(\Lc)$ represents the smooth knot type $(\Delta\Pat)(\K)$, where $\K$ is the smooth knot type of the right handed trefoil.}
\label{fig:satexample}
\end{figure}

The Legendrian satellite construction was first explicitly defined and studied in \cite{NgThesis} where it was shown to be well-defined. However various types of satellites were used prior to this work. For example $n$-copies of Legendrian knots were discussed in \cite{Michatchev01} and the famous Chekanov-Eliashberg examples that gave the first Legendrian non-simple knot types can be thought of as Legendrian Whitehead doubles of Legendrian unknots (see Section~\ref{LWD} for more on Legendrian Whitehead doubles). 

The basic structure theorem for Legendrian satellite operations would involve understanding the map 
\[
\widetilde\Sat\co \bigcup_{t\in \Z} \left(\Leg_V(\Delta^{-t}\Pat)\times \Leg(\K; t)\right)\to \Leg(\Pat(\K))\co (\Q,\Lc)\mapsto \Q(\Lc)
\]
or the slightly more tractable map
\[
\widetilde\Sat'\co \left(\Leg_V(\Delta^{-\tt}\Pat)\times \Leg(\K; \tt)\right)\to \Leg(\Pat(\K))\co (\Q,\Lc)\mapsto \Q(\Lc),
\]
where $\tt$ is the maximal Thurston-Bennequin invariant of the knot type $\K$. The obvious questions one would like to address are the following.
\begin{enumerate}
\item\label{q1} Is $\widetilde\Sat$ or $\widetilde\Sat'$ onto?
\item\label{q2} Is there an equivalence relation that may be placed on the domain to make $\widetilde\Sat$ or $\widetilde\Sat'$ injective?
\item\label{q3} Can one obtain classification results for new knot types using an understanding of $\widetilde\Sat$ or $\widetilde\Sat'$?
\end{enumerate}
Clearly to have a chance at answering the last question one needs to know the answer to the following question.
\begin{enumerate}
\setcounter{enumi}{3}
\item\label{q4} For what patterns $\Pat$ do we understand $\Leg_V(\Pat)$?
\end{enumerate}

To answer some of these questions we first recall a knot type is called {\em uniformly thick} if every solid torus whose core realizes that knot type can be contained in another such torus that is a standard neighborhood of a maximal Thurston-Bennequin invariant Legendrian representative of that knot type. In \cite{EtnyreHonda05} it was shown that negative torus knots are uniformly thick, connected sums of uniformly thick knot types are uniformly thick, and sufficiently negative cables of uniformly thick knot types are uniformly thick. On the other hand the unknot and positive torus knots are not uniformly thick.  
One of the main results of the paper is Theorem~\ref{lem:incommonstab} which we paraphrase as follows (refer to Section~\ref{sec:braidsatellite} for a precise statement). 

\begin{theorem}\label{mainsat}
Suppose that $\K$ is a uniformly thick and Legendrian simple knot type, and $\Pat$ is a pattern in $V$. Assume that $\Pat(\K)$ satisfies certain symmetry hypotheses (see Sections~\ref{subsec:satosymmetry} and~\ref{sec:braidsatellite}). Then the kernel of $\widetilde\Sat'$ is given by an explicit equivalence relation, see Definition~\ref{def:ker}, such that $\widetilde\Sat'$ induces a bijection
\[
{{\Sat'}}\co \left(\frac{\Leg_V(\Delta^{-\tt}\Pat)\times \Leg(\K;\tt)}{\sim}\right) \to \Leg(\Pat(\K)).
\]
\end{theorem}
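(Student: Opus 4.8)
The plan is to establish the two halves of the bijection separately: first that $\widetilde\Sat'$ is onto, and then that its fibers are exactly the classes of the equivalence relation $\sim$ of Definition~\ref{def:ker}. The geometric engine for both halves is convex surface theory applied to the essential torus coming from the satellite structure, combined with the uniform thickness of $\K$, which is precisely what allows me to replace an arbitrary convex companion torus by the boundary of a standard neighborhood of a maximal Thurston--Bennequin representative of $\K$.

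For surjectivity, I would start with an arbitrary $L' \in \Leg(\Pat(\K))$. The smooth satellite structure supplies an essential torus $T$ bounding a solid torus $N'$ with $L' \subset N'$ whose core has knot type $\K$, so that $L'$ reads off a copy of the pattern inside $N'$. First I would make $T$ convex and analyze its dividing set; then, using uniform thickness and standard convex surface theory (edge-rounding and the imbalance principle to control dividing slopes), I would enlarge $N'$ to a standard neighborhood $N$ of some $L \in \Leg(\K; \tt)$. Under the contactomorphism $N \cong (V, \xi_V)$ the knot $L'$ becomes a Legendrian pattern $\Q \subset V$, and because the contact framing of a maximal-$\tb$ companion differs from the Seifert framing by $\tt$, the smooth type of $\Q$ is $\Delta^{-\tt}\Pat$; thus $\Q \in \Leg_V(\Delta^{-\tt}\Pat)$ and $\widetilde\Sat'(\Q, L) = L'$. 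The symmetry hypotheses enter here to guarantee that the essential torus detecting the satellite structure is unique up to isotopy, so that this standardization is well defined independent of the choices made.

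For injectivity modulo $\sim$, the forward direction—that elements related by $\sim$ have Legendrian isotopic images—amounts to realizing each generator of the equivalence relation by an explicit Legendrian isotopy of the satellite: the reparametrization and twisting ambiguity in identifying $V$ with a contact neighborhood of the companion, together with the transfer of stabilizations (and hence rotation number) between the pattern and the companion. For the converse, suppose $\widetilde\Sat'(\Q_1, L_1)$ and $\widetilde\Sat'(\Q_2, L_2)$ are Legendrian isotopic with $L_1, L_2 \in \Leg(\K; \tt)$. I would track the two standard companion tori through the ambient isotopy, make the family convex, and invoke uniform thickness again to keep each intermediate companion torus standard; this arranges the isotopy to carry one standard neighborhood $N_1$ onto the other $N_2$. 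Since $\K$ is Legendrian simple, the companions are then determined by their rotation numbers, and the only remaining discrepancy—in the identification of $V$ with $N$ and in how rotation is split between companion and pattern—is exactly an element of the mapping class group of the solid torus preserving the contact framing, possibly combined with a stabilization transfer. This is by definition a generator of $\sim$, and reading off its action on the pattern yields $(\Q_1, L_1) \sim (\Q_2, L_2)$.

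The hard part will be the uniqueness and standardization step underlying both directions: controlling all convex companion tori and showing that, up to the allowed equivalence, the solid-torus decomposition of the satellite is canonical. A priori the isotopy moving one satellite to another need not respect the essential torus, and a convex torus in the knot type $\K$ can carry many dividing slopes. Uniform thickness is exactly the hypothesis that tames the dividing-slope problem, and the symmetry hypotheses are what rule out the residual ambiguity—namely that the pattern could be re-read inside the solid torus through a nontrivial self-map of the satellite not already recorded in $\sim$. Verifying that these two hypotheses together suffice, and matching the resulting residual ambiguity precisely with Definition~\ref{def:ker}, is the crux of the argument.
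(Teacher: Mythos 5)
Your surjectivity sketch is essentially the paper's own argument (Lemma~\ref{lem:utpsatelite}): thicken the smooth companion torus, using uniform thickness, to a standard neighborhood of a maximal Thurston--Bennequin representative and read off the pattern. One small misplacement: no symmetry hypothesis is needed for this half; the symmetries only enter when comparing two such presentations of the same Legendrian satellite.

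The genuine gap is in your injectivity step. You claim that uniform thickness lets you arrange the ambient isotopy so that every intermediate companion torus stays standard, hence carrying $N_1$ onto $N_2$. If that were possible, the kernel of $\widetilde\Sat'$ would be essentially trivial: a contact isotopy taking $N_1$ to $N_2$ while fixing the satellite would force $\Lc_1=\Lc_2$ and then, by Corollary~\ref{cor:iso}, $\Q_1=\Q_2$. But the relation $\sim$ is genuinely nontrivial: for neighboring peaks $\Lc\neq\Lc'$ with valley $\widetilde{\Lc}$ of depth $d$ one has $(\zeta^d\widetilde{\Q})(\Lc)=(\sigma^d\widetilde{\Q})(\Lc')$, i.e.\ distinct pairs with Legendrian isotopic satellites. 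The point your outline misses is that the intermediate convex tori need not have integer dividing slope; each can be thickened to a standard neighborhood, but consecutive thickenings are standard neighborhoods of \emph{different} Legendrian companions related by stabilization, and this is exactly where the kernel is created. The paper supplies the missing mechanism: since a generic family of surfaces cannot simply be "made convex," it uses Colin's isotopy discretisation (\cite[Lemma~3.10]{Honda02}) to replace the isotopy by a finite sequence of convex tori, each obtained from the previous by a single bypass; it then takes the smallest integer thickening of each and applies Lemma~\ref{lem:tori1} to show that each bypass either leaves the pair $(\Q_i,\Lc_i)$ unchanged or effects exactly one stabilization transfer ($\Lc$ stabilizes while $\Q$ changes by $\zeta$ or $\sigma$). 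This yields the coarser Theorem~\ref{thm:main} with the relation of Definition~\ref{def:kergen}, and a further induction using Legendrian simplicity (the peaks/valleys/shadows argument in the proof of Theorem~\ref{lem:incommonstab}) is needed to convert such a sequence into the peak-to-peak form of Definition~\ref{def:ker}. Both steps---the discretisation-plus-thickening analysis and the reduction to peaks---are absent from your proposal, and neither is routine.
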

This theorem gives an affirmative answer to Questions~(\ref{q1}) and~(\ref{q2}) in certain cases.  However, since cabling is a satellite operation, one can see that the results in \cite{EtnyreHonda05} and \cite{EtnyreLafountainTosun12} imply that neither $\widetilde\Sat$ nor $\widetilde\Sat'$ is onto when considering many positive cables of a positive torus knot. However, results in \cite{TosunPre12} imply that Theorem~\ref{mainsat} can sometimes be true even for knots that are not uniformly thick. Finding the exact generality in which the theorem can be proved would be very interesting. 

The first attempts to address Question~\ref{q4} ocured in \cite{Traynor01} and \cite{NgTraynor04}, where generating family and, respectively, contact homology invariants where used to show several subtle phenomena about Legendrian knots in $(V,\xi_V)$. 

We give several results for Legendrian knots in $(V,\xi_V)$. We call a pattern {\em braided} if it is the closure in $V$ of an element $w$ of the $n$-strand braid group $B_n$ for some $n$. In Theorem~\ref{thm:legbraids1} we prove the following result. 
\begin{theorem}\label{braidstatement}
 A Legendrian braid $B$ in $(V,\xi_V)$ is Legendrian isotopic to the closure of a concatenation of the building blocks in Figure~\ref{fig:openbraidsimpl}.
\end{theorem}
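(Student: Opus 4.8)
The plan is to work entirely in the front projection of $B$ to the $(\theta,z)$--cylinder and to read off a decomposition into building blocks by scanning in the braid ($\theta$) direction. First I would use the fact, noted above, that front projections in $V$ have the same local structure as front projections in $\R^3$. Assuming (as the terminology suggests) that $B$ is positively transverse to the disk fibers $D^2\times\{\theta\}$, the projection $B\to S^1$ has no critical points, so in the front the diagram has no cusps and no vertical tangencies: it is a union of $n$ arcs, each locally a graph $z=f_i(\theta)$, whose only singularities are transverse double points (the crossings of the braid), with the strand of smaller slope $f_i'$ lying in front at each crossing. After a $C^1$--small Legendrian isotopy I may assume the front is generic, so that the crossings occur at finitely many distinct braid levels $\theta_1<\dots<\theta_k$. (If instead $B$ is only required to lie in a braided knot type, an initial isotopy into geometric braid position, a Legendrian analogue of Alexander's theorem, is needed first, and controlling cusp--creation there would itself be delicate.)

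Next I would cut the cylinder along a regular level $\{\theta=c_0\}$ to pass from the closed braid $B$ to an open Legendrian braid in the rectangle, and then slice the rectangle by regular levels $\{\theta=c_j\}$ chosen so that each slab $\{c_j\le\theta\le c_{j+1}\}$ contains exactly one crossing (or none). The problem then reduces to a purely local one: to show that each slab, rel its boundary, is Legendrian isotopic to one of the pieces in Figure~\ref{fig:openbraidsimpl}. A crossing slab carries a single front crossing whose over/under strand is dictated by the slope condition, while a crossing--free slab is a trivial tangle; using the front moves in $(V,\xi_V)$ (the analogues of the Legendrian Reidemeister moves, valid here because the front projection behaves as in $\R^3$) I would normalize the strand slopes along the two bounding levels to a fixed standard profile and, where necessary, apply reduction moves so that each slab matches one of the simplified model blocks and adjacent slabs glue without creating new crossings.

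The \textbf{main obstacle} I expect is precisely this slope bookkeeping and its interaction with the contact condition. Unlike an ordinary braid diagram, the datum of which strand passes over is not free but is forced by the slopes $f_i'$, and these slopes are global: around each closed component of $B$ one has $\oint dz = 0$, so the normalizations cannot be carried out independently slab by slab but must be made mutually compatible and compatible with the eventual closure. Concretely, standardizing the outgoing slopes of one slab alters the incoming slopes of the next, and I must verify that such adjustments can always be absorbed by Legendrian isotopies supported in the trivial slabs \emph{without} introducing a cusp, since a cusp would destroy braidedness; equivalently, that the trivial slabs realize every needed slope change and reordering as a concatenation of the building blocks.

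Finally I would reassemble the normalized slabs in cyclic order and take the closure, checking that the gluing along the cut level $\{\theta=c_0\}$ is consistent, i.e.\ that the last slab's outgoing profile matches the first slab's incoming profile. Since each slab has been identified with a building block, this exhibits $B$ as the closure of a concatenation of the building blocks of Figure~\ref{fig:openbraidsimpl}. The genericity and slicing steps are routine general--position arguments; the real work lies in the compatible slope normalization across slabs and in certifying that no step is forced to create a cusp.
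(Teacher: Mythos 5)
Your argument breaks down at its very first assumption, and the failure is not a technicality. You take $B$ to be positively transverse to the disk fibers, so that its front has no cusps and no vertical tangencies. But in this paper a ``Legendrian braid'' means a Legendrian representative of a braided pattern, i.e.\ a Legendrian link that is merely \emph{topologically} isotopic to a braid closure; it need not be, and in general cannot be made, transverse to the fibers. The obstruction is quantitative: cusps enter the classical invariants $\reltb_{V}(Q)=\textrm{writhe}(Q_{\rm open})-\frac12 c(Q_{\rm open})$ and $\relrot_{V}(Q)=\frac12(d(Q_{\rm open})-u(Q_{\rm open}))$, which are Legendrian isotopy invariants. A cusp-free front of a closed $n$-braid is an honest braid diagram, so it forces $\reltb_{V}$ to equal the exponent sum of the braid word and $\relrot_{V}=0$; consequently no stabilized representative (and every braided pattern admits Legendrian representatives with arbitrarily negative $\reltb_{V}$ and with nonzero rotation numbers) can ever be Legendrian isotoped into the position your scanning argument requires. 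This is exactly why two of the three building blocks of Figure~\ref{fig:openbraidsimpl}, namely $S$ and $Z$, contain cusps: they exist to absorb the representatives your argument never sees. Your parenthetical remark that a ``Legendrian analogue of Alexander's theorem\dots would itself be delicate'' is therefore not a side issue---it is the entire content of the theorem, and the isotopy you defer to does not exist. What your slab-by-slab normalization could at best recover is the classification of cusp-free representatives, essentially the much easier maximal Thurston--Bennequin statement of Theorem~\ref{thm:closedposlegendrianbraid}.

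The paper's proof is of a different nature and cannot be replaced by front-diagram bookkeeping. One cuts $V$ along a meridional disk to reduce to an open Legendrian braid in $(\Rold,\xi_\Rold)$, and the decomposition there (Theorem~\ref{thm:legbraid}, simplified by Corollary~\ref{thm:legbraidsimpl}) is proved by analyzing the contact structure on the \emph{exterior} of the braid: the exterior is straightened, decomposed into bypass layers, and tightness of the contact structure rules out three of the six combinatorial types of bypass attachment; the three surviving types are then identified, via convex surface theory and the uniqueness of the tight contact structure on the ball, with the basic blocks $X(k,l)$, $S(k,l)$ and $Z(k,l)$. Note that tightness is used in an essential way there, whereas your proposal never invokes it---a further symptom that the argument does not engage the actual content of the statement.
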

\begin{figure}[h]
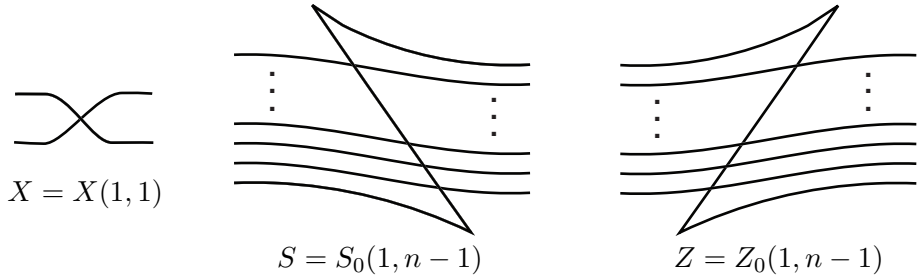

\centering
\begin{overpic}
{xsz1}
\put(-2,23){$X=X(1,1)$}
\put(100,-2){$S=S_0(1,n-1)$}
\put(250,-2){$Z=Z_0(1,n-1)$}
\end{overpic}
\caption{Building blocks of Legendrian braids. There may be other strands both above and bellow of the pictured braids, but they are all assumed to be horizontal strands that are disjoint from the strands in the picture.}
\label{fig:openbraidsimpl}
\end{figure}
From this result in Theorem~\ref{thm:closedposlegendrianbraid} and Lemma~\ref{2starndbraid} we show:
\begin{enumerate}
\item  Let $\Pat$ be the closure of a positive braid $w$ in the $n$-strand braid group $\B_n$, then the relative Thurston-Bennequin invariant is 
$
 \overline{\reltb}_V(\Pat)= \textit{length}(w)
$
and  
\[
\vert\Leg_{V}(\Pat;\overline{\reltb}_{V}(\Pat))\vert=1.
\]
\item Let $\Pat_{m}$ be a 2--braid pattern with $m$ (odd) half twists. Then $\Pat_m$ is Legendrian simple in particular:
\begin{enumerate}
\item If $m> 0$, 
then $\Pat_m$ has a unique Legendrian representative with maximal relative Thurston--Bennequin number $m$ and rotation number $0$. \item If $m<0$, then $\Pat_m$ has $|m|+1$ representatives with maximal Thurston-Bennequin number $\reltb_{V}=-2|m|$ and with different rotation numbers 
\[
\relrot_{V}\in \{-m,-(m-2),\dots,m-2,m\}.
\]
\end{enumerate}
\end{enumerate}
In  \cite{EtnyreNgVertesi13} the proof of Theorem~\ref{braidstatement}, or more precisely Corollary~\ref{thm:legbraidsimpl}, was used to classify Legendrian twist knots. In Theorem~\ref{whiteheadpatternclass} we generalize part of that work to classify Legendrian Whitehead patterns. In addition, in Theorem~\ref{cablepats} we classify Legendrian cable patterns. 

With these results we can prove several classification results for Legendrian knots in $(S^3,\xi_{std})$. 
\begin{enumerate}
\item In Theorem~\ref{csum} we can reprove the structure theorem for connected sums from \cite{EtnyreHonda03} under the extra hypothesis that one of the summands is uniformly thick and Legendrian simple (and their are no symmetries). 
\item In Theorem~\ref{pqcableclass} we can reprove the result from \cite{EtnyreHonda05} that cables of Legendrian simple, uniformly thick knot types are Legendrian simple. 
\item In Theorem~\ref{thm:twistsat} we classify Legendrian knots in the knot types of Whitehead doubles of Legendrian simple, uniformly thick knots types. This leads to many new non-Legendrian simple and non-transversely simple knot types. See Example~\ref{whdtorus}.
\end{enumerate}
We also make several similar observations about transverse knots in satellite knot types. 

In Section~\ref{sec:preliminaries} we discuss the topological satellite construction and make several observations about the topology of the complement of satellite knots. We also discuss several features about contact structures on solid tori that will be needed in the paper. Section~\ref{sec:braidsind2xi} concerns ``open" Legendrian braids in $D^2\times I$ with and $I$-invariant contact structure. Gluing the ends of this thickened disk together gives $V$ and so these results are used in Section~\ref{sec:legendrianbraidsind2xs1} to prove our results about braided patterns. In that section we also consider Whitehead patterns. Finally in Section~\ref{sec:braidsatellite} we discuss the Legendrian satellite construction and prove our new structure theorems. 

\noindent
{\em{Acknowledgments}} The authors are very grateful for helpful conversations with Vincent Colin, Paolo Ghiggini, Lenny Ng, Dave Futer, and Andr\'as Stipsicz. Part of this work was carried out while both the authors were at the Mittag-Leffler Institut, while the first author was at the Institute for Advanced Study, and while the second author was at Universit\'e de Nantes and UC Santa Barbara. We gratefully acknowledge their support of this work. The first author was partially supported by a grant from the Simons Foundation (\#342144), The Bell Companies Fellowship Fund, and NSF grants DMS-1309073 and DMS-1608684. The second author was supported by ERC Geodycon, OTKA grant numbers 49449, 67867 and NK81203 and NSF grant number 1104690.

\section{Preliminaries}
\label{sec:preliminaries}

We assume the reader is familiar with basic knot theory and in particular braid theory as can be found in \cite{Birman74, Rolfsen76}. We also refer to \cite{EtnyreHonda01b} for the basic notions from contact geometry, Legendrian and transverse knot theory, and the use of convex surfaces to study Legendrian knots. 

In Subsection~\ref{subsec:satknot} we recall the satellite operation from knot theory and in the following subsection we discuss some relevant results about Legendrian knots and contact structures on the solid torus. 

\subsection{Satellite knots, patterns and companions}
\label{subsec:satknot}

Let $V=D^2\times S^1$ where $D^2$ is the unit disk in $\R^2$. 
A \dfn{smooth pattern type} $\Pat$ is an isotopy class of embeddings of a closed 1--manifold into $V$ that cannot be included in a ball inside $V$.  Let $\mer=\partial D^2$ be the meridian of $V$ and fix $\lon=\{p\}\times S^1$, where $p\in \partial D^2$.  For any representative $P\in \Pat$ of the isotopy class the algebraic intersection number $P\cdot(D^2\times \{\theta\})$ has the same value for any $\theta\in S^1$. This value is independent of the chosen representative $P\in\Pat$, and it is called the \dfn{winding number}, $n=n(\Pat)$, of $\Pat$. Further the \emph{wrapping number}, $w=\mathit{wrap}(\Pat)$, is the minimal geometric intersection of $\Pat$ with a meridian of $V$.

Usually patterns are pictured by their projection as tangles in ${{\Rold}}$ with matching endpoints in $D \times \{0\}$ and $D\times \{1\}$. These will be called open patterns (and we will distinguish them from closed patterns by a subscript $\Pat_{\textrm{open}}$ whenever it is not clear from the context.)

A full twist of a pattern $\Pat$ can be defined as the image $\Delta \Pat$ of $\Pat$ under the map $\Delta\colon V\to V$ that sends $\lon$ to $\lon+\mer$. This operation on the tangle representation is reflected as concatenation of the tangle with a full twist $\Delta$. 
See Figure~\ref{fig:patex}.
A negative full twist $\Delta^{-1}\Pat$ can be defined similarly.

Let $\K$ be a smooth knot type and $\Pat$ a smooth pattern type in $V=D^2\times S^1$. Take representatives $K\in\K$ and $P\in\Pat$, and 
take a tubular neighborhood $N(K)$ of $K$, and fix a longitude $\lambda$ on $\partial N(K)$. The \emph{satellite $P_\lambda(K)$ with companion $K$ and pattern $P$} is the image of $P$ under a diffeomorphism $\psi\colon V\to N(K)$ which sends $\lon$ into $\lambda$. This notion is well defined up to isotopy.
\begin{lemma} \label{satwelldefined}
Let $N(K_0)$ and $N(K_1)$ be neighborhoods of $K_0, K_1\in\K$. Suppose that $\lambda_0$ and $\lambda_1$ represent the same framing. Then for any $P_0, P_1\in\Pat$ and differomorphisms $\psi_0\colon V\to N(K_0)$ and $\psi_1\colon V\to N(K_1)$ that sends $\lon$ to $\lambda_0$, and $\lambda_1$, respectively, 
$\psi_0(P_0)$ and $\psi_1(P_1)$ are isotopic. \qed
\end{lemma}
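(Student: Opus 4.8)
The plan is to prove the lemma by a sequence of reductions that strip away, one at a time, the three choices entering the construction: the representative $K_i$ of $\K$ together with its neighborhood $N(K_i)$, the representative $P_i$ of $\Pat$, and the gluing diffeomorphism $\psi_i$. After these reductions the statement comes down to a single fact about diffeomorphisms of the solid torus, and it is there that the framing hypothesis does its work.

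First I would make the companion knots coincide. Since $K_0, K_1 \in \K$ there is an ambient isotopy $\Phi_t$ of $S^3$ with $\Phi_0 = \id$ and $\Phi_1(K_0) = K_1$; by the isotopy extension theorem together with uniqueness of tubular neighborhoods I may arrange that $\Phi_1(N(K_0)) = N(K_1)$. Because $\Phi_1$ is isotopic to the identity it preserves the Seifert framing, and so carries $\lambda_0$ to the longitude on $\partial N(K_1)$ inducing the same framing, which is therefore isotopic to $\lambda_1$. Replacing $\psi_0$ by $\Phi_1 \circ \psi_0$ (which alters $\psi_0(P_0)$ only by an ambient isotopy of $S^3$) I reduce to the case $K_0 = K_1 = K$, $N(K_0) = N(K_1) = N(K)$, with $\psi_0(\lon)$ isotopic to $\psi_1(\lon)$ on $\partial N(K)$. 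Next, since $P_0$ and $P_1$ are isotopic in $V$, the knots $\psi_0(P_0)$ and $\psi_0(P_1)$ are isotopic inside $N(K)$, so I may also assume $P_0 = P_1 = P$.

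It remains to compare the two gluings, so I set $g = \psi_1^{-1} \circ \psi_0 \co V \to V$ and must show that $g(P)$ is isotopic to $P$ in $V$. Taking the $\psi_i$ orientation preserving, $g$ is an orientation-preserving diffeomorphism of $V$ with $g_*(\lon) = \lon$ on homology of $\partial V$ (this is exactly the framing hypothesis, using that the longitudes are oriented by $K$). Since $\mer$ is characterized, up to isotopy and sign, as the unique essential curve on $\partial V$ bounding a disk in $V$, orientation-preservation then forces $g_*(\mer) = \mer$, so $g$ restricts to a map isotopic to $\id$ on $\partial V = T^2$. I would first use a collar and isotopy extension to make $g$ the identity on $\partial V$, then isotope the image $g(D)$ of a meridian disk $D$ back to $D$ rel $\partial D$ (any two meridian disks with the same boundary are isotopic rel boundary, since cutting $V$ along one gives a ball), and finally apply Alexander's trick on the complementary ball to conclude that $g$ is isotopic to $\id_V$. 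Hence $g(P)$ is isotopic to $P$ in $V$, and the two satellites agree.

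The main obstacle, and the genuine content of the lemma, is this last step together with the role of the framing hypothesis in it. Without that hypothesis $g|_{\partial V}$ could be any power $\Delta^k$ of the meridional twist, so that $g(P) = \Delta^k P$ would be a genuinely different pattern, which is precisely the $\Delta^{tb}$ discrepancy highlighted in the introduction; the assumption that $\lambda_0$ and $\lambda_1$ induce the same framing is exactly what forces $k = 0$. I would also be careful that the longitudes are oriented and the $\psi_i$ are orientation preserving: otherwise $g_*$ could be the orientation-reversing reflection $\mer \mapsto -\mer$, $\lon \mapsto \lon$, and a chiral pattern would fail to survive the resulting flip of $V$.
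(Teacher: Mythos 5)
Your proof is correct, but there is nothing in the paper to compare it against: Lemma~\ref{satwelldefined} is stated with its end-of-proof marker and no argument at all, the authors treating it as a standard fact about tubular neighborhoods. What you have written is the folklore proof, and it is sound. The two reductions (making the companion knots and their neighborhoods coincide via isotopy extension plus uniqueness of tubular neighborhoods, then making the patterns coincide) are routine, and the real content is exactly where you locate it: the self-diffeomorphism $g=\psi_1^{-1}\circ\psi_0$ of $V$ satisfies $g_*(\lon)=\lon$ by the framing hypothesis and $g_*(\mer)=\mer$ by orientation-preservation, hence is isotopic to $\id_V$ by the standard disk-and-ball argument, so the two satellites agree; your remark that without the framing hypothesis $g$ could act as $\Delta^k$ correctly identifies why the hypothesis is needed and ties in with the $\Delta^{-k}$ re-indexing formula the paper records right after the lemma. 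Two small wrinkles are worth flagging. First, in the smooth category ``Alexander's trick'' on the complementary ball should be replaced by, or supplemented with, Cerf's theorem that $\mathrm{Diff}(B^3)$ rel boundary is connected, and you also tacitly use connectivity of $\mathrm{Diff}(D^2)$ rel boundary to make $g$ the identity on the meridian disk before cutting along it --- both standard, but they are the actual citations behind ``isotopic to $\id_V$.'' Second, your closing caveat about orientations is not a defect of your argument but a genuine imprecision in the lemma as stated: the paper's own discussion in Section~\ref{subsec:satosymmetry} of the flip $f((x,y),\theta)=((-x,y),-\theta)$ and oriented topological symmetries shows the authors are aware that the construction is only pinned down once knots, patterns, and the identifications $\psi_i$ are taken compatibly oriented, which is precisely the convention your proof makes explicit.
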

The above lemma shows that the satellite construction gives a well-defined knot type, which we denote by $\Pat_{\lambda}(\K)$.
The same knot type with respect to a different longitude, has a different pattern:
\[
\Pat_\lambda(\K)\cong(\Delta^{-k}\Pat)_{\lambda+k\mu}(\K).
\]
where $\mu$ is the meridian of $\K$.
If $\K$ has a Seiffert surface then $\Pat(\K)$ denotes $\Pat_\lambda(\K)$, where $\lambda$ is the Seifert framing for $\K$. 

\subsubsection{Symmetries of satellite knots}
\label{subsec:satsymmetry}

When considering satellite knots we will assume that $\K$ is not the unknot and $\Pat$ is not the core of $V$. In this case notice that $T_S=\psi(\partial V)$ will be a non-boundary parallel incompressible torus in the complement of $\Pat_\lambda(\K)$ and it will be called the {\em satellite incompressible torus}. In general, recall that if $C$ denotes the complement of $\Pat_\lambda(\K)$ then there is a JSJ decomposition of $C$, \cite{JacoShalen78, Johannson79}. That is there is a union of tori $T$ in $C$ such that $C\setminus T$ is a union of Seifert fibered spaces and atoroidal manifolds. If the collection $T$ is taken to be a minimal such collection, then it is unique up to isotopy. While JSJ decompositions are defined for general prime 3--manifolds, the case of knot complements has been extensively studied and there is a careful an thorough exposition of this case in \cite{Budney06}.

It is easy to see that $T_S$ is part of this JSJ decomposition of $C$. When there are more tori in the JSJ decomposition of $C$ we will be concerned with certain symmetries that permute the tori. Specifically, consider the situation in Lemma~\ref{satwelldefined}. After isotoping   $\psi_0(P_0)$ to $\psi_1(P_1)$ we have two incompressible tori $T_0=\psi_0(\partial V)$ and $T_1=\psi_1(\partial V)$ in the complement $C$ of $\psi_0(P_0)=\psi_1(P_1)$. In many situations $T_0$ and $T_1$ will have to be isotopic. For example if $\K$ is a hyperbolic knot or a torus knot and $\Pat$  has $\mathit{wrap}(\Pat)\ge 2$. If $T_0$ and $T_1$ are not isotopic then there is a diffeomorphism of $C$ that takes $T_0$ to $T_1$. We will call this an {\em (un-oriented) topological symmetry} and these can be seen through Budney's companionship graphs \cite{Budney06}. 

As we will only be considering cases where un-oriented topological symmetries do not occur we will not discuss the material in \cite{Budney06} in detail, but we will discuss one situation that we need to explicitly exclude below and another to help the reader understand that such symmetries can be subtle. As a first example consider two prime knots $\K_1$ and $\K_2$. The complement of $\K_1\#\K_2$ has two disjoint ``swallow-follow" incompressible tori. To see them consider a neighborhood $N$ of $\K_1\#\K_2$ and the sphere $S^2$ that intersects $N$ in two disks. Note that $S^2$ separates $\partial N$ into two annuli, say $A_1$ and $A_2$. Now let $T_1$ be $S^2\setminus (S^2\cap N)$ union $A_1$ and similarly for $T_2$. See Figure~\ref{fig:sftorus}. These can easily be seen to be incompressible tori in the complement of $\K_1\#\K_2$. Moreover, if $\K_1=\K_2$ then it is easy to see that there is an isotopy of $S^3$ that takes $\K_1\#\K_2$ to itself and exchanges $T_1$ and $T_2$. But if we consider the complement of $\K_1\#\K_2$, then the two tori are not isotopic. This is the simplest example of an un-oriented topological symmetry. 
\begin{figure}[h]
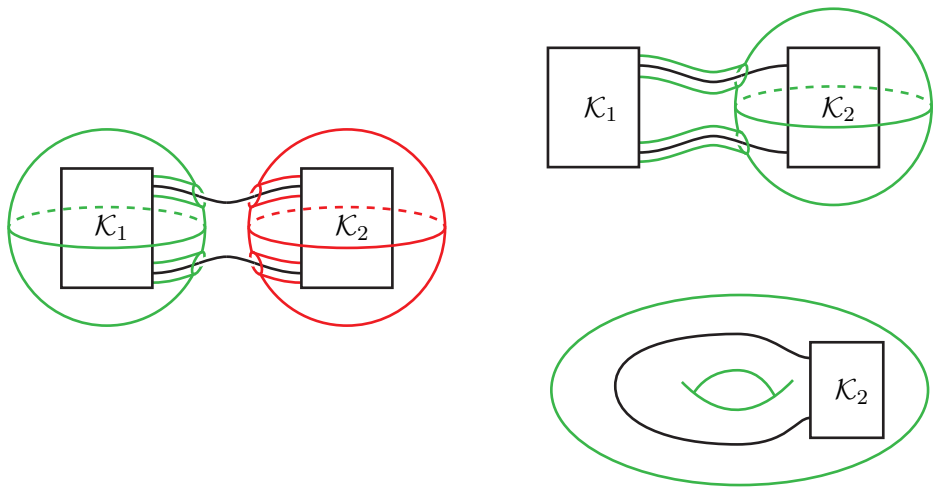

\centering
\begin{overpic}
{sftorus}
\put(33,95){$\K_1$}
\put(124,95){$\K_2$}
\put(218,140){$\K_1$}
\put(308,140){$\K_2$}
\put(313,33){$\K_2$}
\end{overpic}
\caption{On the left the two swallow-follow tori in the complement of the connected sum of $\K_1\#\K_2$. On the upper right, the green torus is isotoped to more clearly ``follow" $\K_1$ and ``swallow" $\K_2$. On the lower right is the solid torus the green $T^2$ bounds that shows $\K_2$ as a pattern in $V$.} 
\label{fig:sftorus}
\end{figure}

To see this situation arrises from a satellite operation notice that each $T_i$ bounds a solid torus $S_i$ and $S_1$ contains a copy of $\K_2$ and hence defines a pattern $\Pat_{\K_2}$. Similarly $S_2$ contains a copy of $\K_1$ and defines a pattern $\Pat_{\K_1}$. Clearly $\Pat_{\K_2}(\K_1)=\K_1\#\K_2=\Pat_{\K_1}(\K_2)$, see Figure~\ref{fig:sftorus}, and we see the well-known fact that connected sums are a special case of the satellite operation. 

We now consider another situation where topological symmetries arise. We first define splicing of two knots. Given $K_1$ and $K_2$ in two separate copies of $S^3$ let $X_1$ and $X_2$ be the complements of open neighborhoods of $K_1$ and $K_2$, respectively. The {\em splice} of $K_1$ and $K_2$ is the result of gluing $X_1$ and $X_2$ together along their boundaries by a diffeomorphism that interchanges their longitudes and meridians. If one of the $K_i$ is an unknot then it is clear that the resulting manifold is $S^3$. Now given a link $L$ with components $L_0\cup\ldots \cup L_k$ such that $L-L_0$ is an unlink and knots $K_1,\ldots, K_k$ then consider the result of splicing each $K_i$ to $L_i$. The result will again be $S^3$ with a knot $L_0$ in it. Notice that if the $K_i$ are non-trivial knots then the complement of $L_0$ has lots of incompressible tori, namely the boundaries of the neighborhoods of the $L_i$. It is an easy exercise to see that if $L=L_0\cup L_1\cup L_2$ where $L_0$ is the unknot and the $L_i$ are meridians to $L_0$, then splicing $K_1$ and $K_2$ to $L_1$ and $L_2$ will result in $L_0$ being the connected sum of $K_1$ and $K_2$ and the two incompressible tori coming form the $L_i$ are the ones described above and the topological symmetry when $K_1=K_2$ comes from the symmetry of $L$ that fixes $L_0$ and interchanges the other $L_i$. 

Now consider the Borromean rings $L=L_0\cup L_1\cup L_2$. Notice we again have a symmetry fixing $L_0$ and interchanging the other $L_i$. Splicing in $K_1$ and $K_2$ will turn $L_0$ into a knot $K$. We claim that $K$ is a satellite knot. To see this let $L'=L'_0\cup L_2'$ be the image of $L_0\cup L_2$ after $L_1$ is spliced to $K_1$. It is easy to see that $L_2'$ is still an unknot. So $L_0'$ in $S^3-L_2'$ is a pattern $\Pat$. And splicing $L_2'$ with $K_2$ is the same thing as forming $\Pat(K_2)$. But as discussed above if $K_1=K_2$ then there will be a topological symmetry of the complement of $\Pat(K_2)$ that interchanges the two incompressible tori that can both be seen as satellite tori. This is an example of a topological symmetry that does not come from the connected sum of two knots. 

\subsubsection{Orientation symmetries of satellite knots}
\label{subsec:satosymmetry}

We will also need to consider {oriented topological symmetries}. To that end notice that in the definition of the satellite construction we are implicitly considering oriented knots to identify $V$ with the neighborhood $N(K)$ of the knot $K$ we need not only a framing on $K$ but also an orientation on $K$. We will also be assuming that our patterns are oriented. Later we will want to consider solid tori that are standard neighborhoods of oriented Legendrian knots (representing $K$) and in particular we will be focusing on the boundary of these standard neighborhoods.  The boundaries of these neighborhoods uniquely determine the oriented Legendrian knot {\em if} an orientation on a longitude is chosen. However, given a pattern $\Pat$ with non-zero winding number, we will always orient $\Pat$ so that the winding number is positive. Now given a torus $\psi(\partial V)$ the orientation on the longitude is determined by the image of $\Pat$. In particular if there are no un-oriented topological symmetries, as discussed above, then when $\psi_0(P_0)$ is isotoped to be the same as $\psi_1(P_1)$ the tori $T_0$ and $T_1$ (we are using the notation from the paragraph above on topological symmetries) will be isotopic through an isotopy taking the preferred orientation on a longitude of $T_0$ to the preferred orientation on a longitude of $T_1$. 

If the winding number of $\Pat$ is zero then this is not the case. Consider the diffeomorphism $f\co V\to V$ defined by $f((x,y),\theta)=((-x,y), -\theta)$ where $V=D^2\times S^1$ with angular coordinate $\theta$ on $S^1$ and Cartesian coordinates on $D^2$. Then there are patterns $\Pat$ such that $\Pat$ and $f(\Pat)$ are the same, for example the Whitehead patterns discussed in Section~\ref{lwp} have this property. Notice that it is a necessary condition for this to happen that the winding number of $\Pat$ is zero. Now suppose $\psi\co V\to S^3$ parameterizes a neighborhood of an oriented knot $\K$ then $\psi\circ f$ parameterizes a neighborhood of $-\K$ (that is $\K$ with the opposite orientation). Moreover, $\Pat(-\K)=(f(\Pat))(\K)=\psi\circ f(\Pat)=\psi(\Pat)=\Pat(\K)$, thus there is no way to assign an orientation to a longitude of $\psi(\partial V)$ (or equivalently fix the orientation on $\K$) from the satellite knot $\Pat(\K)$. We will call this ambiguity in the orientation of $\K$ an {\em oriented topological symmetry}.

\subsection{Legendrian and transverse knots} 
We assume the reader is familiar with Legendrian and transverse knots. The majority of the material used in this paper can be found in \cite{Etnyre05, EtnyreHonda01b} but we recall a few lesser-known results we will need below. We will denote the set of contact structures on a 3--manifold $M$ by $\Xi(M)$. 

\begin{theorem}\label{isoIScontacto}
Let $M$ be a closed 3--manifold on which the space of contact structures isotopic to a fixed contact structure $\xi$ is simply connected. Then classifying Legendrian knots up to contactomorphism (smoothly isotopic to the identity) is equivalent to classifying them up to Legendrian isotopy.  This is also true for a manifold with boundary if the contact structures and diffeomorphisms are all fixed near the boundary. 
\end{theorem}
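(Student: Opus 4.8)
The plan is to show that the two equivalence relations on Legendrian knots in $(M,\xi)$ coincide: being related by a contactomorphism smoothly isotopic to the identity, and being Legendrian isotopic. One inclusion is immediate and does not use the hypothesis. If $L_0$ and $L_1$ are Legendrian isotopic, the Legendrian isotopy extension theorem produces a contact isotopy $\{\phi_t\}_{t\in[0,1]}$ with $\phi_0=\id$ and $\phi_1(L_0)=L_1$; the time-one map $\phi_1$ is then a contactomorphism of $(M,\xi)$ that is smoothly isotopic to the identity through $\phi_t$ itself. For the reverse inclusion, suppose $\psi$ is a contactomorphism smoothly isotopic to the identity with $\psi(L_0)=L_1$. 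It suffices to prove the claim that \emph{every} such $\psi$ is contact isotopic to the identity: granting this, a path $\{\phi_t\}$ from $\id$ to $\psi$ through contactomorphisms yields the Legendrian isotopy $t\mapsto\phi_t(L_0)$ from $L_0$ to $L_1$.

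To prove the claim I would pass to the homotopy theory of the relevant mapping spaces. Write $\mathrm{Diff}_0(M)$ for the identity component of the diffeomorphism group, $\Xi_0(M)$ for the space of contact structures isotopic to $\xi$, and $\mathrm{Cont}_0(M,\xi)=\{f\in\mathrm{Diff}_0(M): f_*\xi=\xi\}$ for the contactomorphisms smoothly isotopic to the identity. Consider the evaluation map
\[
\Phi\co \mathrm{Diff}_0(M)\to \Xi_0(M),\qquad f\mapsto f_*\xi .
\]
This lands in $\Xi_0(M)$ because $f_*\xi$ is joined to $\xi$ by the path $(f_t)_*\xi$ for any isotopy $f_t$ from $\id$ to $f$; it is surjective by Gray stability, which converts a path of contact structures from $\xi$ to any $\xi'\in\Xi_0(M)$ into an ambient isotopy realizing $\xi'$ as $g_*\xi$; and its fiber over $\xi$ is exactly $\mathrm{Cont}_0(M,\xi)$.

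The key structural input is that $\Phi$ is a (Serre) fibration. The homotopy lifting property is again supplied by Gray stability: a parametrized family of paths in $\Xi_0(M)$ integrates, via the associated Moser-type vector field, to a family of ambient isotopies lifting them, starting from any prescribed lift of the initial data. Granting this, the long exact sequence of the fibration $\mathrm{Cont}_0(M,\xi)\hookrightarrow \mathrm{Diff}_0(M)\xrightarrow{\Phi}\Xi_0(M)$ contains the segment
\[
\pi_1\big(\Xi_0(M)\big)\longrightarrow \pi_0\big(\mathrm{Cont}_0(M,\xi)\big)\longrightarrow \pi_0\big(\mathrm{Diff}_0(M)\big).
\]
Since $\mathrm{Diff}_0(M)$ is path connected the right-hand term is trivial, so by exactness the boundary map onto $\pi_0(\mathrm{Cont}_0(M,\xi))$ is surjective; since $\Xi_0(M)$ is simply connected by hypothesis the left-hand term is trivial. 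Hence $\pi_0(\mathrm{Cont}_0(M,\xi))$ is trivial, i.e.\ $\mathrm{Cont}_0(M,\xi)$ is path connected, which is precisely the claim.

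I expect the main obstacle to be the rigorous verification that $\Phi$ is a fibration in the appropriate category of infinite-dimensional (Fr\'echet) manifolds; this is where Gray stability does the real work, and where one must be careful about the smoothness of the constructed isotopies and their smooth dependence on all parameters. The boundary case is handled by running the identical argument with every space replaced by its ``fixed near $\partial M$'' version -- $\mathrm{Diff}_0$, $\Xi_0$, and $\mathrm{Cont}_0$ all taken rel boundary -- and invoking the relative forms of Gray stability and of the Legendrian isotopy extension theorem.
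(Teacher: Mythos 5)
Your proposal is correct and is in essence the paper's own argument: the paper likewise reduces the hard direction to showing that a contactomorphism smoothly isotopic to the identity is contact isotopic to the identity, contracts the loop of contact structures $(\phi_t)_*\xi$ using the simple connectivity hypothesis, and lifts that contraction to diffeomorphisms by parametric Moser/Gray stability, exactly the content of your homotopy lifting property. Your fibration $\mathrm{Cont}_0(M,\xi)\hookrightarrow\mathrm{Diff}_0(M)\to\Xi_0(M)$ and its exact sequence merely package that explicit lifting abstractly, and your handling of the easy direction (Legendrian isotopy extension) and of the boundary case also match the paper's.
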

\begin{proof}
Fix a contact structure $\xi$ on $M$. Clearly if two Legendrian knots are isotopic then there is a contactomorphism taking one to the other (since on a compact manifold Legendrian isotopies can be extended to global contact isotopies).

Now suppose there is a contactomorphism $\phi\co M\to M$ that take the Legendrian knot $L$ to $L'$ and is smoothly isotopic to the identity. Let $\phi_t, t\in[0,1],$ be that isotopy where $\phi_0(x)=x$ and $\phi_1(x)=\phi(x)$. Notice that $\xi_t=(\phi_t)_*(\xi)$ is a loop of contact structures on $M$ based at $\xi$. By hypothesis this loop is contractible. Thus there is a map $H\co [0,1]\times [0,1]\to \Xi(M)$ such that $H(t, 0)=\xi_t, H(t,1)=\xi, H(0,s)=\xi$ and $H(1,s)=\xi$. Applying Moser's method to $H(t, s)$ for $s\in[0,1]$ and $t$ fixed and then noticing that as $t$ varies the diffeomorphism constructed by the method vary smoothly we see that there is a map $F\co [0,1]\times[0,1]\to \text{Diff}(M)$ such that $F(t,0)(x)=x$ and $F(t,s)_*(H(t,s))=H(t,1)=\xi$. In particular $F(0,s), F(t,1)$ and $F(1,s)$ are all contactomorphism of $\xi$. Moreover concatenating these paths gives a path of contactomorphisms that is isotopic, rel.\ endpoints, to $\phi_t$ as a path of diffeomorphisms.  Thus this gives the desired ambient contact isotopy taking $L$ to $L'$. 
\end{proof}

We now recall a fundamental result of Eliashberg. 
\begin{theorem}[Eliashberg 1992, \cite{Eliashberg92a} {\em cf.\ }\cite{Grioux93}]\label{contactOnB3}
Given the 3--ball $B^3$ with a singular foliation $\mathcal{F}$ on its boundary that could be the characteristic foliation of a tight contact structure on $B^3$. There is a unique (up to contact isotopy) tight contact structure on $B^3$ inducing $\mathcal{F}$ and the space of tight contact structures on $B^3$ inducing $\mathcal{F}$ is simply connected. 
\end{theorem}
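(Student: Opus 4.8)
The plan is to deduce both uniqueness and simple connectivity from Giroux's theory of convex surfaces, treating the \emph{existence} of at least one tight structure inducing $\mathcal{F}$ as given by hypothesis (this is what "could be the characteristic foliation of a tight contact structure" grants us). The first reduction is to pass from the characteristic foliation to its contact germ: by the theorem that the characteristic foliation on a surface determines the germ of a compatible contact structure in a neighborhood of that surface, up to isotopy fixing the surface, any two tight contact structures inducing $\mathcal{F}$ may, after an isotopy, be assumed to agree on a collar of $\partial B^3 = S^2$. Thus the uniqueness statement reduces to the claim that two tight contact structures on $B^3$ agreeing near the boundary are isotopic rel a neighborhood of $\partial B^3$.

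Next I would perturb $S^2$ to be convex; since the germ is fixed and $\mathcal{F}$ is realized by a tight structure, Giroux's tightness criterion forces the dividing set $\Gamma \subset S^2$ to be a single closed curve. To control the interior, I would choose a properly embedded disk $D$ whose Legendrian boundary meets $\Gamma$ transversally in exactly two points, and make $D$ convex by Giroux's realization principle. Because a tight structure admits no homotopically trivial dividing curve, and every closed curve on $D$ is contractible, the dividing set of $D$ must be a single boundary-parallel arc, the same for both contact structures. Cutting $B^3$ along $D$ and rounding corners produces two balls, each again with convex $S^2$ boundary carrying a single dividing circle. An induction on this decomposition, using Giroux flexibility to match the characteristic foliations on the cutting disks, shows that the two tight structures are isotopic rel the boundary collar; this is the convex-surface reformulation of Eliashberg's uniqueness of the tight ball.

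For simple connectivity I would run the entire argument in one-parameter families. Given a loop of tight contact structures inducing $\mathcal{F}$, I would first use a parametric version of the germ theorem to arrange that the family agrees near $\partial B^3$ for every parameter, then realize a continuous family of convex cutting disks together with their dividing arcs, appealing to the genericity of convexity and the discreteness of the isotopy type of $\Gamma$ to keep the combinatorics constant along the loop. A parametric Gray-stability (Moser) argument then contracts the loop to the constant family, yielding a nullhomotopy.

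The hard part will be the parametric step: making the convex normalizations (convexity of $S^2$ and of the cutting disk, and the realization of the dividing set) depend continuously on the loop parameter, and handling the exceptional parameter values where genericity fails. This is precisely where Eliashberg's original analytic input, or a careful parametric convex-surface theory, is required, and it is why the statement is best cited rather than obtained from a one-shot convex-surface computation. With existence granted by hypothesis, everything else is a formal consequence of the germ theorem together with Giroux's tightness criterion.
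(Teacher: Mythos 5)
The first thing to note is that the paper does not prove this statement: Theorem~\ref{contactOnB3} is quoted as Eliashberg's theorem (with a pointer to Giroux's survey) and is used as a black box, most importantly in the proof of Lemma~\ref{lem:contactstr}, where the loop of tight structures on $V$ is pushed into a ball and contracted there. So the real question is whether your sketch would stand on its own as a proof, and it would not.

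The genuine gap is in your uniqueness induction. Cutting $B^3$ along a convex disk $D$ whose dividing set is a single boundary-parallel arc and then rounding corners produces two balls, each again with convex boundary sphere carrying a single dividing circle --- exactly the situation you started from. No complexity has decreased, so the ``induction on this decomposition'' has no decreasing measure and no base case other than the theorem being proved; it is an infinite regress, not an induction. This is not a presentational defect one can patch: in convex surface theory, Eliashberg's ball theorem \emph{is} the terminal case to which all cut-and-paste classification arguments reduce (solid tori are cut along meridian disks down to balls, etc.), and it cannot itself be obtained by further cutting. Eliashberg's actual argument is of a different nature --- one normalizes the characteristic foliations on a sweep-out of the ball by spheres, using the elimination lemma and related analysis of singular foliations --- and that is precisely the irreducible input you gesture at. The same applies, a fortiori, to the parametric half: the simple connectivity (in fact contractibility) of the space of tight structures inducing $\mathcal{F}$ is exactly what Eliashberg proves, and your sketch explicitly defers it to ``a careful parametric convex-surface theory.'' Since both halves of the statement ultimately rest on the citation, your proposal amounts to an honest reduction of the theorem to itself; the correct move --- and the one the paper makes --- is simply to cite Eliashberg rather than to present the disk-cutting argument as if it closed the loop.
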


Let $V=D^2\times S^1$ and $\Gamma$ denote a two component slope zero longitudinal dividing curve on $\partial V$. Let $\Xi(V,\Gamma)$ denote the space of tight contact structures on $V$ with a fixed characteristic foliation on $\partial V$ divided by $\Gamma$. Whenever we talk about contact structures on manifolds with boundary we need to fix a characteristic foliation $\mathcal F$ on the boundary divided by the dividing curve of the boundary. Then uniqueness means, that up to isotopy fixing a neighborhood of the boundary there is a unique contact structure with the prescribed characteristic foliation. 
The following lemma is a well-known folk theorem. A proof recently appeared in \cite[Theorem~2.36]{VogelPre13} but we sketch an argument here for completeness. 
\begin{lemma}\label{lem:contactstr} 
With the notations above
$\pi_1(\Xi(V,\Gamma))=1$.
\end{lemma}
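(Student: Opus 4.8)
The plan is to show that the space $\Xi(V,\Gamma)$ of tight contact structures on $V=D^2\times S^1$ with the fixed boundary data is not only connected but simply connected, by reducing everything to the ball $B^3$, where Theorem~\ref{contactOnB3} supplies both uniqueness and simple connectivity. The key geometric idea is that a contact structure on $V$ with convex boundary divided by a two-component longitudinal $\Gamma$ of slope zero admits a convex meridional disk $D=D^2\times\{\theta\}$; cutting $V$ along such a disk produces a ball $B^3$ whose boundary characteristic foliation is determined (up to the unique tight germ) by $\Gamma$ and the dividing set of $D$. Since $\Gamma$ meets each meridian in exactly two points, the dividing set of $D$ is a single boundary-parallel arc, so after edge-rounding the resulting ball has boundary data that could bound a unique tight contact structure. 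This is the standard ``convex decomposition'' that I would invoke first to set up the reduction.

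First I would establish connectedness (which is the $\pi_0$ statement feeding into the $\pi_1$ argument): given any $\xi\in\Xi(V,\Gamma)$, isotope the boundary to be convex rel the characteristic foliation, find a convex meridional disk with the standard boundary-parallel dividing arc, and cut along it to obtain a tight ball with prescribed boundary foliation. By Theorem~\ref{contactOnB3} this tight ball is unique up to contact isotopy, and regluing shows $\xi$ is isotopic to a fixed standard model; hence $\Xi(V,\Gamma)$ is connected. Next, for the $\pi_1$ statement, I would take a loop $\xi_s$, $s\in S^1$, of contact structures in $\Xi(V,\Gamma)$ based at the standard one, and arrange a smoothly-varying family of convex meridional disks $D_s$ carrying the standard dividing arc; this uses genericity/openness of convexity together with the fact that the dividing-set isotopy class on a meridional disk is forced by $\Gamma$, so no monodromy in the combinatorics can occur around the loop. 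Cutting along $D_s$ gives a loop of tight balls $B_s$ with fixed boundary foliation, i.e.\ a loop in the space of Theorem~\ref{contactOnB3}, which is simply connected; contracting that loop and then regluing the meridional disk produces a contraction of $\xi_s$ in $\Xi(V,\Gamma)$.

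The main obstacle I expect is producing the family of cutting disks $D_s$ \emph{continuously} in $s$ and controlling the parametrized edge-rounding so that the reduction to $B^3$ is genuinely a map of spaces, not just a statement at each fixed $s$. At an individual contact structure the convex meridional disk and its dividing set are unproblematic, but over a one-parameter family the disk may have to be perturbed to stay convex, the dividing curves can undergo bifurcations, and the identification of the cut-open manifold with $B^3$ must be made coherently; one must check that these perturbations can be chosen smoothly in $s$ without introducing a loop's worth of extra topology. The honest way to handle this is a parametrized convex-surface-theory argument (in the spirit of the cited \cite[Theorem~2.36]{VogelPre13}), using that convexity is a $C^\infty$-open condition and that the finitely many non-generic $s$ can be crossed by standard bypass/dividing-curve moves applied in families. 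Once the family $D_s$ is in hand, the rest is formal: apply Moser as in the proof of Theorem~\ref{isoIScontacto} to turn the contraction in the space of tight balls into the desired null-homotopy of $\xi_s$, yielding $\pi_1(\Xi(V,\Gamma))=1$.
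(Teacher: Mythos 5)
Your proposal is correct and follows essentially the same route as the paper: reduce to the ball by cutting along a convex meridional disk and invoke Theorem~\ref{contactOnB3} to contract the resulting loop of tight structures on $B^3$. The only cosmetic difference is that the paper keeps a single fixed disk $D$ and instead isotopes the loop $\xi_t$ so that $D$ is convex for all $t$ (justified by exactly the observation you make, that tightness forces any bypass attached to the meridional disk to be trivial), which sidesteps the parametrized family of disks $D_s$ and the coherent edge-rounding that you flag as the main technical obstacle.
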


\begin{proof}
Let $\xi_t$ be an $S^1$-family of contact structures with the given boundary conditions. Choose a meridional disc $D$ of $V$, and isotope $\xi_t$ so that it is convex for all $t\in S^1$ (to guarantee this one needs to observe that since our contact structures are tight any bypass attachment to such a disk must be trivial and so unnecessary). The dividing curve on $D$ is one connected arc, and by another isotopy of $\xi_t$ we can arrange that the characteristic foliation on $D$ is isotopic for all $t$. A further isotopy makes the $\xi_t$ agree in a neighborhood $N$ of $\partial V\cup D$. There is an $S^2$ in this neighborhood that bounds a 3-ball $B$ in $V$ so that $V=N\cup B$. Thus we have  an $S^1$-family of contact structures on $B$ with fixed boundary conditions. By Theorem~\ref{contactOnB3} the fundamental group of the space of tight contact structures on $B^3$ with a given characteristic foliation is trivial. Thus this loop of contact structures is contractible. This completes a contraction of the loop $\xi_t$ as well.
\end{proof}
By Theorem~\ref{isoIScontacto} and this lemma we can conclude the following. 
\begin{corollary}\label{cor:iso}
Let $\xi_V$ be any tight contact structure on $V$ with convex boundary and dividing curves $\Gamma$. Then the classification of Legendrian knots in $(V,\xi_V)$ up to contactomorphism (smoothly isotopic to the identity) and up to isotopy are the same.\hfill\qed
\end{corollary}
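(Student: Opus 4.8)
The plan is to deduce this directly by feeding Lemma~\ref{lem:contactstr} into the manifold-with-boundary version of Theorem~\ref{isoIScontacto}. First I would fix the boundary data once and for all: since $\xi_V$ is tight with convex boundary and dividing curves $\Gamma$, I may fix a characteristic foliation on $\partial V$ divided by $\Gamma$ and restrict attention to contact structures and diffeomorphisms that are fixed near $\partial V$. With this setup $\xi_V$ is a point of the space $\Xi(V,\Gamma)$ appearing in Lemma~\ref{lem:contactstr}, and the two notions of equivalence in the statement are precisely the ones addressed by the boundary case of Theorem~\ref{isoIScontacto}.

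Next I would verify that the hypothesis of that boundary version is met, namely that the space of contact structures isotopic (rel boundary) to $\xi_V$ is simply connected. The key observation is that this space is exactly the path component of $\xi_V$ inside $\Xi(V,\Gamma)$: along any isotopy fixed near $\partial V$ the structures remain tight, since tightness is a diffeomorphism invariant, and they retain the fixed characteristic foliation on $\partial V$, so such an isotopy never leaves $\Xi(V,\Gamma)$. Any loop of contact structures based at $\xi_V$ therefore stays within this path component, and Lemma~\ref{lem:contactstr} asserts that every such loop is contractible, so the component is simply connected. Theorem~\ref{isoIScontacto} then yields the equivalence of the two classifications.

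The one point requiring care, which I would treat as the main obstacle, is confirming that the loop arising inside the proof of Theorem~\ref{isoIScontacto} genuinely lives in $\Xi(V,\Gamma)$ rather than merely in the larger space of all contact structures on $V$. There the loop has the form $\xi_t=(\phi_t)_*(\xi_V)$, where $\phi_t$ is an isotopy from the identity to the contactomorphism carrying one Legendrian knot to the other. Because the boundary version insists the $\phi_t$ be fixed near $\partial V$, each $\xi_t$ induces the fixed characteristic foliation on $\partial V$, and each $\xi_t$ is tight as the pushforward of the tight structure $\xi_V$; hence $\xi_t$ is a bona fide loop in $\Xi(V,\Gamma)$ to which Lemma~\ref{lem:contactstr} applies. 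Granting this, the contraction of $\xi_t$ and the subsequent application of Moser's method proceed verbatim as in the proof of Theorem~\ref{isoIScontacto}, and the corollary follows.
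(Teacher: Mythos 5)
Your proposal is correct and follows exactly the paper's route: the corollary is obtained by feeding Lemma~\ref{lem:contactstr} into the boundary version of Theorem~\ref{isoIScontacto}, which is precisely what the paper does in its one-line justification. The verification you flag as the main obstacle---that the loop $\xi_t=(\phi_t)_*(\xi_V)$ stays in $\Xi(V,\Gamma)$ because the $\phi_t$ are fixed near $\partial V$ and pushforwards of a tight structure are tight---is left implicit in the paper, so your write-up is simply a more careful rendering of the same argument.
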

We note that since a classification of Legendrian knots in a knot type determines the classification of transverse knots in that knot type, \cite{EtnyreHonda01b}, this corollary also holds for transverse knots.

\section{Open Legendrian and transverse braids}
\label{sec:braidsind2xi}
In this section we will classify Legendrian and transverse representatives of open braids in $\Rold$. 
\subsection{Legendrian braids in $\Rold$}\label{basicdefs}
Throughout this section we will be considering the contact structure $\xi_\Rold$ on $\Rold$, where $I=[0,1]$, that is $I$-invariant, tangent to the boundary of each $D\times \{t\}$, and induces a single dividing curve on $D$. We note that the interior of $\Rold$ can naturally be identified with the 1-jet space of $I$ and hence we can depict Legendrian knots in $\Rold$ via their front projection.

We say a Legendrian arc $\gamma$ in $(\Rold,\xi_\Rold)$ is \dfn{straight} if it is of the form $\{p\}\times I$ for some point $p$ in the dividing set $\Gamma_D$ of $D$. An arc $\gamma$ that intersects $D\times \{0\}$ or $D\times\{1\}$ is \dfn{straight near the boundary} if near its end point it agrees with a straight Legendrian arc.  A \dfn{(open) Legendrian braid} $Q$ of index $n$ in $(\Rold,\xi_\Rold)$ is a collection of $n$ Legendrian arcs forming a topological braid that are straight near the boundary. We note that it is easy to see that any collection of Legendrian arcs that topologically form a braid can be isotoped through Legendrian arcs to a Legendrian braid. 
Figure~\ref{fig:openbraid} depicts the front projection of some Legendrian braids.
\begin{figure}[h]
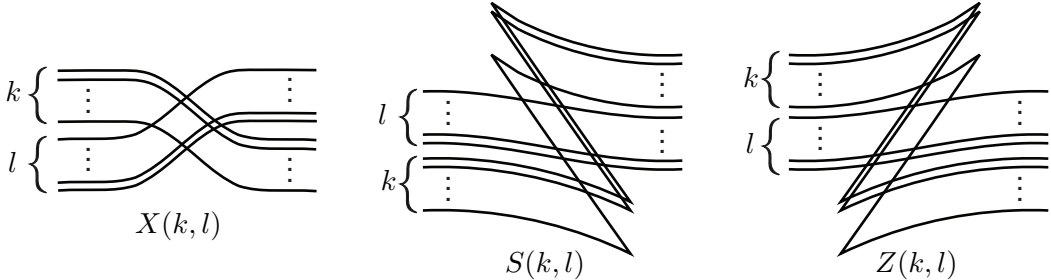

\centering
\begin{overpic}
{xszmain}
\put(1,65){$k$}
\put(2,38){$l$}
\put(142,56){$l$}
\put(143,31){$k$}
\put(280,70){$k$}
\put(281,45){$l$}
\put(50, 15){$X(k,l)$}
\put(190,0){$S(k,l)$}
\put(330,0){$Z(k,l)$}
\end{overpic}
\caption{Front projections of \emph{basic Legendrian braids}. There may be other strands both above and bellow of the pictured braids, but they are all assumed to be horizontal strands that are disjoint from the strands in the picture.} 
\label{fig:openbraid}
\end{figure}
When considering Legendrian braids we allow the end points to move along the dividing set $\Gamma_D$, but they will always remain straight near the boundary. 

We notice that if two copies of $(\Rold, \xi_\Rold)$ are glued together so that $D\times \{1\}$ in the first copy is glued to $D\times \{0\}$ in the second copy, then the result is a contact manifold that is naturally contactomorphic to a subset of $(\Rold, \xi_\Rold)$. Thus two Legendrian $n$-braids can be concatenated to obtain a new Legendrian braid. 

We define basic building blocs for Legendrian braids. Fixing the braid index $n$ for each triple of natural numbers $i, k, l$ such that $i+k+l\leq n$ we define $X_i(k,l)$ to be the Legendrian braid depicted on the left of Figure~\ref{fig:openbraid} with $i$ straight Legendrian arcs below the pictured braid and $n-(i+k+l)$ straight Legendrian arcs above the pictured braid. We similarly have $S_i(k,l)$ and $Z_i(k,l)$ indicated in the middle and right of the figure, respectively. We will usually drop the subscript $i$ from the notation when the meaning is clear from context. The braids $X_i(k,l), S_i(k,l)$ and $Z_i(x,l)$ are called \dfn{basic Legendrian braids}. 

\begin{theorem}\label{thm:legbraid}
 A Legendrian $n$-braid $Q$ in $(\Rold,\xi_\Rold)$ is Legendrian isotopic to a concatenation of the basic Legendrian braids. 
\end{theorem}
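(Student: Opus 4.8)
The plan is to argue entirely in the front projection. Identify the interior of $\Rold$ with the $1$-jet space $J^1(I)=I\times\R_y\times\R_z$ with contact form $dz-y\,dt$, so that the disks $D\times\{t\}$ become the slices $\{t=\mathrm{const}\}$. Since a braid strand is transverse to every $D\times\{t\}$, it is monotone in $t$, hence its front projection is an honest graph $z=f(t)$ with $y=f'(t)$; a straight arc is a horizontal segment $z=\mathrm{const}$ sitting over $\Gamma_D=\{y=0\}$. First I would perturb $Q$ through Legendrian braids, fixing a neighborhood of $D\times\{0,1\}$, so that its front is generic for the projection $\pi\co\Rold\to I$: all crossings are transverse double points, they occur at distinct values of $t$, and on each regular slice the $n$ points sit at distinct heights. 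This is a routine transversality statement — because the strands are graphs over $t$, any $C^1$-small isotopy of the front keeping strands monotone lifts to a Legendrian isotopy — so generic position is achievable, and there are no vertical tangencies to worry about.

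Let $t_1<\dots<t_m$ be the $t$-coordinates of the crossings, and pick regular values $0=s_0<s_1<\dots<s_m=1$ with $t_j\in(s_{j-1},s_j)$. Cutting $\Rold$ along the disks $D\times\{s_j\}$ exhibits $Q$ as a concatenation $Q_1*\dots*Q_m$, where each slab $Q_j=Q\cap(D\times[s_{j-1},s_j])$ contains exactly one crossing and otherwise consists of monotone strands. Arranging the slices $Q\cap(D\times\{s_j\})$ to lie on $\Gamma_D$ and to meet the cutting disks straightly, the slabs glue as genuine open Legendrian $n$-braids, and their concatenation recovers $Q$ up to Legendrian isotopy.

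Inside a single slab I would first \emph{straighten}: away from a small disk around the lone crossing each strand is a monotone graph and can be Legendrian-isotoped, rel the two boundary slices and keeping arcs straight near them, to a horizontal straight arc, so that all nontrivial behavior is concentrated near the crossing. The local model of that crossing is determined by which two (groups of) strands meet and by the slope data at the crossing point, i.e.\ by the relative $y$-coordinates that encode the over/under information. Enumerating the possibilities gives exactly three cases: the two strands have slopes of \emph{opposite} sign, so they cross transversally as a genuine X (matching $X_i(k,l)$); or they have slopes of the \emph{same} sign, with the faster strand overtaking the slower one from below or from above (matching $S_i(k,l)$ and $Z_i(k,l)$, which are mirror images). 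The block indices $k,l$ record how many parallel strands are carried along in each group when several same-type crossings are packaged together, and $i$ records the number of straight strands below; since each single-strand event $(k,l)=(1,1)$ or $(1,\ast)$ is itself a basic Legendrian braid, combining over all slabs produces the desired concatenation.

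The substantive point — and the step I expect to be the main obstacle — is this local normalization: showing that the three slope-sign configurations genuinely exhaust the generic crossing events, that the slope-sign pattern at a crossing cannot be altered by a local Legendrian isotopy (so that three distinct blocks are really needed), and that each configuration can be brought to the standard block by a Legendrian, not merely smooth, isotopy fixed near the cutting disks. This requires careful bookkeeping of the $y$-coordinate, since it is precisely the $y$-data that encodes the braiding and must be preserved, together with the grouping of simultaneous strand-crossings into the block forms $X(k,l)$, $S(k,l)$, $Z(k,l)$. Corollary~\ref{cor:iso}, resting on Theorem~\ref{contactOnB3}, is what licenses running the whole argument up to contactomorphism isotopic to the identity, and hence freely using front-projection isotopies rather than tracking an ambient contact isotopy by hand.
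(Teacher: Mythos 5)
There is a genuine gap, and it occurs in your very first step. You assume that each strand of a Legendrian braid is transverse to every disk $D\times\{t\}$, hence is a graph $z=f(t)$ with $y=f'(t)$, so that ``there are no vertical tangencies to worry about.'' That premise is false for the objects of Theorem~\ref{thm:legbraid}. In this paper a Legendrian braid is a collection of Legendrian arcs forming a braid only \emph{topologically}; the arcs need not be, and in general cannot be made, transverse to the disks by a Legendrian isotopy. If every strand were a graph over $t$, the front would have no cusps, so every Legendrian braid would satisfy $\relrot_{\Rold}(Q)=0$ and $\reltb_{\Rold}(Q)=\mathrm{writhe}(Q)\geq 0$; this contradicts the basic braids $S$ and $Z$ themselves, which have $\reltb_{\Rold}=-n$ and $\relrot_{\Rold}=\mp 1$, and it contradicts the fact that stabilization (which adds cusps and lowers $\reltb_{\Rold}$) stays within the class of Legendrian braids. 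Moreover, at a transverse crossing of two front arcs the over/under information is not free: it is dictated by the slopes, i.e.\ by the $y$-coordinates. So two monotone strands crossing in the front always produce a crossing of one and the same sign, namely the one realized by $X_i(k,l)$; the opposite crossing sign (needed already for the negative $2$-braid patterns $\Pat_m$, $m<0$, classified later in the paper) can only be achieved using cusps.

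Consequently your enumeration of ``three slope-sign configurations'' does not list the possible local events: with graphs there is only one kind of transverse crossing, and your argument, if completed, would show that every Legendrian braid is a concatenation of the $X_i$ alone, which is false. The blocks $S$ and $Z$ are precisely the non-graphical, cusped local models, and reducing an arbitrary (cusped) Legendrian braid to them is the entire content of the theorem, not a routine normalization you can flag as ``the main obstacle'' within your framework. The paper's proof avoids the front projection altogether: using Corollary~\ref{cor:iso} it works up to contactomorphism, replaces the braid by the exterior of its standard neighborhood, straightens that exterior to $D_n\times I$, decomposes it into bypass layers via \cite[Proof of Lemma 3.10]{Honda02}, shows tightness excludes three of the six possible bypass attachments, and identifies the three remaining attachments with the exteriors of $X(k,l)$, $S(k,l)$ and $Z(k,l)$. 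Any front-projection approach would at minimum have to put cusps, not just crossings, into generic position and then prove a normalization of the cusped slabs into $S$ and $Z$ blocks; that is exactly the step the bypass machinery is there to accomplish.
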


The above theorem will be proved in Section~\ref{proofofbraid} by classifying tight contact structures on the complement of $Q$ but before giving the proof we discuss some corollaries of this theorem. First notice that the building blocks in Theorem~\ref{thm:legbraid} can be simplified.
\begin{corollary}\label{thm:legbraidsimpl}
A Legendrian braid $Q$ in $(\Rold,\xi_\Rold)$ is Legendrian isotopic to the concatenation of the basic Legendrian braids $X_i =X_i(1,1)$, $S=S_0(1,n-1)$ and $Z=Z_0(1,n-1)$ shown in Figure~\ref{fig:openbraidsimpl}. 
\end{corollary}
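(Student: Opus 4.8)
The plan is to feed the decomposition from Theorem~\ref{thm:legbraid} into a block-by-block reduction. By that theorem $Q$ is Legendrian isotopic to a concatenation of the basic braids $X_i(k,l)$, $S_i(k,l)$ and $Z_i(k,l)$ of Figure~\ref{fig:openbraid}. Since concatenation is well defined on Legendrian isotopy classes rel endpoints, it suffices to show that each individual block is Legendrian isotopic, through braids that are straight near the boundary, to a concatenation of the three simplified generators $X_i(1,1)$, $S_0(1,n-1)$ and $Z_0(1,n-1)$ of Figure~\ref{fig:openbraidsimpl}; substituting these reductions into the concatenation then proves the statement. All of the isotopies below are carried out in the front projection in the strip $[0,1]\times\R$.

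The crossing blocks $X_i(k,l)$ are handled by the standard ``combing'' argument for block transpositions in the braid group. Passing a block of $k$ strands across a block of $l$ strands is, as a braid, the product of $kl$ adjacent transpositions, and this equality is realized by a front isotopy in which one arranges the strands to cross a single pair at a time rather than simultaneously. Each elementary crossing produced this way is, after a vertical translation of the front, one of the generators $X_j(1,1)$, and the isotopy taking the simultaneous block crossing to the staggered one is built from Legendrian triple-point and far-commutation moves, which do not disturb the straightening near the boundary. Hence every $X_i(k,l)$ reduces to a concatenation of $X_j(1,1)$'s.

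The blocks $S_i(k,l)$ and $Z_i(k,l)$ are the heart of the matter, and I would reduce them in two stages using the crossings $X_j(1,1)$ already at our disposal. First I would peel the distinguished block down to a single strand: sliding the $k$ strands across the block one at a time expresses $S_i(k,l)$ as a concatenation of crossing blocks together with blocks $S_{i'}(1,l')$ (and symmetrically for $Z$). Second, I would use crossings to move the single distinguished strand into the bottom position, so that $i'=0$, and to absorb the discrepancy between the $l'$ strands actually wrapped and the full collection of $n-1$ remaining strands; after pushing the inert horizontal strands past the block by Legendrian isotopy, each $S_{i'}(1,l')$ is converted into $S_0(1,n-1)$ up to the insertion of further $X_j(1,1)$ blocks. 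The same procedure applied to $Z$ gives $Z_0(1,n-1)$, completing the reduction.

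The main obstacle is precisely this second family: the reductions of $S_i(k,l)$ and $Z_i(k,l)$ must be exhibited as explicit front isotopies, and at each stage one must verify that moving the inert strands and the block past one another preserves the slope relations that make the crossings genuinely Legendrian, so that no extraneous tangencies or crossings are created and the endpoints stay straight near the boundary. Tracking which generators $X_j(1,1)$ are produced by the peeling and the repositioning, and checking that the single-strand peeling is compatible with the internal structure of the $S$ and $Z$ blocks, is where the real work lies; the crossing blocks $X_i(k,l)$, by contrast, reduce by the routine combing argument above.
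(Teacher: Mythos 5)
Your proposal is correct and follows essentially the same route as the paper's proof: starting from the decomposition of Theorem~\ref{thm:legbraid}, you reduce each block exactly as the paper does, writing $X_i(k,l)$ as a concatenation of $kl$ elementary crossings, peeling $S_i(k,l)$ (resp.\ $Z_i(k,l)$) into single-strand blocks, and then trading each $S_{i'}(1,l')$ for $S_0(1,n-1)$ together with $n-1-l'$ compensating crossings $X_j(1,1)$. The paper states these reductions in the opposite order and more tersely, but the argument is the same.
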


Notice that the corollary implies that a Legendrian 2-braid is a concatenation of  the building blocks $X_0, S$, and $Z$. This is a key result necessary for the classification of Legendrian twist knots given in \cite{EtnyreNgVertesi13}.

\begin{proof}
The Legendrian braid $X(k,l)$ is a concatenation of $(kl)$ copies of $X_i$ for the appropriate choices of $i$. The braid $S(1,l)$ is obtained from $S$ and $(n-1-l)$ copies of $X_i$ placed before or after as necessary. Then $S(k,l)$ is the concatenation of $k$ copies of $S(1,l)$. Similarly  $Z(k,l)$ can be built up from $k$ copies of ${Z}$ and $k(n-1-l)$ copies of $X_i$.
\end{proof}

\subsection{Invariants of open Legendrian braids}
Let $\Pat$ be denote the smooth isotopy class of an (open) braid (with isotopies relative to the boundary). The set of Legendrian isotopy classes representing $\Pat$ is denoted by $\Leg_{\Rold}(\Pat)$. We also note that we always orient all strands of a braid from left to right. We define the \emph{relative Thurston--Bennequin number} and the \emph{relative rotation number} of a Legendrian braid type, $\Q$, in $(\Rold,\xi_\Rold)$ using the front projections of its chosen representative $Q$ as follows
\begin{eqnarray*}
\reltb_{\Rold}(Q)&=&\textrm{writhe}(Q)-\frac12c(Q);\\
\relrot_{\Rold}(Q)&=&\frac12(d(Q)-u(Q)).
\end{eqnarray*}
where $c(Q)$ denotes the number of cusps, and $d(Q)$ and $u(Q)$ denotes the number of downward and upward oriented cusps, respectively. This number is independent on the chosen representation, thus giving rise to the invariants $\reltb_{\Rold}(\Q)$ and $\relrot_{\Rold}(\Q)$ of the Legendrian isotopy type $\Q$. 

Denote the the set of Legendrian isotopy classes with relative Thurston--Bennequin number $t$  by $\Leg_{\xi_\Rold}(\Pat;t)$. Let $\mathcal{X}_i,\mathcal{S},\mathcal{Z}$ denote the Legendrian isotopy classes of the braids $X_i, S$ and $\mathcal{Z}$. The relative classical invariants for the basic Legendrian braids are
\[
\begin{array}{lll}
\reltb_{\Rold}(\mathcal{X}_i)=1, & \reltb_{\Rold}(\mathcal{S})=-n, & \reltb_{\Rold}(\mathcal{Z})=-n,\\
\relrot_{\Rold}(\mathcal{X}_i)=0, & \relrot_{\Rold}(\mathcal{S})=-1, & \relrot_{\Rold}(\mathcal{Z})=1.\\
\end{array}
\]

\subsection{Positive Legendrian braids in $(\Rold,\xi_\Rold)$}
For positive braids the maximal Thurston--Bennequin number is known to be the length of $\Pat$.
\begin{theorem}\label{openmaxtb}
Let $\Pat$ be a (open) braid represented by a positive word $w$ in the braid group $B_n$, then the maximal relative Thurston-Bennequin invariant for Legendrian knots realizing $\Pat$ is
 \[\overline{\reltb}_{\Rold}(\Pat)=\textit{length}(w).\]
\end{theorem}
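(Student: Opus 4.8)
Set $L=\mathit{length}(w)$ and write $w=\sigma_{i_1}\cdots\sigma_{i_L}$. The plan is to establish the two inequalities $\overline{\reltb}_{\Rold}(\Pat)\ge L$ and $\overline{\reltb}_{\Rold}(\Pat)\le L$ separately, the first by an explicit construction and the second by comparing two invariants that are additive under concatenation.

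For the lower bound I would build an efficient representative purely out of the block $X_i$. Each $X_i=X_i(1,1)$ is topologically a single positive crossing of two adjacent strands, has no cusps, and satisfies $\reltb_{\Rold}(\mathcal X_i)=1$. Concatenating $L$ of them, one block $X_{i_j}$ for each letter $\sigma_{i_j}$ of $w$, gives a Legendrian braid $Q_0$ whose underlying braid is $w$. Since $\reltb_{\Rold}$ is additive under concatenation (both the writhe and the cusp count of a concatenation are the sums of those of the pieces, the straight-near-boundary condition guaranteeing no interaction at the joints), $\reltb_{\Rold}(Q_0)=L$, so $\overline{\reltb}_{\Rold}(\Pat)\ge L$.

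For the upper bound let $Q$ be an arbitrary Legendrian representative of $\Pat$. By Corollary~\ref{thm:legbraidsimpl}, and using that Legendrian isotopy preserves both $\reltb_{\Rold}$ and the topological braid type, I may assume $Q$ is a concatenation of $a$ copies of $X$-blocks, $b$ copies of $S$, and $c$ copies of $Z$. Additivity together with the tabulated values $\reltb_{\Rold}(\mathcal X_i)=1$ and $\reltb_{\Rold}(\mathcal S)=\reltb_{\Rold}(\mathcal Z)=-n$ yields $\reltb_{\Rold}(Q)=a-n(b+c)$. Now let $e\colon B_n\to\Z$ be the exponent-sum homomorphism $e(\sigma_i)=1$. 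It is an invariant of the underlying element of $B_n$, so on the one hand $e(Q)=e(w)=L$, and on the other hand, reading off the braid word carried by the block decomposition, $e(Q)=a\,e(X)+b\,e(S)+c\,e(Z)$. Granting the block values $e(X)=1$ and $e(S)=e(Z)=-(n-1)$, this gives $a-(n-1)(b+c)=L$, hence $a=L+(n-1)(b+c)$ and therefore
\[
\reltb_{\Rold}(Q)=a-n(b+c)=L-(b+c)\le L ,
\]
with equality precisely when $b=c=0$. This is the desired bound, and combined with the previous paragraph it gives $\overline{\reltb}_{\Rold}(\Pat)=L$, realized exactly by the all-$X$ concatenations.

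The step I expect to be the main obstacle is the evaluation $e(S)=e(Z)=-(n-1)$. Geometrically each of $S=S_0(1,n-1)$ and $Z=Z_0(1,n-1)$ slides the single distinguished strand past the remaining $n-1$ strands, so as an element of $B_n$ it should be a product of $n-1$ generators of a single sign, the sign being negative because these blocks carry $\reltb_{\Rold}=-n<0$. The subtlety is that the $S$- and $Z$-fronts are not monotone in the braid direction---they contain cusps---so their front writhe is \emph{not} literally the braid exponent sum, and one must extract the underlying word in $B_n$ from Figure~\ref{fig:openbraidsimpl} directly rather than from the front writhe. As a sanity check on why this exact value is what the statement needs, note that
\[
\reltb_{\Rold}(Q)-e(Q)=b\bigl(-n-e(S)\bigr)+c\bigl(-n-e(Z)\bigr),
\]
so the bound requires only $e(S),e(Z)\ge -n$, while the value $-(n-1)$ (rather than $-n$) is what makes the inequality strict for $b+c>0$ and thus pins down the $\reltb_{\Rold}$-maximizers.
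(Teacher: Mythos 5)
Your proof is correct, but your upper bound goes by a genuinely different route than the paper's. The paper also gets the lower bound by replacing each $\sigma_{i_j}$ with an $X_{i_j}$ block, but for the inequality $\overline{\reltb}_{\Rold}(\Pat)\le \textit{length}(w)$ it does \emph{not} invoke the classification of Legendrian braids: instead it embeds $Q$ in $(\R^3,\xi_{std})$, closes it up as in Figure~\ref{fig:cb}, and applies the Bennequin inequality, using that the closure of a positive braid bounds a Seifert surface of Euler characteristic $n-\textit{length}(w)$ (with an extra concatenation trick to reduce the case where the closure is a link rather than a knot). Your argument instead stays entirely inside $\Rold$: you use Corollary~\ref{thm:legbraidsimpl} to write $Q$ as a concatenation of $a$ copies of $X$-blocks and $b+c$ copies of $S$ and $Z$, and then play the additivity of $\reltb_{\Rold}$ against the exponent-sum homomorphism $e\colon B_n\to\Z$ to get $\reltb_{\Rold}(Q)=\textit{length}(w)-(b+c)$. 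The key values you flagged are right: each of $S$ and $Z$ is topologically $(\sigma_1\cdots\sigma_{n-1})^{-1}$ (or its reverse), so $e(S)=e(Z)=-(n-1)$ while $\reltb_{\Rold}=-n$; this is exactly the bookkeeping the paper itself performs later, in the proof of Theorem~\ref{thm:poslegendrianbraid}, where it observes that the Thurston--Bennequin invariant equals the algebraic length minus the number of $S$s and $Z$s. There is no circularity in your use of Corollary~\ref{thm:legbraidsimpl}: although Theorem~\ref{thm:legbraid} is only proved in Section~\ref{proofofbraid}, that proof (via bypass decompositions and tightness) is independent of Theorem~\ref{openmaxtb}. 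The trade-off is clear: the paper's route needs only standard facts about $(\R^3,\xi_{std})$ and keeps Theorem~\ref{openmaxtb} independent of the structure theorem, while your route leans on the full strength of the classification but avoids the closure construction (and its knot-versus-link complication) entirely, and as a bonus identifies the maximizers as exactly the all-$X$ concatenations, which is most of the way to the uniqueness statement of Theorem~\ref{thm:poslegendrianbraid}.
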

\begin{proof}
If the topological braid generators $\sigma_i$ in the positive word $w$ are replaced by the basic Legendrian braids $X_i$, then we easily see that $\overline{\reltb}_{\Rold}(\Pat)\geq \textit{length}(w)$. To see the other inequality notice that if there is a Legendrian braid $Q$ in  $\Leg_{\Rold}(\Pat)$ with larger relative Thurston-Bennequin invariant then we can embed it in the standard contact structure on $\R^3$ and complete it as shown in Figure~\ref{fig:cb} resulting in a Legendrian link. 
\begin{figure}[h]
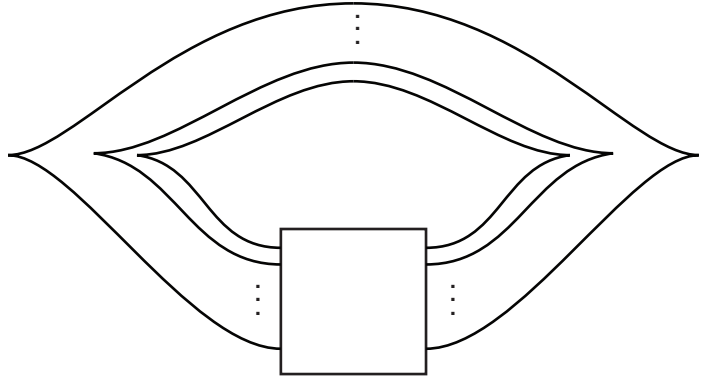

\begin{overpic}
{closedbraid}
\end{overpic}
\caption{The closure of an open $n$-braid in $\R^3$.}
\label{fig:cb}
\end{figure}
If this is a knot then its Thurston-Bennequin invariant is $\reltb_{\Rold}(Q)-n$. Moreover, the maximal Euler characteristic of a Seifert surface for the knot is $n-\textit{length}(\Pat)$, thus we have a Legendrian knot violating the Bennequin bound. If the link in Figure~\ref{fig:cb} is not a knot then one may easily concatenate $\Q$ with some of the basic Legendrian braids $X_i$ so that its ``closure" is a knot and again violates the Bennequin bound. 
\end{proof}
Moreover we can classify (open) positive Legendrian braids with maximal relative Thurston-Bennequin invariant.
\begin{theorem}\label{thm:poslegendrianbraid}
Let $\Pat$ be a (open) braid represented by a positive word $w$ in the braid group $B_n$, then $\Pat$ has a unique Legendrian representative with maximal Thurston--Bennequin number.
 \end{theorem}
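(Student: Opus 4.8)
The plan is to reduce every maximal-$\reltb_{\Rold}$ representative to the normal form of Corollary~\ref{thm:legbraidsimpl} and then pin down which building blocks can occur by a two-invariant bookkeeping, after which only the combinatorics of positive braid words remains. Throughout write $\ell=\textit{length}(w)=\overline{\reltb}_{\Rold}(\Pat)$, the last equality being Theorem~\ref{openmaxtb}. Two facts drive the argument. First, $\reltb_{\Rold}$ is additive under concatenation: the writhe and the cusp number of a concatenated front are the sums of the corresponding quantities, so $\reltb_{\Rold}(Q_1\cdot Q_2)=\reltb_{\Rold}(Q_1)+\reltb_{\Rold}(Q_2)$. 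Second, the exponent sum (algebraic crossing number) $e\co B_n\to\Z$ is a homomorphism, hence also additive under concatenation, and for the positive braid $\Pat$ it equals $e(\Pat)=\ell$.

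Now let $Q\in\Leg_{\Rold}(\Pat;\ell)$ be a representative with maximal relative Thurston--Bennequin number. By Corollary~\ref{thm:legbraidsimpl}, $Q$ is Legendrian isotopic to a concatenation of $a$ blocks of type $X_i$, $b$ blocks $S$, and $c$ blocks $Z$. Using the computed values $\reltb_{\Rold}(\mathcal{X}_i)=1$ and $\reltb_{\Rold}(\mathcal{S})=\reltb_{\Rold}(\mathcal{Z})=-n$, additivity gives $\ell=\reltb_{\Rold}(Q)=a-n(b+c)$. On the other hand $e(\mathcal{X}_i)=1$, and each of $S$ and $Z$ consists of a single strand weaving past the other $n-1$ strands, so $|e(\mathcal{S})|,|e(\mathcal{Z})|\le n-1$; additivity of $e$ then gives $\ell=e(\Pat)=a+b\,e(\mathcal{S})+c\,e(\mathcal{Z})$. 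Subtracting these two expressions for $\ell$ yields $0=b\,(n+e(\mathcal{S}))+c\,(n+e(\mathcal{Z}))$, and since $n+e(\mathcal{S})>0$ and $n+e(\mathcal{Z})>0$ while $b,c\ge0$, we conclude $b=c=0$ and hence $a=\ell$. Thus every maximal representative is Legendrian isotopic to the braid obtained from some positive word $w'$ of length $\ell$ representing $\Pat$ by replacing each generator with the corresponding $X_i$; in particular $\relrot_{\Rold}(Q)=0$ automatically, so there is no rotation-number ambiguity either.

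It remains to show that the Legendrian braid built from a positive word in this way depends only on the braid $\Pat$, not on the chosen word. Two positive words represent the same element of $B_n$ exactly when they are related by the positive braid relations $\sigma_i\sigma_j=\sigma_j\sigma_i$ for $|i-j|\ge2$ and $\sigma_i\sigma_{i+1}\sigma_i=\sigma_{i+1}\sigma_i\sigma_{i+1}$ (the positive monoid embeds in $B_n$). I would realize each such relation by an explicit Legendrian isotopy of the corresponding $X$-block concatenations in the front projection: the far-commutation relation is carried out by sliding two crossings lying in disjoint horizontal bands past one another, and the braid relation is carried out by the Legendrian (front) triple-point move on the three positive crossings among strands $i,i+1,i+2$. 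Because these isotopies preserve both $\reltb_{\Rold}$ and $\relrot_{\Rold}$, they stay within the maximal representatives and connect all of them, giving uniqueness.

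I expect the braid relation to be the main obstacle. The commutation move is immediate, but the triple-point move must be checked directly in the front to confirm that it can be performed through Legendrian braids --- that no cusps are introduced at the triple point and that the strands remain straight near the boundary throughout --- so that we never leave the class of Legendrian braids of index $n$. An alternative that avoids the word combinatorics altogether would be to rerun the tight-contact-structure classification on the braid complement used for Theorem~\ref{thm:legbraid}: the fixed boundary data together with the constraint $\reltb_{\Rold}=\ell$ should determine the relative Euler class and, by that classification, a unique tight contact structure, yielding uniqueness directly.
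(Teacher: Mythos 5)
Your proposal is correct and follows essentially the same route as the paper: both reduce a maximal-$\reltb$ representative to the normal form of Corollary~\ref{thm:legbraidsimpl}, rule out $S$ and $Z$ blocks by playing the Thurston--Bennequin contribution against the algebraic crossing number (your two-invariant linear bookkeeping is just a repackaging of the paper's count of ``compensatory'' $X_i$'s, each $S$ or $Z$ having exponent sum $-(n-1)$), and then deduce uniqueness from Theorem~\ref{thm:posbraid} together with the fact that the positive braid relations are realized by Legendrian isotopies of fronts. The triple-point move you flag is the standard Legendrian Reidemeister III move for fronts without cusps, so it poses no real obstacle, exactly as the paper implicitly assumes.
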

\begin{proof}
Given a Legendrian representative of $w$ Theorem~\ref{thm:legbraidsimpl} allows us to express it in terms of the basic Legendrian braids $X_i$, $S$ and $Z$.  This will give a presentation $w'$ of $w$ in terms of the standard generators $\sigma_i$ of the braid group. For each $Z$ or $S$ this word has a term of the form $(\sigma_0\cdots \sigma_{n-1})^{-1}$ (or the reverse of this). Since the algebraic length of $w$ and $w'$ are the same there will have to be $n-1$ compensatory $X_i$s. So the over all contribution to the Thurston-Bennequin invariant of the terms is $-1$. Thus we see that the Thurston-Bennequin invariant of this Legendrian braid is equal to the algebraic length of $w$ minus the number of $S$s and $Z$s, and so there can be no $S$s and $Z$s.
 
Now we need to prove that any two representatives with maximal Thurston-Bennequin number are Legendrian isotopic. The braid moves that contain only positive powers ($\sigma_i\sigma_{i+1}\sigma_i=\sigma_{i+1}\sigma_i\sigma_{i+1}$ and $\sigma_i\sigma_j=\sigma_i\sigma_j$ (for $|i-j|\ge 2$) ) can be represented by Legendrian isotiopies. These are the relations in the monoid $B_n$ of positive (open) braids, thus Theorem \ref{thm:posbraid} finishes the proof.
 \end{proof}
\begin{theorem}[{\em cf} \protect{\cite[Section~6.5.4]{KasselTuraev08}}]\label{thm:posbraid}
 If two positive braid words are equivalent in the group $B_n$ then they are equivalent in the positive braid monoid too. \qed
\end{theorem}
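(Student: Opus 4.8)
The plan is to deduce the statement from the fact that the canonical monoid homomorphism $\iota\co B_n^+\to B_n$ from the positive braid monoid into the braid group is injective. Indeed, two positive words being equal in the group $B_n$ means they have the same image under $\iota$, while being equal in the positive braid monoid means they are joined by a sequence of the positive relations $\sigma_i\sigma_{i+1}\sigma_i=\sigma_{i+1}\sigma_i\sigma_{i+1}$ and $\sigma_i\sigma_j=\sigma_j\sigma_i$ (for $|i-j|\ge 2$); so injectivity of $\iota$ is precisely the assertion to be proved. I would establish injectivity by realizing $B_n$ as the group of fractions of $B_n^+$ and invoking Ore's theorem, which guarantees that a cancellative monoid admitting common multiples embeds into its group of fractions. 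This is the classical Garside-theoretic route.

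First I would prove that $B_n^+$ is left and right cancellative: $au=bu$ (or $ua=ub$) in $B_n^+$ forces $a=b$. This is the main combinatorial content, and I would argue it by induction on word length using a word-reversing / common-divisor analysis. The crucial local lemma is that whenever $\sigma_i v=\sigma_j v'$ holds in $B_n^+$ with $i\ne j$, both sides are left-divisible by the least common multiple of $\sigma_i$ and $\sigma_j$ (which is $\sigma_i\sigma_j=\sigma_j\sigma_i$ when $|i-j|\ge 2$ and $\sigma_i\sigma_{i+1}\sigma_i=\sigma_{i+1}\sigma_i\sigma_{i+1}$ when $|i-j|=1$), which lets one peel off a common left factor and descend.

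Next I would produce common multiples using the positive half-twist $\Delta_n=\sigma_1(\sigma_2\sigma_1)\cdots(\sigma_{n-1}\cdots\sigma_1)$, the Garside element (not to be confused with the full twist $\Delta$ on patterns used elsewhere in this paper). I would check that each $\sigma_i$ both left- and right-divides $\Delta_n$, that conjugation by $\Delta_n$ realizes the symmetry $\Delta_n\sigma_i=\sigma_{n-i}\Delta_n$, and hence that every element of $B_n^+$ left-divides some power $\Delta_n^N$. It then follows that any two elements $a,b\in B_n^+$ have a common right multiple, since both left-divide a common $\Delta_n^N$, giving $\Delta_n^N=aa'=bb'$; this is exactly Ore's condition.

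With cancellativity and the Ore condition established, Ore's localization theorem yields an embedding $B_n^+\hookrightarrow G$, where $G=\{ab^{-1}:a,b\in B_n^+\}$ is the group of right fractions. Finally I would identify $G$ with $B_n$: the relations of $B_n$ hold in $G$ and every $\sigma_i$ is invertible there, giving a homomorphism $B_n\to G$; conversely every generator inverse is a fraction of positive braids, since $\sigma_i$ left-divides $\Delta_n$ implies $\sigma_i^{-1}=d_i\Delta_n^{-1}$ with $d_i\in B_n^+$, so $G$ is generated by the $\sigma_i$ and the induced map $G\to B_n$ is inverse to the previous one. Thus $\iota$ factors as $B_n^+\hookrightarrow G\cong B_n$ and is injective, which is the theorem. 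The hard part is the cancellativity of $B_n^+$; once $\Delta_n$ is in hand, the common-multiple step and the identification of the group of fractions are comparatively formal.
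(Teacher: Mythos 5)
Your proposal is correct and is essentially the same argument as the paper's: the paper gives no proof of this theorem at all, deferring entirely to \cite[Section~6.5.4]{KasselTuraev08}, and what that reference carries out is precisely the Garside--Ore route you describe (cancellativity of the positive monoid $B_n^+$, common right multiples via powers of the half-twist $\Delta_n$, and Ore's theorem embedding $B_n^+$ into its group of fractions, which is then identified with $B_n$). You correctly isolate cancellativity as the hard combinatorial step and name the right key lemma for it, so your sketch matches the cited proof in both structure and substance.
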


\subsection{Bypasses and Legendrian braids}\label{proofofbraid}
In this section we will prove Theorem~\ref{thm:legbraid}. To that end we begin by observing that by Corollary~\ref{cor:iso} classifying Legendrian braids in $(\Rold, \xi_\Rold)$ up to isotopy and contactomorphism are equivalent. Moreover, the contactomorphism type of a Legendrian braid is determined by the contact structure on the complement of a standard neighborhood of the braid up to contactomorphism fixing the back face of the braid complement. To clarify this last statement we begin by discussing the standard neighborhood of a Legendrian braid. 

\subsubsection{Standard neighborhoods of Legendrian braids}
We consider (open) Legendrian $n$-braids and isotop them so that they become straight near the boundary. To this end we fix $n$ points $p_1,\ldots, p_n$ on $\Gamma_D$ ordered from bottom to top as shown in Figure~\ref{fig:punctured disc}. 
\begin{figure}[ht]
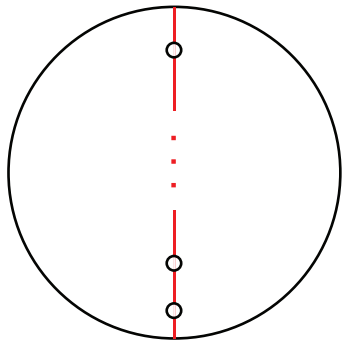

\begin{overpic}
{punctureddisc}
\end{overpic}
\caption{The dividing curve on $S=D-\nu(Q)$. The small circles are the $D^j_i$.}
\label{fig:punctured disc}
\end{figure}
By Giroux realisation we can arrange that on $D\times \{j\}, j=0,1,$ there are disjoint disks $D_i^j, i=1,\ldots n,$ containing $p_i$ such that $D_i^j$ has Legendrian boundary with Thurston-Bennequin invariant $-1$ and standard characteristic foliation shown in Figure~\ref{fig:sdtfoliation}. 
\begin{figure}[h]
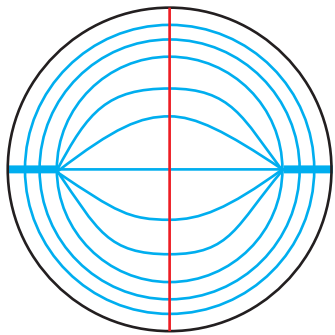

\begin{overpic}
{sdtfoliation}
\end{overpic}
\caption{The characteristic foliation on a disk neighborhood $D^j_i$ of the $p_i$.} 
\label{fig:sdtfoliation}
\end{figure}
Now given a Legendrian $n$-braid $Q$ a standard neighborhood of a strand of $Q$ will be a neighborhood $D\times I$ of the strand such that $D\times \partial I$ consists of two disks that are sub-disks of the $D_i^j$ with Legendrian boundary and $(\partial D)\times I$ is a convex annulus with two dividing curves running between the boundary components of the annulus.  We can also assume, by Giroux realization, that the characteristic foliation on the annulus consists of two lines of singularities parallel to the dividing curves and the rest of the foliation given by the boundary of meridional disks. Now a standard neighborhood of $Q$ is a neighborhood $\nu(Q)$ of $Q$ that is a standard neighborhood of each of its strands. We will call the contact manifold $\Rold\setminus \nu(Q)$ the \dfn{exterior} of $Q$. We will call $D\times \{0\}$ intersected with the exterior the \dfn{back face}, the intersection with $D\times \{1\}$ the \dfn{front face} and the remainder of the boundary the \dfn{vertical boundary}

Since there is a unique tight contact structure on a ball, any contactomorphism of the complement of the standard neighborhoods of Legendrian braids that fixes the back face can be extended over the neighborhoods to a contactomorphism of $D\times I$ preserving the Legendrian braids. (Notice that extending over the neighborhood of the braid is done by gluing 2-handles to the complement that correspond to neighborhoods of the meridional disks. It is important that we fix the back face of the complement of the braid so that the contactomorphism preserves the attaching regions of the 2-handles and can thus be extended over them.) Thus the classification of Legendrian braids up to contactomorphism is equivalent to the classification of the exteriors of Legendrian braids up to contactomorphism fixing the back face. 

\subsubsection{Straightening standard neighborhoods of Legendrian braids}
Notice that we can put the exterior of a Legendrian braid in a standard form. The basic idea is to make the boundary of the exterior of all braids look the same for all braids except for the front face $D\times \{1\}$. See Figure~\ref{fig:straightD2xI}. 
\begin{figure}[h]
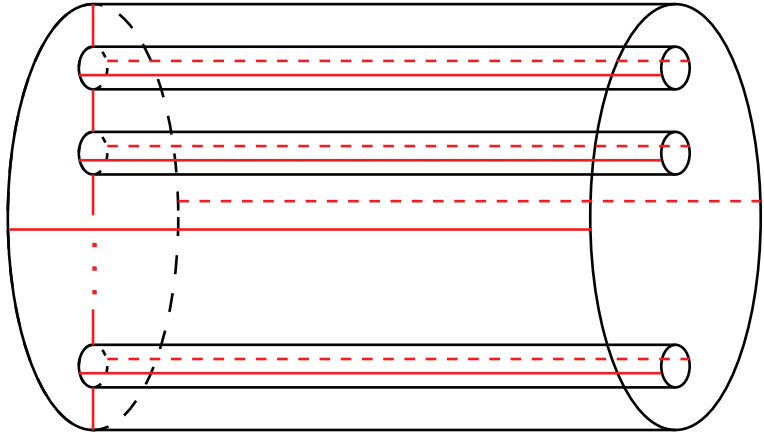

\begin{overpic}
{straightD2xI}
\end{overpic}
\caption{Straightening the strands of $\nu(Q)$}
\label{fig:straightD2xI}
\end{figure}
More specifically, Let $D_n$ be the convex disk shown in Figure~\ref{fig:punctured disc} and let $\xi_n$ be the $I$ invariant contact structure on $D_n\times I$. Notice that $(\partial D_n)\times I$ can be made convex so that the dividing curves are all parallel to the $I$-factor. We can also assume that $\partial D_n\times \{t\}$ is Legendrian for each $t\in I$.  Given a Legendrian $n$-braid $Q$ and a standard neighborhood $\nu(Q)$ of $Q$, there is a smooth diffeomorphism of $\Rold\setminus \nu(Q)$ to $D_n\times I$ that is the identity on the back face, and takes the front face to the front face and the vertical boundary to the vertical boundary. 
Pushing forward the contact structure on $\Rold\setminus \nu(Q)$ by this diffeomorphism gives the \dfn{straightened neighborhood of $Q$}. Notice that everything on the boundary is standard except on the front face where the dividing curves can be quite complicated.  In Figure~\ref{fig:straight} and ~\ref{fig:straight2} we show the front face of the straightened basic braids $X(k,l)$ and $Z(k,l)$, respectively. The front face for the straightened braid $S(k,l)$ is obtained from Figure~\ref{fig:straight2} by rotating the picture by $\pi$. 

\begin{figure}[htb]
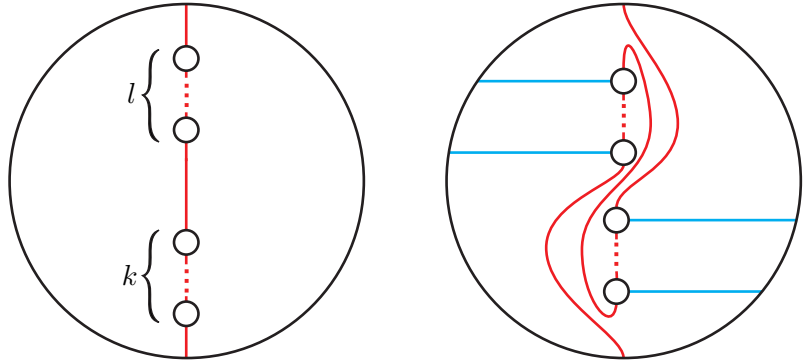

\begin{overpic}
{BasicStraight1}
\put(45,98){$l$}
\put(43,29){$k$}
\end{overpic}
\caption{The front face of the exterior of the basic Legendrian braid $X(k,l)$ before straightening on the left and after straightening on the right. The horizontal arcs on the right will be the boundary of product disks used below.}
\label{fig:straight}
\end{figure}

\begin{figure}[htb]
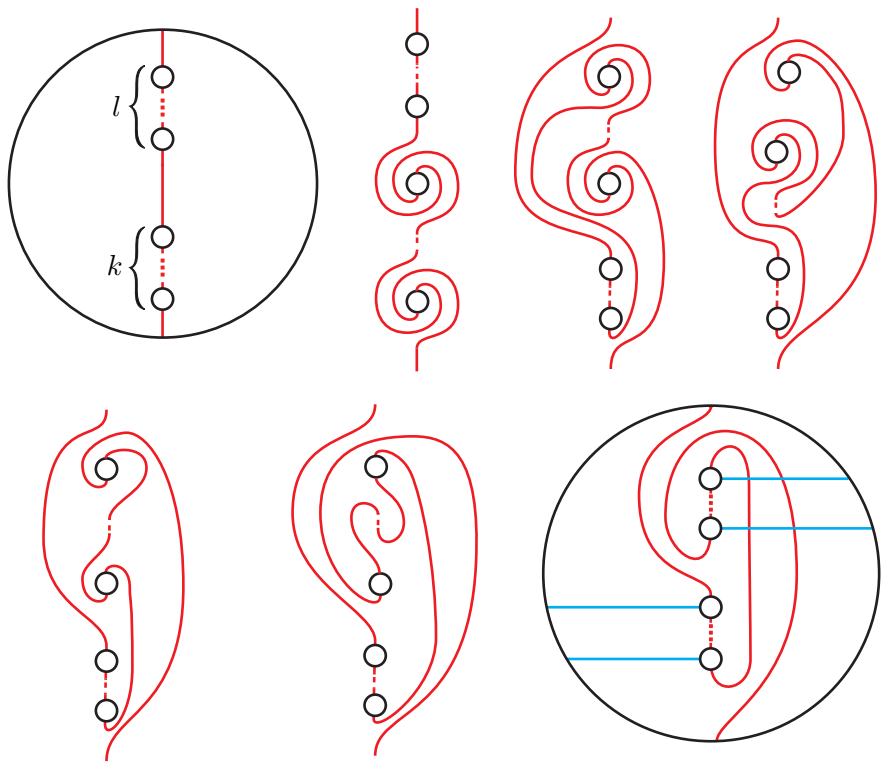

\begin{overpic}
{BasicStraight}
\put(40,245){$l$}
\put(38,185){$k$}
\end{overpic}
\caption{The front face of the exterior of the basic Legendrian braid $Z(k,l)$ before straightening on the upper left and after straightening on the lower right. In the second figure on the top the twists in the vertical boundary have been pushed to the front face. The next figure untwists the half twist between the $k$ and $l$ strands. The next three figures untwists the full twist between the $k$ strands. The horizontal arcs on the right will be the boundary of product disks used below.}
\label{fig:straight2}
\end{figure}

Notice that in the straightened neighborhood each $D_n\times \{t\}$ has Legendrian boundary. According to \cite[Proof of Lemma 3.10]{Honda02} we can arrange that there are finite number of $0=t_0<t_1<\ldots <t_k=1$ such that each $D_n\times [t_{i-1},t_i]$ is a bypass layer (that is obtained from an $I$ invariant contact structure on a neighborhood of $D_n\times \{t_i\}$ by attaching a bypass). 

\subsubsection{Bypasses and basic Legendrian braids}
We proceed by understanding a single bypass attachment. The $6$ ways of attaching a bypass to $D\times\{0\}$ are depicted in Figure \ref{fig:bypassxsz}. Since $\xi_\Rold$ is tight, only the first 3 type of attachment is allowed (otherwise after the bypass attachment we would create a dividing set with a contractible component, when considered on the disk $D$, and thus an overtwisted disk).
\begin{figure}[h]
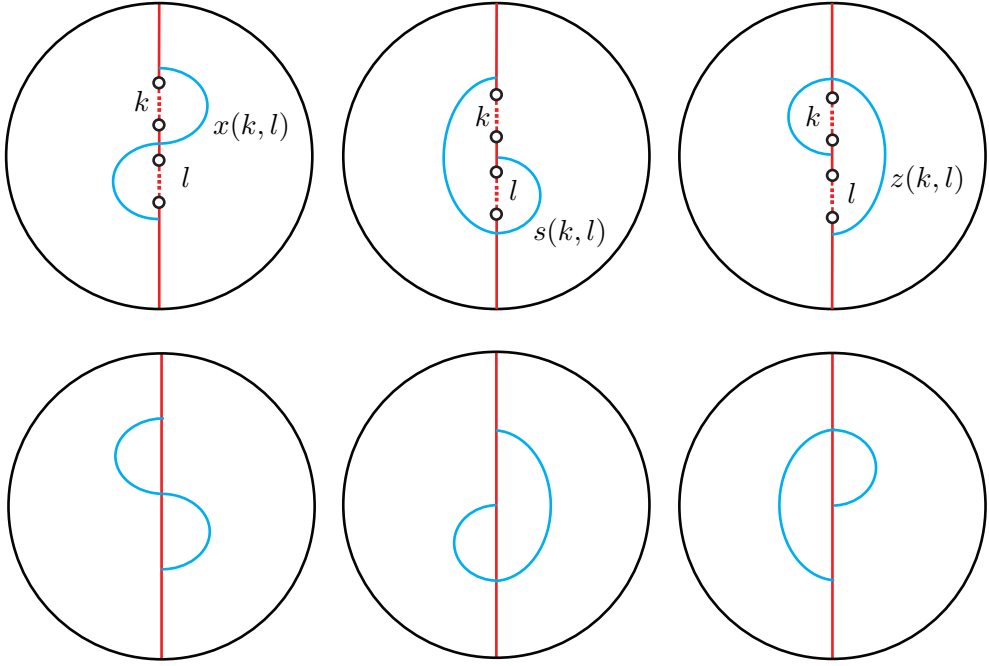

\centering
\begin{overpic}
{PossibleBypass}
\put(49,209){$k$}
\put(67,180){$l$}
\put(178,203){$k$}
\put(191,175){$l$}
\put(303,203){$k$}
\put(319,174){$l$}
\put(79,200){$x(k,l)$}
\put(200,160){$s(k,l)$}
\put(335,180){$z(k,l)$}
\end{overpic}
        \caption{Possible bypass attachments to $S$. Only some of the punctures $D^i$ that occur along the dividing curve are depicted here.}
        \label{fig:bypassxsz}
\end{figure} 
In the following we will show that these three types of attachments correspond to the front projections of Figure~\ref{fig:openbraid}.
First note, that after the bypass attachments along the curves $x(k,l),s(k,l),z(k,l)$ depicted in Figure~\ref{fig:bypassxsz} we obtain the dividing curves $\Gamma_{\Xkl}, \Gamma_{\Skl}$ and $\Gamma_{\Zkl}$ on $S\times\{1\}$ shown in Figure~\ref{fig:afterbypassxsz}.
\begin{figure}[ht]
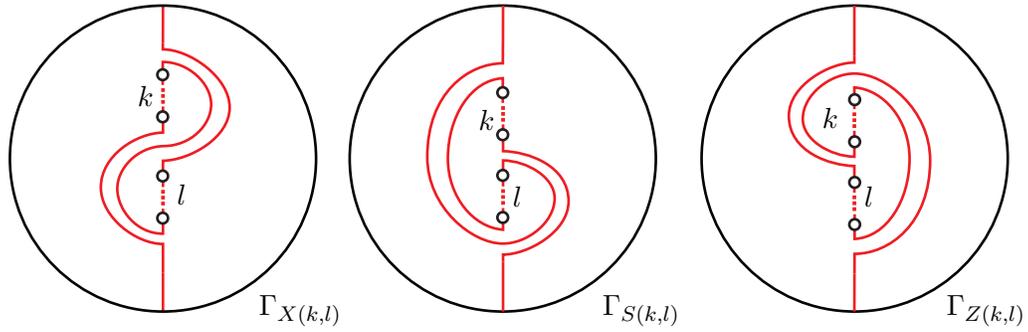

\begin{overpic}
{AfterBypassxsz}
\put(49,79){$k$}
\put(64,42){$l$}
\put(178,70){$k$}
\put(191,41){$l$}
\put(308,71){$k$}
\put(324,41){$l$}
\put(95,0){$\Gamma_{X(k,l)}$}
\put(225,0){$\Gamma_{S(k,l)}$}
\put(355,0){$\Gamma_{Z(k,l)}$}
\end{overpic}
        \caption{Result of a bypass attachment. The punctures $D^i$ are denoted by dots on the dividing curve.}
        \label{fig:afterbypassxsz}
\end{figure}

\begin{lemma}
Using the notation established above,  suppose that $(D_n\times I, \xi)$ is obtained by a bypass attachment along the curve $x(k,l), s(k,l)$ or $z(k,l)$. Then $(D_n\times I, \xi)$ is contactomorphic to the complement of the standard neighborhood of the basic Legendrian braid $X(k.l)$, $S(k,l)$ or $Z(k,l)$, respectively, shown in Figure~\ref{fig:openbraid}, by a contactomorphism preserving the back face, front face and vertical boundaries. 
\end{lemma}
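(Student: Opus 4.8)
My plan is to prove all three cases at once by showing that both contact structures named in the statement are single bypass layers attached along the same arc, and then invoking uniqueness of bypass attachment. Write $(\Rold,\xi_{\mathrm{by}})$ for the structure obtained by attaching the bypass along $x(k,l)$ (the cases $s(k,l)$ and $z(k,l)$ are identical, the latter up to a rotation by $\pi$), and $(\Rold,\xi_{\mathrm{br}})$ for the straightened exterior of the basic braid $X(k,l)$. First I would record that these two structures carry identical convex boundary data: both have the standard back face and the standard vertical boundary with dividing curves parallel to the $I$-factor, and both have front face divided by $\Gamma_{X(k,l)}$ --- for $\xi_{\mathrm{by}}$ this front-face dividing set is exactly the one computed in Figure~\ref{fig:afterbypassxsz}, and for $\xi_{\mathrm{br}}$ it is the straightened front face displayed in Figure~\ref{fig:straight}. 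Using Giroux flexibility I would arrange the characteristic foliations on all three boundary faces to agree.

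The mechanism I would use for the identification is the standard observation that a bypass attachment is supported in a ball (see \cite{Honda02}): away from a ball neighborhood $N$ of the attaching arc the layer $\Rold$ is $I$-invariant, and $N$ is a $3$--ball whose characteristic foliation on $\partial N$ is determined by the attaching arc together with the surrounding $I$-invariant structure. The $I$-invariant part is determined rel boundary by its dividing set, and by Theorem~\ref{contactOnB3} the tight structure on the standard bypass ball $N$ is unique relative to its boundary foliation. Consequently, the result of attaching a bypass along a prescribed arc to a fixed convex surface is unique up to a contactomorphism fixing the back face, front face, and vertical boundary. Thus it suffices to exhibit $\xi_{\mathrm{br}}$ as the attachment of a single bypass along an arc isotopic, through arcs meeting the dividing set in three points, to $x(k,l)$.

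This last point is where the straightening pictures do the work. The exterior of $X(k,l)$ decomposes, as in the preceding discussion, into bypass layers $D_n\times[t_{i-1},t_i]$; I would argue that only one nontrivial layer occurs and that its attaching arc is $x(k,l)$. For $X(k,l)$ this is immediate from Figure~\ref{fig:straight}: the straightened exterior is $I$-invariant except across the single block where the group of $k$ strands crosses the group of $l$ strands, and the product disks bounded by the indicated horizontal arcs isolate one bypass whose attaching arc meets the dividing set exactly as $x(k,l)$ does in Figure~\ref{fig:bypassxsz}. For $Z(k,l)$ (and, after a rotation by $\pi$, for $S(k,l)$) the same holds once the untwisting of Figure~\ref{fig:straight2} has been carried out: one pushes the twisting in the vertical boundary onto the front face, untwists the half twist between the $k$- and $l$-strand groups, and then untwists the full twist among the $k$ strands, after which the front face again differs from the back face by a single bypass along an arc isotopic to $z(k,l)$. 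In each case $\xi_{\mathrm{br}}$ and $\xi_{\mathrm{by}}$ are attachments of the same bypass, hence contactomorphic rel the three boundary faces, which is the assertion.

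The main obstacle I expect is precisely this last step for $Z(k,l)$ and $S(k,l)$: verifying that the straightened exterior is a \emph{single} bypass layer along exactly the prescribed arc, rather than a concatenation of several bypasses. One must check that the untwisting in Figure~\ref{fig:straight2} can be organized so that all of the twisting is absorbed into the $I$-invariant part, leaving only one bypass, and that the attaching arc it produces really is isotopic to $z(k,l)$ (resp. $s(k,l)$) through admissible arcs. The tightness of $\xi_\Rold$ is what guarantees that no cancelling pair of bypasses is hidden in the straightening, and keeping track of this bookkeeping --- together with matching the attaching arcs up to the equivalence under which bypass attachments coincide --- is the crux of the argument.
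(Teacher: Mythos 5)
Your setup (matching boundary data, then invoking uniqueness of the layer) is reasonable in outline, but the proposal has a genuine gap exactly where you flag it, and the remark you offer to close it does not work. The step you must prove is that the straightened exterior of $X(k,l)$ (resp.\ $Z(k,l)$, $S(k,l)$) is a \emph{single} bypass layer attached along an arc isotopic to $x(k,l)$ (resp.\ $z(k,l)$, $s(k,l)$); for $X(k,l)$ you call this ``immediate'' from Figure~\ref{fig:straight}, and for $Z$ and $S$ you say tightness ``guarantees that no cancelling pair of bypasses is hidden in the straightening.'' Neither claim is justified. What the discretization result of \cite{Honda02} gives you is only that the exterior is a concatenation of finitely many bypass layers; tightness of a product $D_n\times I$ does not by itself imply that a tight structure whose two boundary dividing sets differ by one bypass move \emph{is} one bypass layer, nor does it identify the attaching arc. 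Pinning this down is the entire mathematical content of the lemma, so as written the proposal assumes what is to be proved.

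The paper closes this gap by a convex decomposition argument rather than by exhibiting the exterior as a bypass layer. After using Giroux realization \cite{Giroux91} to isotope the identity to a contactomorphism $\phi$ between the two structures on a neighborhood $U$ of the whole boundary, one cuts along the $n$ product disks coming from the horizontal arcs in Figures~\ref{fig:straight} and~\ref{fig:straight2}. For $X(k,l)$ each disk boundary meets the dividing set exactly twice, so after Legendrian realization each disk is convex with a single, forced dividing arc, and $\phi$ can be improved to a contactomorphism on $U$ union a neighborhood of the disks; the complement is a ball, and Theorem~\ref{contactOnB3} (Eliashberg) finishes the argument. For $Z(k,l)$ and $S(k,l)$ there is a further subtlety your proposal never engages: the uppermost $k$ disks meet the dividing set four times, so each carries \emph{two} possible dividing configurations, and one must rule out one of them by observing that pushing across the corresponding bypass would disconnect the dividing set on $\partial(D\times I)$ and hence produce an overtwisted disk. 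Only after that exclusion is the dividing data on all cutting disks determined and the ball argument applicable. If you want to keep your ``identify it as one bypass layer'' strategy, you would still need essentially this cutting-and-uniqueness analysis to certify the layer structure, so the honest completion of your proof converges to the paper's argument.
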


\begin{proof}
Let $\xi_X$ be the contact structure on $D_n\times I$ coming from straightening the exterior of $X(k,l)$ and let $\xi_x'$ be the contact structure on $D_n\times I$ obtained from an $I$ invariant neighborhood of $D_n$ by attaching a bypass layer along the curve $x(k,l)$ shown in Figure~\ref{fig:bypassxsz}. By construction the characteristic foliation and dividing sets on the back face and vertical boundaries of these braid complements agree. Notice that there is an isotopy (fixed outside a neighborhood of the front face) of the identity map on $D_n\times I$ so that the dividing curves on the front face are also preserved. By Giroux Realization \cite{Giroux91} we can assume the map also preserves the characteristic foliations (technically we are removing a small neighborhood of one of the front faces that lies in an $I$-invariant neighborhood, but it should be clear that this does not affect our argument). So the identity map can be isotoped to a map $\phi$ that is a contactomorphism from $\xi_X$ to $\xi_x'$ in a neighborhood $U$ of $\partial (D_n\times I)$. Let $S$ be a convex surface 
embedded in the interior of the region $U$ that is obtained by rounding the corners of $\partial (D_n\times I)$. There are $n$ disks properly embedded in 
$D_n\times I$ that come as the product of the horizontal lines in Figure~\ref{fig:straight}. We can think of the boundaries of these disks as lying on $S$ and one easily checks they intersect the dividing set exactly twice. Thus we may Legendrian realize the boundaries of these disks and make them convex. Each will contain exactly one dividing curve, thus $\phi$ may be further isotoped to fix the characteristic foliation on the disk. Hence we can isotope $\phi$ to be a contactomorphism on $U'$ which is $U$ union a neighborhood of these disks. Since the complement of $U'$ can be assumed to be a ball and there is a unique tight contact structure on the ball, we may finally isotope $\phi$ to a contactomorphism from $\xi_X$ to $\xi_x'$ on all of $D_n\times I$.

Considering Figure~\ref{fig:straight2} and the rightmost diagram in Figure~\ref{fig:afterbypassxsz} we see that the argument for $Z(k,l)$ is almost identical. The only difference is that the uppermost $k$ horizontal arcs in Figure~\ref{fig:straight2} will result in convex disks with 2 dividing curves each. There are two possibilities for such dividing curves, but one of them will give a bypass that straddles the ``vertical" dividing curve in the bottom right diagram of Figure~\ref{fig:straight2}. Pushing over this bypass will result in a disconnected dividing curve on $\partial (D\times I)$ and hence an overtwisted disk. Thus there is a unique possible configuration for the dividing curves on the disks corresponding the the horizontal lines in Figure~\ref{fig:straight2}. With this observation the argument is identical to the one given above for $X(k,l)$. 

The proof of $S(k,l)$ is identical to the one given above for $Z(k,l)$ after one draws the straightened exterior of $S(k,l)$ and compares it to the middle diagram in Figure~\ref{fig:afterbypassxsz}. 
\end{proof}

\begin{proof}[Proof of Theorem \ref{thm:legbraid}]
The manifold $D_n\times [0,1]$ is built up from bypass layers, and above we understood what front projections each of them correspond to. Thus any Legendrian knot can be built up from concatenations of the Legendrian braids on Figure~\ref{fig:openbraid}. 
\end{proof}

\section{Patterns in $D^2\times S^1$}
\label{sec:legendrianbraidsind2xs1}
This section discusses the definitions, constructions, and basic computations concerning Legendrian and transverse pattens. The definitions and notations used below for smooth patterns in $V=D^2\times S^1$ are given in Subsection~\ref{subsec:satknot}. 

\subsection{Legendrian patterns}
Let $\xi_V$ be the unique (up to isotopy) $S^1$--invariant tight contact structure on $V=D^2\times S^1$ with convex boundary $\partial V$ and dividing curve $\Gamma_{\partial V}=\lon\cup-\lon$.  To be specific we can take $V$ to be a subset of the 1-jet space $T^*S^1\times \R$ with its standard contact structure $\ker(dz-y\, d\theta)$, where $z$ is the coordinate on $\R$, $\theta$ the coordinate on $S^1$ and $y$ the coordinate on the fiber of $T^*S^1=S^1\times \R$. The core $C=\{(0,0)\}\times S^1$ of $V$ can be assumed to be a Legendrian curve. 

\subsubsection{Invariants of Legendrian patterns}\label{invariantsinV}
To define invariants of Legendrian patterns in $V$ we think of $V$ as the 1--jet space of $S^1$ and use the front projection. More specifically, as first observed by Ng \cite{NgThesis} (or \cite{NgTraynor04} for more sophisticated invariants) the relative Thurston-Bennequin number and rotation number of a Legendrian braid $Q$ in $V$ can be computed in terms of the front projection:
\begin{eqnarray*}
\reltb_{V}(Q)&=&\textrm{writhe}(Q_{\rm{open}})-\frac12c(Q_{\rm{open}});\\
\relrot_{V}(Q)&=&\frac12(d(Q_{\rm{open}})-u(Q_{\rm{open}})),
\end{eqnarray*}
where $Q_{\rm{open}}$ is any open Legendrian tangle whose closure is $Q$, $c(Q_{\rm{open}})$ denotes the number of cusps and $d(Q_{\rm{open}})$ and $u(Q_{\rm{open}})$ denotes the number of downward and upward cusps, respectively. Note that the above value is independent of the open pattern $Q_{\rm{open}}$ whose closure is $Q$.
Let $\Leg_{V}(\Pat)$ denote the set of Legendrian isotopy classes of Legendrian representations of the smooth pattern $\Pat$. 
We will denote Legendrian isotopy classes of patterns in the smooth pattern type $\Pat$ with relative Thurston-Bennequin invariant $t$ and relative rotation number $r$ by $\Leg_{V}(\Pat;t,r)$. The set of representatives with relative Thurston--Bennequin number $t$ are denoted by $\Leg_{V}(\Pat;t)$.
\subsubsection{Reimbeddings of Legendrian patterns}\label{reembed}
When studying Legendrian satellite knots it will be useful to ``reimbed" certain patterns into the solid torus. We discuss this here. 

The solid torus $(V,\xi_V)$ can be embedded into itself as follows. The core $C$ is a Legendrian curve, and $(V,\xi_V)$ can be interpreted as a standard neighborhood of $C$. The standard neighborhood $\nu(\St_+^z\St_-^s(C))$ of a stabilization of the core is on the one hand naturally a subset of $(V,\xi_V)=\nu(C)$ and on the other hand it is contactomorphic to $(V,\xi_V)$, thus defining an embedding
\[\zeta^z\sigma^s\colon (V,\xi_V)\hookrightarrow (V,\xi_V)\]
whose image is $\nu(\St_+^z\St_-^s(C))$. Notice that the image of $n$ horizontal strands parallel to $C$ under $\zeta^z\sigma^s$ is $\mathcal{Z}^z\mathcal{S}^s$. 
Since the concatenation of open patterns is well-defined, the above discussion leads to the following simple observation whose proof is left to the reader. 
\begin{lemma}\label{lem:stabpattern}
For a Legendrian pattern type $\Q\in\Leg_{V}(\Pat)$, the followings are equivalent:
\begin{enumerate}
\item\label{lem:stabpatternt item:1} There exists a Legendrian pattern $\widetilde{\Q}\in \Leg_{V}(\Delta^{(z+s)}\Pat)$ such that $\Q=\zeta^z\sigma^s(\widetilde{Q})$;
\item\label{lem:stabpatternt item:2} We have the inclusions $\Q\subset \nu\subset V$ for some standard neighborhood $\nu$ of $\St_+^z\St_-^s(C)$ with $\partial \nu$ isotopic to $\partial V$ in the complement of $\Q$; and 
\item\label{lem:stabpatternt item:3} There exists an open Legendrian pattern $\widetilde{\Q}_\textrm{open}\in\Leg_{\xi_{{\Rold}}}(\Delta^{(z+s)}\Pat_\textrm{open})$ such that $\Q$ is the closure of the open braid $\mathcal{Z}^{z}\mathcal{S}^{s}\widetilde{\Q}_\textrm{open}$.
\end{enumerate}
In particular the condition in Item (\ref{lem:stabpatternt item:3}) is independent on the chosen tangle-representation of $\Q$.\qed
\end{lemma}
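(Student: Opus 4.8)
The plan is to prove the three conditions equivalent by establishing the two two-sided implications \eqref{lem:stabpatternt item:1}$\Leftrightarrow$\eqref{lem:stabpatternt item:2} and \eqref{lem:stabpatternt item:1}$\Leftrightarrow$\eqref{lem:stabpatternt item:3}, after which the final independence assertion follows for free, since \eqref{lem:stabpatternt item:3} becomes equivalent to the manifestly tangle-free condition \eqref{lem:stabpatternt item:2}. I would organize everything around the geometric meaning of the reimbedding $\zeta^z\sigma^s$: by construction its image is the standard neighborhood $\nu(\St_+^z\St_-^s(C))$ of a stabilized core, and it is a contactomorphism onto that image.

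For \eqref{lem:stabpatternt item:1}$\Rightarrow$\eqref{lem:stabpatternt item:2}, suppose $\Q=\zeta^z\sigma^s(\widetilde{\Q})$. Then $\Q$ lies in the image $\nu:=\nu(\St_+^z\St_-^s(C))\subset V$, which is by definition a standard neighborhood of $\St_+^z\St_-^s(C)$. Because $\widetilde{\Q}$ is a pattern in the source copy of $V$, its complement contains a torus parallel to $\partial V$, and the contactomorphism $\zeta^z\sigma^s$ carries this to an isotopy of $\partial\nu$ to $\partial V$ in the complement of $\Q$. Conversely, for \eqref{lem:stabpatternt item:2}$\Rightarrow$\eqref{lem:stabpatternt item:1}, I would invoke the uniqueness of standard neighborhoods of a Legendrian knot: any standard neighborhood $\nu$ of $\St_+^z\St_-^s(C)$ is taken to $\nu(\St_+^z\St_-^s(C))$ by an ambient contact isotopy of $V$, hence identified with the image of $\zeta^z\sigma^s$, and setting $\widetilde{\Q}:=(\zeta^z\sigma^s)^{-1}(\Q)$ yields a Legendrian knot in $V$. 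The hypothesis that $\partial\nu$ is isotopic to $\partial V$ in the complement of $\Q$ is exactly what guarantees that $\widetilde{\Q}$ is genuinely a pattern and pins down its smooth type. The only substantive point is the framing bookkeeping: since $\St_+^z\St_-^s(C)$ has Thurston--Bennequin invariant $z+s$ less than $C$, the product framing of the source $V$ differs from that of the target by $z+s$ meridional twists, and by the framing-change formula $\Pat_\lambda(\K)\cong(\Delta^{-k}\Pat)_{\lambda+k\mu}(\K)$ this forces $\widetilde{\Q}\in\Leg_V(\Delta^{(z+s)}\Pat)$, matching the statement.

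The equivalence \eqref{lem:stabpatternt item:1}$\Leftrightarrow$\eqref{lem:stabpatternt item:3} is the translation between the abstract embedding and the concrete front/braid picture. Here I would use the observation recorded just before the lemma, that $\zeta^z\sigma^s$ carries $n$ core-parallel horizontal strands to $\mathcal{Z}^z\mathcal{S}^s$, together with the well-definedness of concatenation of open patterns. Writing any $\widetilde{\Q}$ as the closure of an open braid $\widetilde{\Q}_\textrm{open}$, the map $\zeta^z\sigma^s$ commutes with concatenation and so realizes $\Q=\zeta^z\sigma^s(\widetilde{\Q})$ as the closure of $\mathcal{Z}^{z}\mathcal{S}^{s}\widetilde{\Q}_\textrm{open}$; conversely every such closure arises this way, with $\widetilde{\Q}_\textrm{open}\in\Leg_{\xi_{\Rold}}(\Delta^{(z+s)}\Pat_\textrm{open})$ matching $\widetilde{\Q}\in\Leg_V(\Delta^{(z+s)}\Pat)$ upon closing up. Since neither \eqref{lem:stabpatternt item:1} nor \eqref{lem:stabpatternt item:2} refers to a chosen tangle representative, this equivalence immediately yields the independence of condition \eqref{lem:stabpatternt item:3} on the tangle representation of $\Q$.

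I expect the main obstacle to be precisely the framing and pattern-type bookkeeping in \eqref{lem:stabpatternt item:2}$\Rightarrow$\eqref{lem:stabpatternt item:1}: one must verify that the boundary-isotopy hypothesis is exactly what is needed to recover a bona fide pattern after pulling back by $(\zeta^z\sigma^s)^{-1}$ (ruling out configurations where $\partial\nu$ is knotted with $\Q$ or merely bounds a ball on the wrong side), and that the twisting introduced by passing to the $(z+s)$-fold stabilized core is precisely $\Delta^{(z+s)}$ and not some other power. Everything else reduces to the uniqueness of standard neighborhoods and the elementary behavior of concatenation under $\zeta^z\sigma^s$.
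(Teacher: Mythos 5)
The paper offers no proof of this lemma---it is stated as a ``simple observation whose proof is left to the reader,'' following the reimbedding discussion of Subsection~\ref{reembed}---and your argument fills it in using exactly the ingredients that discussion supplies (the image of $\zeta^z\sigma^s$ being a standard neighborhood of the stabilized core, uniqueness of standard neighborhoods up to contact isotopy, the observation that horizontal strands map to $\mathcal{Z}^z\mathcal{S}^s$, and well-definedness of concatenation), so it is correct and matches the intended route, including the framing bookkeeping that produces the power $\Delta^{(z+s)}$. One small repair in (1)$\Rightarrow$(2): the isotopy from $\partial \nu$ to $\partial V$ in the complement of $\Q$ cannot be obtained by pushing forward a torus from the source copy of $V$ (any such torus lands inside $\nu$, parallel to $\partial\nu$); it comes instead from the region $V\setminus \mathrm{int}(\nu)\cong T^2\times[0,1]$, which is disjoint from $\Q\subset\nu$, the product structure holding because $\St_+^z\St_-^s(C)$ is smoothly isotopic to the core of $V$.
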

\begin{remark}
It is interesting to note, that it is not clear how one would define a full positive twist of a Legendrian pattern. One reason for this is that $(V,\xi_V)$ has no solid sub-torus with two dividing curves of slope $1$.  So one cannot use the construction above for negative twists. One might try cutting open a pattern to get a tangle and then concatenating with $\Delta$ to add a positive twist, but Figure~\ref{fig:noposttwist} shows that this is not well-defined and can result in a patterns related by stabilization. 
\begin{figure}[ht]
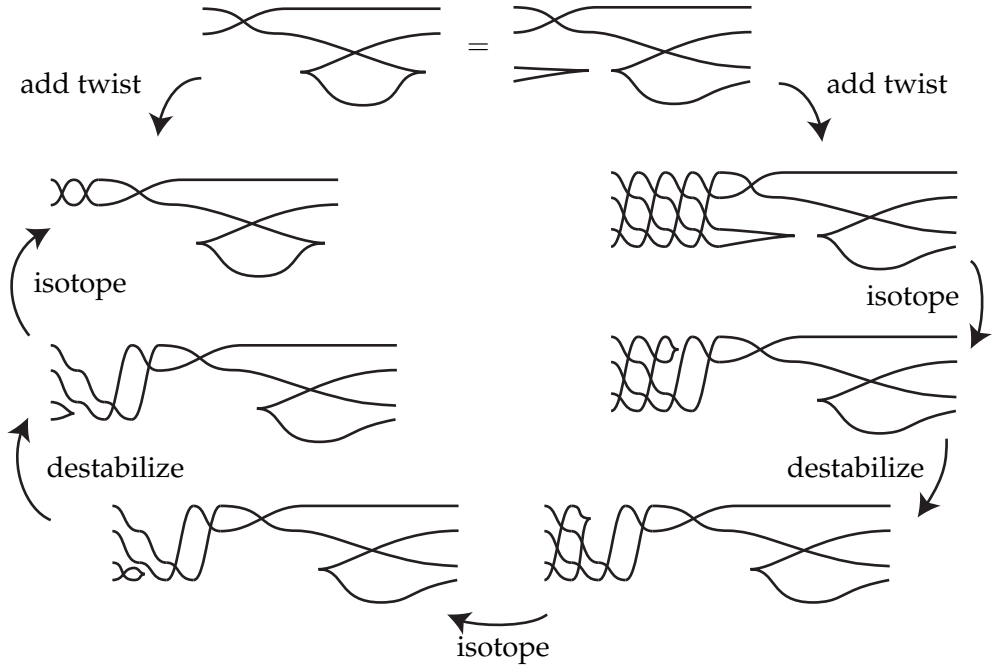

\centering
\begin{overpic}
{nopostwist}
\put(174,220){$=$}
\put(310,205){add twist}
\put(325,125){isotope}
\put(295,60){destabilize}
\put(170,-8){isotope}
\put(15,60){destabilize}
\put(10,130){isotope}
\put(5,205){add twist}
\end{overpic}
\caption{The top row shows two ways to cut open a pattern in $V$. The next row shows the effect of adding a full positive twist. The other diagrams show that these patterns are related by stabilizations.} 
\label{fig:noposttwist}
\end{figure}
\end{remark}

\subsubsection{Legendrian braid patterns}

Suppose that $\Pat$ represents the closure of a braid word $w\in B_n$ in $V$, and let $Q$ be a Legendrian representative of a Legendrian isotopy class $\Q\in\Leg_{V}(\Pat)$. We may cut $V$ along a meridional disk to get an open Legendrian pattern $Q_{open}$  in $({{\Rold}},\xi_{{\Rold}})$. Moreover this open pattern smoothly represents a conjugate $uwu^{-1}$ of  $w$, where $u\in B_n$.  By Theorem \ref{thm:legbraid} the open Legendrian braid $Q_{\rm{open}}$ is Legendrian isotopic to a concatenation of the building blocks of Figure \ref{fig:openbraid}. This sequence of building blocks defines a braid word equivalent to $uwu^{-1}$ in the group $B_n$.
Thus after gluing the ends of $\Rold$ to obtain $V$ we have established the following result. 
\begin{theorem}\label{thm:legbraids1}
 A Legendrian braid $B$ in $(V,\xi_V)$ is Legendrian isotopic to the closure of (cyclic) concatenation of the building blocks in Figure~\ref{fig:openbraidsimpl}.\hfill \qed
\end{theorem}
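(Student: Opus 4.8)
The plan is to reduce the statement about closed Legendrian braids in $(V,\xi_V)$ directly to the already-established open braid classification, Theorem~\ref{thm:legbraid}. The key conceptual observation is the one recorded just before the statement in the discussion of Legendrian braid patterns: cutting $V$ along a meridional disk turns a closed Legendrian braid into an open Legendrian braid in $(\Rold,\xi_\Rold)$, and gluing the two ends back recovers the closed braid. This cut-and-glue operation is exactly the inverse of the concatenation-and-closure construction, so the main work is to make sure nothing is lost in passing back and forth.

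First I would take a Legendrian braid $B$ in $(V,\xi_V)$ representing the smooth closed braid $\Pat$, which is the closure of some $w\in B_n$. I would choose a meridional disk $D$ of $V$ that is convex and transverse to $B$, meeting it in exactly $n$ points lying on the dividing set; since $B$ is transverse to the meridional disks of the $S^1$-invariant structure this is arranged by a small Legendrian isotopy. Cutting $V$ along $D$ produces $(\Rold,\xi_\Rold)$ together with an open Legendrian braid $B_{\mathrm{open}}$ whose strand endpoints are straight near the boundary (the straightening being possible by the same Giroux-realization argument used in Section~\ref{sec:braidsind2xi}). Smoothly $B_{\mathrm{open}}$ represents a conjugate $uwu^{-1}$ of $w$, but that only affects which closed braid we recover, not the classification step.

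Next I would apply Theorem~\ref{thm:legbraid} to $B_{\mathrm{open}}$: it is Legendrian isotopic, rel boundary, to a concatenation of the basic Legendrian braids of Figure~\ref{fig:openbraid}, and by Corollary~\ref{thm:legbraidsimpl} we may even take these to be the simplified building blocks $X_i$, $S$, $Z$ of Figure~\ref{fig:openbraidsimpl}. The point I would emphasize is that the isotopy in Theorem~\ref{thm:legbraid} is relative to the boundary $D\times\partial I$, so it respects the gluing data; hence re-gluing $D\times\{0\}$ to $D\times\{1\}$ carries this boundary-fixed isotopy of open braids to an honest Legendrian isotopy of closed braids in $(V,\xi_V)$. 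Therefore $B$ is Legendrian isotopic to the closure of the concatenation of building blocks obtained for $B_{\mathrm{open}}$, which is precisely the assertion.

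The main obstacle I anticipate is verifying that the open braid classification is genuinely rel-boundary and that the gluing really produces a \emph{cyclic} concatenation without introducing spurious boundary artifacts. Concretely, one must check that the straightening of $B$ near the chosen meridional disk matches the straight-near-the-boundary normal form assumed in the definition of open Legendrian braids, so that the endpoints on $D\times\{0\}$ and $D\times\{1\}$ agree after gluing; this is where the freedom to move endpoints along $\Gamma_D$ (allowed in the open braid setup) is used. Once that compatibility is in place the argument is purely formal, which is why the statement is marked \qed in the excerpt: it is essentially a restatement of Theorem~\ref{thm:legbraid} under the cut-open/glue-closed correspondence, with the word ``cyclic'' recording that the two ends of the open concatenation are identified.
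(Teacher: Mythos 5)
Your proposal is correct and is essentially the paper's own argument: the paper's proof (the paragraph immediately preceding the theorem) likewise cuts $V$ along a meridional disk to obtain an open Legendrian braid in $(\Rold,\xi_\Rold)$ representing a conjugate $uwu^{-1}$ of $w$, applies Theorem~\ref{thm:legbraid} (with Corollary~\ref{thm:legbraidsimpl} for the simplified blocks), and re-glues the ends of $\Rold$ to recover the closed statement. Your added care about the rel-boundary nature of the open-braid isotopy and the matching of the straight-near-the-boundary normal form under gluing is exactly the implicit content the paper leaves to the reader.
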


We can also prove a closed version of Theorems~\ref{openmaxtb} and~\ref{thm:poslegendrianbraid}.
\begin{theorem}\label{thm:closedposlegendrianbraid}
 Let $\Pat$ be the closure of a positive braid $w\in B_n$, then 
 $
 \overline{\reltb}_V(\Pat)= \textit{length}(w)
 $
 and  $\vert\Leg_{V}(\Pat;\overline{\reltb}_{V}(\Pat))\vert=1$.
 \end{theorem}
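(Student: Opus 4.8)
The plan is to reduce the closed statement to the open statement already established in Theorems~\ref{openmaxtb} and~\ref{thm:poslegendrianbraid}, using the cut-open correspondence between Legendrian braids in $(V,\xi_V)$ and open Legendrian braids in $(\Rold,\xi_\Rold)$. First I would address the maximal-$\reltb$ equality. Given any Legendrian representative $Q$ of the closed positive braid $\Pat$, cut $V$ along a meridional disk to obtain an open Legendrian braid $Q_{\rm open}$ in $(\Rold,\xi_\Rold)$. Since $\reltb_V$ is computed from the front projection of any open tangle whose closure is $Q$ (see Subsection~\ref{invariantsinV}), we have $\reltb_V(Q)=\reltb_\Rold(Q_{\rm open})$. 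The open braid $Q_{\rm open}$ represents some conjugate $uwu^{-1}$ of $w$, which is again a positive braid of the same length. By Theorem~\ref{openmaxtb}, $\reltb_\Rold(Q_{\rm open})\le \textit{length}(uwu^{-1})=\textit{length}(w)$, so $\overline{\reltb}_V(\Pat)\le\textit{length}(w)$. For the reverse inequality, realize each positive generator $\sigma_i$ in $w$ by the basic block $X_i$ and take the closure in $V$; this gives a representative with relative Thurston--Bennequin number exactly $\textit{length}(w)$, establishing $\overline{\reltb}_V(\Pat)=\textit{length}(w)$.

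Next I would prove uniqueness of the maximal representative. By Theorem~\ref{thm:legbraids1}, any $Q\in\Leg_V(\Pat;\overline{\reltb}_V(\Pat))$ is the closure of a cyclic concatenation of the blocks $X_i$, $S$, and $Z$. The same bookkeeping as in the proof of Theorem~\ref{thm:poslegendrianbraid} applies verbatim: each $S$ or $Z$ contributes a factor $(\sigma_0\cdots\sigma_{n-1})^{\pm1}$, requiring $n-1$ compensatory $X_i$'s to preserve the algebraic length, and the net contribution to $\reltb$ of such a bundle is $-1$. Since the closed $\reltb_V$ equals the algebraic length of $w$ minus the number of $S$'s and $Z$'s, maximality of $\reltb_V$ forces the representative to contain no $S$ or $Z$ blocks. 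Hence the maximal representative is a closure of a cyclic word purely in the $X_i$, i.e.\ a positive Legendrian braid built only from positive generators.

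It then remains to show any two such all-positive closures are Legendrian isotopic in $V$. Here is where the closed case genuinely differs from the open case: the relevant equivalence is now in the \emph{group} $B_n$ rather than merely in the word, because cutting $V$ open along different meridional disks yields conjugate open braids. I would argue that two positive closures representing the same closed braid type differ by positive braid relations together with cyclic permutation (conjugation), and that each of these moves is realizable by a Legendrian isotopy of the all-$X_i$ blocks in $V$. The positive braid relations $\sigma_i\sigma_{i+1}\sigma_i=\sigma_{i+1}\sigma_i\sigma_{i+1}$ and $\sigma_i\sigma_j=\sigma_j\sigma_i$ (for $|i-j|\ge2$) are realized by Legendrian isotopies of the corresponding $X$ blocks exactly as in Theorem~\ref{thm:poslegendrianbraid}, and conjugation corresponds to the freedom of choosing where to cut $V$ open, which is a Legendrian isotopy in the closed solid torus. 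Invoking Theorem~\ref{thm:posbraid} to pass from group equivalence to positive-monoid equivalence completes the identification.

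The main obstacle I anticipate is precisely this last step: in the closed setting one must verify that closed positive Legendrian braids are classified by their conjugacy class in the positive monoid, and that cyclic/conjugation moves are indeed Legendrian-realizable in $V$ without introducing any $S$ or $Z$ blocks. In the open case one has a fixed cutting and equivalence is within the monoid $B_n^+$; in the closed case the additional conjugation freedom must be handled, but since every positive word conjugate to $w$ has the same length and Theorem~\ref{thm:posbraid} guarantees group-equivalent positive words are monoid-equivalent, the argument closes. This reduces the closed uniqueness cleanly to the open uniqueness plus cyclic symmetry.
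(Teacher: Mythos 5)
You follow the same route as the paper: cut the closed braid open along a meridional disk, use $\reltb_V(Q)=\reltb_{\Rold}(Q_{\rm open})$, do the block bookkeeping to show a maximal representative contains no $\mathcal{S}$ or $\mathcal{Z}$ blocks, and then reduce uniqueness to the open results. The first flaw is small but real: you assert that $uwu^{-1}$ ``is again a positive braid of the same length'' and then apply Theorem~\ref{openmaxtb} to $Q_{\rm open}$. A conjugate of a positive braid need not be positive (in $B_3$ the element $\sigma_2\sigma_1\sigma_2^{-1}$ is conjugate to $\sigma_1$ but admits no positive word representative), and Theorem~\ref{openmaxtb} is stated only for open braids represented by positive words. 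The fix is the computation the paper actually uses: $\reltb_{\Rold}(Q_{\rm open})=\mathrm{writhe}(Q_{\rm open})-\frac12 c(Q_{\rm open})$, the writhe equals the exponent sum of $uwu^{-1}$, and the exponent sum is a conjugation invariant, so $\reltb_{\Rold}(Q_{\rm open})\le \textit{length}(w)$ with equality exactly when there are no cusps; this simultaneously yields the ``no $\mathcal{S}$ or $\mathcal{Z}$'' conclusion.

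The genuine gap is your last step. You must compare closures of two positive Legendrian words, each conjugate to $w$ in $B_n$, and you claim they differ by positive braid relations together with cyclic permutation, closing the argument with Theorem~\ref{thm:posbraid}. But Theorem~\ref{thm:posbraid} converts \emph{equality} in the group into equality in the monoid; it says nothing about \emph{conjugacy}. Worse, your claimed combinatorial statement is false in the generality you state it: $\sigma_1$ and $\sigma_2$ in $B_3$ are conjugate positive words whose closures are isotopic patterns in $V$, yet no cyclic permutation or braid relation acts nontrivially on a length-one word, so they are not related by those moves. Likewise, ``conjugation corresponds to the freedom of choosing where to cut $V$ open'' only realizes the conjugations that arise from re-cutting the \emph{same} closed Legendrian braid (cyclic permutations, for standard disks between blocks); it does not show that closures of two arbitrary conjugate positive words are Legendrian isotopic. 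So the sentence beginning ``since every positive word conjugate to $w$ has the same length\dots'' is a non sequitur, and this is exactly where your proof is incomplete. The paper proceeds differently at this point: it argues that the positive word obtained by cutting open a maximal representative must be a cyclic permutation of $w$, then re-chooses the cutting disk so that the open braid obtained from $Q$ represents the word $w$ itself, and applies the open uniqueness Theorem~\ref{thm:poslegendrianbraid} to that single word, rather than trying to connect two different conjugate positive words by Legendrian-realizable word moves. (The paper's own justification of the cyclic-permutation claim is terse, but its reduction avoids leaning on Theorem~\ref{thm:posbraid} for a conjugacy statement it cannot provide.) If you want to keep your formulation, the assertion that positive words with isotopic closures in the solid torus are related by cyclic permutations and positive relations is a separate claim that needs proof and, as the example above shows, must at minimum be restricted or supplemented by additional geometric input.
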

\begin{proof}
The computation of the maximal relative Thurston-Bennequin invariant follows from Theorem~\ref{openmaxtb} and the observations in Section~\ref{invariantsinV} about the relation between the invariants in $V=D^2\times S^1$ and $\Rold$. 

For uniqueness take a Legendrian representation $Q$ with maximal relative Thurston--Bennequin number.  As noted above we can cut $V$ open to obtain an open Legendrian braid $Q_{\rm{open}}$ in $\Rold$ representing the braid word $uwu^{-1}$ for some $n$-braid $u$. We know the algebraic length of $uwu^{-1}$ will equal the length of $w$ and thus $\overline{\reltb}_V(Q_{\rm{open}})=\textit{length}(w)- c$, where $c$ is the number of left cusps in the front projection of $Q_{\rm{open}}$. Since we are assuming that $\overline{\reltb}(Q_{\rm{open}})=\textit{length}(w)$ we see that there are no cusps and hence when $Q_{\rm{open}}$ is represented in terms of the basic Legendrian braids from Corollary~\ref{thm:legbraidsimpl} there will be no $\mathcal{Z}$s or $\mathcal{S}$s. From this we see that $uwu^{-1}$ must just be some cyclic permutation of $w$. By choosing the cutting disc differently we can make sure that  the open braid we get from $Q$ represents $w$, and thus as in the proof of Theorem~\ref{thm:poslegendrianbraid} we conclude that $Q$ is the unique Legendrian representation with maximal Thurston--Bennequin number.
\end{proof}
We can give a complete Legendrian classification for 2-braid patterns. This is due to the fact that it is clear when 2--braid patterns  destabilize.
\begin{theorem}\label{2starndbraid}
Let $\Pat_{m}$ be a 2--braid pattern with $m$ (odd) half twists. Then $\Pat_m$ is Legendrian simple. In particular:
\begin{enumerate}
\item If $m> 0$, 
then $\Pat_m$ has a unique Legendrian representative which has maximal relative Thurston--Bennequin number $m$ and rotation number $0$. \item If $m<0$, then $\Pat_m$ has $|m|+1$ representatives with maximal Thurston-Bennequin number $2m$ and with different rotation numbers $\relrot_{V}\in \{-|m|,-(|m|-2),\dots,|m|-2,|m|\}$.\end{enumerate}
\end{theorem}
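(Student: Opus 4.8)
The plan is to reduce everything to the combinatorics of the building blocks of Theorem~\ref{thm:legbraids1} and then run a destabilization argument. First I would note that any Legendrian representative of $\Pat_m$ is, by Theorem~\ref{thm:legbraids1}, the closure of a cyclic word $W$ in the $n=2$ building blocks $X_0$, $S$ and $Z$ of Figure~\ref{fig:openbraidsimpl}. Let $a$, $b$, $c$ be the numbers of occurrences of $X_0$, $S$ and $Z$ in $W$. Since $X_0$ realizes $\sigma_1$ while $S$ and $Z$ both realize $\sigma_1^{-1}$, and $B_2\cong\Z$, the smooth braid type forces $a-b-c=m$. Feeding in the relative invariants of the building blocks ($\reltb_V(X_0)=1$, $\reltb_V(S)=\reltb_V(Z)=-2$, $\relrot_V(X_0)=0$, $\relrot_V(S)=-1$, $\relrot_V(Z)=+1$) and eliminating $a=m+b+c$, I get
\[
\reltb_V(W)=a-2(b+c)=m-(b+c),\qquad \relrot_V(W)=c-b,
\]
subject to $a=m+b+c\ge 0$, that is $b+c\ge\max(0,-m)$. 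Maximizing $\reltb_V$ means minimizing $b+c$: for $m>0$ this forces $b=c=0$, so $\reltb_V=m$ and $\relrot_V=0$; for $m<0$ it forces $a=0$ and $b+c=|m|$, so $\reltb_V=2m$ and $\relrot_V=c-b$ ranges over $\{-|m|,-|m|+2,\dots,|m|\}$, the $|m|+1$ values claimed.

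Next I would set up the destabilization step. The key local relations are that inserting $X_0S$ (resp.\ $X_0Z$) into a $2$-braid word is a negative (resp.\ positive) stabilization of the two parallel strands: each is read off the front directly and is consistent with the invariant count, since each drops $\reltb_V$ by one and changes $\relrot_V$ by $\mp 1$ (compare also Lemma~\ref{lem:stabpattern}). Now suppose $W$ does not have maximal $\reltb_V$. The constraints above then force both $a\ge 1$ and $b+c\ge 1$, so $W$ contains at least one $X_0$ and at least one of $S,Z$; in any cyclic word in which two different letters occur there is an adjacent pair consisting of an $X_0$ and an $S$ or $Z$, and since cyclic rotation is free in $V$ I may bring such a pair to the front without commuting any blocks past one another. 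Applying the local relation exhibits $W$ as a stabilization, and iterating shows that every representative destabilizes to a maximal-$\reltb_V$ representative, i.e.\ to one of the peaks found above ($P_c=S^{|m|-c}Z^{c}$ for $m<0$, and the unique all-$X_0$ word for $m>0$).

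To conclude Legendrian simplicity I would assemble the mountain range. For $m>0$ the maximal representative is unique with $\relrot_V=0$ directly by Theorem~\ref{thm:closedposlegendrianbraid}; for $m<0$ the peaks are distinguished by $\relrot_V$, and what remains is to show that (a) each peak $P_c$ is a single Legendrian class and (b) the stabilizations fit together so that $(\reltb_V,\relrot_V)$ determines the class. Both reduce to one statement: \emph{two closed Legendrian $2$-braids given by cyclic words with the same number of each of $X_0,S,Z$ are Legendrian isotopic}. Indeed this gives (a) (the case $a=0$, so $S^{|m|-c}Z^{c}$ is independent of the ordering), the commutation $\St_+\St_-\simeq\St_-\St_+$, and the peak-merging relation $\St_-(P_c)\simeq \St_+(P_{c-1})$, since both sides are cyclic words with counts $(1,|m|-c+1,c)$. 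Granting this, every class is $\St_+^{\,i}\St_-^{\,j}(P_c)$, and solving $\reltb_V=2m-i-j$, $\relrot_V=(2c-|m|)+i-j$ shows that $(\reltb_V,\relrot_V)$ recovers $i+j$ and $2c+i-j$; normalizing with $\St_+\St_-\simeq\St_-\St_+$ and peak-merging then pins the class down, which is exactly Legendrian simplicity.

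The main obstacle is the italicized commutation claim — that $X_0$, $S$, $Z$ commute up to (cyclic) Legendrian isotopy on two strands. I expect this to be the real content hidden in the remark that ``it is clear when $2$-braid patterns destabilize'': with only two strands one verifies $X_0S\simeq SX_0$, $X_0Z\simeq ZX_0$ and $SZ\simeq ZS$, together with the stabilization relations above, by explicit Legendrian Reidemeister moves in the front projection. Everything else is bookkeeping with the invariant formulas, and the one point requiring care is to use the cyclic (rather than linear) word structure in $V$ when making an $X_0$ adjacent to an $S$ or $Z$, so that no commutation is secretly invoked during the destabilization itself.
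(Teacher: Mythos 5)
Your overall architecture coincides with the paper's: decompose via Theorem~\ref{thm:legbraids1}, do the invariant bookkeeping to locate the candidate peaks, and use the observation that an $X_0$ adjacent to an $S$ or $Z$ exhibits the closure as a stabilization. Those parts are sound. The genuine gap is exactly the step you flagged: the claim that $X_0$, $S$, $Z$ pairwise commute up to Legendrian isotopy, in particular $SZ\simeq ZS$. Everything you need for $m<0$ (that each peak $S^{|m|-c}Z^{c}$ is a single Legendrian class, and that $(\reltb_V,\relrot_V)$ determines the class) is reduced to this assertion, which you do not prove, and there is strong reason to believe it is false rather than merely hard. Notice that your commutation claim never uses that $m$ is odd: if $SZ\simeq ZS$ held as a tangle move, the identical argument would show that every \emph{even}-length cyclic word in $S,Z$ is determined by its letter counts, e.g.\ that the closed $2$-component links $SSZZ$ and $SZSZ$ in $(V,\xi_V)$ are Legendrian isotopic. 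Distinguishing precisely such pairs --- solid-torus links with the same classical invariants but different arrangements of the two Legendrian realizations of a negative crossing --- is the content of the generating-family and contact-homology results of \cite{Traynor01} and \cite{NgTraynor04}, which this paper cites as the ``subtle phenomena'' for Legendrian knots in $(V,\xi_V)$; compare also the Whitehead case $m<0$ even in Theorem~\ref{whiteheadpatternclass}, where the number of classes exceeds what free commutation would allow. So the restriction to odd $m$ is not cosmetic, and a parity-blind argument cannot be correct.

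What the paper actually proves and uses is a strictly weaker relation, the isotopy of Figure~\ref{fig:ssz}: a letter may be moved past an adjacent \emph{equal} pair, $ZSS\leftrightarrow SSZ$ and $ZZS\leftrightarrow SZZ$, i.e.\ letters two positions apart may be exchanged. Parity then enters twice: an odd-length cyclic word in $\{S,Z\}$ must contain an adjacent equal pair, and distance-two transpositions together with cyclic rotation generate all permutations of the letters only when the length is odd (for even length they preserve the partition into even- and odd-indexed positions, which is exactly why $SSZZ$ and $SZSZ$ are not related). Replacing your commutation claim by the Figure~\ref{fig:ssz} relation and rerunning your sorting argument with this parity input repairs the peak classification. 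Incidentally, your mountain-range gluing $\St_-(P_c)\simeq\St_+(P_{c-1})$ needs no commutation at all: the single cyclic word $X_0S^{|m|-c+1}Z^{c}$ destabilizes in two ways --- removing $X_0$ together with its neighboring $S$ yields $P_c$ with a stabilization of one sign, while removing it together with the cyclically adjacent $Z$ yields $P_{c-1}$ with a stabilization of the other sign --- so the two stabilized peaks are the same Legendrian knot. With those two changes your proof becomes essentially the paper's.
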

\begin{proof}
The key observation is that if a 2-braid is represented as a product of basic braids and there is a basic $\mathcal{X}$-braid next to a basic $\mathcal{S}$ or $\mathcal{Z}$-braid then the braid will destabilize. We have already observed that there is a unique maximal Thurston-Bennequin invariant representative for $m>0$ and we now see that all other destabilize. 

For the $m<0$ case we see that all Legendrian representatives destabilize to one represented by a concatenation of basic $\mathcal{S}$ and $\mathcal{Z}$-braids. Moreover there are an odd number of basic braids in this representation and thus there must be two adjacent $\mathcal{S}$ or $\mathcal{Z}$-braids. Noting the isotopy in Figure~\ref{fig:ssz} 
\begin{figure}[ht]
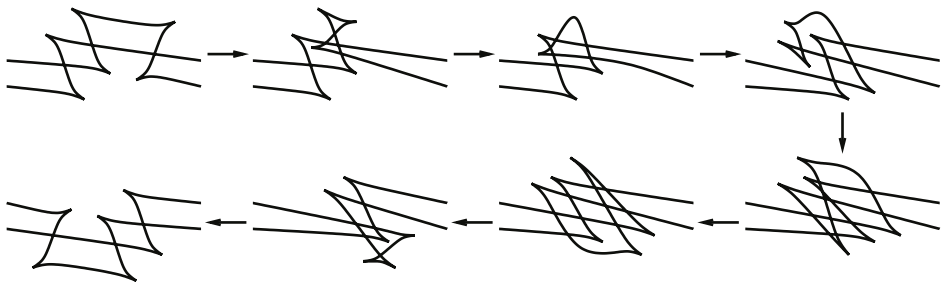

\centering
\begin{overpic}
{ssz}
\end{overpic}
\caption{Legendrian isotopy of a tangle.}
\label{fig:ssz}
\end{figure}
and recalling that we can cyclically permute the basic $\mathcal{S}$ and $\mathcal{Z}$-braids one may easily see that two Legendrian representatives that are written with the same number of $\mathcal{S}$ and $\mathcal{Z}$-braids are isotopic. Moreover if the number of $\mathcal{S}$ and $\mathcal{Z}$-braids used to represent two Legendrian knots is different then their rotation numbers will be different. 
The result now easily follows.
\end{proof}

\subsubsection{Legendrian cable patterns}
Cabling a knot is a satellite operation. To see this let $T\subset V$ be a torus parallel to the boundary of $V$ then let $\mu$ be the primitive element in $H_1(T^2)$ that becomes trivial when included into $V$ and let $\lambda$ be the homology class of $x\times S^1$ for some $x\in D^2$. Then for any relatively prime integers $p$ and $q$ the homology class $p\lambda + q\mu$ can be realized by an embedded curve $C_{p,q}$ on $T$. The knot $C_{p,q}$ represents the knot type $\CC_{p,q}$ that we call the {\em $(p,q)$-cable pattern}. It is clear that for a given knot $\K$ the satellite $\CC_{p,q}(\K)$ is simply the $(p,q)$-cable of $\K$. 
\begin{theorem}\label{cablepats}
Let $p$ and $q$ be relatively prime integers. Then $\CC_{p,q}$ is Legendrian simple. In particular
\begin{enumerate}
\item If $p/q>0$, then there is a unique maximal relative Thurston-Bennequin invariant representative which has  $\reltb_V=pq-p$ and rotation number $0$. 
\item If $p/q<0$, then the maximal relative Thurston-Bennequin invariant is $pq$ and $\Leg_V(\CC_{p,q};pq)$ has $2n+2$ elements with rotation numbers
\[
\{\pm(p+q(n+k))| k=-n, -n+2, \ldots, n-2, n\},
\]
where $n$ is the unique integer such that $-n-1<p/q<-n$.
\end{enumerate}
\end{theorem}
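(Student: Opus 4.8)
The plan is to treat $\CC_{p,q}$ as a braided pattern and feed it into the machinery of Sections~\ref{sec:braidsind2xi} and~\ref{sec:legendrianbraidsind2xs1}. The cable $C_{p,q}$ is isotopic in $V$ to the closure of a torus braid, namely a power $\delta^{k}$ of the full cyclic shift $\delta=\sigma_1\sigma_2\cdots\sigma_{N-1}$ on $N$ strands, where $N$ is the winding number and $\delta^{k}$ is a positive braid exactly when $p/q>0$. By Theorem~\ref{thm:legbraids1} any $\Q\in\Leg_V(\CC_{p,q})$ is the closure of a (cyclic) concatenation of the building blocks $\mathcal{X}$, $\mathcal{S}$, $\mathcal{Z}$ of Figure~\ref{fig:openbraidsimpl}, and since the relative invariants add over concatenation, a word using $a$ copies of $\mathcal{X}$, $b$ of $\mathcal{S}$ and $c$ of $\mathcal{Z}$ satisfies $\reltb_V=a-N(b+c)$ and $\relrot_V=c-b$. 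The first bookkeeping step is to record that the underlying smooth braid must be conjugate to $\delta^{k}$; this fixes the writhe and hence expresses $a$ as an affine function of $b+c$, so that $\reltb_V$ is maximized precisely by minimizing $b+c$, while $\relrot_V$ is controlled by the split of the $\mathcal S/\mathcal Z$ blocks.

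For the case $p/q>0$ I would orient so that $\delta^{k}$ is positive and apply Theorem~\ref{thm:closedposlegendrianbraid} directly: it yields $\overline{\reltb}_V(\CC_{p,q})=\textit{length}(\delta^{k})=pq-p$ together with a unique representative at maximal relative Thurston--Bennequin number, realized by the all-$\mathcal{X}$ word and therefore having $\relrot_V=0$. It then remains to upgrade this to full Legendrian simplicity. For that I would argue, exactly as in the proof of Theorem~\ref{2starndbraid}, that any non-maximal representative contains an $\mathcal{S}$ or $\mathcal{Z}$ block; after a cyclic permutation and the positive braid relations one arranges such a block adjacent to an $\mathcal{X}$ block, which then destabilizes (here Lemma~\ref{lem:stabpattern} makes the destabilization precise). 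Inducting downward shows every class destabilizes to the unique maximal one, so the representatives form a single stabilization tree and are determined by $(\reltb_V,\relrot_V)$.

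The case $p/q<0$ is the heart of the matter. Here $\delta^{k}$ has negative exponent sum, so the writhe constraint forces $a=0$ once $b+c$ is minimized: the maximal-$\reltb_V$ representatives are exactly the closures of concatenations of $|k|$ blocks, each an $\mathcal{S}$ or a $\mathcal{Z}$ and no $\mathcal{X}$, giving $\overline{\reltb}_V(\CC_{p,q})=N\,k=pq$, and their rotation numbers are $\relrot_V=c-b$ with $b+c$ fixed. A naive count would produce one class for each value of $c-b$, but this over-counts: I expect that for $N\ge 3$ many of these $\mathcal S/\mathcal Z$ words are \emph{not} in fact maximal, because certain adjacent pairs can be simplified using the braid relations on $N$ strands together with the tangle isotopy of Figure~\ref{fig:ssz}, producing an $\mathcal X$ next to an $\mathcal S$ or $\mathcal Z$ and hence a destabilization.

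The main obstacle is therefore to pin down exactly which rotation numbers survive at maximal $\reltb_V$ and to show there are precisely $2n+2$ of them, with the stated values $\pm\bigl(p+q(n+k)\bigr)$. I would resolve this by organizing the $\mathcal S/\mathcal Z$ words according to how the cyclic shift interacts with the braid relations, showing that only words with a constrained sign pattern (indexed by the integer $n$ with $-n-1<p/q<-n$) fail to destabilize; equivalently, and perhaps more transparently, by passing to convex surface theory on the solid torus between the cabling torus and $\partial V$ and reading off the admissible dividing--slope sequences, exactly as in the Etnyre--Honda classification of cables \cite{EtnyreHonda05}, where the continued fraction of $p/q$ produces the count $2n+2$ and the two sign choices account for the $\pm$. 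Finally, to complete Legendrian simplicity I would check that distinct maximal classes have rotation numbers differing by multiples of $2q$ and that their stabilizations interleave in the standard mountain--range pattern, so that below maximal $\reltb_V$ every class is again determined by $(\reltb_V,\relrot_V)$; this last step is routine once the maximal classes are understood, and the expected difficulty is concentrated entirely in the negative-case count just described.
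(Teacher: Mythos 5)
Your route is genuinely different from the paper's. The paper proves this theorem almost entirely by convex surface theory, closely following Theorems~3.2 and~3.6 of \cite{EtnyreHonda05}: maximal representatives are identified as ruling curves (for $p/q>0$) or Legendrian divides (for $p/q<0$) on convex tori parallel to $\partial V$, non-maximal representatives are destabilized by bypasses found on convex annuli, and the rotation numbers in the negative case are computed from the position of the convex torus between standard neighborhoods of stabilized cores; the $\mathcal{S}/\mathcal{Z}$-word presentation appears only as an afterthought illustrating that computation. Parts of your braid-theoretic alternative do work, and arguably more cleanly: Theorem~\ref{thm:closedposlegendrianbraid} does give the maximal $\reltb_V$ and the uniqueness statement in the positive case, and your bookkeeping (a word with $a$ blocks $\mathcal{X}$ and $b+c$ blocks $\mathcal{S},\mathcal{Z}$ whose underlying braid is conjugate to $\delta^k$ has $a=k(N-1)+(N-1)(b+c)$, hence $\reltb_V=k(N-1)-(b+c)$) yields the upper bound $pq$ in the negative case directly, where the paper instead cites the proof of Theorem~1.2 of \cite{EtnyreHonda05}.

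However, there is a genuine gap at the crux of the negative case. You propose to prune the naive count of $\mathcal{S}/\mathcal{Z}$-words by arguing that many of them ``are not in fact maximal'' and destabilize. That cannot be the mechanism: every word built from $|k|$ blocks of $\mathcal{S}$ and $\mathcal{Z}$ and no $\mathcal{X}$ has $\reltb_V=pq$ exactly, which is the maximum, so none of them destabilizes. What actually cuts the count down from $|p|+1$ to $2n+2$ is topological, not contact-geometric: for braid index $N\ge 3$ the blocks $\mathcal{S}$ and $\mathcal{Z}$ are \emph{different} braids whose underlying permutations are inverse $N$-cycles, so most mixed words are not conjugate to the torus braid and do not represent the smooth type $\CC_{p,q}$ at all --- on three strands, for instance, the word $\mathcal{S}\mathcal{Z}$ has trivial permutation and closes to a three-component link. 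The words that do represent the cable are the constrained ones, $(Z^q)^{n_1}(S^q)^{n_2}Z^e$ or $(Z^q)^{n_1}(S^q)^{n_2}S^e$ with $|p|=n|q|+e$, and the essential remaining work is to show that these fall into exactly $2n+2$ Legendrian isotopy classes realizing the stated rotation numbers. For this you offer only the fallback ``exactly as in the Etnyre--Honda classification of cables,'' which is precisely the paper's own proof rather than an argument of yours, so the blind proposal is missing its key step. A secondary gap: your destabilization of non-maximal representatives (needed for Legendrian simplicity in both cases) rests on the claim that an $\mathcal{X}$ adjacent to an $\mathcal{S}$ or $\mathcal{Z}$ destabilizes. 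This is immediate only for $2$-braids, where the paper uses it in Lemma~\ref{2starndbraid}; for index $\ge 3$ the crossing must first be slid until it involves the cusped strand, a Legendrian move established nowhere in the paper, and Lemma~\ref{lem:stabpattern} (which concerns re-embedding a pattern into a smaller solid torus, i.e.\ stabilization of the companion) does not supply it. The paper sidesteps this entirely by producing bypasses along convex annuli.
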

\begin{remark}
Note that this theorem subsumes Lemma~\ref{2starndbraid}, but the proofs are significantly different and the proof of the former demonstrates the utility of the the constructions of Legendrian braids in Theorem~\ref{thm:legbraid} (as the classification of Legendrian Whitehead patters below will too). 
\end{remark}
\begin{proof}
The proof of this theorem follows the proof of Theorem~3.2 and~3.6 in \cite{EtnyreHonda05} almost exactly, so we only sketch the details here. 

We first observe that by the classification of contact structures on solid tori, \cite{Giroux00, Honda00a}, there is a convex $T^2$ in $(V,\xi_V)$ that is parallel to the boundary with dividing slope $r/s$ for any $r/s\leq 0$ and none with dividing slope greater than zero. Now given a Legendrian representative $L$ of $\CC_{p,q}$ with $p/q>0$ we claim that the twisting of $\xi_V$ along $L$ relative to any torus $T^2$ parallel to the boundary of $V$ must be less than zero. If it were not then there would be a Legendrian $L'$ in that knot type with twisting zero. We could then place it on a convex torus that would necessarily have to have dividing slope $p/q$ which is impossible. Knowing that the twisting of $\xi_V$ along $L$ relative to $T$ is negative we can put $L$ on a convex torus. Suppose this torus has dividing slope $r/s\leq0$. Then the twisting of $L$ relative to $T$ is $|rq-ps|$. One may easily see that this is maximized by $-p$ exactly when $r/s=0/1$. Thus any maximal Thurston-Bennequin representative will be a ruling curve on the unique (up to isotopy) convex torus with dividing slope $0$ and thus is itself unique up to isotopy. The relative Thurston-Bennequin invariant differs from the twisting of $\xi_V$ relative to $T$ by $pq$. So the maximal relative Thurston-Bennequin invariant is $pq-p$. One may now easily draw a front diagram for $L$ (using only basic $\mathcal{X}$ braids) and see that the rotation number is $0$. 

Given any $L$ with relative Thurston-Bennequin invariant less than $pq-p$ we can put it on a convex torus with dividing slope less than $0$ and use a convex annulus with one boundary component $L$ and the other a ruling curve on the dividing slope $0$ convex torus to find a bypass for $L$ and destabilize it. Thus all Legendrian knots realizing $\CC_{p,q}$ will destabilize to the one with maximal relative Thurston-Bennequin invariant. 

Turning now to $p/q<0$ one can construct a Legendrian representative of $\CC_{p,q}$ as a Legendrian divide on a a convex torus $T$ parallel to the boundary of $V$. This Legendrian will have relative Thurston-Bennequin invariant $pq$. The proof of Theorem 1.2 in \cite{EtnyreHonda05} shows that the relative Thurston-Bennequin invariant cannot be larger than $pq$. From this it is easy to see that any maximal relative Thurston-Bennequin invariant  representative of $\CC_{p,q}$ is a Legendrian divide on a convex torus. Moreover one can argue that all non-maximal representatives will destabilize to one of these as was done for positive $p/q$. 

To compute the rotation numbers notice that $V$ is a standard neighborhood of a Legendrian core curve $C$ and that any convex torus with dividing slope $p/q$ will be contained between the boundary of a standard neighborhood of $\St^{n_1}_+\St^{n_2}_-(C)$ and $\St_\pm(\St^{n_1}_+\St^{n_2}_-(C))$ where $n=n_1+n_2$ and $n_1,n_2\geq 0$. The computation of the rotation numbers can now be done as in the proof of Lemma~3.8 from \cite{EtnyreHonda05}. Here we indicate the presentation of such curves in terms of $\mathcal{Z}$ and $\mathcal{S}$ braids. Notice that $\|p\|=n\|q\|+e$ where $e>0$. Now any maximal Thurston-Bennequin invariant representative of $\CC_{p,q}$ will be of the form $(Z^q)^{n_1}(S^q)^{n_2}Z^e$ or $(Z^q)^{n_1}(S^q)^{n_2}S^e$. Assuming $p>0$ (and hence $q<0$) we see that $p+nq=e$ and the computation of the rotation numbers is clear. 
\end{proof}

\subsubsection{Legendrian Whitehead patterns} \label{lwp}

Whitehead doubling of a knot is based on the sequence of patterns   $\W_m$ ($m\in\Z$) in $V$ shown in Figure \ref{fig:whiteheadtop}.
\begin{figure}[h]
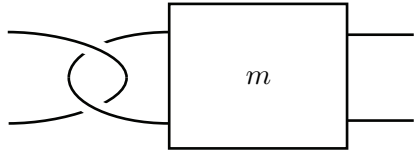

\centering
\begin{overpic}
{whiteheadtop}
\put(90,25){$m$}
\end{overpic}
\caption{The Whitehead pattern $\W_m$ (where the box contains $m\in\Z$ half twists).} 
\label{fig:whiteheadtop}
\end{figure}
Note that for these patterns winding number is $0$. In the following we give a complete description of $\Leg_{V}(\W_m)$.
\begin{theorem}\label{whiteheadpatternclass}
Let $\W_m$ be a smooth pattern with $m$ half twists. Then
\begin{enumerate}
\item For $m\ge 0$ even, there are two Legendrian representatives of $\W_m$ with 
\[
(\reltb_{V},\relrot_{V})=(1-m,0).
\]
These two Legendrian patterns become isotopic after a single stabilization (of the same sign). 
All other Legendrian patterns of type $\W_m$ destabilize to one of these two.
\item For $m>0$ odd, there are exactly two Legendrian representatives of $\W_m$ with maximal relative Thurston-Bennequin number, $\reltb_{V} = -m-3$. These representatives are distinguished by their relative rotation numbers $\relrot_{V}=\pm1$ and a negative stabilization of the $\relrot_{V}=1$ pattern is isotopic to a positive stabilization of the $\relrot_{V}=-1$ pattern. All other Legendrian knots destabilize to at least one of these two. In particular, the pattern type is Legendrian simple. 
\item For $m<0$ odd, $\W_m$ has $|m|+1$ Legendrian representatives with 
\[
(\reltb_{V},\relrot_{V})=(-3,0).
\]
All other Legendrian knots destabilize to one of these. After any stabilizations, these $|m|+1$
representatives all become isotopic.
\item\label{item4} For $m< 0$ even, $\W_m$ has $\left(\frac{|m|}2+1\right)^2$ different Legendrian representations with  
\[
(\reltb_{V},\relrot_{V})=(1,0).
\]
All other Legendrian knots destabilize to one of these. These Legendrian knots fall into $\frac{|m|}2+1$ different Legendrian isotopy classes after any given positive number of positive stabilizations, and $\frac{|m|}2+1$ different Legendrian isotopy classes after any given positive number of negative stabilizations. After at least one positive and one negative stabilization (with a fixed number of each), the knots all become Legendrian isotopic.
\end{enumerate}
\end{theorem}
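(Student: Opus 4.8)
The plan is to classify the Legendrian representatives of $\W_m$ by the strategy used for $2$--braid patterns in Theorem~\ref{2starndbraid}, adapted to the fact that $\W_m$ is a winding number zero (rather than braided) pattern because of its clasp. First I would fix a meridional disk $D$ of $V$ meeting a representative $W\in\Leg_V(\W_m)$ in the minimal two points, so that $D$ misses the clasp and the half--twist box of Figure~\ref{fig:whiteheadtop}, and cut $V$ along $D$ to obtain $(\Rold,\xi_\Rold)$. Although the two strands of $\W_m$ are antiparallel, away from the clasp they are geometrically monotone, so the cut--open picture is an open two--strand tangle consisting of a \emph{braided part} carrying the $m$ half twists, concatenated with a fixed \emph{clasp/turnaround tangle}. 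Applying Corollary~\ref{thm:legbraidsimpl} to the braided part expresses it as a concatenation of the basic Legendrian braids $\mathcal X_0,\mathcal S,\mathcal Z$; since the bypass classification underlying Theorem~\ref{thm:legbraid} is insensitive to orientation, this applies verbatim, and the antiparallel orientations are reinstated only when computing $\relrot_V$.

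Second, I would establish a short list of standard Legendrian forms for the non--braided clasp/turnaround piece. In the front projection this piece consists of cusped turnbacks together with the clasp crossings, and an elementary convex--surface and bypass argument as in Section~\ref{proofofbraid} should show that, up to Legendrian isotopy and the destabilization move that absorbs an $\mathcal X$ adjacent to an $\mathcal S$ or $\mathcal Z$ (the key observation in the proof of Theorem~\ref{2starndbraid}), it has only finitely many normal forms; I would record its contribution to $\reltb_V$ (the cusps) and to $\relrot_V$. Combining this with the braid normal form reduces every $W$ to a concatenation $(\mathcal Z^{a}\mathcal S^{b})\cdot(\text{twists})\cdot(\text{standard clasp})$, and iterating the destabilization move shows that every representative destabilizes to one of maximal $\reltb_V$. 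This yields the stated maxima: positive half twists are rigid and appear as $\mathcal X$'s contributing to $\reltb_V$ (giving $1-m$ for $m\ge 0$ even and $-m-3$ for $m>0$ odd), while negative half twists can always be removed by destabilization, leaving $\reltb_V$ independent of $m$ (namely $1$ for $m<0$ even and $-3$ for $m<0$ odd).

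Third, I would enumerate the maximal--$\reltb_V$ representatives and their rotation numbers. For $m>0$ the only remaining freedom is the choice of clasp normal form, giving two representatives whose rotation numbers coincide (even case) or differ by the half twist to give $\relrot_V=\pm1$ (odd case); the stabilization identifications are then read off from explicit Legendrian isotopies of the clasp of the type in Figure~\ref{fig:ssz}. For $m<0$ the negative twists are concatenations of $\mathcal S$'s and $\mathcal Z$'s, and counting inequivalent distributions after the cyclic and commutation moves of Theorem~\ref{thm:legbraids1} and Figure~\ref{fig:ssz} produces the counts: when $m$ is odd the half twist merges everything into a single region, yielding the linear count $|m|+1$ (distinguished by $\relrot_V$), whereas when $m$ is even there are two independent regions in which $\mathcal S$'s and $\mathcal Z$'s accumulate, each contributing a factor $\tfrac{|m|}2+1$ and giving $\left(\tfrac{|m|}2+1\right)^2$, all with $(\reltb_V,\relrot_V)=(1,0)$.

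The main obstacle is precisely this last case, together with the non--simple cases generally: in part~(\ref{item4}) the $\left(\tfrac{|m|}2+1\right)^2$ representatives share the classical invariants $(1,0)$, so their pairwise non--isotopy cannot be detected classically, and the delicate stabilization structure, in which one positive stabilization collapses one factor to $\tfrac{|m|}2+1$ classes, one negative stabilization collapses the other, and one of each collapses everything, must be extracted from the contact geometry itself. I would handle this via Corollary~\ref{cor:iso}, which reduces the Legendrian classification to classifying tight contact structures on the exterior of $W$ up to contactomorphism fixing the boundary, and then run the convex--surface and bypass analysis of Section~\ref{proofofbraid} on this exterior: the two twist regions give two independent families of bypass layers whose relative Euler--class data form a complete invariant. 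This simultaneously proves that the $\left(\tfrac{|m|}2+1\right)^2$ normal forms are pairwise non--isotopic and encodes exactly how positive versus negative stabilization merges them, and the analogous but simpler version of this argument settles the remaining identifications in cases (1)--(3).
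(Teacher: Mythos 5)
Your reduction to normal forms is essentially the route the paper itself takes: the paper records exactly this statement as Theorem~\ref{citethm}, obtained by cutting $V$ along \emph{two} meridional disks into two copies of $\Rold$, one containing the $2$-braid of $m$ half twists (handled by Theorem~\ref{thm:legbraid}) and one containing the clasp (handled by carrying over Section~4.2 of \cite{EtnyreNgVertesi13} rather than re-running a bypass analysis), and your subsequent enumeration of words in $\ZZ^\pm,\SS^\pm$ modulo the moves of Figures~\ref{fig:ssz} and~\ref{fig:szstab} is the same count the paper performs. Note one error already at this stage: in case (3) ($m<0$ odd) the antiparallel orientations make the rotation contributions of the $\SS$'s and $\ZZ$'s cancel, so all $|m|+1$ normal forms have $(\reltb_V,\relrot_V)=(-3,0)$; your parenthetical claim that they are ``distinguished by $\relrot_V$'' contradicts the very statement you are proving, and is a symptom of the main gap below.

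The main gap is the final step, and it is not fixable as proposed. You want to prove that the $\left(\frac{|m|}{2}+1\right)^2$ normal forms in case~(\ref{item4}) (and the two forms in case (1), and the $|m|+1$ forms in case (3)) are pairwise non-isotopic by convex-surface theory on the exterior, with ``relative Euler-class data'' of the bypass decomposition as a complete invariant. No Euler-class-type invariant can do this, because such data is natural under stabilization: the exterior of $\St_\pm\Q$ is the exterior of $\Q$ glued to a fixed basic slice along $\partial\nu(\Q)$, so if $C$ denotes the exterior of the pattern, then for two representatives $\Q,\Q'$ with the same classical invariants the difference of their relative Euler classes, evaluated on any class in $H_2(C,\partial C)$, is unchanged when both are stabilized. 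But the theorem asserts that $\Q_{(z^+,z^-)}$ and $\Q_{(\tilde z^+,z^-)}$ with $z^+\neq\tilde z^+$ become isotopic after a single positive stabilization, and that the two representatives in case (1) merge after a single stabilization of either sign; naturality then forces their Euler-class data to agree \emph{before} stabilizing. These are Chekanov-type pairs, invisible to all classical data on the exterior. The paper closes this step with genuinely non-classical input: for $m\geq 0$ even the two maximal representatives are distinguished by contact homology (Proposition~5.11 of \cite{NgTraynor04}), while for $m<0$ the patterns $\Q_{(z^+,z^-)}$ are distinguished by forming the satellites $\Q_{(z^+,z^-)}(\mathcal{U})$ and $\Q_{(z^+,z^-)}(\St_+\mathcal{U})$ on the maximal Thurston--Bennequin Legendrian unknot $\mathcal{U}$ and on its stabilization---these are Legendrian twist knots in $(S^3,\xi_{std})$---and invoking Theorem~1.1 of \cite{EtnyreNgVertesi13} (itself proved with Legendrian contact homology) for each companion; using two different companions eliminates the residual symmetry $(z^+,z^-)\sim(m-1-z^+,m-1-z^-)$ left by a single one. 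Without an input of this kind, your argument produces the upper bound on the number of isotopy classes but cannot establish the matching lower bound.
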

See Figure~\ref{fig:mainthm1} and~\ref{fig:mainthm3} for a schematic picture for the Legendrian mountain range for $\W_m$.
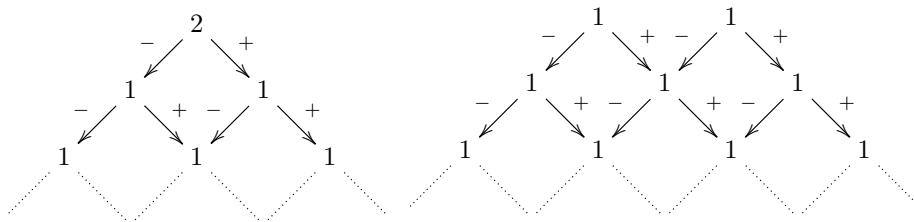
\begin{figure}
\centering
{\small
$
\xymatrixrowsep{2em}
\xymatrixcolsep{2em}
\vcenter{\xymatrix  @dr {
2 \ar[r]^{+} \ar[d]_{-} & 1 \ar[r]^{+} \ar[d]_{-} & 1 \ar@{.}[r] \ar@{.}[d]& \\
1 \ar[r]^{+} \ar[d]_{-} & 1 \ar@{.}[r] \ar@{.}[d]  & &\\
1 \ar@{.}[r] \ar@{.}[d] & & & \\
&&&}}
\vcenter{\xymatrix  @dr {
& 1 \ar[r]^{+} \ar[d]_{-} & 1 \ar[r]^{+} \ar[d]_{-} & 1 \ar@{.}[r] \ar@{.}[d]& \\
1 \ar[r]^{+} \ar[d]_{-} & 1 \ar[r]^{+} \ar[d]_{-}& 1 \ar@{.}[r] \ar@{.}[d]  & &\\
1 \ar[r]^{+} \ar[d]_{-} &1 \ar@{.}[r] \ar@{.}[d] & & & \\
1 \ar@{.}[r] \ar@{.}[d]&&&&\\
&&&&}}
$
}
\vspace{-3cm}
\caption{Schematic Legendrian mountain range for $\W_m$ for $m\geq 0$ even,
  left, and $m>0$ odd, right.  Rotation number is plotted
  in the horizontal direction, Thurston--Bennequin number in the
  vertical direction. The numbers represent the number of Legendrian
  representatives for a particular $(\rot, \tb)$, and the signed arrows
  represent positive and negative stabilization.}
\label{fig:mainthm1}
\end{figure}

\begin{figure}
\centering
{\small
$
\xymatrixrowsep{2em}
\xymatrixcolsep{2em}
\vcenter{\xymatrix  @dr {
|m|+1 \ar[r]^{+} \ar[d]_{-} & 1 \ar[r]^{+} \ar[d]_{-} & 1 \ar@{.}[r] \ar@{.}[d]& \\
1 \ar[r]^{+} \ar[d]_{-} & 1 \ar@{.}[r] \ar@{.}[d]  & &\\
1 \ar@{.}[r] \ar@{.}[d] & &  & \\
&&&&
}}
\vcenter{\xymatrix  @dr {
 \left(\frac{|m|}2+1\right)^2 \ar[r]^{+} \ar[d]_{-} &  \left(\frac{|m|}2+1\right)  \ar[r]^{+} \ar[d]_{-}  &  \left(\frac{|m|}2+1\right)  \ar@{.}[r] \ar@{.}[d]& \\
 \left(\frac{|m|}2+1\right) \ar[r]^{+} \ar[d]_{-} &  1 \ar@{.}[r] \ar@{.}[d]  & &\\
 \left(\frac{|m|}2+1\right) \ar@{.}[r] \ar@{.}[d] &  & & \\
&&&&}}
$}
\vspace{-3cm}
\caption{Legendrian mountain range for $\W_m$, $m$ odd and negative, left, and even and negative, right.}
\label{fig:mainthm3}
\end{figure}
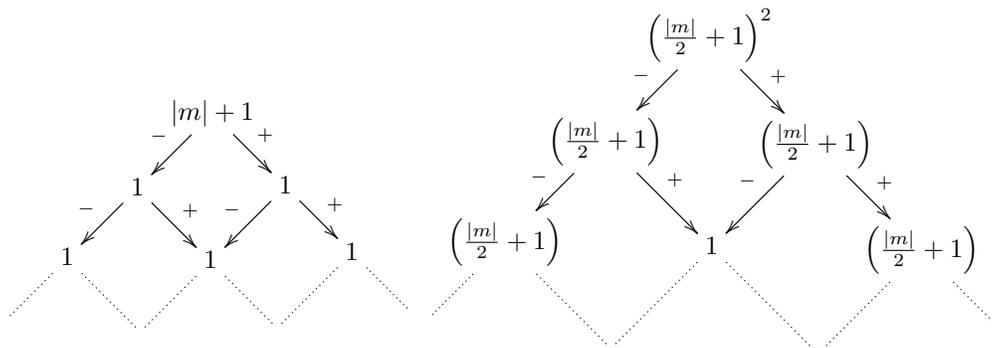

The proof follows the authors paper with Ng \cite{EtnyreNgVertesi13}.
The method of \cite{EtnyreNgVertesi13} for putting a Legendrian representation of twists knots in $(S^3,\xi_\textrm{st})$ in a standard form only uses that the contact structure is tight and the existence of certain discs in some decompositions of the knot complement. The contact structure we  consider is still tight, and all the discs still exist. More specifically, after showing that any Legendrian twist knot in $S^3$ has a Legendrian unknot with $tb=-1$ linking it in a particular way, the rest of the classification in \cite{EtnyreNgVertesi13} is done by analyzing Legendrian patterns in the smooth type of $\W_m$. For a Legendrian pattern that does not destabilize, this is done by finding two meridional disks in $V$ that cut $V$ into two copies of $\Rold$, one containing a 2-braid representing the $m$ twists in $\W_m$ and one containing the clasp shown in Figure~\ref{fig:whiteheadtop}. The Legendrian knots representing the 2-braid are classified in Theorem~\ref{thm:legbraid} above and the Legendrian representative for the clasp is understood in Section~4.2 of \cite{EtnyreNgVertesi13}. The proofs there cary over to our case (more or less verbatim) yielding the following result. 
\begin{theorem}\label{citethm}
Any Legendrian representation of $\W_m$ either destabilizes or isotopic to the Legendrian patterns depicted in Figure~\ref{fig:Legtwistpattern}.\qed
\end{theorem}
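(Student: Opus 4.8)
The plan is to follow the strategy of \cite{EtnyreNgVertesi13}, adapted to the solid torus $(V,\xi_V)$, and reduce the classification of $\Leg_V(\W_m)$ to the understanding of two elementary tangle pieces: a Legendrian $2$-braid carrying the $m$ half twists and a single fixed clasp that is independent of $m$. The structural point is that all of the $m$-dependence lives in the braid box, so that the classification of the clasp only has to be done once.

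First I would take an arbitrary Legendrian representative $Q$ of $\W_m$ and use that $\mathit{wrap}(\W_m)=2$, so that any meridional disc of $V$ meets $Q$ in at least two points. Using the standard convex surface machinery—Giroux realization together with the elimination of trivial bypasses, which is legitimate because $\xi_V$ is tight—I would isotope $Q$ and choose two disjoint convex meridional discs $D_1,D_2$, each meeting $Q$ in exactly two points, positioned so that one component of $V$ cut along $D_1\cup D_2$ contains precisely the $m$ half twists of Figure~\ref{fig:whiteheadtop} and the other contains precisely the clasp. After rounding corners and a further application of Giroux realization, each component is contactomorphic to $(\Rold,\xi_\Rold)$, so that the two pieces are open Legendrian tangles in the sense of Section~\ref{sec:braidsind2xi}.

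Next I would treat the two pieces separately. The twist piece is a Legendrian $2$-braid, and Theorem~\ref{thm:legbraid} together with its $2$-strand specialization Theorem~\ref{2starndbraid} gives its complete classification: up to isotopy it either destabilizes or is one of the standard forms built from the basic braids $\mathcal{X}$, $\mathcal{S}$ and $\mathcal{Z}$. The clasp piece is a fixed tangle whose Legendrian representatives, together with the dividing curves induced on its two boundary discs, are exactly those enumerated in Section~4.2 of \cite{EtnyreNgVertesi13}; that analysis uses only the tightness of the ambient structure and the existence of the relevant bypass discs, both of which persist here. Concatenating the allowed clasp and twist pieces, and invoking the destabilization criterion from the proof of Theorem~\ref{2starndbraid} (whenever a basic $\mathcal{X}$-braid is adjacent to a basic $\mathcal{S}$- or $\mathcal{Z}$-braid the pattern destabilizes, and adjacent $\mathcal{S}/\mathcal{Z}$ blocks can be normalized using the isotopy of Figure~\ref{fig:ssz}), shows that any $Q$ which does not destabilize must be Legendrian isotopic to one of the normal forms in Figure~\ref{fig:Legtwistpattern}.

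The main obstacle is the bookkeeping needed to promote the two meridional discs to disjoint convex discs in minimal position while keeping the twist and clasp regions cleanly separated. In particular one must rule out the configurations of dividing curves on $D_1,D_2$ that would prevent either cut piece from being a standard $(\Rold,\xi_\Rold)$ block; as above, this rests on tightness and a careful trivial-bypass argument. Once this decomposition is established, the remainder is the verification—carried out essentially verbatim from \cite{EtnyreNgVertesi13}—that the finitely many combinations of an allowed clasp with an allowed twist piece reduce, after the forced destabilizations, exactly to the list depicted in Figure~\ref{fig:Legtwistpattern}.
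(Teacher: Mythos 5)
Your proposal follows essentially the same route as the paper: both cut $V$ along two meridional discs into two copies of $\Rold$, one containing the $2$-braid with the $m$ half twists (handled by Theorem~\ref{thm:legbraid}) and one containing the clasp (handled by Section~4.2 of \cite{EtnyreNgVertesi13}), relying on the fact that the arguments there use only tightness and the existence of the relevant discs. Your write-up simply makes explicit the convex-surface bookkeeping that the paper defers to \cite{EtnyreNgVertesi13}, so it is a correct, slightly more detailed rendering of the paper's own argument.
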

\begin{figure}[h]
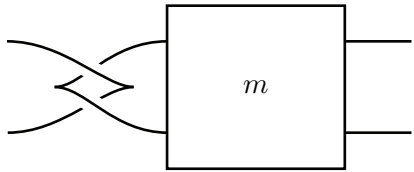

\centering
\begin{overpic}
{Legtwistpattern}
\put(89,29){$m$}
\end{overpic}
\caption{Legendrian representatives of the Whitehead pattern $\W_m$ with maximal Thurston-Bennequin number. The box contains $m$ copies of the basic ${X}$-braid if $m\ge0$, and  $\vert m\vert$ copies of the basic  ${S}$ and ${Z}$-braids if $m<0$.}
\label{fig:Legtwistpattern}
\end{figure}
\begin{proof}[Proof of Theorem~\ref{whiteheadpatternclass}]
We start with the case when $m<0$, so the box in Figure~\ref{fig:Legtwistpattern} contains $\vert m\vert$ basic ${\SS}$ and ${\ZZ}$-braids in arbitrary order. Depending on the orientation of the strands there are two types of ${\SS}$s and ${\ZZ}$s as depicted on Figure~\ref{fig:zsplusminus}. 
\begin{figure}[h]
\centering
\includegraphics
{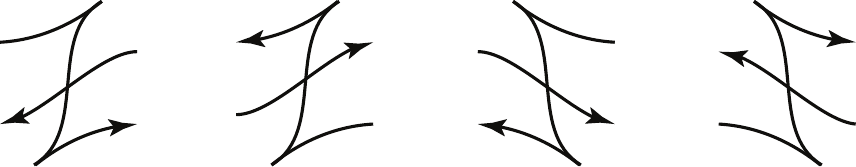}
\caption{The Legendrian braids ${\ZZ}^+$, ${\ZZ}^-$, ${\SS}^+$ and ${\SS}^-$, respectivelly}
\label{fig:zsplusminus}
\end{figure}
Thus the content of the box can be described by a word $w$ of length $\vert m\vert$ in
${\ZZ}^+$, ${\ZZ}^-$, ${\SS}^+$ and ${\SS}^-$, with alternating signs.
Let $\Q_w$ denote the Legendrian isotopy class corresponding to the word $w$.
In Figure~\ref{fig:ssz} it was shown that if $w'$ is obtained from $w$ by exchanging  an appearance of ${\ZZ}^\pm{\SS}^\mp{\SS}^\pm$ for ${\SS}^\pm{\SS}^\mp{\ZZ}^\pm$ or similarly exchanging  ${\ZZ}^\pm{\ZZ}^\mp{\SS}^\pm$ for ${\SS}^\pm{\ZZ}^\mp{\ZZ}^\pm$ then $\Q_w=\Q_{w'}$. 

Now consider the case when $m$ is even. The isotopies in Figure~\ref{fig:ssz} show that any two consecutive $+$ letters can be exchanged and similarly that any two consecutive $-$ letters can be exchanged. Thus the Legendrian pattern determined by a word $w$ is determined by the number of  ${Z}^+$s, ${Z}^-$s, ${S}^+$s and ${S}^-$ in the word --- which we denote by $z^+(w),z^-(w),s^+(w)$, and $s^-(w)$, respectively --- and whether or not the word begins with a $+$ or $-$ letter.  Figure~\ref{fig:movez} shows that $\Q_{w'{S}^\pm}=\Q_{{S}^\pm w'}$ and similarly reflecting the diagram about a horizontal axis shows that $\Q_{w'\mathcal{Z}^\pm}=\Q_{\mathcal{Z}^\pm w'}$. Thus we can always assume that the word begins with, say, a $+$ letter. 
\begin{figure}[h]
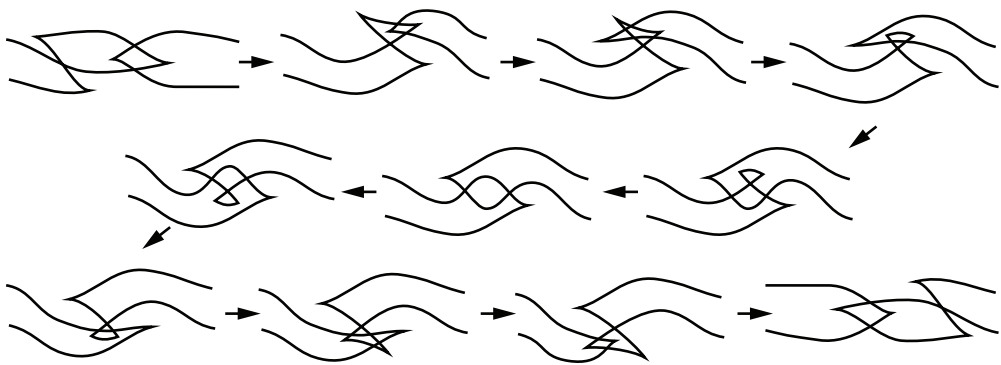

\centering
\begin{overpic}
{Sbacktofront}
\end{overpic}
\caption{The sequence of front diagrams showing how to move an $\mathcal{S}$ from one side of the cusp to the other. In other words that $\Q_{w'{S}^\pm}=\Q_{{S}^\pm w'}$.}
\label{fig:movez}
\end{figure}
We conclude that the Legendrian isotopy class of $\Q_w$ can be described by the quadruple of numbers $(z^+,z^-,s^+,s^-)=(z^+(w),z^-(w),s^+(w),s^-(w))$, and we have the relations 
\[
z^++s^+=\frac{\vert m\vert}2 \quad \text{and}\quad  z^-+s^-=\frac{\vert m\vert}2.
\] 
Thus the quadruple can be replaced by the pair $(z^+,z^-)$ and all Legendrian representations of the pattern $\W_m$ fall into the isotopy classes $\Q_{(z^+,z^-)}$ determined by two numbers $z^+$ and $z^-$ that lie between 0 and $\frac{\vert m\vert}2$.

Now notice that the standard Legendrian unknot $\mathcal{U}$ has Thurston-Bennequin number $-1$, thus for a Legendrian representative $\Q$ of $\W_m$ the satellite $\Q(\mathcal{U})$, as described in Section~\ref{sec:braidsatellite}, is a Legendrian representative of a twist knot with $(m-2)$ half twists. 
By Theorem 1.1 of \cite{EtnyreNgVertesi13} $\Q_{(z^+,z^-)}(\mathcal{U})$ is different from $\Q_{(\tilde z^+,\tilde z^-)}(\mathcal{U})$ unless $(\tilde z^+,\tilde z^-)=(z^+,z^-)$ or $(m-1-z^+,m-1-z^-)$. Similarly the knots $\Q_{(z^+,z^-)}(\St_+\mathcal{U})$ and $\Q_{(\tilde z^+,\tilde z^-)}(\St_+\mathcal{U})$ are distinct unless $(\tilde z^+,\tilde z^-)=(z^+,z^-)$ or $(m-z^+,m-z^-)$. Thus we see that all the $\Q_{(z^+,z^-)}$ must be distinct as patterns.

This establishes Item~(\ref{item4}) of the theorem for the maximal Thurston-Bennequin examples. For the non-maximal examples we notice that Theorem~\ref{citethm} and the proof of Lemma~\ref{2starndbraid} imply that a non-maximal Thurston-Bennequin invariant pattern will destabilize. We now note that in Figure~\ref{fig:szstab} it is shown that the stabilized knot $\St^\pm\Q_w$ is Legendrian isotopic to $\St^\pm\Q_{w'}$, where $w'$ is obtained from $w$ by exchanging any $\mathcal{\ZZ}^\pm$ for $\mathcal{\SS}^\pm$ or vice versa.
\begin{figure}[h]
\centering
\includegraphics
{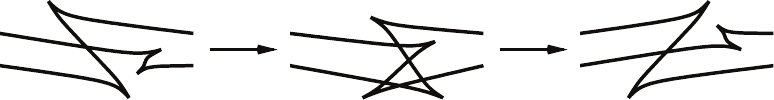}
\caption{Legendrian isotopy of a tangle.}
\label{fig:szstab}
\end{figure}
Thus to complete Item~(\ref{item4}) of the theorem we merely need to notice that by embedding the solid torus $V$ as a neighborhood of the maximal Thurston-Bennequin unknot in $S^3$ the classification of twist knots in \cite{EtnyreNgVertesi13} implies that $\St^\pm\Q_w$ is determined by $z^\pm$. 

For the $m<0$ odd case the proof is similar, but simpler. The relation of Figure~\ref{fig:szstab} translates to $\Q_{w\mathcal{Z}^\pm}=\Q_{\mathcal{Z}^\mp w}$ and $\Q_{w\mathcal{S}^\pm}=\Q_{\mathcal{S}^\mp w}$. Since $m$ is odd, this shows that $\Q_{w}=\Q_{w'}$ whenever they have the same number of $\mathcal{Z}$s and $\mathcal{S}$s. Let $\Q_z$ denote the Legendrian isotopy classes with $z$, $\ZZ$s (and $m-z$, $\mathcal{S}$s). Write $m=-2n+1$.
Then Theorem 1.1 of \cite{EtnyreNgVertesi13} says that $\Q_z(\U)$ and $\Q_{z'}(\U)$
are different for $z\neq z' \text{ or } n-z'$. To distinguish $\Q_z$ from $\Q_{n-z}$ we need to consider the satellies $\Q_z(\St_+ \U)=\Q_{z+2}(\U)$. Here we see that $\Q_z(\St_+ \U)\neq \Q_{z'}(\St_+ \U)$ whenever $z\neq z'\text{ or } n+1-z'$.  The $m<0$ odd case is now complete by noting that maximal Thurston-Bennequin invariant patterns have been classified and the non-maximal Thurston-Bennequin invariant patters are dealt with in the discussion of stabilization above.

For $m\ge 0$ the box of Figure~\ref{fig:Legtwistpattern} can be uniquely filled with $m$ copies of the basic ${\XX}$-braid, thus depending on the orientation of the strands there can be two Legendrian representatives of $\W_m$ with maximal Thurston-Bennequin number. For $m$ odd these representatives are distinguished by their relative rotation number. For $m$ even the representatives have the same relative rotation number, but can be distinguished using contact homology, see for example Proposition 5.11 of \cite{NgTraynor04}. After any stabilization the knot is isotopic to one that has a $\mathcal{Z}$ or $\mathcal{S}$ in it, thus the orientation of the cusp can be changed, and the knots become Legendrian isotopic.
\end{proof}

\subsection{Transverse patterns}
\label{sec:transversebraidsind2xi}
In this section we discuss transverse representations of patterns in $(V,\xi_V)$. Let $\Pat$ be a smooth pattern in $V$, and let $R$ be a transverse representative of $\Pat$ with transverse isotopy class $\RR$. The set of transverse isotopy classes of $\Pat$ is denoted by $\Trans(\Pat)$. In \cite{EtnyreHonda01b} it was shown that the relation between Legendrian representations and transverse representations is local.
\begin{theorem}[Etnyre and Honda 2001, \cite{EtnyreHonda01b}]\label{relation}
The transverse patterns $R$ and $R'$ are transverse isotopic if and only if they have Legendrian approximations $Q$ and $Q'$ that have Legendrian isotopic negative stabilizations.\qed
\end{theorem}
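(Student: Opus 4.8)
The plan is to reduce the statement to \emph{local} model computations, exploiting the fact emphasized above that the correspondence between Legendrian and transverse representatives takes place in a neighborhood of the knot on which $\xi_V$ is standard; the relations established for $(\R^3,\xi_{std})$ in \cite{EtnyreHonda01b} then apply verbatim to patterns in $(V,\xi_V)$, computed with front projections in the $1$--jet space of $S^1$. First I would set up the positive transverse push--off $T_+(Q)$ of a Legendrian pattern $Q$, obtained by the usual local resolution of the front near each cusp. Since a Legendrian isotopy of patterns on the compact manifold $V$ extends to an ambient contact isotopy (cf.\ the proof of Theorem~\ref{isoIScontacto}) and the push--off is natural under such isotopies, $T_+$ would descend to a well--defined map $\Leg_V(\Pat)\to\Trans(\Pat)$, the push--off being smoothly isotopic to $Q$.

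Two purely local facts will drive the argument, and I would import both unchanged from the $\R^3$ picture. First, a front computation in a single chart shows that a negative stabilization is invisible to the positive push--off, so $T_+(\St_-Q)$ is transversely isotopic to $T_+(Q)$, whereas a positive stabilization changes it. Second, every transverse pattern $R$ admits a Legendrian approximation: a $C^0$--small Legendrian perturbation $Q$ with $T_+(Q)$ transversely isotopic to $R$.

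With these in hand the backward implication is immediate: if $Q,Q'$ are Legendrian approximations of $R,R'$ and $\St_-^{m}Q$ is Legendrian isotopic to $\St_-^{m'}Q'$, then applying $T_+$ and using its invariance under negative stabilization yields $R\simeq T_+(Q)\simeq T_+(\St_-^{m}Q)\simeq T_+(\St_-^{m'}Q')\simeq T_+(Q')\simeq R'$, where $\simeq$ denotes transverse isotopy. For the forward implication I would fix a transverse isotopy $\{R_t\}_{t\in[0,1]}$ from $R$ to $R'$ and choose a generic one--parameter family of Legendrian approximations $\{Q_t\}$. For all but finitely many $t$ the knot $Q_t$ should vary by Legendrian isotopy, while at the exceptional values a pair of cusps is born or dies in the front, so that $Q_t$ changes by a stabilization or a destabilization. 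Because the positive push--off of $Q_t$ stays equal to $R_t$ and a positive stabilization would alter it, every such move must be a negative (de)stabilization. Hence $Q=Q_0$ and $Q'=Q_1$ would be joined by a chain of Legendrian isotopies and negative stabilizations/destabilizations, and a standard confluence argument (negative stabilizations commute with isotopies and with one another up to Legendrian isotopy, and destabilization is inverse to stabilization) would then produce integers $m,m'$ with $\St_-^{m}Q$ Legendrian isotopic to $\St_-^{m'}Q'$.

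The hard part will be the forward direction: making the one--parameter analysis of Legendrian approximations precise and, above all, verifying that only negative stabilizations can occur along the family. This sign control is the geometric heart of the statement, and it is exactly where locality is essential, since it lets the $\R^3$ analysis of \cite{EtnyreHonda01b} be carried out chart by chart inside $V$.
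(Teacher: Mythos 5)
Your proposal is correct and takes essentially the same route as the paper: the paper gives no independent proof but simply invokes the fact (from \cite{EtnyreHonda01b}) that the Legendrian--transverse correspondence is local, so the $\R^3$ result applies verbatim to patterns in $(V,\xi_V)$ via fronts in the $1$--jet space of $S^1$, exactly as in your first paragraph. The one--parameter analysis you sketch for the forward direction is precisely the adaptation of the argument of \cite{EpsteinFuchsMeyer01} that the paper mentions as an alternate proof, so your acknowledged ``hard part'' is the content of the cited works rather than a new gap.
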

One can also adapt the proof in \cite{EpsteinFuchsMeyer01} to give an alternate proof of this result.  
The above theorem implies that the relative self-linking number can be defined using the relative invariants for Legendrian knots in $(V,\xi_V)$:
\[\textrm{relsl}_{V}(\mathcal{R})=\textrm{reltb}_{V}(\Q)-\textrm{relrot}_{\xi_V}(\Q),\]
where $\Q$ is any Legendrian approximation of $\RR$.
A pattern $\Pat$ is called \emph{transverse simple} if its transverse representatives are distinguished by their relative self-linking numbers. 

Recall from Subsection~\ref{reembed} that $(V,\xi_V)$ can be embedded into itself as a standard neighborhood of a stabilization of its Legendrian core, thus a negative full twist of a pattern can be defined for transverse patterns, just as they were for Legendrian patterns in that subsection, to which we refer for the notation used below. The image of the trivial transverse pattern with $n$ horizontal strands under $\sigma$ and $\zeta$ are the transverse push-offs of $\mathcal{S}^n$ and $\mathcal{Z}^n$, respectively.

\subsubsection{Transverse braid patterns}
Using the results for Legendrian braids patterns in $(V,\xi_V)$ we can classify transverse braids in a solid torus. 
\begin{theorem}\label{thm:transversebraid}
Let $\Pat$ be a braid pattern in $V$, and let $\RR\in \Trans({\Pat})$ be a transverse isotopy class in $(V,\xi_V)$. Then  $\RR$ can be represented as the closure of some concatenation of the basic transverse braids shown in Figure~\ref{fig:opentransversebraidsimpl}. \end{theorem}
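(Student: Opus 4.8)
The plan is to transfer the Legendrian classification of Theorem~\ref{thm:legbraids1} to the transverse setting by means of the transverse push-off / Legendrian approximation correspondence recorded in Theorem~\ref{relation}. The starting observation is that every transverse braid $R$ representing $\Pat$ arises as the transverse push-off of a Legendrian braid. Indeed, $R$ admits Legendrian approximations, and since being a braid in $V$ is the open condition of (positive) transversality to the meridional disks $D^2\times\{\theta\}$, a sufficiently $C^0$-close Legendrian approximation $Q$ of $R$ is still a braid representing $\Pat$; moreover $R$ is transversely isotopic to the transverse push-off of $Q$.

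Next I would apply Theorem~\ref{thm:legbraids1} to the Legendrian braid $Q$: it is Legendrian isotopic to the closure of a (cyclic) concatenation of the basic Legendrian braids $X$, $S$, and $Z$ of Figure~\ref{fig:openbraidsimpl}. The key point is then that forming the transverse push-off is well defined on Legendrian isotopy classes and is local, in the sense that it is compatible with concatenation of braids and with taking closures in $V$. Consequently the transverse push-off of $Q$ --- hence $R$ up to transverse isotopy --- is the closure of the concatenation of the transverse push-offs of the building blocks appearing in the presentation of $Q$.

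Finally I would identify these transverse push-offs with the basic transverse braids of Figure~\ref{fig:opentransversebraidsimpl}: as already noted in the discussion preceding the statement, the transverse push-offs of $\mathcal{S}^n$ and $\mathcal{Z}^n$ (and likewise of the $X$-block) are exactly the pictured basic transverse braids, so concatenating them reproduces the desired normal form. Combining the three steps shows that $\RR$ is represented by the closure of a concatenation of basic transverse braids.

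The main obstacle I anticipate is the bookkeeping in the middle step: one must check carefully that the transverse push-off genuinely commutes with concatenation and closure within the solid torus $V$ (so that a push-off of a concatenation really is the concatenation of the push-offs, with no extra crossings or stabilizations introduced at the seams), and that the Legendrian approximation can be arranged to be a genuine braid rather than merely a curve transverse to the contact planes. Both points are geometrically clear from the front-projection description --- the push-off is obtained by the standard local replacement near cusps --- but making them precise requires a careful local analysis near the meridional disks and near the cusps of the front diagram. The actual identification of the push-offs of $X$, $S$, $Z$ with the pictured basic transverse braids is then routine.
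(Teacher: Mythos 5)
Your proposal is correct and follows essentially the same route as the paper's proof: take a Legendrian approximation of the transverse braid, decompose it into basic Legendrian braids via Theorem~\ref{thm:legbraids1} (the paper invokes the open version, Corollary~\ref{thm:legbraidsimpl}), use that the transverse push-off is compatible with concatenation and closure, and then identify the push-offs of the building blocks. One factual slip in your final step, though it does not derail the argument: the push-offs of the Legendrian blocks are \emph{not} ``exactly'' the pictured basic transverse braids, and this is not what the discussion preceding the statement establishes (that discussion only identifies the images of trivial patterns under $\sigma$ and $\zeta$ with push-offs of $\mathcal{S}^n$ and $\mathcal{Z}^n$); rather, the paper carries out this identification in Figures~\ref{fig:destabtransvers} and~\ref{fig:s1ktr}, where the push-off of $S(1,n-1)$ takes the expected form but the push-off of $Z(1,n-1)$ \emph{destabilizes}, i.e., is a concatenation involving the stabilization block $\St_{\textrm{tr}}$ of Figure~\ref{fig:opentransversebraidsimpl}. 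This is precisely why $\St_{\textrm{tr}}$ appears among the basic transverse braids in place of any ``$Z$-type'' block, and it is the geometric reason the transverse classification (Theorem~\ref{thm:transversed2s1}) collapses to something far simpler than the Legendrian one.
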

\begin{figure}[h]
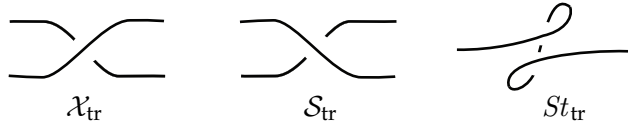

\centering
\begin{overpic}
{basictrans}
\put(22,0){$\mathcal{X}_{\textrm{tr}} $}
\put(112,0){$\mathcal{S}_{\textrm{tr}} $}
\put(202,0){$\St_{\textrm{tr}}$}
\end{overpic}
\caption{Basic transverse braids (the strands not depicted are assumed to be horizontal).}
\label{fig:opentransversebraidsimpl}
\end{figure}
\begin{proof}
Take a Legendrian approximation $Q$ of the representative $R$ of $\RR$. From Corollary~\ref{thm:legbraidsimpl}, $Q$ is built up from basic Legendrian braids depicted in Figure~\ref{fig:openbraidsimpl}. 
Thus $\RR$ is the transverse push off of this concatenation, which means it is transverse isotopic to the transverse push offs of the sequence of the basic Legendrian braids. Figures~\ref{fig:destabtransvers} and~\ref{fig:s1ktr} show that the transverse push offs can be further simplified to the basic transverse braids of Figure~\ref{fig:opentransversebraidsimpl}.
\begin{figure}[h]
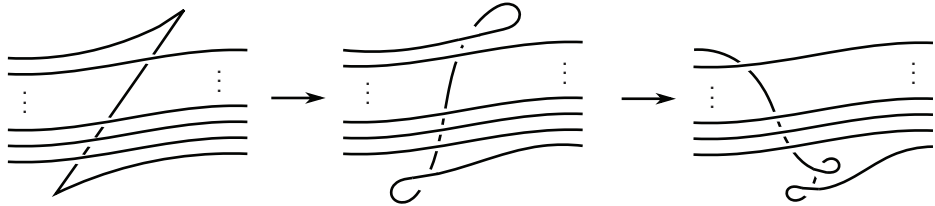

\centering
\begin{overpic}
{destabtransvers}
\end{overpic}
\caption{The transverse push off of $Z(1,n-1)$ destabilizes.}
\label{fig:destabtransvers}
\end{figure}
\begin{figure}
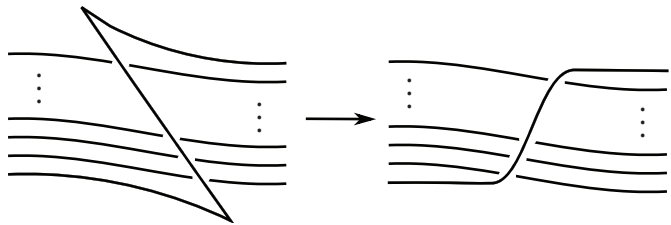

\centering
\begin{overpic}
{s1ktr}
\end{overpic}
\caption{The transverse push off of $S(1,n-1)$.}
\label{fig:s1ktr}
\end{figure}
\end{proof}
Unlike Legendrian braids, transverse braids are easy to understand.
\begin{theorem}\label{thm:transversed2s1}
Any braid pattern $\Pat$ in $V$ is transversally simple in $(V,\xi_V)$ and the maximal self-linking number is 
\[\overline{\textrm{relsl}}_{V}(\Pat)=\textit{writhe}(\Pat).\]
\end{theorem}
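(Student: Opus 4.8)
The plan is to build everything on the normal form of Theorem~\ref{thm:transversebraid}, which already writes an arbitrary transverse representative $\RR\in\Trans(\Pat)$ as the closure of a concatenation of the basic transverse braids $\mathcal{X}_{\textrm{tr}}$, $\mathcal{S}_{\textrm{tr}}$ and $\St_{\textrm{tr}}$, and then to read off the self-linking number additively. Recall that $\relsl_V$ is computed from a Legendrian approximation $\Q$ by $\relsl_V(\RR)=\reltb_V(\Q)-\relrot_V(\Q)$, and since $\reltb_V$ and $\relrot_V$ are additive under concatenation so is $\relsl_V$. Taking the Legendrian approximations built from the basic braids $\XX$ and $\SS$ and using the invariant table for the basic Legendrian braids, I would record that a block $\mathcal{X}_{\textrm{tr}}$ contributes $+1$ and a block $\mathcal{S}_{\textrm{tr}}$ contributes $1-n$ to $\relsl_V$, and that these two values agree with the exponent sums $+1$ and $1-n$ of the corresponding braid words, whereas each $\St_{\textrm{tr}}$ is a transverse stabilization and lowers $\relsl_V$ by $2$. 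This last point is exactly the content of Figures~\ref{fig:destabtransvers} and~\ref{fig:s1ktr}, which show that the transverse push-off of $\ZZ$ is stabilized while that of $\SS$ is not.

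For the maximal self-linking statement I would note that the exponent sum of the braid word underlying the stabilization-free part of any representative is conjugate to $w$ and hence equals the conjugation invariant $\textit{writhe}(\Pat)$. Combined with the block computation above, any representative in normal form satisfies $\relsl_V(\RR)=\textit{writhe}(\Pat)-2k$, where $k\ge 0$ is the number of $\St_{\textrm{tr}}$ blocks, which immediately gives the bound $\relsl_V(\RR)\le \textit{writhe}(\Pat)$. Equality is realized by the transverse push-off of any Legendrian representative written with only $\XX$ and $\SS$ blocks; such a representative exists for every braid word (realizing positive crossings by $\XX$ and negative crossings by $\SS$), so $\overline{\relsl}_V(\Pat)=\textit{writhe}(\Pat)$.

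For transverse simplicity I would argue that $\relsl_V$ determines the normal form up to transverse isotopy. Since $\textit{writhe}(\Pat)$ is fixed, the identity above shows that $\relsl_V$ determines the number $k$ of transverse stabilizations; because transverse stabilization is a well-defined operation, it then suffices to prove that the stabilization-free ($k=0$) representatives — the maximal ones — form a single transverse isotopy class. Two such representatives come from Legendrian braids in the letters $\XX$ and $\SS$ whose words are conjugate to $w$. By Theorem~\ref{relation} they are transverse isotopic as soon as their negative stabilizations are Legendrian isotopic, and I would establish this by moving between the two words using the braid relations (which for the positive part are realized by Legendrian isotopies via Theorem~\ref{thm:posbraid}, as in the proof of Theorem~\ref{thm:poslegendrianbraid}) together with the tangle moves of Figure~\ref{fig:szstab}, which become available after a single negative stabilization and allow $\SS$ and $\ZZ$ blocks to be slid past one another and interchanged.

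The step I expect to be the main obstacle is exactly this last one: proving that all maximal, stabilization-free transverse representatives are transversely isotopic. The difficulty is that the underlying Legendrian braids are genuinely distinct (Legendrian braid patterns are not simple, as the twist-knot and Whitehead examples in this section already show), so the argument cannot be made at the Legendrian level; the collapse occurs only after a negative stabilization, and one must verify that the braid relations together with the stabilized $\SS\leftrightarrow\ZZ$ and sliding moves genuinely suffice to connect any two conjugate words realizing $\Pat$. Once this transverse rigidity of the maximal representative is in hand, additivity of $\relsl_V$ and the stabilization count established above finish both claims at once.
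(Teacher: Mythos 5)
Your skeleton matches the paper's: use the normal form of Theorem~\ref{thm:transversebraid}, compute $\relsl_V$ additively over the blocks, observe that the exponent sum is a conjugation invariant so that $\relsl_V(\RR)=\textit{writhe}(\Pat)-2k$ with $k$ the number of stabilization blocks, and reduce transverse simplicity to the claim that the stabilization-free representatives form a single transverse isotopy class. The gap is exactly at the step you flag as the main obstacle, and it is a genuine one: you never prove that two stabilization-free representatives are transversely isotopic, and the route you propose --- passing to Legendrian approximations, invoking Theorem~\ref{relation}, and trying to connect the two underlying words by positive braid relations together with the stabilized moves of Figure~\ref{fig:szstab} --- is not carried out and would require solving a nontrivial word-problem of Markov type: one must show that any two words in the letters $\XX$, $\SS$ whose braid words are conjugate in $B_n$ (for instance words differing by insertion of $\sigma_i\sigma_i^{-1}$, which at the Legendrian level is a \emph{stabilized} braid, not a trivial one) become Legendrian isotopic after a single negative stabilization. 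Nothing developed in the paper up to this point establishes that, and your proposal gives no argument for it.

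The paper avoids this difficulty entirely by an observation your proposal misses: uniqueness should be proved directly in the transverse category, not the Legendrian one. A word in $\XX_{\textrm{tr}}$ and $\SS_{\textrm{tr}}$ is an honest topological braid word --- transverse crossings carry no cusp decoration --- and \emph{all} braid relations (not only the positive ones available to Legendrian fronts) as well as conjugation (cyclic permutation, i.e.\ changing the cutting disk) are realized by transverse isotopies in $V$. Hence the topological braid type alone determines a maximal self-linking representative up to transverse isotopy, and simplicity then follows from your destabilization count. This is the one idea you need to add; the rest of your argument (the $+1$, $1-n$, and $-2$ block contributions, the conjugation invariance of the exponent sum, and the realization of the maximum by stabilization-free words) is sound and agrees with the paper up to bookkeeping conventions.
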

\begin{proof} 
Let $\RR$ be a transverse patters representing some braid $\Pat$. 
According to Theorem~\ref{thm:transversebraid} a pattern $\RR$ can be written as a word in the basic transverse braids in Figure~\ref{fig:opentransversebraidsimpl}.  If we denote by $\textit{writhe}(\RR)$ the number of $\XX_{\textrm{tr}}$ minus the number of $\SS_{\textrm{tr}}$ then one easily computes that the relative self-linking number of $\RR$ is $\textit{writhe}(\RR)-\# \St_{\textrm{tr}}$. Moreover, any word in the $\mathcal{S}_{\mathrm{tr}}$ and $\mathcal{X}_{\mathrm{tr}}$ corresponds directly to a braid word in the braid group and vice versa. So if we ignore the $\St_{\textrm{tr}}$ in the expression of $\RR$ as a word in the basic transverse braids then we get a braid word $w$ in the topological braid group representing $\Pat$. Examining the braid relations one sees that for any other braid word $w'$ representing $\Pat$ we have that $\textit{writhe}(w)=\textit{writhe}(w')$ and thus we can talk about $\textit{writhe}(\Pat)$. We have shown that any transverse pattern $\RR$ representing  $B$ satisfies
\[
\textrm{relsl}_{\xi_V}(\RR)\leq \textit{writhe}(\Pat).
\]

Representing $\RR$ as a word in the basic transverse braids in Figure~\ref{fig:opentransversebraidsimpl}, we see that it has maximal self-linking number unless there are some $\St_{\textrm{tr}}$ terms. If there are such terms we can clearly destabilize $\RR$. So any transverse braid with non maximal self-linking number destabilizes. 

To see that two transverse braids with maximal self-linking number are transversely isotopic recall that a word in the basic transverse braids $\mathcal{S}_{\mathrm{tr}}$ and $\mathcal{X}_{\mathrm{tr}}$ corresponds to a word in the standard generators of the braid group and vice versa. In addition, all braid relations in the braid group correspond to transverse isotopies of the associated transverse braids. Similarly conjugations of the braid word representing the braid are also transverse isotopies in $V$. Thus the topological types of a maximal self-linking number transverse braid determines the braid up to isotopy. 
\end{proof}

Note that a full twist $\Delta$ is naturally a transverse braid. For $R$ a transverse knot in $\RR\in\Trans_{V}(\Pat)$ take an arbitrary opening $R_{\textrm{open}}$ with a convex disc that intersects $R$ in $n$ points. And let $\Delta R_{\textrm{open}}$ be the concatenation of $\Delta$ and $R_{\textrm{open}}$.
Choosing a different cutting disc will not change the transverse isotopy class of the resulting closed braid $\Delta\RR$. Here we are using the fact that any two opening of $R$ are related to conjugation with elements in the braid group and these conjugations do not change the result. The operation $\Delta$ on transverse braids is the inverse of applying the map $\sigma^1$ defined in Subsection~\ref{reembed} to a braid. 
\begin{lemma}\label{lem:deltastab}
For any transverse braid $\RR\in\Trans(\Pat)$ we have
\[\Delta(\sigma^1(\RR))=\sigma^1(\Delta\RR)=\RR.\]\qed
\end{lemma}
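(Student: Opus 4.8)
The plan is to reduce both displayed equalities to the single assertion that, on transverse braid patterns, the reembedding $\sigma^1$ is precisely a negative full twist, i.e. that $\sigma^1(\RR)=\Delta^{-1}\RR$ as transverse braids. Granting this, the two composites become $\Delta(\sigma^1(\RR))=\Delta(\Delta^{-1}\RR)$ and $\sigma^1(\Delta\RR)=\Delta^{-1}(\Delta\RR)$; since $\Delta^{\pm1}$ are genuine transverse braids given by the full twist, which is central in the braid group, concatenating a positive and a negative full twist along an opening cancels and returns $\RR$. So the heart of the matter is to identify the geometrically defined operation $\sigma^1$ with the combinatorially defined $\Delta^{-1}$.

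To make this identification I would work on a fixed opening $R_{\mathrm{open}}$ of a representative $R$ of $\RR$, obtained by cutting $V$ along a convex meridional disc meeting $R$ in $n$ points. Recall from Subsection~\ref{reembed} that $\sigma^1$ is the contact reembedding of $(V,\xi_V)$ onto the standard neighborhood $\nu(\St_-(C))$ of a single negative stabilization of the Legendrian core $C$, and that the image of the $n$ horizontal strands under $\sigma^1$ is the transverse push-off of $\SS^n$. Thus $\sigma^1(\RR)$ is the transverse closure of the concatenation of this push-off with $R_{\mathrm{open}}$. The geometric input is that the product framing of the inner torus $\nu(\St_-(C))$, measured in the ambient $V=\nu(C)$, differs from the product framing of $V$ by exactly one negative full twist, because a single negative stabilization drops the contact framing of the core by one; equivalently, the transverse push-off of $\SS^n$ is the transverse negative full twist. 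Combined with the transverse analogue of Lemma~\ref{lem:stabpattern} (passing to a Legendrian approximation and invoking Theorem~\ref{relation}), this yields $\sigma^1(\RR)=\Delta^{-1}\RR$, both sides being the closure of the negative full twist concatenated with $R_{\mathrm{open}}$.

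Finally I would carry out the cancellation transversely. By the analysis in the proof of Theorem~\ref{thm:transversed2s1}, the braid relations and conjugations witnessing $\Delta\cdot\Delta^{-1}=1$ in the braid group are all realized by transverse isotopies in $(V,\xi_V)$, so concatenating a positive and a negative full twist with $R_{\mathrm{open}}$ gives a transverse braid transversely isotopic to the closure of $R_{\mathrm{open}}$, namely $\RR$; reading the same computation in the opposite order handles $\sigma^1(\Delta\RR)=\RR$. Well-definedness is then automatic from the preceding paragraph of the text, since any two openings of $R$ differ by a braid conjugation, which is a transverse isotopy commuting with both $\Delta$ and $\sigma^1$. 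The step I expect to be the main obstacle is exactly the identification in the middle paragraph: matching the contact reembedding $\sigma^1$ against the combinatorial negative full twist $\Delta^{-1}$, one must verify that the push-off of $\SS^n$ is the negative full twist \emph{on the nose} and not off by a stabilization. This is where the destabilizations recorded in Figures~\ref{fig:destabtransvers} and~\ref{fig:s1ktr} are used, together with a short self-linking check via the front-projection formulas of Subsection~\ref{invariantsinV}; once the framing count is confirmed, the central cancellation above completes the argument.
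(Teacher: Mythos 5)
Your proposal is correct and takes essentially the same approach as the paper: the paper records Lemma~\ref{lem:deltastab} with no written proof, treating it as an immediate consequence of the observation stated just before it that the operation $\Delta$ on transverse braids is the inverse of the reembedding $\sigma^1$, and your argument is precisely a detailed verification of that observation---the framing count identifying $\sigma^1(\RR)$ with $\Delta^{-1}\RR$, using the push-off of $\mathcal{S}^n$ as the standard transverse negative full twist---followed by cancellation of the two full twists through transverse isotopies. Nothing in your route differs in substance from what the paper leaves implicit.
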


\subsubsection{Transverse Whithead patterns}
As a consequence of the Legendrian classification of Whitehead patterns and the relation between Legendrian and transverse knots recalled in Theorem~\ref{relation} we can classify transverse Whitehead patterns. 
\begin{theorem}
Let $\W_m$ be a smooth pattern with $m$ half twists. Then
\begin{enumerate}
\item For $m\ge 0$ or $m<0$ odd, $\W_m$ is transversely simple. Moreover the maximal relative self-linking number equals 
\[
\relsl_{V}=
\begin{cases} 
1-m & m\ge 0 \text{ is even,}\\ 
-m-2 & m>0 \text{ is odd,}\\ 
-3 & m<0  \text{ is odd.} 
\end{cases}
\]
\item For $m<0$ even, $\W_m$ has $\frac{|m|}2+1$ transverse representatives with maximal self-linking number $\relsl_{V}=1$, and any nontrivial stabilization of these maximal representatives is isotopic (for a fixed number of stabilizations).\qed
\end{enumerate}
\end{theorem}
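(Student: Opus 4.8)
The plan is to derive the entire statement as a formal consequence of the Legendrian classification in Theorem~\ref{whiteheadpatternclass} together with the Legendrian--transverse correspondence of Theorem~\ref{relation}. The two inputs I would use repeatedly are: first, the transverse push-off of a Legendrian pattern $\Q$ has relative self-linking number $\relsl_V=\reltb_V(\Q)-\relrot_V(\Q)$, and this push-off is unchanged by negative Legendrian stabilization; second, a transverse stabilization of a transverse pattern corresponds, under Legendrian approximation, to a single positive Legendrian stabilization, and hence lowers $\relsl_V$ by $2$. By Theorem~\ref{relation}, two transverse patterns coincide exactly when their Legendrian approximations share a common negative stabilization, so the transverse classification is precisely the Legendrian mountain range of Theorem~\ref{whiteheadpatternclass} read modulo negative stabilization.

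First I would reduce to the maximal level. Since Theorem~\ref{whiteheadpatternclass} shows that every non-maximal Legendrian representative of $\W_m$ destabilizes, any transverse representative $\RR$ is the push-off of some $\Q$ that destabilizes to a maximal one; writing $\Q=\St_+^a\St_-^b(\Q_{\max})$ after destabilizing and then discarding the negative stabilizations (which do not affect the push-off), one sees that $\RR$ is the $a$-fold transverse stabilization of a maximal transverse representative. Thus it suffices to count transverse representatives at the maximal self-linking level and to track how they behave under positive Legendrian stabilization.

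Next I would treat the transversely simple cases $m\ge 0$ and $m<0$ odd. For $m\ge 0$ even the two maximal Legendrian representatives have $(\reltb_V,\relrot_V)=(1-m,0)$ and become isotopic after a single negative stabilization, so their push-offs agree and there is a unique maximal transverse representative, with $\relsl_V=(1-m)-0=1-m$. For $m>0$ odd the two maximal Legendrians carry $\relrot_V=\pm1$, giving candidate self-linking numbers $-m-4$ and $-m-2$; the relation $\St_-(\Q_{+1})=\St_+(\Q_{-1})$ shows that the push-off of the $\relrot_V=1$ pattern is the transverse stabilization of the push-off of the $\relrot_V=-1$ pattern, so the maximal transverse representative is unique with $\relsl_V=-m-2$. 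For $m<0$ odd the $|m|+1$ maximal Legendrians all become isotopic after one negative stabilization, so their push-offs coincide, giving a unique maximal transverse representative with $\relsl_V=-3$. In each case the unique maximal representative together with the reduction of the previous paragraph shows that every self-linking level carries a single transverse class, so $\W_m$ is transversely simple.

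Finally, the case $m<0$ even is where the main obstacle lies. Here Theorem~\ref{whiteheadpatternclass} gives $\left(\tfrac{|m|}2+1\right)^2$ maximal Legendrians at $(\reltb_V,\relrot_V)=(1,0)$ that collapse to $\tfrac{|m|}2+1$ classes after any positive number of negative stabilizations; passing to push-offs yields exactly $\tfrac{|m|}2+1$ maximal transverse representatives, each with $\relsl_V=1$. To see that any nontrivial transverse stabilization collapses them, I would use that a transverse stabilization is a positive Legendrian stabilization, together with the last assertion of Theorem~\ref{whiteheadpatternclass} that after at least one positive and one negative stabilization all these Legendrians become isotopic: this forces the common negative stabilizations of the positively stabilized approximations to agree, so by Theorem~\ref{relation} all the once-stabilized transverse representatives coincide. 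The delicate point throughout is keeping the bookkeeping of positive versus negative stabilizations consistent between the Legendrian and transverse sides --- matching the horizontal (rotation) collapse under negative stabilization with the transverse count, and the vertical collapse under positive stabilization with transverse stabilization --- rather than any new geometric input.
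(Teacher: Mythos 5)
Your proposal is correct and is exactly the argument the paper intends: the paper states this theorem with no written proof (just a \qed), presenting it as an immediate consequence of the Legendrian classification in Theorem~\ref{whiteheadpatternclass} together with the correspondence of Theorem~\ref{relation}, which is precisely the reduction you carry out. Your case-by-case bookkeeping (push-offs unchanged by negative stabilization, transverse stabilization equals positive Legendrian stabilization, and the collapse counts read off from the stabilization behavior in Theorem~\ref{whiteheadpatternclass}) correctly fills in the details the authors left implicit.
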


\section{Satellites}
\label{sec:braidsatellite}

Topological satellite knots were discussed in Subsection~\ref{subsec:satknot} and we refer the reader to that section to review notation and terminology. In this section we will begin by giving a general discussion of Legendrian satellite knots and what one can say about them in terms of the underlying pattern and companion knots. We then discuss the specific examples of Legendrian braids, cables and Whitehead doubles. We end this section with a discussion of transverse satellite knots. 

\subsection{Legendrian satellites}\label{subsec:legsatellites}
Let $\Lc\in \Leg(\K)$ be a Legendrian isotopy class of the smooth knot type $\K$. Choose a specific Legendrian knot $L$ representing $\Lc$ and a standard contact neighborhood $\nu(L)$ of $L$.
Denote by  $\tau$ the Thurston--Bennequin framing for $L$. Note that $\tau=\lambda+\tb(L)\mu$, where $\lambda$ and $\mu$ give the standard longitude-meridian basis for $\partial \nu(L)$ as discussed in Subsection~\ref{subsec:satknot}. Let $Q$ be a Legendrian pattern representing an isotopy class $\Q\in\Leg_{V}(\Delta^{-\tb(\Lc)}\Pat)$.  Denote by $\psi$ the unique (up to contact isotopy) contactomorphism from $(V,\xi_V)$ to $(\nu(L),\xi\vert_{\nu(L)})$. The contactomorphism $\psi$ necessarily maps $\Gamma_{\partial V}$ to $\Gamma_{\nu(L)}$, thus the product framing $\lon$ on $V$ is mapped to $\tau$. Denote the image of $Q$ under the contactomorphism $\psi$ by $Q(L)$. Then $Q(L)$ is a Legendrian knot in the smooth class 
\[
(\Delta^{-\tb(L)}\Pat)_{\tau}(\K)=(\Delta^{-\tb(L)}\Pat)_{\lambda+\tb(L)\mu}(\K)=\Pat_\lambda(\K)=\Pat(\K).
\]
A simple application of Lemma~\ref{isoIScontacto}, and the fact that standard neighborhoods of Legendrian knots are well-defined up to contact isotopy, shows that $Q(L)$ is well defined up to Legendrian isotopy independent of the choice of $L\in\Lc$ and $Q\in \Q$.
\begin{lemma} 
Let $L_0$ and  $L_1$ be two Legendrian knots in the Legendrian isotopy class $\Lc$, and choose contactomorphisms $\psi_0\colon (V,\xi_V)\to (\nu(L_0),\xi\vert_{\nu(L_0)})$ and $\psi_1\colon (V,\xi_V)\to (\nu(L_1),\xi\vert_{\nu(L_1)})$ for some  standard contact neighborhoods $\nu(L_0)$ and $\nu(L_1)$. Let $Q_0$ and  $Q_1$ be two Legendrian patters in the Legendrian isotopy class $\Q\in\Leg_{V}(\Delta^{-\tb(\Lc)}\Pat)$. Then $Q_0(L_0)$ and $Q_1(L_1)$ are Legendrian isotopic. \qed
\end{lemma}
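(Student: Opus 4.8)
The plan is to build a single chain of Legendrian isotopies in $S^3$ connecting $Q_0(L_0)=\psi_0(Q_0)$ to $Q_1(L_1)=\psi_1(Q_1)$ by changing one piece of data at a time. First I would dispose of the dependence on the chosen pattern representative. Since $Q_0$ and $Q_1$ both represent the class $\Q\in\Leg_{V}(\Delta^{-\tb(\Lc)}\Pat)$, there is a Legendrian isotopy between them in $(V,\xi_V)$; as $\psi_1$ is a contactomorphism onto $\nu(L_1)\subset S^3$, it carries this isotopy to a Legendrian isotopy of $\psi_1(Q_0)$ to $\psi_1(Q_1)$ inside $\nu(L_1)$, hence in $S^3$. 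Thus it suffices to prove that $\psi_0(Q_0)$ and $\psi_1(Q_0)$ are Legendrian isotopic, and the problem is reduced to understanding the dependence on the companion $L$ and on the identifying contactomorphism $\psi$.

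Next I would change the companion. Because $L_0$ and $L_1$ lie in the same Legendrian class $\Lc$, the Legendrian isotopy between them extends, on the compact manifold $S^3$, to an ambient contact isotopy $\Phi_t$ with $\Phi_0=\id$ and $\Phi_1(L_0)=L_1$. Then $\Phi_1(\nu(L_0))$ is a standard contact neighborhood of $L_1$, and $\Phi_1\circ\psi_0$ is a contactomorphism from $(V,\xi_V)$ onto it sending the product framing $\lon$ to the contact framing. Since $\Phi_t$ is an ambient contact isotopy, it carries $\psi_0(Q_0)$ to $(\Phi_1\circ\psi_0)(Q_0)$, so these two are Legendrian isotopic. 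It therefore remains to compare the two contactomorphisms $\Phi_1\circ\psi_0$ and $\psi_1$, whose targets are now the two standard neighborhoods $\Phi_1(\nu(L_0))$ and $\nu(L_1)$ of the \emph{same} knot $L_1$.

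Finally I would invoke the two structural facts quoted above. The standard neighborhoods of a Legendrian knot are well-defined up to contact isotopy, so there is an ambient contact isotopy $g_t$ of $S^3$ with $g_1(\Phi_1(\nu(L_0)))=\nu(L_1)$; and the contactomorphism from $(V,\xi_V)$ onto a fixed standard neighborhood is unique up to contact isotopy, so $g_1\circ\Phi_1\circ\psi_0$ and $\psi_1$ differ by a contact isotopy of $\nu(L_1)$. Composing these ambient contact isotopies shows that $(\Phi_1\circ\psi_0)(Q_0)$ and $\psi_1(Q_0)$ are Legendrian isotopic, completing the chain $\psi_0(Q_0)\simeq(\Phi_1\circ\psi_0)(Q_0)\simeq\psi_1(Q_0)\simeq\psi_1(Q_1)$. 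The genuinely substantive inputs are precisely these last two facts—well-definedness of the standard neighborhood and uniqueness of its parametrization up to contact isotopy, which rest on Theorem~\ref{isoIScontacto} and Corollary~\ref{cor:iso}—so the main obstacle is not the bookkeeping of the ambient isotopies but ensuring that the uniqueness of $\psi$ genuinely holds; everything else is the routine extension of Legendrian isotopies to global contact isotopies on the compact manifold $S^3$.
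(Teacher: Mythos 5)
Your proof is correct and takes essentially the same route as the paper: the paper dispatches this lemma with a one-sentence remark citing exactly the two facts you isolate, namely Theorem~\ref{isoIScontacto} (applied to $(V,\xi_V)$ via Lemma~\ref{lem:contactstr} and Corollary~\ref{cor:iso}) and the well-definedness of standard neighborhoods of Legendrian knots up to contact isotopy. Your chain of ambient contact isotopies is the intended elaboration of that remark.
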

This means that the Legendrian class $\Q(\Lc)$ is well-defined.
The classical invariants of Legendrian satellites  can be computed as follows.
\begin{lemma}\label{lem:legbraidsat} 
Let $n$ denote the winding number of the Legendrian pattern $\Q$. Then 
\[
\tb(\Q(\Lc))=n^2\cdot \tb(\Lc)+\reltb_{V}(\Q)
\]
and
\[
\rot(\Q(\Lc))=  n\cdot \rot(\Lc)+\relrot_{V}(\Q).
\]
\end{lemma}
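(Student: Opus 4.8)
The plan is to establish the two identities separately: the Thurston--Bennequin formula will follow from the bilinearity of the linking form inside $\nu(L)$, and the rotation--number formula from comparing two trivializations of $\xi$ over $\nu(L)$. The common starting point is that, since $\psi$ is a contactomorphism carrying the product framing $\lon$ of $V$ to the contact framing $\tau=\lambda+\tb(\Lc)\mu$ of $\partial\nu(L)$, every contact-geometric quantity attached to $Q$ inside $(V,\xi_V)$ transports verbatim to $Q(L)=\psi(Q)$ inside $(\nu(L),\xi)$; the only thing that changes is the ambient framing against which we normalize.

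For the first identity I would rewrite $\tb$ as a self-linking number: $\tb(\Q(\Lc))=\lk(Q(L),Q(L)^c)$, where $Q(L)^c=\psi(Q^c)$ is the contact push-off and $\lk$ is computed in $S^3$ against the Seifert framing. Both $Q(L)$ and $Q(L)^c$ lie in $\nu(L)$ and represent $n\,[L]\in H_1(\nu(L))$, where $n$ is the winding number. The key step is the satellite framing formula: for curves $\gamma_1,\gamma_2\subset\nu(L)$ with $[\gamma_i]=n_i[L]$, one has
\[
\lk(\gamma_1,\gamma_2)=n_1 n_2\,\lk(L,L^{\tau})+\lk_{\tau}(\gamma_1,\gamma_2),
\]
where $L^{\tau}$ is the push-off of $L$ in the framing $\tau$ and $\lk_{\tau}$ is the relative linking measured inside $\nu(L)$ using $\tau$. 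Here $\lk(L,L^{\tau})=\tb(\Lc)$ by definition of the Thurston--Bennequin invariant, while $\lk_{\tau}(Q(L),Q(L)^c)=\reltb_{V}(\Q)$, since $\reltb_V$ is exactly the relative self-linking of $Q$ against the product framing $\lon$ (equivalently, its front-diagram value) and $\psi$ sends $\lon$ to $\tau$. Setting $n_1=n_2=n$ yields $\tb(\Q(\Lc))=n^2\tb(\Lc)+\reltb_{V}(\Q)$. The factor $n^2$ is forced by the bilinearity of $\lk$ in the homology classes.

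For the rotation number I would compare trivializations of $\xi$. Pushing forward the $S^1$--invariant (product) trivialization $\mathbf{t}_0$ of $\xi_V$ used to compute $\relrot_V$, the winding of the positive tangent of $Q(L)$ against $\mathbf{t}_0$ is $\relrot_{V}(\Q)$, whereas $\rot(\Q(\Lc))$ is its winding against a global trivialization $\mathbf{t}_1$ coming from a Seifert surface of $Q(L)$. For any closed oriented $\gamma\subset\nu(L)$ the difference of the two windings depends only on $[\gamma]$ and equals $\langle\delta,[\gamma]\rangle$, where $\delta\in H^1(\nu(L);\Z)\cong\Z$ is the primary difference class of $\mathbf{t}_0$ and $\mathbf{t}_1$. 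Evaluating on the core $C$ (whose relative rotation in $V$ is $0$, its front being a single cusp-free strand) gives $\langle\delta,[L]\rangle=\rot(\Lc)$; evaluating on $Q(L)$, where $[Q(L)]=n[L]$, then gives $\rot(\Q(\Lc))-\relrot_{V}(\Q)=n\,\rot(\Lc)$, as desired. The linear (rather than quadratic) dependence on $n$ here reflects that $\delta$ is an element of $H^1$ paired once with $[Q(L)]=n[L]$.

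The hard part will be bookkeeping the framing and trivialization conventions so that the two ``relative'' invariants $\reltb_V$ and $\relrot_V$ --- which the paper defines through front projections --- are correctly identified with the homological quantities $\lk_{\tau}(Q(L),Q(L)^c)$ and the $\mathbf{t}_0$--winding, and in particular that $\psi$ indeed carries $\lon$ to $\tau$ and $\mathbf{t}_0$ to the product trivialization. A more computational alternative is to build the front of $\Q(\Lc)$ from parallel, vertically translated copies of the front of $L$ with the pattern front $Q_{\mathrm{open}}$ inserted, and to read off $\tb$ and $\rot$ from the writhe and cusps; I would keep this in reserve as a consistency check, but the cusp-and-orientation accounting there is exactly the messy part that the homological argument above avoids.
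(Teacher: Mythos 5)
Your proof is correct, but it takes a genuinely different route from the paper. The paper proves Lemma~\ref{lem:legbraidsat} by brute-force front-diagram computation: it builds the front of $\Q(\Lc)$ by inserting the pattern front into $w_+$ positively and $w_-$ negatively oriented vertically translated copies of the companion front, then counts cusps ($u$, $d$, $c$) and signed crossings ($x_\pm$) of the composite diagram, with the $n^2$ and $n$ coefficients emerging from the combinatorics (e.g.\ $x_+(Q(L))$ picks up terms like $(w_+^2+w_-^2)x_+(L)+2w_+w_-x_-(L)$). You instead derive the $\tb$ formula from the bilinearity of the linking form via the satellite framing identity $\lk(\gamma_1,\gamma_2)=n_1n_2\,\lk(L,L^\tau)+\lk_\tau(\gamma_1,\gamma_2)$, and the $\rot$ formula from the obstruction-theoretic difference class $\delta\in H^1(\nu(L);\Z)$ between the pushed-forward product trivialization and a global one, evaluated on $[Q(L)]=n[L]$; this cleanly explains \emph{why} one coefficient is quadratic and the other linear, avoids the orientation bookkeeping entirely, and generalizes beyond $(S^3,\xi_{std})$. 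What your route costs is the pair of identifications you flag yourself: that the paper's front-projection definitions of $\reltb_V$ and $\relrot_V$ agree with the intrinsic quantities $\lk_{\lon}(Q,Q^c)$ and the winding relative to the $S^1$-invariant trivialization. These are true (they are essentially Ng's observation, which the paper cites when defining the relative invariants) and are proved by the same local front computations as in $\R^3$, carried out in the annular front of $V$; but be aware they are not entirely automatic --- for instance, the naive ``closure'' embedding of Figure~\ref{fig:cb} takes the product framing to the $\tb=-1$ framing of the unknot rather than the Seifert framing, and is not a contact embedding, so one must verify the identification intrinsically in $V$ (crossings of $Q$ with its Reeb push-off: two per crossing, one negative per cusp) rather than by closing up the front in the plane. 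With that verification in place, your argument is a complete and more conceptual proof; the paper's computation then serves, as you say, as a consistency check.
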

\begin{proof} 
These formulas were first observed in  \cite{NgThesis} (see also \cite{NgTraynor04}) where it was noted that they follow easily from front diagrams. For the sake of completeness 
we give the computation here.  

A front projection for a Legendrian representative of the Legendrian satellite $\Q(\Lc)$ can be constructed as follows. Take a Legendrian representative  $L\in \Leg(\K)$, that has a straight horizontal segment $c$ pointing from left to right in its front projection $\pi(L)$ with a neighborhood $N(c)\cong[-\varepsilon,\varepsilon]\times c$ that intersects $\pi(L)$ only in $c$. Pick an open pattern $Q_{\textit{open}}$ with closure $Q \in \Leg_{V}(\Delta^{-\tb(\Lc)}\Pat)$, and which intersects the boundary in $w$ points. Insert a front projection of $Q_\textit{open}$ into $N(c)$ and add oriented parallel copies of $\pi(L)-c$ that match $Q_\textit{open}$ at $[-\varepsilon,\varepsilon]\times\partial c$. See Figure \ref{fig:satex}. 
\begin{figure}[h]
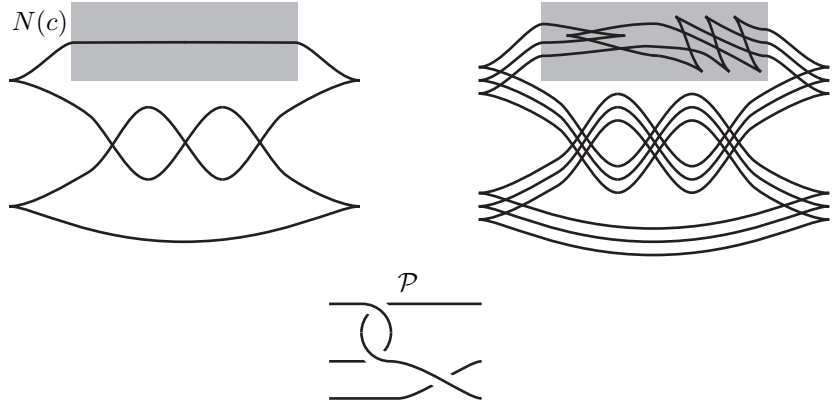

\small
\begin{overpic}
{satex}
\put(1,143){$N(c)$}
\put(147,44){$\Pat$}
\end{overpic}
\caption{A Legendrian knot $L$ depicted on the left together with the neighborhood $N(c)$ shaded. The (open) satellite pattern $\Pat_\textit{open}$ is shown in the middle. On the right is a satellite where the open pattern is inserted in the shaded region. Notice that since $\tb(L)=1$ the open Legendrian pattern represents $\Delta^{-1}\Pat$.}
\label{fig:satex}
\end{figure}
Let $w_+$ and $w_-$ denote the number of positively and negatively orieneted copies of $L$. Then the total wrapping number is $w=w_++w_-$ and the winding number is $n=w_+-w_-$. 

In this projection of $Q(L)$, the number of cusps can be explicitly computed. 
\begin{align*}
u(Q(L))&=w_+\cdot u(L)+w_-\cdot d(L)+u(Q),\\
d(Q(L))&=w_+\cdot d(L)+w_-\cdot u(L)+d(Q),\\
c(L)&=w\cdot c(L)+c(Q).
\end{align*}
Thus the rotation number $\rot(Q(L))=\frac{1}2\left(d(Q(L))-u(Q(L))\right)$ is indeed
\[\frac12 \left(w_+\cdot\left(d(L)-u(L)\right)-w_-\cdot\left(d(L)-u(L)\right)+d(Q)-u(Q)\right)=n\cdot\rot(L)+\relrot_V(Q)\]
Additionally to the usual notation let $x_+$, respectively $x_-$, denote the number of positive, respectively negative, crossings of a projection. Then
\begin{align*}
x_+(Q(L))=&
(w_+w_-)c(L)+(w_+^2 +w_-^2)x_+(L)+(2w_+w_-)x_-(L)+x_+(Q), \\
x_-(Q(L))=&
\left(\binom{w_+}{2} +\binom{w_-}{2}\right)c(L)\\
&\qquad +(2w_+w_-)x_+(L)+(w_+^2+w_-^2)x_-(L)+x_-(Q). 
\end{align*}
The writhe of $Q(L)$ is
\begin{align*}
\mathit{writhe}(Q(L))&=x_+(Q(L))-x_-(Q(L))\\&=\frac{w-n^2}2 c(L)+n^2(x_+(L)-x_-(L))+x_+(Q)-x_-(Q).
\end{align*}
And then 
\[
\tb(Q(L))=\mathit{writhe}(Q(L))-\frac12c(Q(L))=n^2\cdot \tb(L)+\reltb_{V}(Q).
\]
\vskip -.3in
\end{proof} 

To further understand  Legendrian satellites 
we recall from \cite{EtnyreHonda05} that a knot type $\K$  is \emph{uniformly thick}
if all solid tori representing $\K$ can be contained in another solid torus that is a
non-thickenable, standard neighborhood of a Legendrian representative $L$  of $\K$ with maximal Thurston--Bennequin number.
In the following we use the notation 
\[
\tt=\overline{\tb}(\K)
\]
for the maximal Thuston-Bennequin number of Legendrian knots in $\Leg(\K)$. 

For a while we only need to work with a weaker assumption. A knot is called \emph{thickenable} if all solid tori representing $\K$ can be contained in another solid torus that is the standard neighborhood of a Legendrian representative of $\K$.

\begin{lemma}\label{lem:utpsatelite}
Suppose $\K$ is thickenable and let $\Pat$ be a pattern in $V$. Then any element of  $\Leg(\Pat(\K))$
can be written as $\Q(\Lc)$, where $\Lc$ is a Legendrian representation of $\K$ and $\Q\in (\Delta^{-\tb(\Lc)}\Pat)$.

Moreover if $\Lc$ is uniformly thick then we can assume that $\Lc\in\Leg(\K;{\tt})$ and $\Q\in\Leg_{V}(\Delta^{-\tt}\Pat)$.
\end{lemma}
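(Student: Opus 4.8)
The plan is to establish surjectivity of the Legendrian satellite construction by locating, inside the complement of an arbitrary Legendrian representative of $\Pat(\K)$, a convex solid torus whose core realizes $\K$, and then enlarging it, via thickenability, to a standard neighborhood of a Legendrian companion. First I would fix an arbitrary $J\in\Leg(\Pat(\K))$. Since $\K$ is not the unknot and $\Pat$ is not the core of $V$, every smooth representative of $\Pat(\K)$ carries the satellite incompressible torus $T_S$ of Subsection~\ref{subsec:satsymmetry}; concretely $T_S=\psi(\partial V)$ bounds the solid torus $W_0=\psi(V)$, whose core is a copy of $\K$ and which contains $J$. After a $C^\infty$-small isotopy I would make $T_S$ convex, obtaining a convex torus $T$ still bounding a solid torus $W$ with core in the class $\K$ and with $J\subset W$. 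Because $(S^3,\xi_{std})$ is tight, $\xi_{std}|_W$ is tight, so $(W,\xi_{std}|_W)$ is a tight contact solid torus with convex boundary whose core realizes $\K$, i.e.\ a solid torus representing $\K$ in the sense of \cite{EtnyreHonda05}.

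Next I would feed $W$ into the thickenability hypothesis: there is a Legendrian representative $L$ of $\K$ and a standard neighborhood $\nu(L)\supset W$. Since $W$ and $\nu(L)$ share the core class $\K$, the region $\nu(L)\setminus\mathrm{int}(W)$ is a $T^2\times I$ disjoint from $J$, so $\partial\nu(L)$ is isotopic to $T_S$ in the complement of $J$ and $J\subset\nu(L)$ still holds. Identifying $(\nu(L),\xi_{std}|_{\nu(L)})$ with $(V,\xi_V)$ through the contactomorphism $\psi_L$ of Subsection~\ref{subsec:legsatellites}, the knot $Q:=\psi_L^{-1}(J)$ is a Legendrian pattern and, by construction, $J=\Q(\Lc)$ with $\Lc=[L]$. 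To pin down the smooth type of $\Q$, I would use that $\psi_L$ carries the product framing $\lon$ to the contact framing $\tau=\lambda+\tb(L)\mu$: reading $J$ against $\tau$ rather than the Seifert framing $\lambda$ converts $\Pat$ into $\Delta^{-\tb(L)}\Pat$ by the change-of-framing formula $\Pat_\lambda(\K)\cong(\Delta^{-\tb(L)}\Pat)_{\lambda+\tb(L)\mu}(\K)$ of Subsection~\ref{subsec:satknot}. Hence $\Q\in\Leg_V(\Delta^{-\tb(L)}\Pat)$, as claimed.

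For the \emph{moreover} clause I would run the same argument but invoke the stronger uniform thickness hypothesis, which guarantees that $W$ sits inside a standard neighborhood of a \emph{maximal} Thurston--Bennequin Legendrian representative of $\K$. Choosing $L$ to be such a representative yields $\tb(L)=\tt$, and the construction above then produces $\Lc\in\Leg(\K;\tt)$ together with $\Q\in\Leg_V(\Delta^{-\tt}\Pat)$.

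The step I expect to be the main obstacle is the passage from the topological satellite torus to a convex solid torus to which thickenability genuinely applies. One must check that after making $T_S$ convex the enclosed piece $W$ is a bona fide tight contact solid torus representing $\K$, and, more delicately, that the thickening supplied by the hypothesis can be arranged to leave $J$ fixed inside $W$ throughout (so that the final pattern is unchanged). Once these points are secured, the remaining ingredients, namely the framing bookkeeping and the identification $\nu(L)\cong V$, are routine given the preliminaries.
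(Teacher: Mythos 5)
Your proposal follows essentially the same route as the paper's proof: place the Legendrian representative of $\Pat(\K)$ inside a solid torus representing $\K$, use the thickenability (resp.\ uniform thickness) hypothesis to enlarge it to a standard neighborhood $\nu(L)$ of a Legendrian companion $L$, and pull back through the contactomorphism $\nu(L)\cong(V,\xi_V)$, with the passage from Seifert to contact framing accounting for the $\Delta^{-\tb(L)}$ twist in the pattern. The extra steps you add (making the satellite torus convex, verifying the thickened torus is concentric so the pattern is unchanged) are harmless refinements rather than a different argument; the paper applies thickenability directly to the smooth satellite torus, since that hypothesis as defined applies to all solid tori representing $\K$.
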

\begin{proof} 
Take a Legendrian representative $S$ of $\Q(\Lc)$. Smoothly $S$ is the satellite $\Pat(\K)$ of $\K$ and thus it is contained in a solid torus $T$ that represents $\K$. Since $\K$ is thickenable this solid torus can be thickened to a standard neighborhood $\nu(L)$, where $L$ is a Legendrian representation of $\K$. The solid torus $(\nu(K),\xi_{\rm{st}}\vert_{\nu(K)})$ is contactomorphic to $(V,\xi_V)$ via a contactomorphism $\psi$ that brings $\mathbf{l}$ to the contact framing $\lambda+\tb(\Lc)\mu$. Then $Q=\psi^{-1}(S)$ is a Legendrian pattern in $(V,\xi_V)$ that smoothly represents $\Delta^{-\tb(\Lc)}\Pat$.

The proof for the uniformly thick case is identical.
\end{proof}

For any satellite $\Pat(\K)$ we have $\overline{\tb}(\Pat(\K))\ge n^2\tt+\overline{\reltb_{V}}(\Delta^{-\tt}\Pat)$. A consequence of the previous lemma is that for uniformly thick knot types the above inequality is an equality.
\begin{corollary}\label{cor:utpsatmaxtb}
Suppose $\K$ is uniformly thick, and $\Pat(\K)$ is the satellite with pattern $\Pat$. Then
\[\overline{\tb}(\Pat(\K))=n^2\tt+\overline{\reltb_{V}}(\Delta^{-\tt}\Pat). \]
\vskip -.3in \qed
\end{corollary}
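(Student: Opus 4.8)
The plan is to obtain the asserted equality by pairing the lower bound already recorded just before the statement with a matching upper bound that is forced by the uniform thickness hypothesis through the second half of Lemma~\ref{lem:utpsatelite}. Both directions are then immediate applications of the classical-invariant formula of Lemma~\ref{lem:legbraidsat}, so the only real input is the decomposition lemma.

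First I would establish the lower bound $\overline{\tb}(\Pat(\K))\ge n^2\tt+\overline{\reltb_{V}}(\Delta^{-\tt}\Pat)$. Choose a Legendrian representative $\Lc\in\Leg(\K;\tt)$ realizing the maximal Thurston--Bennequin number, and a Legendrian pattern $\Q\in\Leg_{V}(\Delta^{-\tt}\Pat)$ realizing $\overline{\reltb_{V}}(\Delta^{-\tt}\Pat)$. Since applying full twists to a pattern does not change its winding number, the winding number of $\Q$ equals $n=n(\Pat)$, so Lemma~\ref{lem:legbraidsat} gives $\tb(\Q(\Lc))=n^2\tt+\overline{\reltb_{V}}(\Delta^{-\tt}\Pat)$. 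As $\Q(\Lc)$ is a genuine Legendrian representative of $\Pat(\K)$, this realizes the claimed value and yields the inequality. (This is exactly the general bound noted in the text, and it requires nothing about thickness.)

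For the upper bound I would take any $S\in\Leg(\Pat(\K))$ realizing $\overline{\tb}(\Pat(\K))$. Because $\K$ is uniformly thick (hence in particular thickenable), the moreover-clause of Lemma~\ref{lem:utpsatelite} lets me write $S=\Q(\Lc)$ with $\Lc\in\Leg(\K;\tt)$ and $\Q\in\Leg_{V}(\Delta^{-\tt}\Pat)$. Applying Lemma~\ref{lem:legbraidsat} once more gives $\tb(S)=n^2\tt+\reltb_{V}(\Q)$, and since $\reltb_{V}(\Q)\le\overline{\reltb_{V}}(\Delta^{-\tt}\Pat)$ by definition of the maximal relative Thurston--Bennequin number, we get $\overline{\tb}(\Pat(\K))\le n^2\tt+\overline{\reltb_{V}}(\Delta^{-\tt}\Pat)$. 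Combining the two inequalities completes the proof.

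I do not expect a genuine obstacle here: the entire content is carried by the uniform-thickness form of Lemma~\ref{lem:utpsatelite}. The point is precisely that without uniform thickness one could only guarantee $S=\Q(\Lc)$ for some \emph{non-maximal} $\Lc$, and then the substitution into Lemma~\ref{lem:legbraidsat} need not reach $n^2\tt+\overline{\reltb_{V}}(\Delta^{-\tt}\Pat)$; this is exactly the slack that makes the inequality strict for the positive cables of positive torus knots mentioned in the introduction. So the ``hard part'' is conceptual rather than computational, namely recognizing that uniform thickness is what forces the realizing companion to have maximal $\tb$; the remaining steps are bookkeeping with the winding number and the two invariant formulas.
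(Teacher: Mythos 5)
Your proposal is correct and follows exactly the paper's intended argument: the paper states this corollary with no separate proof because, as in your write-up, the lower bound is the general inequality recorded in the preceding sentence (via Lemma~\ref{lem:legbraidsat}), and the upper bound is immediate from the uniform-thickness clause of Lemma~\ref{lem:utpsatelite} combined with the same invariant formula. Your added remark that full twists preserve the winding number, so that $n(\Delta^{-\tt}\Pat)=n(\Pat)$, is a small but legitimate check the paper leaves implicit.
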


For simplicity in the following we will assume that $\K$ is Legendrian simple and thickenable. 

By Lemma \ref{lem:utpsatelite} for a thickenable knot type $\K$ the map
\[
\widetilde\Sat\co \bigcup_{t\in \Z} \left(\Leg_V(\Delta^{-t}\Pat)\times \Leg(\K; t)\right)\to \Leg(\Pat(\K))\co (\Q,\Lc)\mapsto \Q(\Lc)
\]
is surjective. 

If $\K$ is uniformly thick then we can work with a more trackable surjective map
\[
\widetilde\Sat'\co \left(\Leg_V(\Delta^{-\tt}\Pat)\times \Leg(\K;\tt)\right) \to \Leg(\Pat(\K))\co (\Q, \Lc)\mapsto \Q(\Lc).
\]
One can define stabilization of pairs by $\St_\pm(\Q,\Lc)=(\St_\pm\Q,\Lc)$ which makes $\widetilde\Sat$ and $\widetilde\Sat'$ $\St_\pm$ equivariant maps.

Stabilization on the $\K$ component is reflected by $\sigma$ or $\zeta$ on the $\Pat$-component: 
\begin{align*}
\widetilde\Sat(\Q,\St_+\Lc)&=\widetilde\Sat(\zeta\Q,\Lc);\\
\widetilde\Sat(\Q,\St^-\Lc)& =\widetilde\Sat(\sigma\Q,\Lc).
\end{align*}
Thus to make $\widetilde\Sat$ injective we need to factor out with an equivalence relation containing $(\Q,\St_+\Lc)\sim (\zeta\Q,\Lc)$ and $(\Q,\St_-\Lc)\sim (\sigma\Q,\Lc)$.  If there are no oriented topological symmetries then the equivalence relation of Theorem \ref{mainsat} is what follows from the above observation, if we have oriented topological symmetries then we need to include them in the relation too. 

\begin{definition}\label{def:kergen} Let $\K$ be a thickenable knot type, and $\Pat$ be a pattern in $V$. If the winding number of $\Pat$ is not zero then for $\Lc,\Lc'\in \Leg(\K)$ and patterns $\Q\in \Leg_V(\Delta^{-\tb(\Lc)}\Pat)$, $\Q'\in \Leg_V(\Delta^{-\tb(\Lc')}\Pat)$, the pairs 
$(\Lc,\Q)\sim (\Lc',\Q')$  if and only if there is a sequence 
\[
(\Q,\Lc)=(\Q_0,\Lc_0),(\Q_1,\Lc_1),\dots,(\Q_k,\Lc_k)=(\Q',\Lc'),
\] 
where either
\begin{itemize}
\item $\Lc_i$ is a stabilisation of $\Lc_{i-1}$ and depending on the sign of the stabilisation $\Q_i$ is either $\zeta \Q_{i-1}$ (if $\Lc_i=\St_+\Lc_{i-1}$) or $\sigma Q_{i-1}$ (if $\Lc_i=\St_-\Lc_{i-1}$); or
\item $\Lc_i$ is a destabilisation of $\Lc_{i-1}$ and depending on the sign of the destabilisation $\Q_{i-1}$ is either $\zeta Q_{i}$ (if $\Lc_{i-1}=\St_+\Lc_i$) or $\sigma Q_{i}$ (if $\Lc_{i-1}=\St_+\Lc_i$).
\end{itemize}
If $\Pat$ has winding number zero, $-\K=\K$, and $\Pat(\K)$ has oriented topological symmetries as defined at the end of Subsection~\ref{subsec:satknot} then we define the same equivalence relation except that at the first step in the sequence we also allow, but do not require, $\Lc_{1}=-\Lc_0$ and $\Q_{1}=f(\Q_0)$. (The map $f$ is defined at the end of Section~\ref{subsec:satknot} and is a contactomorphism of $V$.) 
\end{definition}
Then by the above observation the map $\widetilde\Sat$ descends to an $\St_\pm$ equivariant map on the quotient 
\[
{\Sat}\co   \left(\frac{\bigcup_{t\in \Z} \left(\Leg_V(\Delta^{-t}\Pat)\times \Leg(\K; t)\right)}{\sim}\right)\to \Leg(\Pat(\K))\co (\Q,\Lc)\mapsto \Q(\Lc)
\]
with respect to the inherited map 
\[
\St_\pm \colon \left( \frac{\bigcup_{t\in \Z} \left(\Leg_V(\Delta^{-t}\Pat)\times \Leg(\K; t)\right)}{\sim}\right) \to  \left( \frac{\bigcup_{t\in \Z} \left(\Leg_V(\Delta^{-t}\Pat)\times \Leg(\K; t)\right)}{\sim}\right).
\]
Moreover, as stated in Theorem~\ref{thm:main} the relation $\sim$ accounts for the non-injectivity of $\widetilde\Sat$, thus ${\Sat}$ is a bijection.  Before turning to the proof of Theorem~\ref{thm:main} we prove a technical lemma.

\begin{lemma}\label{lem:tori1}
Suppose that $N$ is a solid torus with convex boundary realizing the knot type $\K$. 
\begin{enumerate}

\item\label{2thick} If the dividing slope of $\partial N$ is $n$, for some integer $n$, then for any solid torus $N'$ containing $N$ with standard convex boundary of slope $n+1$ there is another solid torus $N''$ satisfying $N\subset N''\subset N'$ and $N''$ has standard convex boundary of slope $n$ (if $\partial N$ has two dividing curves then one can take $N''=N$). The torus $N''$ is a standard neighborhood of a Legendrian knot $L$ and $N'$ is a standard neighborhood of a Legendrian knot that stabilizes to $L$. If $\K$ is Legendrian simple then there are at most two possibilities for $N'$. It can be a standard neighborhood of the Legendrian knots $L_+$ or $L_-$ where $\St_+(L_+)=\St_-(L_-)=L$. 

\item\label{uthick} If the dividing slope of $\partial N$ is in the interval $(n,n+1)$, for some integer $n$, then any solid torus $N'$ containing $N$ with standard convex boundary of slope $n+1$ is a standard neighborhood of a Legendrian representative of $\K$. If $\K$ is Legendrian simple then there is a unique possibility for this Legendrian representative. 
\end{enumerate}
Moreover, in Item \eqref{uthick} any such $N'$ can be obtained by a sequence of bypass attachments from the outside of $N$, and if $N'$ is a solid torus with standard convex boundary of slope $n+1$ obtained from $N$ by a single bypass attachment from the outside then the dividing slope of the boundary of $N$ is in $(n,n+1]$. 
\end{lemma}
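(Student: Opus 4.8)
The plan is to analyze everything through convex surface theory, using Honda's classification of tight contact structures on solid tori and on $T^2\times I$ (\cite{Honda00a}, see also \cite{Giroux00}), the Farey-graph bookkeeping of boundary slopes, the decomposition of a minimally twisting tight $T^2\times I$ into basic slices, and the bypass-attachment rule governing how dividing slopes change. Two standing facts drive the argument. First, since $N$ and $N'$ are solid tori representing $\K$ with $N\subset N'$, the region $N'\setminus N$ is a tight $T^2\times I$, so Honda's edge-path description applies to the slopes on its two boundary components. Second, a solid torus with convex boundary carrying exactly two dividing curves of integer slope $m$ automatically has a meridional disc whose dividing set is a single boundary-parallel arc (the meridian meets a curve of slope $m$ once); hence by Giroux realization \cite{Giroux91} and the uniqueness of tight structures on the ball it is the standard neighborhood of a Legendrian representative $L$ of $\K$ with $\tb(L)=m$. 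I will invoke this identification repeatedly.

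For Item~\eqref{2thick}, if $\partial N$ already has two dividing curves then $N$ is itself such a standard neighborhood and I take $N''=N$. Otherwise $\partial N$ has $2k$ dividing curves of slope $n$ with $k>1$, and I first thicken $N$ inside $N'$ to a torus $N''$ of slope $n$ with only two dividing curves. This is done by attaching, from the outside of $N$, bypasses along Legendrian ruling curves parallel to the slope-$n$ dividing set; each nontrivial such bypass lowers the number of dividing curves by two without changing the slope, and Honda's classification of the tight $T^2\times I$ between slopes $n$ and $n+1$ guarantees the required bypasses occur before the slope is forced off $n$. The resulting $N''$ is a standard neighborhood $\nu(L)$ with $\tb(L)=n$. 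Now $N'\setminus N''$ is a tight $T^2\times I$ with boundary slopes $n$ and $n+1$; these are Farey neighbors, so it is a single basic slice. By Honda's description of how nested standard neighborhoods record stabilization, adding a basic slice of slope $n\to n+1$ realizes $N'=\nu(L')$ with $L=\St_\pm L'$, the sign matching that of the basic slice. There are exactly two basic slices with these boundary slopes, giving the two candidates $L_+,L_-$ with $\St_+(L_+)=\St_-(L_-)=L$; when $\K$ is Legendrian simple each candidate is a single Legendrian isotopy class, so there are at most two possibilities for $N'$.

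For Item~\eqref{uthick}, the torus $N'$ again has exactly two dividing curves of the integer slope $n+1$, so by the standing fact it is the standard neighborhood $\nu(L')$ of a Legendrian representative with $\tb(L')=n+1$; this gives existence with no hypothesis on $\K$. For uniqueness I note that the rotation number of the core $L'$ is the relative Euler number of $\xi|_{N'}$ relative to the framing, an invariant of the contact solid torus $N'$ and hence determined by the given data; together with $\tb(L')=n+1$ this fixes the pair $(\tb,\rot)$, so if $\K$ is Legendrian simple the representative $L'$ is unique.

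Finally, for the ``Moreover'' clause I use that the tight $T^2\times I$ equal to $N'\setminus N$ decomposes into basic slices (\cite{Honda00a}), and each basic slice is realized by a single bypass attached from the outside; concatenating these exhibits $N'$ as obtained from $N$ by a sequence of outside bypasses. If a single such bypass already produces the slope-$(n+1)$ torus $N'$, then $N'\setminus N$ is one basic slice, so the slope of $\partial N$ and $n+1$ are Farey neighbors; since attaching from the outside increases the slope toward $n+1$, the bypass-attachment rule places this neighboring slope in $(n,n+1]$. I expect the main obstacle to be the bookkeeping in Item~\eqref{2thick}: verifying that the dividing-curve reduction to $N''$ can be carried out while staying inside $N'$ and that the resulting $L$ (hence the pair $L_\pm$) is well defined, and more generally keeping precise track of signs and tightness so that Honda's basic-slice count delivers exactly the stated ``at most two'' and ``unique'' conclusions and the exact endpoints of the interval $(n,n+1]$.
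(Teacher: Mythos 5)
Your handling of Item (1) and of the ``Moreover'' clause follows the standard convex-surface arguments (the paper itself treats these as standard, saying the only non-standard part is Item (2)), and those parts of your sketch are essentially fine. The genuine gap is in your uniqueness argument for Item (2). You argue that $\rot(L')$ is the relative Euler number of $\xi|_{N'}$, ``an invariant of the contact solid torus $N'$ and hence determined by the given data.'' But the given data is only $(N,\xi|_N)$: the structure $\xi|_{N'}$ depends on the choice of thickening $N'$, and the entire content of the uniqueness claim is that all thickenings of $N$ to slope $n+1$ have contactomorphic total spaces, equivalently isotopic cores. So the step ``hence determined by the given data'' assumes exactly what must be proved. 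The decisive symptom is that your argument never uses the hypothesis that the dividing slope of $\partial N$ lies in the \emph{open} interval $(n,n+1)$: applied verbatim when the slope equals the integer $n$, it would show the thickening is unique, contradicting Item (1), where two genuinely different thickenings $\nu(L_+)$ and $\nu(L_-)$ with $\St_+(L_+)=\St_-(L_-)=L$ occur and $\rot(L_+)\neq\rot(L_-)$.

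The missing mechanism is the one the paper uses. Choose a solid torus $N''\subset N$ with standard convex boundary of slope $n$ (possible by the classification of tight contact structures on solid tori, since the slope of $\partial N$ exceeds $n$), with core $L''$. For any thickening $N'$, the layer $N'\setminus N''$ is a basic slice from slope $n$ to $n+1$, and it contains the layer $N\setminus N''$, which carries at least one basic slice precisely because the slope of $\partial N$ is \emph{strictly} greater than $n$. Since the basic slices in any factorization of a basic slice all have the same sign as the ambient slice, the sign of $N'\setminus N''$ is determined by the signs of the basic slices of $N\setminus N''$, i.e., by $N$ alone, independently of the choice of $N'$. Hence $L'$ is a fixed destabilization of $L''$, its pair $(\tb,\rot)$ is pinned down, and Legendrian simplicity gives the uniqueness; when the slope of $\partial N$ is exactly $n$ this sign information is absent, which is why Item (1) yields two candidates rather than one. (A minor further caveat: in the last clause your Farey-neighbor reasoning should also allow the degenerate case where $\partial N$ already has slope $n+1$ with more than two dividing curves and the single outside bypass merely reduces the number of dividing curves; this case still gives slope in $(n,n+1]$, so the stated conclusion is unaffected.)
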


\begin{proof}
The only non-standard part of the statement is Item~(\ref{uthick}). Suppose that the dividing slope of $\partial N$ is contained in the interval $(n,n+1)$ for some integer $n$. Then one can use the classification of contact structures on solid tori to find a solid torus $N'' \subset N$ with dividing slope $n$. Now suppose we are given $N'$ with dividing slope $n+1$ and satisfying $N\subset N'$. The tori $N''$ and $N'$ are each neighborhoods of Legendrian curves $L''$ and $L'$, respectively. Moreover $L''$ is a stabilization of $L'$. The sign of the basic slices of the thickened tori $N\setminus N''$ is determined by the sign of the basic slice $N'\setminus N''$. Since $N\setminus N''$ is independent of the thickening $N'$ we see that $L'$ is a fixed destabilization of $L''$. 
\end{proof}

\begin{theorem}\label{thm:main}
Suppose that $\K$ is a thickenable and Legendrian simple knot type, and $\Pat$ is a pattern in $V$. Assume that $\Pat(\K)$ has no topological symmetries, and if $\Pat$ has winding $0$, assume that $-\K=\K$ and $\Pat(\K)$ has oriented topological symmetries.

Let $\Lc, \Lc'$ be Legendrian representatives of $\K$  and let $\Q$ and $\Q'$ be Legendrian patterns representing $\Delta^{-\tb(\Lc)}\Pat$ and $\Delta^{-\tb(\Lc')}\Pat$, respectively. Then $\Q(\Lc)$ is Legendrian isotopic to $\Q'(\Lc')$ if and only if $(\Q,\Lc) \sim (\Q',\Lc')$ as in Definition \ref{def:kergen}.

In particular, the map ${\Sat}$ induced by $\widetilde{\Sat}$ on the equivalence classes of $\sim$ is an $\St_\pm$ equivariant bijection.
\end{theorem}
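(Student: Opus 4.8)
The plan is to prove both directions of the ``if and only if.'' The reverse implication, that $(\Q,\Lc)\sim(\Q',\Lc')$ forces $\Q(\Lc)$ and $\Q'(\Lc')$ to be Legendrian isotopic, is the routine direction: I would run along a defining sequence for the relation and invoke the stabilization identities $\widetilde\Sat(\Q,\St_+\Lc)=\widetilde\Sat(\zeta\Q,\Lc)$ and $\widetilde\Sat(\Q,\St_-\Lc)=\widetilde\Sat(\sigma\Q,\Lc)$ recorded just before Definition~\ref{def:kergen}, so that each elementary (de)stabilization step leaves $\widetilde\Sat$ unchanged. In the winding-zero case the extra move $(\Lc_1,\Q_1)=(-\Lc_0,f(\Q_0))$ is accounted for by the facts, established at the end of Section~\ref{subsec:satknot}, that $f$ is a contactomorphism of $(V,\xi_V)$ and that $\psi\circ f$ parametrizes a neighbourhood of $-\K$, so that $f(\Q_0)(-\Lc_0)$ and $\Q_0(\Lc_0)$ are the same Legendrian satellite. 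The $\St_\pm$-equivariance is then immediate from $\St_\pm(\Q,\Lc)=(\St_\pm\Q,\Lc)$, and bijectivity follows by combining the forward implication (injectivity) with the surjectivity of $\widetilde\Sat$ supplied by Lemma~\ref{lem:utpsatelite}.

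For the forward implication I would first use the fact that on a compact manifold a Legendrian isotopy extends to an ambient contact isotopy (the mechanism behind Corollary~\ref{cor:iso} and Theorem~\ref{isoIScontacto}) to reduce to the following normal form: a single fixed Legendrian knot $S$ that is simultaneously contained in two standard neighbourhoods $N=\nu(L)$ and $N'=\nu(L')$ of Legendrian representatives of $\K$, with $Q(L)=S=Q'(L')$. The tori $\partial N$ and $\partial N'$ are then the two satellite incompressible tori in the complement of $S$, and the hypothesis that $\Pat(\K)$ has no un-oriented topological symmetry guarantees that they are smoothly isotopic in that complement. If the winding number of $\Pat$ is nonzero this isotopy carries the preferred longitude orientation of $\partial N$ to that of $\partial N'$; if it is zero the hypothesis $-\K=\K$ together with the map $f$ records exactly the possible reversal, which is what licenses the extra clause in Definition~\ref{def:kergen}.

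The heart of the argument is to produce a common inner model. Using the smooth isotopy of $\partial N$ to $\partial N'$ rel $S$, together with the uniqueness of the satellite torus in the JSJ decomposition, I would find a smooth solid torus with $S$ in its interior that is simultaneously contained in $N$ and in $N'$; after making its boundary convex and, using the classification of tight contact structures on solid tori together with the bound coming from the wrapping number of $\Pat$ (which controls how far $S$ may be pushed inward), I would thin it to a standard neighbourhood $N_0$ of a Legendrian knot $\Lc_0$ satisfying $S\subset N_0\subset N$ and $N_0\subset N'$. Applying Lemma~\ref{lem:tori1} slope by slope to the two thickenings $N_0\subset N$ and $N_0\subset N'$ then exhibits $L$ and $L'$ as iterated stabilizations of $\Lc_0$, with the signs of the stabilizations dictated by the signs of the intervening basic slices. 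Reading the pattern $\Q_0$ of $S$ inside $N_0$ through the reimbeddings $\zeta,\sigma$ of Subsection~\ref{reembed} (Lemma~\ref{lem:stabpattern}), these same signs give $\Q=\zeta^a\sigma^b\Q_0$ and $\Q'=\zeta^c\sigma^d\Q_0$ with exponents matching the stabilizations, so that $(\Q,\Lc)\sim(\Q_0,\Lc_0)\sim(\Q',\Lc')$ along a defining sequence for $\sim$.

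I expect the main obstacle to be precisely the construction of this common standard neighbourhood $N_0$: passing from the purely smooth isotopy supplied by the no-symmetry hypothesis to a genuinely contact-geometric sub-torus that is a standard neighbourhood and still contains $S$ in its interior requires convex surface theory (Giroux realization and controlled bypass attachments), and the bookkeeping of dividing slopes must remain in the range where $S$ stays interior. Here the quantitative parts of Lemma~\ref{lem:tori1}---that a single bypass from outside a convex torus of slope $n$ yields slope in $(n,n+1]$, and that thickenings to a standard neighbourhood are controlled (unique, or the two options $L_+,L_-$ with common destabilization, in the Legendrian simple case)---are exactly what make both the existence of $N_0$ and the identification of the stabilization signs go through. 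The remaining delicate point is keeping the orientation data consistent in the winding-zero case, so that the $f$-move is invoked exactly when the smooth isotopy of the satellite tori reverses the preferred longitude.
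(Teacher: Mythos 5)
Your reverse implication, equivariance, and the initial reduction to a single fixed Legendrian $S=Q(L)=Q'(L')$ with the symmetry discussion (including the role of $f$ in the winding-number-zero case) all match the paper. The gap is in the heart of your forward implication: the ``common inner model.'' You never actually construct a standard neighbourhood $N_0\subset N\cap N'$ containing $S$ (you acknowledge this is the main obstacle), and, more seriously, no such $N_0$ exists in general. Its existence would prove a statement strictly stronger than Theorem~\ref{thm:main}: it would say that any two presentations of the same satellite are related by a \emph{single} descent to a common stabilization followed by an ascent, i.e.\ $\Q=\zeta^a\sigma^b\Q_0$ and $\Q'=\zeta^c\sigma^d\Q_0$ with $\St$'s matching. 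Taking $\Lc=\Lc'$ there, Legendrian simplicity forces $a=c$, $b=d$, hence $\Q=\Q'$; so your argument would show that $\Sat(\cdot,\Lc)$ is injective for \emph{every} pattern and companion. But the paper only obtains this injectivity under the extra hypothesis of Theorem~\ref{thm:mainleg} (that $\sigma^d\Q=\sigma^d\Q'$ if and only if $\zeta^d\Q=\zeta^d\Q'$), precisely because chains such as $(\zeta\widetilde{\Q}_1,\Lc)\sim(\widetilde{\Q}_1,\St_+\Lc)\sim(\sigma\widetilde{\Q}_1,\Lc'')$ with $\sigma\widetilde{\Q}_1=\zeta\widetilde{\Q}_2$, continuing back to $(\sigma\widetilde{\Q}_2,\Lc)$, need not collapse to a single valley. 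This is exactly why Definition~\ref{def:kergen} is phrased in terms of sequences of elementary moves rather than a common stabilization. (Even the purely smooth version of your claim is delicate: making $\partial N'$ disjoint from $\partial N$ by an isotopy rel $S$ produces a solid torus inside $N$ isotopic to $N'$, not one inside the original $N'$, so iterating does not terminate.)

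The paper's proof avoids any common refinement. It applies Colin's isotopy discretisation (\cite[Lemma~3.10]{Honda02}) to the smooth isotopy of $\partial\nu(L)$ to $\partial\nu(L')$ rel $S$, producing a finite chain of convex tori $T_0,\dots,T_l$, all bounding solid tori containing $S$, with $T_{i+1}$ obtained from $T_i$ by a single bypass. Each $N_i$ is then given its \emph{smallest integer thickening} $N_i'$, a standard neighbourhood of some $L_i$ with pattern $Q_i=\phi_i^{-1}(S)$, and Lemma~\ref{lem:tori1} shows that consecutive pairs $(Q_{i-1},L_{i-1})$, $(Q_i,L_i)$ are either Legendrian isotopic (when the dividing slope of $T_{i-1}$ is non-integral) or differ by exactly one elementary move of Definition~\ref{def:kergen} (when it is integral). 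The multi-step relation is thus the natural output of the proof, not an artifact one can strengthen. A small additional sign of the directional confusion in your sketch: if $N_0\subset N$ were standard neighbourhoods, then $L_0$ would be an iterated \emph{stabilization} of $L$ (shrinking neighbourhoods lowers $\tb$), not the other way around as you wrote.
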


\begin{proof}
From the discussion above it is clear that if $(\Q,\Lc) \sim (\Q',\Lc')$ then $\Q(\Lc)$ is Legendrian isotopic to $\Q'(\Lc')$.

To prove the other implication after Legendrian isotopy we can assume that ${S}=Q(L)=Q'(L')$ for some $L\in \Lc$, $L'\in \Lc'$, $Q\in \Q$, and $Q'\in\Q'$. Let $\phi\colon (V,\xi_V)\to (S^3,\xi_{\textrm{st}})$ and $\phi'\colon (V,\xi_V)\to (S^3,\xi_{\textrm{st}})$ be the defining maps for $Q(L)$ and $Q'(L')$. In particular if $C$ is the core of $V$ then $\phi(C)=L$,  
 $\phi'(C)=L'$, $\phi(V)=\nu(L)$, $\phi'(V)=\nu(L')$, $\phi(Q)=S$, and $\phi'(Q')=S$.
By our assumption on no topological symmetries  the neighborhoods $\nu(L)$ and $\nu(L')$ are smoothly isotopic by an isotopy fixing $S$. Moreover, if $\Pat$ has non-zero winding number then we may assume this isotopy preserves an orientation on a longitude on the boundaries of these neighborhoods. If $\Pat$ does have winding number zero then after possibly reversing the orientation on $L$ (and replacing $\Q$ with $f(\Q)$ where $f$ was defined at the end of Subsection~\ref{subsec:satknot}) we may similarly assume that there is a well-defined oriented longitude preserved by the isotopy. Thus by Colin's isotopy discretisation argument \cite[Lemma~3.10]{Honda02} there is a sequence of convex tori $\partial \nu(L)=T_0,T_1,\dots,T_l=\partial \nu(L')$ bounding corresponding solid tori $N_i$ all containing ${S}$, such that $T_{i+1}$ is obtained from $T_{i}$ by a bypass attachment for $0\leq i<l-1$. Let $s_i$ be the dividing slope of $T_i$.

Choose smallest integer thickenings for the $N_i$, i.e.\ choose solid tori $N'_i$ containing $N_i$ with standard convex boundary of dividing slope $n_i=\lceil s_i \rceil$. Making sure that if $s_i$ is an integer, and the dividing curve on $T_i$ has two components then $N_i=N_i'$. Each $N_i'$ is contactomorphic to $(V,\xi_V)$ via the embedding $\phi_i\colon (V,\xi_V)\to (S^3,\xi_{std})$. Now let $L_i=\phi_i(C)$ and $Q_i=\phi_i^{-1}(S)$. Here $\phi_0=\phi$ and $\phi_l=\phi'$.
 
We claim that for each $i$ the pairs $(L_{i-1},Q_{i-1})$ and $(L_i,Q_i)$ are Legendrian isotopic pairs; $L_i$ is a stabilization of $L_{i-1}$ and depending on the sign of the stabilization $Q_i$ is either $\zeta Q_{i-1}$ or $\sigma Q_{i-1}$; or vice versa.

By symmetry we may assume that $N_i$ is obtained from $N_{i-1}$ via a nontrivial bypass attachment on the outside, so $N_{i-1}\subset N_{i}$.
Suppose that the boundary slope of $N_{i-1}$ is in $(n,n+1)$ for some integer $n$, and thus the boundary slope of $N_i$ is in $(n,n+1]$. Then $N_{i-1}$ is included in two solid torus $N_{i-1}\subset N_{i-1}'$ and $N_{i-1}\subset N_i\subset N_i'$, with boundary slope $n+1$. Thus by Lemma~\ref{lem:tori1}, $L_i$ and $L_{i-1}$ are Legendrian isotopic. Now, from the proof of Lemma~\ref{lem:tori1} there is a contactomorphism  $\phi\co N_{i-1}'\to N_i'$ fixing $N_{i-1}$ and hence also fixing $S$. Thus we have a contactomorphism $\psi_{i-1}^{-1}\circ \psi \circ \phi_i\colon (V,\xi_V)\to (V,\xi_V)$ bringing $Q_i$ to $Q_{i-1}$. Then by Corollary \ref{cor:iso}, $Q_i$ and $Q_{i-1}$ are Legendrian isotopic. 

If the boundary slope of $N_{i-1}$ is an integer $n$, then our construction gives nested solid tori $N_{i-1}=N_{i-1}'\subset N_i\subset N_{i}'$, and again 
by Lemma \ref{lem:tori1} $L_{i-1}$ is a stabilization of $L_i$. Now the same argument as above proves that depending on the sign of the stabilization $\zeta Q_{i-1}$ or $\sigma Q_{i-1}$ is Legendrian isotopic to $Q_{i}$.
\end{proof}

The above Theorem has a more trackable version when $\K$ is uniformly thick and Legendrian simple. In this case every representation of the Legendrian satellite is of the form $\Lc(\Q)$ for some peak $\Lc$ of the Legendrian mountain range, and for some Legendrian representation $\Q$ of the pattern $\Delta^{-\overline{t}}\Pat$. Thus in this case the map 
\[
\widetilde{\Sat}'\co \Leg_V(\Delta^{-\tt}\Pat)\times \Leg(\K;\tt) \to \Leg(\Pat(\K))
\]
is surjective. 
To phrase how the relation $\sim$ desecends to $\Leg_V(\Delta^{-\tt}\Pat)\times \Leg(\K;\tt)$ we introduce some notation.

A \emph{peak} of the mountain range for $\K$ is a Legendrian representative $\Lc$ that has Thurston--Bennequin number $\tt$.  
Two peaks $\Lc$ and $\Lc'$ are said to be neighbouring if there is no other peak with rotation number 
between $\rot(\Lc)$ and $\rot(\Lc')$. The \emph{valley} $\widetilde{\Lc}$ between neighbouring peaks is the common stabiliztaion of $\Lc$ and $\Lc'$ with maximal Thurston-Bennequin number. The \emph{depth} of the \emph{valley} $\widetilde{\Lc}$ is $d=\frac 12 |\rot(\Lc)-\rot(\Lc')|=\tt-\tb(\widetilde{\Lc})$. Then for $\rot(\Lc)<\rot(\Lc')$ we have $\widetilde{\Lc}=\mathit{St}_+^d\Lc'=\mathit{St}_-^d\Lc$.  If $\Lc$ is a peak in the mountain range of $\Leg(\K)$ we say $\Lc'\in \Leg(\K)$ is in the \emph{shadow} of $\Lc$ if $\Lc'$ is some (possibly iterated positive and negative) stabilization of $\Lc$.

Note that if $\Lc$ and $\Lc'$ are neighbouring peaks with $\rot(\Lc)<\rot(\Lc')$ and valley $\widetilde{\Lc}$ of depth $d=\frac{\rot(\Lc')-\rot(\Lc)}2$ then for $\widetilde{Q}\in  \Leg_V(\Delta^{-\tt+d}\Pat)$ we have:
\[\Q(\Lc)=(\zeta^d\widetilde{\Q})(\Lc)=\widetilde{\Q}(\St_+^d\Lc)=\widetilde{\Q}(\widetilde{\Lc})=\widetilde{\Q}(\St_-^d\Lc')=(\sigma^d\widetilde{Q})(\Lc')=\Q'(\Lc'),\]
where  $\Q=\zeta^d\widetilde{\Q}$ and $\Q'=\sigma^d\widetilde{\Q}$ are in $\Leg_V(\Delta^{-\tt}\Pat)$. 

Again, if there are no oriented topological symmetries then the equivalence relation $\sim'$ is what follows from the above observation, if we have oriented topological symmetries then we need to include them in the relation too. 

\begin{definition}\label{def:ker} Let $\K$ be a uniformly thick and Legendrian simple knot type, and $\Pat$ be a pattern in $V$. If the winding number of $\Pat$ is not zero then for the peaks 
$\Lc, \Lc'\in\Leg(\K;\tt)$ and the Legendrian patterns $\Q,\Q'\in\Leg_{V}(\Delta^{-\tt}\Pat )$ define $(\Q, \Lc)\sim' (\Q',\Lc')$ 
if and only if there is a sequence 
\[
(\Q,\Lc)=(\Q_0,\Lc_0),(\Q_1,\Lc_1),\dots,(\Q_k,\Lc_k)=(\Q',\Lc'),
\] 
where
\begin{itemize}
\item $\tb(\Lc_i)=\tt$ and $\Lc_{i-1}$ and $\Lc_{i}$ are neighboring peaks with valley $\widetilde{\Lc}_i$ of depth $d_i$ between them; and
\item if $\rot(\Lc_i)>\rot(\Lc_{i-1})$ (resp. $\rot(\Lc_i)<\rot(\Lc_{i-1})$) then there is a Legendrian pattern $\widetilde{\Q}_i\in \Leg_{V}(\Delta^{d_i-\tt}\Pat)$ with 
$\Q_{i-1}=\zeta^d\widetilde{\Q}_i$ and $\Q_{i}=\sigma^d\widetilde{\Q}_i$ (resp. $\Q_{i-1}=\sigma^d\widetilde{\Q}_i$ and $\Q_{i}=\zeta^d\widetilde{\Q}_i$).
\end{itemize}
If $\Pat$ has winding number zero, $-\K=\K$, and $\Pat(\K)$ has oriented topological symmetries as defined at the end of Subsection~\ref{subsec:satknot} then we define the same equivalence relation except that at the first step in the sequence we also allow, but do not require, $\Lc_{1}=-\Lc_0$ and $\Q_{1}=f(\Q_0)$. (The map $f$ is defined at the end of Section~\ref{subsec:satknot} and is a contactomorphism of $V$.) 
\end{definition}
Then by the above observation the map $\Sat'$ descends to an $\St_\pm$ equivariant map on the quotient 
\[
{{\Sat}'}\colon \left(\frac{\Leg_V(\Delta^{-\tt}\Pat)\times \Leg(\K;\tt)}{\sim'}\right) \to  \Leg(\Pat(\K))
\] 
with respect to the inherited map 
\[
\St_\pm \colon \left(\frac{\Leg_V(\Delta^{-\tt}\Pat)\times \Leg(\K;\tt)}{\sim'}\right) \to  \left(\frac{\Leg_V(\Delta^{-\tt}\Pat)\times \Leg(\K;\tt)}{\sim'}\right).
\]
Again, as stated in Theorem~\ref{lem:incommonstab} the relation $\sim'$ accounts for the non-injectivity of $\widetilde\Sat'$, thus ${\Sat'}$ is a bijection.
\begin{theorem}\label{lem:incommonstab}
Suppose that $\K$ is a uniformly thick and Legendrian simple knot type, and $\Pat$ is a pattern in $V$. Assume there are no topological symmetries of $\Pat(\K)$ that send the satellite incompressible torus to another incompressible torus in the complement of $\Pat(\K)$. In addition, if $\Pat$ has winding number zero, assume that $-\K=\K$ and $\Pat(\K)$ has oriented topological symmetries. 
Let $\Lc, \Lc'\in\Leg(\K;\tt)$ be peaks of the mountain range of $\K$ and let $\Q,\Q'\in\Leg_{V}(\Delta^{-\overline{tb}(\K)}\Pat )$ be Legendrian patterns. Then 
 $\Q(\Lc)$ is Legendrian isotopic to $\Q'(\Lc')$ if and only if $(\Q,\Lc) \sim' (\Q',\Lc')$ as in Definition \ref{def:ker}.
In particular, 
\[
{{\Sat}'}\co \left(\frac{\Leg_V(\Delta^{-\tt}\Pat)\times \Leg(\K;\tt)}{\sim'}\right) \to \Leg(\Pat(\K))
\]
is an $\St_\pm$ equivariant bijection.
\end{theorem}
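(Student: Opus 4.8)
The plan is to deduce this ``maximal-$\tb$'' version from the general structure theorem, Theorem~\ref{thm:main}, which has already been proved, and then to reconcile the two equivalence relations $\sim$ and $\sim'$ on pairs whose companion is a peak. Surjectivity of $\widetilde\Sat'$ (hence of $\Sat'$) and the ``if'' direction of the equivalence are already in hand: uniform thickness of $\K$ feeds into Lemma~\ref{lem:utpsatelite} to write every element of $\Leg(\Pat(\K))$ as $\Q(\Lc)$ with $\Lc\in\Leg(\K;\tt)$ a peak and $\Q\in\Leg_V(\Delta^{-\tt}\Pat)$, while the displayed computation preceding Definition~\ref{def:ker} shows that a single $\sim'$-move preserves the Legendrian satellite. (The oriented-symmetry clause is handled exactly as in Theorem~\ref{thm:main}, using the contactomorphism $f$.) So the entire remaining content is the reverse implication: if $\Q(\Lc)$ is Legendrian isotopic to $\Q'(\Lc')$ with $\Lc,\Lc'$ peaks, then $(\Q,\Lc)\sim'(\Q',\Lc')$.

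First I would observe that a uniformly thick knot type is in particular thickenable, and that the hypotheses on (oriented) topological symmetries match those of Theorem~\ref{thm:main}; hence Theorem~\ref{thm:main} applies and produces a chain realizing $(\Q,\Lc)\sim(\Q',\Lc')$ in the sense of Definition~\ref{def:kergen}. The essential step is then purely combinatorial: to show that any $\sim$-chain whose two endpoints have companion of maximal Thurston--Bennequin number can be replaced by a $\sim'$-chain. Such a $\sim$-chain records a walk $\Lc_0,\dots,\Lc_k$ in the Legendrian mountain range of $\K$ in which consecutive terms differ by a single (de)stabilization, with the pattern coordinate determined along the way by the rules $\Q_i=\zeta\Q_{i-1}$ or $\sigma\Q_{i-1}$. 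Here I would use two structural facts about the reimbedding maps of Subsection~\ref{reembed}: they are injective on Legendrian isotopy classes, being induced by contact embeddings, and they commute, $\zeta\sigma=\sigma\zeta$, because the corresponding stabilizations of the core commute. Consequently the pattern coordinate at any vertex of the walk depends only on the net numbers of positive and negative stabilizations accumulated from the starting peak, independently of the order in which they were performed.

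The heart of the argument is then a ``straightening'' of this walk so that it only meets the level $\tb=\tt$ at peaks. Since each step changes $(\rot,\tb)$ by $(\pm1,-1)$ for a stabilization or $(\pm1,+1)$ for a destabilization, any passage of the walk from the rotation number of one peak to that of another must cross the rotation number of every intermediate peak; I would show that at each such crossing the walk may be pushed up to touch that peak without leaving its $\sim$-class, the detour up-and-back being a trivial $\sim$-loop that, by commutativity and injectivity of $\zeta,\sigma$, returns the pattern coordinate to the value forced by the net stabilization count. Splitting the straightened walk at the successive peaks it now visits exhibits it as a concatenation of elementary moves between \emph{neighbouring} peaks, each passing through the common valley of the appropriate depth --- that is, precisely a $\sim'$-chain. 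Combining the two implications shows that $\Sat'$ is a well-defined bijection, and $\St_\pm$-equivariance is inherited from that of $\widetilde\Sat'$.

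The step I expect to be the main obstacle is this straightening lemma, specifically verifying that pushing the walk up to each intermediate peak is compatible with the pattern bookkeeping. One must check that a walk dipping below a peak $P_j$ at its rotation number really does correspond, after lifting to $P_j$, to routing a $\sim'$-chain through $P_j$ with a valley whose depth matches the drop in $\tb$ on either side. This is where Lemma~\ref{lem:tori1}, which controls how bypass attachments relate nested standard neighborhoods, together with the Legendrian simplicity of $\K$ (so that each vertex of the walk is determined by its classical invariants), are used to guarantee that the lifted pattern is the unique one demanded by Definition~\ref{def:ker}.
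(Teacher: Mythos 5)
Your overall strategy is the same as the paper's: get surjectivity and the ``if'' direction from Lemma~\ref{lem:utpsatelite} and the computation preceding Definition~\ref{def:ker}, invoke Theorem~\ref{thm:main} (uniform thickness implies thickenable, and the symmetry hypotheses match), and then convert the resulting $\sim$-chain with peak endpoints into a $\sim'$-chain. The gap is in the conversion, where you appeal to the ``structural fact'' that $\zeta$ and $\sigma$ are injective on Legendrian isotopy classes ``being induced by contact embeddings.'' That inference is not valid: a Legendrian isotopy in $V$ between two patterns lying in the sub-torus $\zeta(V)$ need not stay inside that sub-torus, so a contact embedding induces no injection on isotopy classes. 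Nor is this injectivity available from the paper: it is exactly the kind of statement the paper treats as an \emph{additional hypothesis} (see the condition in Theorem~\ref{thm:mainleg} and the Remark following it, where injectivity of $\sigma^d,\zeta^d$ is singled out as a special sufficient condition) and verifies pattern-by-pattern when needed, e.g.\ for Whitehead patterns in the proof of Theorem~\ref{thm:twistsat}. A proof of Theorem~\ref{lem:incommonstab}, whose hypotheses say nothing about injectivity, cannot use it.

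This is not a cosmetic problem, because injectivity carries both of your key claims: that the pattern coordinate at a vertex of the walk is \emph{determined} by the net stabilization counts, and that after straightening the walk splits at its peak touches into $\sim'$-moves. In a $\sim$-chain, the move in the direction where the companion is stabilized (consistently with $\widetilde\Sat(\Q,\St_+\Lc)=\widetilde\Sat(\zeta\Q,\Lc)$) only constrains the new pattern to be \emph{some} preimage under $\zeta$ or $\sigma$, and preimages need not be unique. (The up-and-back detours in your straightening are fine without injectivity --- going up is forced and coming back down one may simply choose the original pattern as the preimage --- but the splitting is not.) Concretely, a walk can descend from a peak $P$ to the valley (pattern $\widetilde{\Q}$, with pattern $\zeta^{d}\widetilde{\Q}$ at $P$), destabilize once into the shadow of the neighboring peak (pattern $\sigma\widetilde{\Q}$), stabilize back choosing a preimage $\Q''\neq\widetilde{\Q}$ with $\sigma\Q''=\sigma\widetilde{\Q}$, and reascend to $P$; the two touches of $P$ then carry distinct patterns $\zeta^{d}\widetilde{\Q}$ and $\zeta^{d}\Q''$, so consecutive same-peak touches need not agree and your splitting does not directly yield a $\sim'$-chain (the two pairs are $\sim'$-equivalent, but only via an explicit excursion through the neighboring peak, since $\sigma^{d}\Q''=\sigma^{d}\widetilde{\Q}$, not by equality at $P$). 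The paper's proof is structured precisely to avoid this: within the shadow of the current peak it tracks only the \emph{relation} $\Q_0=\zeta^{p}\sigma^{n}\Q_i'$, which follows from commutativity of $\zeta$ and $\sigma$ alone, and at the first exit from that shadow it uses Legendrian simplicity to see the walk sits over the valley, defines the valley pattern $\widetilde{\Q}_1=\zeta^{a}\Q'_{i_1}$ explicitly, records the $\sim'$-move to the neighboring peak, and re-bases the induction there. Your sketch names the right tools (Lemma~\ref{lem:tori1}, Legendrian simplicity) for the step you yourself flag as the main obstacle, but as written that step rests on the unavailable injectivity, so the proof is incomplete.
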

\begin{proof}
Theorem \ref{thm:main} gives a sequence $(\Q,\Lc)=(\Q_0',\Lc'_0),(\Q'_1,\Lc'_1),\dots,(\Q'_k,\Lc'_k)=(\Q',\Lc')$, such that for each $i$ the Legendrian knot
$\Lc_i$ is a (de)stabilization of $\Lc_{i-1}$ and $Q_i$ and $Q_{i-1}$ are related appropriately by $\zeta$ and $\sigma$. Each $\Lc_i'$ is in the shadow of some set of consecutive peaks $\mathit{Peak}_i$. We will inductively choose $(\Lc_i,\Q_i)$ as in Definition \ref{def:ker}. Let $\Lc_0=\Lc$ and $\Q_0=\Q$ and suppose that $(i_1+1)$ is the first index such that $\mathit{Peak}_{i_1+1}$ does not contain $\Lc_0$. Then $\mathit{Peak}_{i_1+1}$ must contain exactly one of the neighbours of $\Lc_0$, say $\Lc_1$, and $\Lc_{i_1}$ must be a shadow of both $\Lc_0$ and $\Lc_1$. Then by Legendrian simplicity $\Lc_{i_1}$ is in the shadow of the valley $\widetilde{\Lc}_1$ between $\Lc_0$ and $\Lc_1$.  In fact, $\Lc_{i_1}$ is obtained from $\widetilde{\Lc}_1$ by some stabilisations of the same sign, say $\Lc_{i_1}=\St_+^a\widetilde{\Lc}_1$ for some $a$. 
Also $\widetilde{\Lc}_1=\St_+^{d_1}\Lc_0$ and $\widetilde{\Lc}_1=\St_-^{d_1}\Lc_1$. With this notation $\Lc_{i_1}=\St_+^{a+d_1}\Lc_0$.
Note that since $\Lc_0',\dots,\Lc_{i_1}'$ are all in the shadow of $\Lc_0$ then we can follow through the changes of the $\Q_i'$ and conclude that $\Q_0=\zeta^{a+d_1}\Q_{i_1}'$.  Let $\widetilde{\Q}_1=\zeta^a\Q_{i_1}$ then we have $\Q_0=\zeta^{d_1}\widetilde{\Q}_1$ and define $\Q_1$ as $\sigma^{d_1}\widetilde{\Q}_1$. Now $\Q_{i_1+1}'=\sigma^{d_1-1}\widetilde{\Q}_{1}$, and we can continue our induction to define the pairs $(\Lc_2,\Q_2),\dots (\Lc_l,\Q_l)$. Since $\Lc'$ is only in the shadow of itself $\Lc_k=\Lc'$, and once we are in the shadow of $\Lc_k$ we can follow through the changes of the $\Q_i'$ backwards, and see that $\Q_l=\Q'$, concluding the proof.
\end{proof}

For certain patterns Theorem \ref{lem:incommonstab} gives us a complete understanding of the number of Legendrian representatives of satellites. 

\begin{theorem}\label{thm:mainleg}
Suppose that $\K$ is a uniformly thick and Legendrian simple knot type.  Denote the maximal Thurston--Bennequin number of $\K$ by $\tt$. Suppose  
\[
\Leg(\K;\tt)=\{\Lc_0,\Lc_1 \dots,\Lc_k\}
\]
where  
\[
\rot(\Lc_0)=r_0<\rot(\Lc_1)=r_1<\cdots<\rot(\Lc_k)=r_k.
\]
And 
let $d_1=\frac{r_2-r_1}2,\dots,d_{k-1}=\frac{r_k-r_{k-1}}2$ denote the depths of the valleys $\widetilde{\Lc}_1,\dots,\widetilde{\Lc}_k$. 

Let $\Pat$ be a pattern with winding number $n$. 
Assume that for two patterns $\Q,\Q'\in \Leg_V(\Delta^{-m+d}\Pat)$ (here $m$ is any integer and $d$ is any natural number) we have 
$\sigma^d\Q=\sigma^d\Q'$ if and only if $\zeta^d\Q=\zeta^d\Q'$. 
Assume there are no topological symmetries of $\Pat(\K)$ that send the satellite incompressible torus to another incompressible torus in the complement of $\Pat(\K)$. 
\begin{enumerate}
\item\label{mainleg:1A} If $\Pat$ has nonzero winding number, then the number of Legendrian representatives of $\K(\Pat)$ with invariants $(\tb,\rot)=(t,r)$ is
\begin{align*}
\sum_{i=0}^{k}&\left\vert\Leg_{V}(\Delta^{-\overline{t}}\Pat;(t-n^2\overline{t}),r-nr_i)\right\vert\\
&\qquad\qquad-\sum_{i=1}^{k}\left\vert\sigma^{d_i}\left(\Leg_{V}(\Delta^{-\tb(\widetilde{\Lc}_i)}\Pat;(t-n^2\tb(\widetilde{\Lc}_i)),r-n\rot(\widetilde{\Lc}_i)\right)\right\vert.
\end{align*}
\item\label{mainleg:2A} If $\Pat$ has winding number zero, assume that $-\K=\K$ and $\Pat(\K)$ has oriented topological symmetries. Then
the number of Legendrian representatives of $\K(\Pat)$ with invariants $(\tb,\rot)=(t,r)$ is:
\begin{align*}
\sum_{i=0}^{\left\lfloor\frac{k+1}{2}\right\rfloor}\left\vert\Leg_{V}(\Delta^{-\overline{t}}\Pat;t,r)\right\vert
& +\mathcal{X}_{2\mid k}\left\vert \frac{\Leg_{V}(\Delta^{-\overline{t}}\Pat;t,r)}{\Q\sim f(\Q)} \right\vert \\
& - \sum_{i=1}^{\left\lfloor\frac{k}{2}\right\rfloor}\left\vert\sigma^{d_i}\left(\Leg_{V}(\Delta^{-\tb(\widetilde{\Lc}_i)}\Pat;t,r)\right)\right\vert ,
\end{align*}
where $\mathcal{X}_{2\mid k}$ is the indicator function with value 1 if $k$ is even, and 0 if $k$ is odd. 
\end{enumerate}
\end{theorem}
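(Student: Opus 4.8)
The plan is to count Legendrian representatives of $\Pat(\K)$ with fixed classical invariants by transporting the count through the bijection of Theorem~\ref{lem:incommonstab} and then analyzing $\sim'$ combinatorially. First I would invoke Theorem~\ref{lem:incommonstab} to identify $\Leg(\Pat(\K))$ with the set of $\sim'$-classes of pairs $(\Q,\Lc_i)$. By Lemma~\ref{lem:legbraidsat} the satellite $\Q(\Lc_i)$ has invariants $(\tb,\rot)=(n^2\tt+\reltb_V(\Q),\,nr_i+\relrot_V(\Q))$, so the pairs whose satellite realizes $(t,r)$ are exactly those with $\Q\in\Leg_V(\Delta^{-\tt}\Pat;\,t-n^2\tt,\,r-nr_i)$. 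Summing over the peaks $\Lc_0,\dots,\Lc_k$ produces precisely the first sum in each formula as the number of such pairs.

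Next I would model $\sim'$ as a graph $\Gamma$ whose vertices are these pairs and whose edges are the elementary moves of Definition~\ref{def:ker}. An edge across a valley $\widetilde{\Lc}_i$ of depth $d_i$ is recorded by a pattern $\widetilde{\Q}\in\Leg_V(\Delta^{-\tb(\widetilde{\Lc}_i)}\Pat)$ joining $(\zeta^{d_i}\widetilde{\Q},\Lc_{i-1})$ to $(\sigma^{d_i}\widetilde{\Q},\Lc_i)$, and matching invariants forces $\widetilde{\Q}\in\Leg_V(\Delta^{-\tb(\widetilde{\Lc}_i)}\Pat;\,t-n^2\tb(\widetilde{\Lc}_i),\,r-n\rot(\widetilde{\Lc}_i))$. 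Here the standing hypothesis enters twice. Since $\sigma^{d_i}\widetilde{\Q}=\sigma^{d_i}\widetilde{\Q}'$ if and only if $\zeta^{d_i}\widetilde{\Q}=\zeta^{d_i}\widetilde{\Q}'$, two patterns determine the same edge exactly when they share a $\sigma^{d_i}$-image, so the number of edges across $\widetilde{\Lc}_i$ equals $\bigl|\sigma^{d_i}(\Leg_V(\dots))\bigr|$, the $i$-th term of the subtracted sum. The same hypothesis shows that a vertex has at most one partner across each of its two adjacent valleys, for the partner $(\zeta^{d_i}\widetilde{\Q},\cdot)$ is determined by $\sigma^{d_i}\widetilde{\Q}=\Q$ (and symmetrically). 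Thus every vertex has degree at most two, and since an incoming left edge forces the only other edge to point right, every path in $\Gamma$ is monotone in the peak index; hence $\Gamma$ is a disjoint union of simple paths. A forest satisfies $\#\text{components}=\#\text{vertices}-\#\text{edges}$, which is exactly Case~\eqref{mainleg:1A}.

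For Case~\eqref{mainleg:2A} the winding number is zero, so the invariant formulas lose their dependence on $\Lc_i$ and the oriented symmetry of Definition~\ref{def:ker} adjoins the involution $\tau(\Q,\Lc_i)=(f(\Q),\Lc_{k-i})$, using $-\Lc_i=\Lc_{k-i}$ from $-\K=\K$. I would count the components of $\Gamma/\tau$. Away from the center, peaks pair off as $\{i,k-i\}$ and valleys pair off under $\tau$, so both the vertex and the edge counts are halved, yielding the truncated sums; when $k$ is even the central peak $\Lc_{k/2}$ is $\tau$-invariant and contributes $\bigl|\Leg_V(\Delta^{-\tt}\Pat;t,r)/(\Q\sim f(\Q))\bigr|$ classes, which is the $\mathcal{X}_{2\mid k}$ term. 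The remaining task is to verify that $\Gamma/\tau$ is again essentially a forest and to account correctly for the $\tau$-fixed vertices (only at the central peak, where $f(\Q)=\Q$) and for the $\tau$-behavior along the central valley.

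I expect the main obstacle to be precisely the no-cycles (forest) property together with this symmetry bookkeeping. The forest property is where the hypothesis $\sigma^d\Q=\sigma^d\Q'\Leftrightarrow\zeta^d\Q=\zeta^d\Q'$ is indispensable: it is exactly what guarantees that each elementary move has a uniquely determined partner, so the identification graph cannot branch or loop back on itself. In the winding-zero case the delicate point is the parity-dependent treatment of the center—separating the contribution of the central peak (the $f$-quotient) from that of the central valley, and correctly enumerating the fixed points of $\tau$—so that the folded vertex and edge counts assemble into the stated formula.
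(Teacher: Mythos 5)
Your treatment of Item~(\ref{mainleg:1A}) is correct, and it is the paper's own proof in graph-theoretic clothing: the paper establishes injectivity of $\Sat(\cdot,\Lc_i)$ at each peak by exactly your ``at most one partner per side'' observation (a $\sim'$-sequence returning to the same peak has a local extremum, where the two adjacent moves share a $\sigma^{d}$-image, hence by the standing hypothesis a $\zeta^{d}$-image, so the sequence shortens), and then counts representatives as pairs minus adjacent-peak identifications, which is precisely your vertices-minus-edges count for a disjoint union of monotone paths.

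For Item~(\ref{mainleg:2A}), however, your proposal stops exactly where the mathematical content lies, and what you defer as ``bookkeeping'' is a genuine gap, not a routine verification. Write $A=\Leg_{V}(\Delta^{-\tt}\Pat;t,r)$, let $G$ be your graph, and suppose $k$ is odd, so that the central valley $\widetilde{\Lc}_c$, $c=\frac{k+1}{2}$, joins the two peaks that $\tau$ exchanges. Since the classes of $\sim'$ are the orbits of $\tau$ acting on $\pi_0(G)$, and $G$ is a forest of monotone paths, the exact count is
\[
\#\text{classes}\;=\;\tfrac12\bigl(\#\pi_0(G)+\#\mathrm{Fix}(\tau)\bigr)
\;=\;\frac{k+1}{2}\,\lvert A\rvert-\sum_{i=1}^{(k-1)/2}E_i-\tfrac12\bigl(E_c-F_c\bigr),
\]
where $E_i$ is the number of edges across the $i$-th valley, $E_c$ that across the central valley, and $F_c$ is the number of central edges fixed by $\tau$ (a $\tau$-fixed component of $G$ must contain exactly one central edge, and that edge must be $\tau$-fixed). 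The stated formula omits the central valley from the subtracted sum entirely, so it holds if and only if $F_c=E_c$, i.e.\ if and only if \emph{every} edge across the central valley is $\tau$-invariant; unwinding $\tau(e_{\widetilde{\Q}})=e_{f(\widetilde{\Q})}$, this is the assertion that $\sigma^{d_c}(f(\widetilde{\Q}))=\sigma^{d_c}(\widetilde{\Q})$ for every $\widetilde{\Q}\in\Leg_{V}(\Delta^{-\tb(\widetilde{\Lc}_c)}\Pat;t,r)$. Nothing in the hypothesis $\sigma^{d}\Q=\sigma^{d}\Q'\Leftrightarrow\zeta^{d}\Q=\zeta^{d}\Q'$ forces this: a central-valley move composed with the symmetry move produces the identification $\zeta^{d_c}\widetilde{\Q}\sim\zeta^{d_c}(f(\widetilde{\Q}))$ of two patterns attached to the \emph{same} peak (a ``horizontal'' edge in your folded graph), and you must prove these are all loops.

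Note also what this claim amounts to: it is equivalent to the injectivity of $\Sat(\cdot,\Lc)$ at the two peaks flanking the central valley, which is exactly what the paper's own (terse) proof of Item~(\ref{mainleg:2A}) invokes when it rewrites every representative as $\Q(\Lc_i)$ with $i$ in the first half of the peaks and asserts that the Item~(\ref{mainleg:1A}) argument applies verbatim, with the $f$-quotient needed only at a rotation-zero peak. In other words, the shortening argument alone no longer yields that injectivity once the symmetry move is available (a sequence may now run from $(f(\Q),-\Lc)$ monotonically across the center to $(\Q',\Lc)$ with $\Q'\neq\Q$), so the missing statement requires a separate, pattern-geometric argument relating $f$, $\sigma^{d_c}$ and $\zeta^{d_c}$, not just the combinatorial hypothesis of the theorem. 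By contrast, your $k$ even bookkeeping does close up: by monotonicity each $\tau$-fixed component meets the central rotation-zero peak in a unique vertex, fixed components correspond to $f$-fixed patterns there, and $\lvert A/f\rvert=\tfrac12\bigl(\lvert A\rvert+\lvert\mathrm{Fix}(f)\rvert\bigr)$ produces the $\mathcal{X}_{2\mid k}$ term. So the proposal is fine for nonzero winding number and for $k$ even, but Item~(\ref{mainleg:2A}) with $k$ odd is not proved.
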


\begin{remark} The conditions of the Theorem hold when the maps $\sigma^d,\zeta^d\colon \Leg_V(\Delta^{-m+d}\Pat;t,r)\to  \Leg_V(\Delta^{-m}\Pat)$ are all injective, or when $\Leg_V(\Delta^{-m+d}\Pat;t,r)$ is empty or has only one element. In all of these cases 
$\left\vert\sigma^{d}\left(\Leg_{V}(\Delta^{-t+d}\Pat;(t,r)\right)\right\vert$ is $\left\vert\Leg_{V}(\Delta^{-t+d}\Pat;(t,r)\right\vert$, $1$, or $0$ and the formulas simplify accordingly. 
\end{remark} 
For an illustration on how to use the above statements, see the proofs of Theorems~\ref{2braidsclass} and~\ref{thm:twistsat}.
\begin{proof}
For the proof of the first statement fix the pair $(t,r)$. We first check that the maps
\[\Sat(\cdot,\Lc)\colon \Leg_{V}(\Delta^{-\tb(\Lc)}\Pat;(t-n^2\tb(\Lc)),r-n\rot(\Lc)) \to \Leg(\Pat(\K);t,r)\]
are injective for any $\Lc \in \Leg(\K)$. It is enough to prove the statement for a peak $\Lc$, so assume that  $\Q(\Lc)=\Q'(\Lc)$. Then Theorem~\ref{lem:incommonstab} gives a path  $(\Q,\Lc)=(\Q_0,\Lc_0),$ $(\Q_1,\Lc_1),\dots,$ $(\Q_k,\Lc_k)=(\Q',\Lc)$ as in Definitition~\ref{def:ker}. Since $\Lc_0=\Lc_k$ there must be an $i$ such that $\Lc_{i-1}=\Lc_{i+1}$. This means that the valleys $\widetilde{\Lc}_{i}$ and $\widetilde{\Lc}_{i+1}$ must agree too, and without loss of generality we can assume that $\St_+^d\Lc_{i-1}=\St_+^d\Lc_{i+1}=\widetilde{\Lc}_{i}=\widetilde{\Lc}_{i+1}=\St_-^d\Lc_{i}$. Then according to Definition~\ref{def:ker} there are patterns $\widetilde{\Q}_{i}$ and $\widetilde{\Q}_{i+1}$ such that $\sigma^d\widetilde{\Q}_{i}=\Q_{i}=\sigma^d\widetilde{\Q}_{i+1}$, thus by the hypothesis of the theorem $\Q_{i-1}=\zeta^d\widetilde{\Q}_{i}=\zeta^d\widetilde{\Q}_{i+1}=\Q_{i+1}$. So the sequence can be shortened. 
Repeating the same procedure, we will end up with a sequence of length one, which proves that $\Q=\Q'$.

Theorem~\ref{lem:incommonstab} and Lemma~\ref{lem:legbraidsat} say that ${{\Sat}'}$ maps the set
\[
S= \bigcup_{i=0}^k \left(\Leg_{V}(\Delta^{-\overline{t}}\Pat;(t-n^2\overline{t}),r-nr_i)\times \{\Lc_i\}\right)
\]
onto $\Leg(\Pat(\K); t,r)$ and thus the first term in the equation in Item~\ref{mainleg:1A} is an upper bound on $|\Leg(\Pat(\K); t,r)|$.

Arguing as above, two elements in $S$ corresponding to adjacent peaks $\Lc_{i-1}$ and $\Lc_{i}$ will map to the same element under $\Sat'$ if and only if they are also in the image of $\Sat$ restricted to 
\begin{align*}
\sigma^{d_i}&\left(\Leg_{V}(\Delta^{-\tb(\widetilde{\Lc}_i)}\Pat;(t-n^2\tb(\widetilde{\Lc}_i)),r-n\rot(\widetilde{\Lc}_i)\right) \times \{\Lc_{i-1}\}\\
&= \zeta^{d_i}\left(\Leg_{V}(\Delta^{-\tb(\widetilde{\Lc}_i)}\Pat;(t-n^2\tb(\widetilde{\Lc}_i)),r-n\rot(\widetilde{\Lc}_i)\right) \times \{\Lc_{i}\}.
\end{align*}
Thus, by the injectivity of $\Sat(\cdot, \Lc)$, the second term in the equation in Item~\ref{mainleg:1A} accounts for the over count in the first term. 

In the second part $n=0$ and the maps
\[\Sat(\cdot,\Lc)\colon \Leg_{V}(\Delta^{-\tb(\Lc)}\Pat;t,r) \to \Leg(\Pat(\K);t,r)\]
are only injective if $\Lc\neq -\Lc$, and by Legendrian simplicity and our hypothesis this is equivalent to $\rot(\Lc)\neq 0$. When $\rot(\Lc)=0$, then we only get an injective map after factoring out with the relation $\Q\sim f(\Q)$. 
Now the argument is identical to the one above after observing, that by the symmetry $\Lc_i(\Q)=\Lc_{k-i+1}(f(\Q))$ every Legendrian representative of the satellite can be written in the form $\Q(\Lc_i)$ for $i\leq \lceil \frac{k+1}2\rceil$.
\end{proof}

\subsubsection{Connected sums} 
Recall from Section~\ref{subsec:satknot} that the connected sum of two knots can be thought of as a satellite of one by the other. More specifically given $\K_1$ and $\K_2$ in $S^3$ let $\mu$ be a meridian to $K_1$. Then the complement of a neighborhood of $\mu$, $V=S^3-\nu(\mu)$, is a solid torus containing $\K_1$ and it has a canonical product structure (coming from $\mu$ and a meridian of $\mu$). So we can think of $\K_1$ as determining a pattern $\Pat_{\K_1}$ in $V=S^1\times D^2$. (It is useful to notice that the pattern $\Pat_{\K_1}$ is independent of the product structure chosen on the complement of $\nu(\mu)$. In particular, we could have chosen a Legendrian unknot with maximal Thurston-Bennequin invariant to represent $\mu$ and then taken the framing coming from the dividing curves on the boundary of the neighborhood. This will be convenient below.) One may easily see that $\K_1\#\K_2$ is the same topological knot as $\Pat_{\K_1}(\K_2)$. 

We will now see how to recover part of the structure theorem concerning connected sums from \cite{EtnyreHonda03} using Theorem~\ref{lem:incommonstab}. We begin by recalling Legendrian connected sums for Legendrian knots $\Lc_1$ and $\Lc_2$ in the standard contact structure on $S^3$ (there is a more general notion defined in any contact manifold, but we will not need that here). Choose Legendrian representatives $L_1$ and $L_2$ of $\Lc_1$ and $\Lc_2$ so that the front diagram for $L_1$ is to the left of the one for $L_2$ and a right cusp of $L_1$ is right beside a left cusp for $L_2$. Remove a small neighborhood of a right cusp from $L_1$ and a left cusp form $L_2$, and then connect the remained with two horizontal arcs. This will result in a Legendrian knot $L_1\#L_2$ and one can show that its Legendrian isotopy type is independent of the choices of $L_1$, $L_2$, and all other choices, \cite{EtnyreHonda03}. Thus we denote the resulting Legendrian isotopy type by $\Lc_1\#\Lc_2$.  We can now show the following. 
\begin{theorem}\label{csum}
Given two knots $\K_1$ and $\K_2$ in $S^3$. 
Let $\K=\K_1\#\K_2$ with either $\K_1$ or $\K_2$ Legendrian simple and uniformly thick. Also assume that $\K_1$ is not isotopic to $\K_2$ (and if the knots are not prime then neither contains a summand of the other). Then there is a bijection 
\[
C:\left(\frac{\Leg(\K_1)\times \Leg(\K_2)}{\sim}\right) \to \Leg(\K_1\#\K_2)
\]
given by Legendrian connected sum: $C(\Lc_1,\Lc_2)=\Lc_1\#\Lc_2$, where the equivalence relation $\sim$ is generated by $(\St_\pm(\Lc_1),\Lc_2)\sim(\Lc_1, \St_\pm(\Lc_2))$.
\end{theorem}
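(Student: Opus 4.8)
The plan is to recognize the connected sum as a satellite operation and feed it into Theorem~\ref{lem:incommonstab}, then translate the resulting bijection into the language of Legendrian connected sums. Write $\K=\K_1\#\K_2$ and, using the symmetry of the hypotheses, assume without loss of generality that $\K_2$ is the summand that is uniformly thick and Legendrian simple; this will be the companion. As explained in Section~\ref{subsec:satknot}, letting $\mu$ be a meridian of $K_1$ and $V=S^3-\nu(\mu)$, the knot $\K_1$ becomes a pattern $\Pat_{\K_1}$ in $V$ with $\K_1\#\K_2=\Pat_{\K_1}(\K_2)$. Since $\mu$ is a meridian of $K_1$ we have $\mathrm{lk}(K_1,\mu)=1$, so $\Pat_{\K_1}$ has winding number $n=1$; in particular the winding number is nonzero and no oriented-symmetry bookkeeping will be needed.

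First I would identify the pattern side with $\Leg(\K_1)$. Following the remark in Section~\ref{subsec:satknot}, represent $\mu$ by a Legendrian unknot $U$ with $\tb(U)=-1$ and use the framing coming from its dividing curves; then $(V,\xi_V)$ is literally $(S^3-\nu(U),\xi_{std})$. A Legendrian representative of $\Pat_{\K_1}$ in $(V,\xi_V)$ is then exactly a Legendrian $\K_1$ in $S^3$ lying in the complement of $U$ and linking it once, and conversely every Legendrian $\K_1$ admits such a meridian; this yields a bijection $\Leg_V(\Pat_{\K_1})\cong\Leg(\K_1)$. Because $\Pat_{\K_1}$ is a local knot of winding number one, a full twist can be absorbed through the unknotted core strand, so $\Delta^{-t}\Pat_{\K_1}=\Pat_{\K_1}$ smoothly for every $t$ and hence $\Leg_V(\Delta^{-\tt}\Pat_{\K_1})\cong\Leg(\K_1)$. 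By Lemma~\ref{lem:legbraidsat} (with $n=1$) this bijection matches invariants, sending a pattern with $(\reltb_V,\relrot_V)=(t,r)$ to a Legendrian knot with $(\tb,\rot)=(t-1,r)$, and by Lemma~\ref{lem:stabpattern} it intertwines $\St_\pm$ on patterns with $\St_\pm$ on $\Leg(\K_1)$ and the reembeddings $\zeta,\sigma$ with positive and negative stabilization. I would also check, directly from the front-diagram construction of the satellite in Figure~\ref{fig:satex}, that with this choice of meridian $\Q(\Lc_2)$ is Legendrian isotopic to the Legendrian connected sum $\Lc_1\#\Lc_2$, where $\Q$ corresponds to $\Lc_1$, since inserting the pattern front at a cusp of $\Lc_2$ reproduces the connected-sum front construction.

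The main obstacle is verifying the no-symmetry hypothesis of Theorem~\ref{lem:incommonstab}. Here the satellite incompressible torus $T_S=\psi(\partial V)$ is precisely the swallow-follow torus that follows $\K_2$ and swallows $\K_1$ described in Section~\ref{subsec:satsymmetry} and Figure~\ref{fig:sftorus}. One must rule out any topological symmetry of the complement of $\K_1\#\K_2$ carrying $T_S$ to another incompressible torus. Appealing to the uniqueness of the JSJ decomposition and the companionship analysis of Section~\ref{subsec:satsymmetry}, the only candidate is the symmetry exchanging the two swallow-follow tori, and this exists only when the two sides have diffeomorphic complements, i.e.\ when $\K_1$ and $\K_2$ share a summand. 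The hypotheses that $\K_1$ is not isotopic to $\K_2$ and (in the non-prime case) that neither contains a summand of the other are exactly what exclude this, so the hypothesis of Theorem~\ref{lem:incommonstab} is met.

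With these two points in place, Theorem~\ref{lem:incommonstab} gives an $\St_\pm$-equivariant bijection from $\bigl(\Leg_V(\Delta^{-\tt}\Pat_{\K_1})\times\Leg(\K_2;\tt)\bigr)/\!\sim'$ onto $\Leg(\Pat_{\K_1}(\K_2))=\Leg(\K_1\#\K_2)$. Using the identification $\Leg_V(\Delta^{-\tt}\Pat_{\K_1})\cong\Leg(\K_1)$, the domain becomes $\bigl(\Leg(\K_1)\times\Leg(\K_2;\tt)\bigr)/\!\sim'$. Finally I would observe that every class in $\bigl(\Leg(\K_1)\times\Leg(\K_2)\bigr)/\!\sim$ has a representative with $\Lc_2$ a peak of the mountain range of $\K_2$, obtained by pushing all stabilizations onto the $\K_1$ factor via $(\St_\pm\Lc_1,\Lc_2)\sim(\Lc_1,\St_\pm\Lc_2)$, and that the residual identifications among peaks are exactly the relation $\sim'$ of Definition~\ref{def:ker} (which moves between neighboring peaks while applying $\zeta,\sigma$, i.e.\ stabilizing $\Lc_1$). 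Hence $\bigl(\Leg(\K_1)\times\Leg(\K_2)\bigr)/\!\sim\ \cong\ \bigl(\Leg(\K_1)\times\Leg(\K_2;\tt)\bigr)/\!\sim'$, and combining this with $\Q(\Lc_2)=\Lc_1\#\Lc_2$ yields the desired bijection $C(\Lc_1,\Lc_2)=\Lc_1\#\Lc_2$.
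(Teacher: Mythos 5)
Your proposal is correct and follows essentially the same route as the paper's proof: recognize $\K_1\#\K_2$ as the satellite $\Pat_{\K_1}(\K_2)$ with the uniformly thick, Legendrian simple summand as companion, use $\Delta^k\Pat_{\K_1}=\Pat_{\K_1}$ to identify $\Leg_V(\Delta^{-\tt}\Pat_{\K_1})$ with $\Leg(\K_1)$, observe that the satellite of $\Lc_2$ by the pattern corresponding to $\Lc_1$ is exactly $\Lc_1\#\Lc_2$, and then apply Theorem~\ref{lem:incommonstab} after matching $\sim$ with $\sim'$. The one place the paper does genuinely more work is your asserted bijection $\Leg_V(\Pat_{\K_1})\cong\Leg(\K_1)$: this is the paper's Lemma~\ref{csumpattern}, and its well-definedness and injectivity on isotopy classes (a Legendrian isotopy in $S^3$ need not preserve the solid torus $V$ or the chosen Legendrian meridian) are established there via the contractibility of loops of contact structures on $V$ (Lemma~\ref{lem:contactstr}, used as in Theorem~\ref{isoIScontacto}), an argument your sketch leaves implicit.
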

\begin{remark}
In \cite{EtnyreHonda03} a more general result was shown. Specifically the ambient manifolds did not have to be $(S^3,\xi_{std})$ and symmetries between $\K_1$ and $\K_2$ were allowed. More importantly, the requirement that one of $\K_1$ or $\K_2$ be Legendrian simple and uniformly thick was not needed. On the one hand this shows a deficiency in our understanding of satellites, but on the other hand it points to the fact that for some patterns we might be able to dispense with the Legendrian simple and uniformly thick hypotheses in Theorem~\ref{lem:incommonstab}.
\end{remark}
Before giving the proof we make a simple observation.
\begin{lemma}\label{csumpattern}
Given a knot type $\K$ in $S^3$ and the associated pattern $\Pat_\K$ in $V$ defined above, there is a bijection 
\[
\Leg_V(\Pat_\K)\to \Leg(\K)
\]
given by sending $\Q$ to $\Q(U)$ where $U$ is the maximal Thurston-Bennequin representative of the unknot. 
\end{lemma}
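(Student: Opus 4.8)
The plan is to exhibit the map of the lemma as the restriction of an honest contactomorphism. The geometric input is that the complement $V'=S^3\setminus\nu(U)$ of a standard neighborhood of the maximal Thurston--Bennequin unknot $U$ is again a standard tight solid torus: its boundary is convex with two dividing curves, the restriction of $\xi_{std}$ to $V'$ is tight, and by the classification of tight contact structures on solid tori there is a contactomorphism $\Phi\colon(V,\xi_V)\to V'$ carrying the product framing $\lon$ to the framing cut out by the dividing curves of $\partial\nu(U)$. This is exactly the dividing--curve framing used to define $\Pat_\K$ in the discussion preceding the lemma, so $\Phi$ sends a Legendrian pattern $\Q\in\Leg_V(\Pat_\K)$ to a Legendrian knot $\Phi(Q)\subset V'\subset S^3$ of smooth type $\K=\Pat_\K(\text{unknot})$. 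Since $U$ is a meridian of the core of $V'$ (and conversely), inserting $\Q$ into $V'$ by $\Phi$ produces precisely the Legendrian satellite $\Q(U)$; in particular $\Q\mapsto\Q(U)$ is well defined into $\Leg(\K)$.

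Because $\Phi$ is a contactomorphism onto $V'$, two patterns are Legendrian isotopic in $(V,\xi_V)$ if and only if their images are Legendrian isotopic inside $V'$. Both halves of the bijection therefore come down to a single fact about meridians: a maximal--$tb$ Legendrian unknot meeting a Legendrian knot once in a meridian disk is, up to contact isotopy of the complement, canonically attached to that knot. For surjectivity I would take $L\in\Leg(\K)$ and build such a meridian $U_L$ inside a Darboux ball meeting $L$ in a single arc; then $S^3\setminus\nu(U_L)\cong(V,\xi_V)$ and $L$ becomes a Legendrian representative $\Q$ of $\Pat_\K$ with $\Q(U_L)=L$. As the unknot is Legendrian simple, an ambient contact isotopy of $S^3$ moves $U_L$ onto the fixed meridian $U$ while carrying $L$ to a Legendrian isotopic knot, so $\Q$ maps to the class of $L$.

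For injectivity suppose $\Q(U)$ and $\Q'(U)$ are Legendrian isotopic in $S^3$. By Theorem~\ref{isoIScontacto} there is a contactomorphism $g$ of $(S^3,\xi_{std})$, smoothly isotopic to the identity, with $g(\Q(U))=\Q'(U)$, and then $g(U)$ and $U$ are both maximal--$tb$ Legendrian meridians of $\Q'(U)$. Using the meridian statement I would post-compose $g$ with a contact isotopy fixing $\Q'(U)$ and taking $g(U)$ back to $U$; the result is a contactomorphism of $S^3$, still smoothly isotopic to the identity, preserving $\nu(U)$ and hence restricting to a contactomorphism $V'\to V'$ sending $\Phi(Q)$ to $\Phi(Q')$. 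Conjugating by $\Phi$ and invoking Corollary~\ref{cor:iso} gives $\Q=\Q'$.

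The step I expect to be the main obstacle is the contact uniqueness of the meridian: two maximal--$tb$ Legendrian unknots each linking a fixed Legendrian $L$ once are contact isotopic in $S^3\setminus L$. Smoothly this is classical, since any circle bounding a disk that meets $L$ once is isotopic in the complement to the standard meridian on $\partial\nu(L)$. To promote this to the contact category I would make the two spanning disks convex --- each carrying a single boundary--parallel dividing arc because $tb(U)=-1$ --- and rigidify the isotopy using the uniqueness of tight contact structures on the ball and on solid tori (Theorem~\ref{contactOnB3} and the solid--torus classification) together with Legendrian simplicity of the unknot. Controlling the isotopy in a neighborhood of $L$, so that it genuinely fixes $L$ rather than merely its knot type, is the delicate point.
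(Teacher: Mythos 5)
Your overall architecture---realize the map via the contactomorphism $(V,\xi_V)\cong S^3\setminus\nu(U)$, prove surjectivity by building a Legendrian meridian in a Darboux ball and re-embedding, prove injectivity by promoting a Legendrian isotopy of satellites to an ambient contactomorphism that is then corrected to preserve the two Heegaard solid tori---parallels the paper's proof, and your surjectivity argument is essentially complete (modulo the harmless point that your satellite has companion the core of $V'$ rather than $U$ itself, which Legendrian simplicity of the unknot and the identity $\Delta^k\Pat_\K=\Pat_\K$ repair). The genuine gap is exactly where you flag it: injectivity rests entirely on the claim that two maximal-$\tb$ Legendrian meridians of a fixed Legendrian knot are contact isotopic in its complement, and this is never proved; you only sketch a strategy and yourself note that forcing the isotopy to fix $L$ is ``the delicate point.'' That claim is not a routine reduction: it is a convex-surface-theory statement of essentially the same depth as the lemma being proved (it is the analogue of the unknot-linking lemmas that \cite{EtnyreHonda03} and \cite{EtnyreNgVertesi13} must establish by hand), so as written the proposal trades the lemma for an unproven statement of comparable difficulty. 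A second, smaller unchecked step: the final appeal to Corollary~\ref{cor:iso} requires the contactomorphism of $V$ obtained by conjugating with $\Phi$ to be smoothly isotopic to the identity and compatible with the boundary data; this is true (it preserves orientations and acts trivially on $H_1(\partial V)$ because it preserves both solid tori and the orientations of the knots), but it must be said.

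It is worth seeing how the paper avoids needing your meridian statement as an input. The paper constructs the inverse assignment $\Lc\mapsto\Q_\Lc$ by choosing \emph{any} $\tb=-1$ Legendrian meridian $M$ of $L$ and pulling $L$ back through $S^3\setminus\nu(M)\cong(V,\xi_V)$; to show this is well defined it allows an arbitrary, merely smooth, isotopy of the pair $(L,M)$---smooth uniqueness of meridians being classical---which induces a family of embeddings $\phi_t\co V\to S^3$ that are contact only at the two endpoints. Pulling back $\xi_{std}$ gives a loop of contact structures on $V$ based at $\xi_V$, and Lemma~\ref{lem:contactstr} (triviality of $\pi_1$ of the space of tight contact structures on the solid torus), combined with the parametric Moser argument of Theorem~\ref{isoIScontacto}, lets one correct the $\phi_t$, rel endpoints, to contact embeddings; then $\phi_t^{-1}(L_t)$ is itself the desired Legendrian isotopy of patterns. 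In other words, in the paper the contact uniqueness of the meridian emerges as a consequence of the $\pi_1$ argument rather than entering as a hypothesis. If you wish to keep your outline, the honest fix is either to prove the meridian uniqueness genuinely by convex surfaces (make the punctured spanning disks convex, control their dividing sets, and invoke Theorem~\ref{contactOnB3}---a real argument, not a remark), or to replace that step by the paper's rectification-of-families mechanism.
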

\begin{proof}
Given any $\Lc\in \Leg(\K)$ and specific Legendrian representative $L$ of $\Lc$ let $M$ be a Legendrian representative of the meridian to $L$ with $tb(M)=-1$. The complement of a standard neighborhood of $M$ is a solid torus $N$ that is a standard neighborhood of an unknot $U$ with $tb(U)=-1$. Now let $\phi\co V\to N$ be the map from the 1-jet space of $S^1$ to $N$ and let $Q_L$ be $\phi^{-1}(L)$ in $V$. 

Notice that $Q_L$ is well-defined up to isotopy since any isotopy, $L_t$, $t\in[0,1]$, of $L$ (or similarly isotopy of the chosen Legendrian meridian $M$) will induce a one parameter family of smooth maps $\phi_t\co V\to S^3$ and we can assume that $\phi_0$ and $\phi_1$ are the maps used to define the standard neighborhood of a Legendrian unknot. Pulling back $\xi_{std}$ by the $\phi_t$ will give a loop of contact structures on $V$. Lemma~\ref{lem:contactstr} says that the space of contact structures $\Xi(V)$ has trivial fundamental group, so arguing as in Theorem~\ref{isoIScontacto} we can conclude that the $\phi_t$ may be isotoped, relative to $t=0,1$, so that all the $\phi_t$ give parameterizations of standard neighborhoods of unknots. Thus $\phi_t(L_t)$ give a Legendrian isotopy from $Q_{L_0}$ to $Q_{L_1}$ and we see that $\Lc$ defines a Legendrian pattern $\Q_\Lc$.

From the construction it is clear that $\Q_\Lc(U)=\Lc$ and thus the map in the lemma is surjective. The argument in the previous paragraph also shows that if $\Q_\Lc(U)$ is Legendrian isotopic to $\Q_{\Lc'}(U)$ in $S^3$ then $\Q_\Lc$ is Legendrian isotopic to $\Q_{\Lc'}$ in $V$. Thus we see that the map in the lemma is also injective. 
\end{proof}
\begin{proof}[Proof of Theorem~\ref{csum}]
Suppose, without loss of generality, that $\K_2$ is Legendrian simple and uniformly thick. Let $\tt$ be the maximal Thurston-Bennequin invariant of $\K_2$. 
Notice that by the symmetries in the statement of Theorem~\ref{csum} the set 
\[
\frac{\Leg(\K_1)\times \Leg(\K_2)}{\sim}
\]
is the same as the set
\[
\frac{\Leg(\K_1)\times \Leg(\K_2;\tt)}{\sim}.
\]
In addition notice that when written in this form $\sim$ in the statement of the theorem reduces to $\sim$ from the statement of Theorem~\ref{lem:incommonstab}.

Notice that since there is a meridional disk in $V$ that intersects $\Pat_{\K_1}$ in a single point we know that $\Delta^k\Pat_{\K_1}=\Pat_{\K_1}$ for any integer $k$. Moreover, Lemma~\ref{csumpattern} implies that $\Leg_V(\Delta^{-\tt}\Pat_{\K_1})=\Leg_V(\Pat_{\K_1})$ can be identified with $\Leg(\K_1)$. 

We also know from above that $\Leg(\Pat_{\K_1}(\K_2))=\Leg(\K_1\#\K_2)$. Observing that the Legendrian satellite of $\Lc_2$ in $\Leg(\K_2)$ by a Legendrian pattern $\Q_{\Lc_1}$ in $\Leg_V(\Pat_{\K_1})$ is the same as the Legendrian connected sum of $\Lc_1\#\Lc_2$ we see that the map in the statement of the theorem before modding out by equivalence is simply 
\[
C={\Sat}\co \left(\frac{\Leg_V(\Delta^{-\tt}\Pat)\times \Leg(\K;\tt)}{\sim}\right) \to \Leg(\Pat(\K)).
\]
Thus the bijectivity of $\Sat$ in Theorem~\ref{lem:incommonstab} implies the bijectivity of $C$. 
\end{proof}

\subsubsection{Legendrian braid satellites} 

As a direct consequence of Theorems \ref{thm:mainleg} and \ref{2starndbraid} we can reprove a theorem originally proven in \cite{EtnyreHonda05} using quite different techniques. 

\begin{theorem}\label{2braidsclass}
Let $\K$ be a Legendrian simple and uniformly thick knot type. Then any 2--braided satellite of $\K$ is also Legendrian simple. 

More explicitly denote the 2--braided pattern with $m$ (odd) twists by $\Pat_m$ and the maximal Thurston-Bennequin invariant of $\K$ by $\tt$. Moreover, suppose 
\[
\Leg(\K;\tt)=\{\Lc_0,\Lc_1 \dots,\Lc_k\}
\]
where  
\[
\rot(\Lc_0)=r_0<\rot(\Lc_1)=r_1<\cdots<\rot(\Lc_k)=r_k.
\]

If $m>2\tt$, then  
the maximum Thurston-Bennequin number of $\Pat_{m}(\K)$ is $2\tt+m$, 
there are exactly $k+1$ elements in $\Leg(\Pat_m(\K),2\tt+m)$ 
that realize the rotation numbers $2r_i$ for $i=0,\ldots k$, and all other Legendrian knots realizing $\Pat_m(\K)$ destabilize to one of these. 

If $m<2\tt$, then 
the maximum Thurston-Bennequin number of $\Pat_{m}(\K)$ is $2m$,
the rotation numbers realized by elements in $\Leg(\Pat_m(\K),2m)$ are in the set 
\[
\mathcal{R}=\{2r_i + (m-2\tt) + 2l| i=0,\ldots, k, l=0,\ldots, 2\tt-m\}
\] 
(as this is a set multiplicities are ignored), the cardinality of the set $\Leg(\Pat_m(\K),2m)$ is the same as the set $\mathcal{R}$, and all other Legendrian knots realizing $\Pat_m(\K)$ destabilize to one of these. 
\end{theorem}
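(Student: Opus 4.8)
The plan is to feed the complete Legendrian classification of $2$--braid patterns from Theorem~\ref{2starndbraid} into the counting formula of Theorem~\ref{thm:mainleg}. The first observation is purely topological: a $2$--braid pattern has winding number $n=2$, and since a negative full twist removes two half twists, $\Delta^{-\tt}\Pat_m=\Pat_{m-2\tt}$. Thus the patterns appearing in the satellite formula are again $2$--braid patterns, all of which are Legendrian simple by Theorem~\ref{2starndbraid}. Because $\Pat_m$ has wrapping number $2$, the satellite incompressible torus is the canonical companion torus, so the no-symmetry hypothesis of Theorem~\ref{thm:mainleg} is met; and since each $\Leg_{V}(\Pat_{m'};t,r)$ has at most one element, the remark following Theorem~\ref{thm:mainleg} shows the injectivity hypothesis $\sigma^d\Q=\sigma^d\Q'\iff\zeta^d\Q=\zeta^d\Q'$ holds as well. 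Hence I may apply Item~\ref{mainleg:1A}.

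Next I would pin down the maximal Thurston--Bennequin number via Corollary~\ref{cor:utpsatmaxtb}, $\overline{\tb}(\Pat_m(\K))=4\tt+\overline{\reltb_{V}}(\Pat_{m-2\tt})$. For $m>2\tt$ the exponent $m-2\tt$ is positive, so $\overline{\reltb_{V}}(\Pat_{m-2\tt})=m-2\tt$ and the maximum is $2\tt+m$; for $m<2\tt$ the exponent is negative, so $\overline{\reltb_{V}}=2(m-2\tt)$ and the maximum is $2m$. I then evaluate Item~\ref{mainleg:1A} at $t=\overline{\tb}$. For $m>2\tt$ the unique maximal representative of $\Pat_{m-2\tt}$ has rotation number $0$, so the peak sum contributes exactly one knot for each peak $\Lc_i$, of rotation number $2r_i$, giving $k+1$ knots; the valley terms vanish because they would require a representative of $\Pat_{m-2\tt+2d_i}$ with relative Thurston--Bennequin number $m-2\tt+4d_i$, strictly above its maximum $m-2\tt+2d_i$. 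For $m<2\tt$ the maximal representatives of $\Pat_{m-2\tt}$ have relative rotation numbers ranging over $\{-(2\tt-m),\dots,2\tt-m\}$ in steps of $2$, so the peak sum realizes precisely the rotation numbers $2r_i+(m-2\tt)+2l$, i.e.\ the set $\mathcal{R}$; here the valley subtraction is genuinely present, and the content of the computation is that it collapses the repeated rotation values so that each element of $\mathcal{R}$ is attained exactly once, yielding $|\Leg(\Pat_m(\K),2m)|=|\mathcal{R}|$.

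For the destabilization statement I would use $\St_\pm$--equivariance of $\Sat'$ directly: every representative has the form $\Q(\Lc_i)$ with $\Lc_i$ a peak and $\Q\in\Leg_{V}(\Pat_{m-2\tt})$, and since $(\St_\pm\Q)(\Lc_i)=\St_\pm(\Q(\Lc_i))$, any $\Q$ that is not of maximal relative Thurston--Bennequin number (hence destabilizes in $V$ by Theorem~\ref{2starndbraid}) produces a knot that destabilizes in $S^3$. As $\Q(\Lc_i)$ has maximal $\tb$ exactly when $\Q$ has maximal $\reltb_{V}$, every non-maximal representative destabilizes to a maximal one. Finally, for Legendrian simplicity I would show the count from Item~\ref{mainleg:1A} is at most $1$ at every $(t,r)$. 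Fixing $s=t-4\tt$ and letting $a_i\in\{0,1\}$ record whether $\Leg_{V}(\Pat_{m-2\tt};s,r-2r_i)$ is nonempty, the thin mountain range of $\Pat_{m-2\tt}$ makes $\{i:a_i=1\}$ a contiguous block (the admissible relative rotation numbers at a fixed relative $\tb$ form a symmetric interval, and the $r_i$ are linearly ordered), and a short check of depths shows the valley term between $\Lc_{i-1}$ and $\Lc_i$ equals $1$ precisely when $a_{i-1}=a_i=1$; inclusion--exclusion over a path then gives $1$ when the block is nonempty and $0$ otherwise.

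The main obstacle I anticipate is the bookkeeping in the $m<2\tt$ case: verifying that the valley subtraction terms $\sigma^{d_i}(\Leg_{V}(\Pat_{m-2\tt+2d_i};\cdot,\cdot))$ are exactly the overlaps of adjacent peak shadows. This requires tracking how the depths $d_i$ interact with the sign of $m-2\tt+2d_i$ (which controls whether $\Pat_{m-2\tt+2d_i}$ is a positive or negative $2$--braid, and hence the shape of its mountain range), and confirming the resulting parity/interval computation both in the maximal-$\tb$ count $|\mathcal{R}|$ and in the global thinness argument. Everything else is a direct substitution into the established formulas.
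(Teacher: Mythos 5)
Your proposal is correct and follows essentially the same route as the paper's proof: both feed the $2$--braid classification of Theorem~\ref{2starndbraid} into Item~(\ref{mainleg:1A}) of Theorem~\ref{thm:mainleg}, get the destabilization statement from Lemma~\ref{lem:utpsatelite} together with destabilization of the patterns $\Pat_{m-2\tt}$, and establish Legendrian simplicity by the same interval/inclusion--exclusion argument showing each valley term is nonempty exactly when both adjacent peak terms are. The only (harmless) cosmetic differences are that you invoke Corollary~\ref{cor:utpsatmaxtb} for the maximal Thurston--Bennequin number where the paper reads it off from emptiness of the sets in the counting formula, and that you make the contiguity of the nonempty peak terms explicit where the paper leaves it implicit.
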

\begin{proof}
First note that since Legendrian representatives of the pattern $\Delta^{-\tt}\Pat_m=\Pat_{m-2\tt}$ always destabilize to a representative with maximal Thurston-Bennequin number, thus by Lemma~\ref{lem:utpsatelite} the same statement is true for Legendrian representatives of $\Pat_m(\K)$.

Since $\Pat_{m-2t}$ is Legendrian simple for any $t$ we see that $\Leg_V(\Pat_{m-2t}; a,b)$ contains one element or is empty. Moreover the maps $\sigma$ and $\zeta$ are always surjective. So the condition of Theorem~\ref{thm:mainleg} holds, thus we can use a simplified version of the first formula of Theorem~\ref{thm:mainleg}:
\[
\sum_{i=0}^{k}\left\vert\Leg_{V}(\Pat_{m-2\tt};(t-4\overline{t}),(r-2r_i))\right\vert-\sum_{i=1}^{k}\left\vert\left(\Leg_{V}(\Pat_{m-2\widetilde{t}_i};(t-4\widetilde{t}_i),(r-2\widetilde{r}_i)\right)\right\vert.
\]
where $\widetilde{t}_i$ and $\widetilde{r}_i$ are the Thurston-Bennequin and rotation numbers of the valleys $\widetilde{\Lc}_i$ between $\Lc_{i-1}$ and $\Lc_{i}$, i.e. $\widetilde{t}_i=\tb(\widetilde{\Lc}_i)=\tt-\frac{r_{i}-r_{i-1}}{2}$ and $\widetilde{r_i}=\rot(\widetilde{\Lc}_i)=\frac{r_{i}+r_{i-1}}2$.

Considering the case where $m-2\tt>0$ notice that $\Leg_V(\Pat_{m-2\tt}; t,r)$ is non-empty if and only if $|r|\leq m-2\tt-t$, and $t+r$ is odd. Thus $\Leg_{V}(\Pat_{m-2\tt};(t-4\overline{t}),(r-2r_i))$ is non-empty if and only if $|r-2r_i|\leq m+2\tt-t$ and $t+r$ is odd. 

In addition notice that if $|r-2r_{i-1}|\leq m+2\tt-t$, $|r-2r_i|\leq m+2\tt-t$, and $t+r$ is odd, then since $\widetilde{r_i}$ is between $r_{i-1}$ and $r_{i}$ we see that 
\begin{align*}
|r-2\widetilde{r_i}| &= |r - 2(r_{i-1}+\frac{r_i-r_{i-1}}{2})|\leq |r-2r_{i-1}| + ({r_i-r_{i-1}})\\
&\leq  m+2\tt-t +({r_i-r_{i-1}})= m+2\widetilde t_i-t
\end{align*}
and so $\left(\Leg_{V}(\Pat_{m-2\widetilde{t}_i};(t-4\widetilde{t}_i),(r-2\widetilde{r}_i)\right)$ is non-empty. Similarly if $|r-2\widetilde{r_i}|\leq m+2\widetilde t_i-t$ then one may easily check that $|r-2r_i|\leq m+2\tt-t$ and $|r-2r_{i+1}|\leq m+2\tt-t$. So in the equation above, the terms in the first sum are 1 exactly one more time than the terms in the second sum. This establishes Legendrian simplicity of $\Pat_m\K$.

To complete the classification of Legendrian knots notice that $\Leg_{V}(\Pat_{m-2\tt};(t-4\overline{t}),(r-2r_i))$ will be empty if $(t-4\overline{t})> m+2\tt$ and if $(t-4\overline{t})= m+2\tt$ then we must have $r=2r_i$ for some $i$. 

We can argue similarly for the case when $m-2\tt<0$ by noticing that $\Leg_V(\Pat_{m-2\tt}; t,r)$ is non-empty if and only if $|r|\leq m-2\tt-t$, $t\leq -2m-4\tt$, and $t+r$ is odd. The extra constraint does not significantly affect the argument that the sum determining $|\Leg(\Pat_m\K;t,r)|$ is either 0 or 1 and thus $\Pat_m\K$ is Legendrian simple. Identifying the representatives with maximal Thurston-Bennequin invariant is also similar. 
\end{proof}

Theorem~\ref{thm:mainleg} together with Theorem~\ref{cablepats} gives an alternate proof of a theorem from \cite{EtnyreHonda05} by an an argument identical to the one give above. 
\begin{theorem}\label{pqcableclass}
Let $\K$ be a Legendrian simple and uniformly thick knot type. Then any $(p,q)$-cable of $\K$ is also Legendrian simple. 

More explicitly denote the $(p,q)$-cable pattern $\CC_{p,q}$ and suppose the maximum Thurston-Bennequin number of $\K$ is denoted $\tt$ and 
\[
\Leg(\K;\tt)=\{\Lc_0,\Lc_1 \dots,\Lc_k\}
\]
where  
\[
\rot(\Lc_0)=r_0<\rot(\Lc_1)=r_1<\cdots<\rot(\Lc_k)=r_k.
\]

If $p/q>\tt$, then  
the maximum Thurston-Bennequin number of $\CC_{p,q}(\K)$ is $pq-p+\tt q$, 
there are exactly $k+1$ elements in $\Leg(\CC_{p,q}(\K),pq-p+\tt q)$ 
that realize the rotation numbers $qr_i$ for $i=0,\ldots k$, and all other Legendrian knots realizing $\CC_{p,q}(\K)$ destabilize to one of these. 

If $p/q<\tt$ then taking $n$ so that $-n-1< p/q<-n$ we see that 
the maximum Thurston-Bennequin number of $\CC_{p,q}(\K)$ is $pq$,
the rotation numbers realized by elements of $\Leg(\CC_{p,q}(\K),pq)$ are in the set 
\[
\mathcal{R}=\{\pm(q\cdot\rot(L) + (p+nq)| L\in \Leg(\K, \tt-n)\}
\]
(as this is a set multiplicities are ignored), the cardinality of the set $\Leg(\CC_{p,q}(\K),pq)$ is the same as the set $\mathcal{R}$, and all other Legendrian knots realizing $\CC_{p,q}(\K)$ destabilize to one of these. \qed
\end{theorem}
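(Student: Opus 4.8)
The plan is to run the proof of Theorem~\ref{2braidsclass} essentially verbatim, replacing the two--braid pattern $\Pat_m$ by the cable pattern $\CC_{p,q}$ and feeding in the cable classification of Theorem~\ref{cablepats} wherever the braid case invoked Lemma~\ref{2starndbraid}. Since $\CC_{p,q}$ has nonzero winding number $n$, we land in Item~(\ref{mainleg:1A}) of Theorem~\ref{thm:mainleg}, so no oriented topological symmetries enter the discussion. First I would record that for a nontrivial companion the satellite incompressible torus is not moved by any topological symmetry of the complement of $\CC_{p,q}(\K)$ (the cabling torus is part of the JSJ decomposition and a cable is not built from a connected sum or a splice of the Borromean type), so the symmetry hypothesis of Theorem~\ref{lem:incommonstab} is met. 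The enabling observation is that a full twist of a cable is again a cable: $\Delta^{-\tt}\CC_{p,q}=\CC_{p,\,q-\tt n}$, and more generally $\Delta^{-m}\CC_{p,q}$ is a cable pattern for every integer $m$, hence Legendrian simple by Theorem~\ref{cablepats}.

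Next I would verify the hypothesis of Theorem~\ref{thm:mainleg}. Because every $\Delta^{-m}\CC_{p,q}$ is Legendrian simple, each set $\Leg_V(\Delta^{-m}\CC_{p,q};t,r)$ is empty or a single point; by the remark following Theorem~\ref{thm:mainleg} this makes the condition $\sigma^d\Q=\sigma^d\Q'\iff\zeta^d\Q=\zeta^d\Q'$ automatic. Consequently the number of Legendrian representatives of $\CC_{p,q}(\K)$ with invariants $(\tb,\rot)=(t,r)$ is the alternating sum of Item~(\ref{mainleg:1A}), in which every summand is $0$ or $1$:
\[
\sum_{i=0}^{k}\left\vert\Leg_{V}(\Delta^{-\tt}\CC_{p,q};\,t-n^2\tt,\;r-nr_i)\right\vert-\sum_{i=1}^{k}\left\vert\Leg_{V}(\Delta^{-\tb(\widetilde{\Lc}_i)}\CC_{p,q};\,t-n^2\tb(\widetilde{\Lc}_i),\;r-n\rot(\widetilde{\Lc}_i))\right\vert.
\]

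The crux, and the step I expect to be the main obstacle, is the same convexity bookkeeping as in Theorem~\ref{2braidsclass}. By Theorem~\ref{cablepats} the set of rotation numbers realized by $\Leg_V(\Delta^{-\tt}\CC_{p,q};t,\cdot)$ at a fixed $t$ is a symmetric arithmetic progression of the correct parity, whose endpoints move affinely with the companion data. I would show that if the peak terms for two neighbouring peaks $\Lc_{i-1},\Lc_i$ are both nonzero then the interpolating valley term is nonzero as well --- because the shifted arguments $(t-n^2\tb(\widetilde{\Lc}_i),\,r-n\rot(\widetilde{\Lc}_i))$ attached to the valley lie between those attached to the two peaks --- and conversely that a nonzero valley term forces an adjacent peak term to be nonzero. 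This telescoping makes the alternating sum equal $0$ or $1$ for every $(t,r)$, which is precisely Legendrian simplicity of $\CC_{p,q}(\K)$. The division into the cases $p/q>\tt$ and $p/q<\tt$ is exactly the dichotomy between the twisted cable $\CC_{p,\,q-\tt n}$ having positive or negative slope, i.e. between Cases~(1) and~(2) of Theorem~\ref{cablepats}; the endpoint and parity estimates are routine but must be carried out separately in each case.

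Finally I would read off the top of the mountain range. By Corollary~\ref{cor:utpsatmaxtb} we have $\overline{\tb}(\CC_{p,q}(\K))=n^2\tt+\overline{\reltb_V}(\Delta^{-\tt}\CC_{p,q})$, and substituting the explicit maximal relative Thurston--Bennequin number and the list of rotation numbers of the cable pattern from Theorem~\ref{cablepats} yields the stated values: maximal $\tb$ equal to $pq-p+\tt q$ with rotation numbers $q r_i$ in the first case, and $pq$ with rotation set $\mathcal{R}$ in the second. That all non-maximal representatives destabilize then follows, exactly as in Theorem~\ref{2braidsclass}, from Lemma~\ref{lem:utpsatelite} together with the fact from Theorem~\ref{cablepats} that every non-maximal cable pattern destabilizes.
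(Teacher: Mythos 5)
Your proposal is correct and takes essentially the same route as the paper: the paper's entire proof of Theorem~\ref{pqcableclass} is the remark that Theorem~\ref{thm:mainleg} together with Theorem~\ref{cablepats} gives the result ``by an argument identical to'' the proof of Theorem~\ref{2braidsclass}, which is exactly the substitution you carry out. The details you supply --- that twisting a cable pattern yields another cable pattern, that Legendrian simplicity of these patterns verifies the hypothesis of Theorem~\ref{thm:mainleg} via the remark following it, the telescoping of the alternating sum, and reading off the maximal invariants from Corollary~\ref{cor:utpsatmaxtb}, Theorem~\ref{cablepats}, and Lemma~\ref{lem:utpsatelite} --- are precisely what that identical argument entails.
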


While we cannot classify satellites for general braided patterns we can come close for positive braided patterns.
\begin{theorem}
Let $\K$ be a Legendrian simple and uniformly thick knot type with with maximal Thurston-Bennequin invariant $\tt$. Let $\Pat$ be any pattern such that $\Delta^{-\tt}\Pat$ is a positive braided pattern. Then maximal Thurston-Bennequin invariant representatives of $\Pat(\K)$ are distinguished by their rotation numbers and when any two such Legendrian knots are stabilized to have the same Thurston-Bennequin invariant and rotation number then they become Legendrian isotopic.
\end{theorem}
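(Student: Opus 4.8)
The plan is to reduce both assertions to the theory already developed for positive braided patterns together with the reimbedding maps $\zeta,\sigma$. Write $\Pat'=\Delta^{-\tt}\Pat$ for the given positive braided pattern, let $w$ be the positive braid word representing it, let $n$ be its winding number (equal to the braid index, so $n\ge 1$), and set $\ell=\mathit{length}(w)$. By Theorem~\ref{thm:closedposlegendrianbraid} the pattern $\Pat'$ has a unique Legendrian representative $\Q_0$ with maximal relative Thurston--Bennequin number $\reltb_V(\Q_0)=\ell$; since this representative is a concatenation of basic $\mathcal{X}$--braids we also have $\relrot_V(\Q_0)=0$.

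First I would identify the maximal $\tb$ representatives. By Corollary~\ref{cor:utpsatmaxtb}, $\overline{\tb}(\Pat(\K))=n^2\tt+\ell$, and by Lemma~\ref{lem:utpsatelite} every element of $\Leg(\Pat(\K))$ has the form $\Q(\Lc)$ with $\Lc\in\Leg(\K;\tt)$ a peak and $\Q\in\Leg_V(\Pat')$. Lemma~\ref{lem:legbraidsat} gives $\tb(\Q(\Lc))=n^2\tt+\reltb_V(\Q)$, so a representative is maximal if and only if $\reltb_V(\Q)=\ell$, i.e. $\Q=\Q_0$. Hence the maximal representatives are exactly the knots $\Q_0(\Lc)$, with $\rot(\Q_0(\Lc))=n\rot(\Lc)$. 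Since $\K$ is Legendrian simple, distinct peaks have distinct rotation numbers, and because $n\ge 1$ the values $n\rot(\Lc)$ are then also distinct; moreover two maximal representatives with equal rotation number come from peaks with the same rotation number and hence (by Legendrian simplicity) from the same $\Lc$, so they are the same satellite. This proves that maximal $\tb$ representatives are distinguished by their rotation numbers, and notably it requires no hypothesis on topological symmetries.

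For the second assertion I would produce an explicit common stabilization. Fix peaks $\Lc,\Lc'$ with $\rot(\Lc)<\rot(\Lc')$, put $d=\tfrac12(\rot(\Lc')-\rot(\Lc))\in\Z_{>0}$, and set $S=\Q_0(\Lc)$, $S'=\Q_0(\Lc')$. Since $\K$ is Legendrian simple and uniformly thick, $\St_+^d\Lc$ and $\St_-^d\Lc'$ are representatives of $\K$ with the same $\tb$ and the same rotation number, hence Legendrian isotopic; call this common knot $\widetilde{\Lc}$. As $\Delta^d\Pat'$ is again a positive braided pattern (adjoining positive full twists keeps the word positive), Theorem~\ref{thm:closedposlegendrianbraid} gives it a unique maximal representative $\widetilde{\Q}_0$. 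The reimbedding relations $\Q(\St_+\Lc)=(\zeta\Q)(\Lc)$ and $\Q(\St_-\Lc)=(\sigma\Q)(\Lc)$ then yield
\[
(\zeta^d\widetilde{\Q}_0)(\Lc)=\widetilde{\Q}_0(\St_+^d\Lc)=\widetilde{\Q}_0(\widetilde{\Lc})=\widetilde{\Q}_0(\St_-^d\Lc')=(\sigma^d\widetilde{\Q}_0)(\Lc').
\]
The crux is then the purely pattern-theoretic identity $\zeta^d\widetilde{\Q}_0=\St_+^{nd}\Q_0$ (and symmetrically $\sigma^d\widetilde{\Q}_0=\St_-^{nd}\Q_0$). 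Granting it, $\St_+^{nd}S=(\St_+^{nd}\Q_0)(\Lc)=\widetilde{\Q}_0(\widetilde{\Lc})=(\St_-^{nd}\Q_0)(\Lc')=\St_-^{nd}S'$, so $S$ and $S'$ admit a common stabilization; a short bookkeeping check, matching the numbers of positive and negative stabilizations needed to reach a prescribed $(\tb,\rot)$, shows that every common value of $(\tb,\rot)$ reachable by both arises from this one by applying the same further stabilizations to both sides, so $S$ and $S'$ are Legendrian isotopic there.

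It remains to establish $\St_+^{nd}\Q_0=\zeta^d\widetilde{\Q}_0$, which I expect to be the main obstacle. By Lemma~\ref{lem:stabpattern} it suffices to show that $\St_+^{nd}\Q_0$ lies in a standard neighborhood of $\St_+^d(C)$ whose boundary is isotopic to $\partial V$ in the complement of the pattern: for then its $\zeta^d$--preimage is a representative of $\Delta^d\Pat'$ whose relative Thurston--Bennequin number computes, via $\reltb_V(\zeta^d(\cdot))=\reltb_V(\cdot)-dn^2$ and $\reltb_V(\St_+^{nd}\Q_0)=\ell-nd$, to $\ell+dn(n-1)=\overline{\reltb}_V(\Delta^d\Pat')$, forcing it to equal $\widetilde{\Q}_0$ by the uniqueness in Theorem~\ref{thm:closedposlegendrianbraid}. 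The containment is the geometric heart: stabilizing the core $C$ positively $d$ times and letting the $n$ parallel strands of the braid follow the stabilized core introduces exactly one positive stabilization on each of the $n$ strands at each of the $d$ stabilization sites, that is $nd$ positive stabilizations of $\Q_0$, and displays the result inside $\nu(\St_+^d C)$. I would verify this using the front-projection model of the satellite from the proof of Lemma~\ref{lem:legbraidsat}, checking that the local picture at a single positive stabilization of the core is precisely $n$ positive stabilizations distributed among the strands; the symmetric statement for $\sigma$ and $\St_-$ follows by the reflection exchanging up and down cusps.
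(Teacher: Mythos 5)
Your overall architecture is sound, and it is in fact \emph{not} the route the paper takes: the paper disposes of this theorem in one sentence by rerunning the counting argument of Theorem~\ref{2braidsclass} (which rests on Theorem~\ref{thm:mainleg} and hence on Theorem~\ref{lem:incommonstab}), with Theorem~\ref{thm:closedposlegendrianbraid} substituted for Theorem~\ref{2starndbraid}. Your route instead uses only surjectivity (Lemma~\ref{lem:utpsatelite}), uniqueness of the maximal positive-braid representative, and the explicit valley identity stated before Definition~\ref{def:ker}; your identification of the maximal representatives and your stabilization bookkeeping ($a_+=b_++nd$, $b_-=a_-+nd$) are correct, and your approach has the virtue of not needing any symmetry hypotheses. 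However, there is a genuine gap at exactly the step you call the geometric heart.

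The error is the local model: for $n\ge 2$ it is \emph{false} that the picture of the $n$ parallel strands at a single positive stabilization of the core is ``precisely $n$ positive stabilizations distributed among the strands.'' The parallel copies are taken with respect to the \emph{contact} framing of the stabilized core, so at each stabilization site the strands pick up, in addition to one zigzag each, a full negative twist among themselves (each pair of strands crosses negatively near each cusp); this is forced by linking numbers, since the contact framing of $\St_+^d(C)$ differs from that of $C$ by $-d$. This is exactly why the image of $n$ horizontal strands under $\zeta$ is the paper's tangle $\mathcal{Z}$ --- smoothly a \emph{negative full twist}, not a disjoint union of zigzags --- why $\zeta$ shifts the smooth pattern type by $\Delta^{-1}$, and why $\reltb_{V}(\zeta\Q)=\reltb_{V}(\Q)-n^2$ rather than $\reltb_{V}(\Q)-n$. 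Note that this last formula is the one you correctly use in your uniqueness computation, so your proposal is internally inconsistent at this point: if your local model were right, the reimbedding would not change the smooth pattern type at all. Consequently the containment of $\St_+^{nd}\Q_0$ in $\nu(\St_+^d C)$ does not follow as described. The identity $\St_+^{nd}\Q_0=\zeta^d\widetilde{\Q}_0$ is nevertheless true, but its proof must use positivity of $\Delta^d\Pat'$: one cancels the $dn(n-1)$ negative crossings of $\mathcal{Z}^d$ against the positive crossings of the basic-$X$ word for $\Delta^d$ sitting inside $\widetilde{\Q}_0$, by front isotopies of the type shown in Figure~\ref{fig:ssz}, showing that $\mathcal{Z}$ concatenated with the $X$-word for one positive full twist is Legendrian isotopic, rel endpoints, to the trivial braid with one positive zigzag on each strand; equivalently, one verifies item~(3) of Lemma~\ref{lem:stabpattern} by exhibiting $\St_+^{nd}\Q_0$ as the closure of $\mathcal{Z}^d$ concatenated with the positive word for $w\Delta^d$. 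With that established, your invariant-count-plus-uniqueness trick does pin the $\zeta^d$-preimage down to $\widetilde{\Q}_0$, and the rest of your argument goes through.
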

\begin{remark}
Notice that if one could show that any Legendrian knot in the knot type $\Pat(\K)$ destabilized to a maximal Thurston-Bennequin invariant representative then we would know that $\Pat(\K)$ is Legendrian simple. Unfortunately the tools we have developed so far do not establish this.  
\end{remark}
\begin{proof}
The proof of this theorem is almost identical to the proof of the first part of the Theorem \ref{2braidsclass} except we use Theorem~\ref{thm:closedposlegendrianbraid} in palace of Theorem~\ref{2starndbraid}.
\end{proof}

\subsubsection{Legendrian Whitehead doubles}\label{LWD}                       
The previous theorems about cabled satellites do not actually need the full strength of Theorem~\ref{lem:incommonstab} but only the observation made before Definition~\ref{def:ker} about when two Legendrian satellites are isotopic. We will now consider a situation where we do need the full strength of Theorem~\ref{lem:incommonstab} and also see another situation where the satellite of a Legendrian simple knot by a Legendrian simple pattern need not be Legendrian simple.

\begin{theorem} \label{thm:twistsat}
Let $\W_m$ be the Whitehead pattern with $m$ half-twists. And
let $\K$ be a uniformly thick and Legendrian simple knot type for which $\K=-\K$, and such that $\W_m(\K)$ has no topological symmetries (as in Theorem~\ref{lem:incommonstab}). Suppose the maximum Thurston-Bennequin number of $\K$ is denoted $\tt$ and 
\[
\Leg(\K;\tt)=\{\Lc_0,\Lc_1 \dots,\Lc_k\}
\]
where  
\[
\rot(\Lc_0)=r_0<\cdots<\rot(\Lc_k)=r_k.
\]

Let $j$ be the maximal depth of a valley  in the Legendrian mountain range of $\K$ and let $n_d$ be the number of valleys of depth $d$ corresponding to Legendrian knots with negative rotation numbers, so the total number of valleys is $k=2(n_1+\ldots + n_j)+\mathcal{X}_{2\nmid k}$. Here, again $\mathcal{X}_{2\nmid k}$ is the indicator function, with value 1 if $k$ is odd and value 0 if $k$ is even.

All Legendrian knots in $\Leg(\W_m(\K))$ destabilize to one of the maximal Thurston-Bennequin invariant representatives. 

\begin{enumerate}
\item\label{it:twistsat1} 

If $m\geq 2\tt$ even, then the maximal Thurston-Bennequin invariant of $\W_m(\K)$ is $2\tt-m+1$. There are $k+1$ elements in $ \Leg(\W_m(\K); 2\tt-m+1)$ and they all have rotation number $0$. Moreover for any $a=a_1+a_2>0$ with $a_i \ge 0$ and $h=\min\{a_1,a_2\}$ we have  
\[
\left|\Leg(\W_m(\K); 2\tt-m+1-a,a_1-a_2)\right|= \left\lceil \frac{k+1}2 \right\rceil -\sum_{d=1}^hn_d.
\] 
\item\label{it:twistsat2}  If $m>2\tt$ odd, then  the maximal Thurston-Bennequin invariant of $\W_m(\K)$ is $2\tt-m-3$. There are exactly $\left\lceil \frac{k+1}2\right\rceil$ elements in   $\Leg(\W_m(\K); 2\tt-m-3,\pm 1)$ and for other $r$ $\Leg(\W_m(\K); -(m-\tt)-3,r)$ is empty. Moreover, for any $a_1,a_2$ non-negative with $a=a_1+a_2$  and $h=\min\{a_1,a_2\}$ we have
\[
\left|\Leg(\W_m(\K); 2\tt-m-3-a,\pm(1+a_1-a_2))\right|=\left\lceil\frac{k+1}2 \right\rceil-\sum_{d=1}^hn_d.
\]
\item\label{it:twistsat3} If  $m<2\tt $ odd,  then the maximal Thurston-Bennequin invariant of $\W_m(\K)$ is $-3$. Let $l=\left\lfloor \frac{2\tt-m}2\right\rfloor$. There are 
\[
(k+1)\left( \frac{2\tt-m+1}2\right) -\sum_{d=1}^{l} n_d\left({2\tt-2d-m+1}\right)
\]
elements in $\Leg(\W_m(\K); -3)$ and they all have rotation number $0$. Moreover, for any $a=a_1+a_2>0$ with $a_i$ non-negative and $h=\min\{a_1,a_2\}$ we have  
\[
\left\lceil\frac{k+1}2 \right\rceil -\sum_{d=1}^ln_d-
\sum_{d=l+1}^hn_d
\]
elements in
\[
\Leg(\W_m(\K); -3-(a+1),\pm (1+a_1-a_2)),
\]
where the last sum is 0 unless $h>l$
\item\label{it:twistsat4} If $m<2\tt$ even, then the maximal Thurston-Bennequin invariant of $\W_m(\K)$ is $1$. Let $l=\frac{2\tt-m}2$, then there are 
\begin{equation*}
\left\lceil\left(\frac{k+1}2\right) \left(\frac{2\tt-m}{2}+1\right)^2\right\rceil - \sum_{d=1}^{l-1} n_d \left(\frac{2\tt-m-2d}2 +1\right)^2 -2n_l \end{equation*}
elements in $ \Leg(\W_m(\K); 1)$ and they all have rotation number $0$.  
Moreover, for any $a>0$ we have 
\[
\left\lceil\frac{k+1}2\right\rceil \left(\frac{2\tt-m}{2}+1\right) - \sum_{d=1}^{l} n_d \left(\frac{2\tt-m-2d}2 +1\right) \]
elements in $\Leg(\W_m(\K); 1-a,\pm a)$ and for $a=a_1+a_2$ with $a_1\geq 0$ and $a_2>0$, and $h=\min\{a_1,a_2\}$ we have  
\[
\left\lceil\frac{k+1}2 \right\rceil -\sum_{d=1}^ln_d-
\sum_{d=l+1}^hn_d
\]
elements in
\[
\Leg(\W_m(\K); 1-(a+1),\pm (1+a_1-a_2)),
\]
where the last sum is 0 unless $h>l$.
\end{enumerate}
\end{theorem}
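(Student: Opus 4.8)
The plan is to deduce the entire theorem from the general counting formula of Theorem~\ref{thm:mainleg} together with the pattern classification of Theorem~\ref{whiteheadpatternclass}, so that the only real work is bookkeeping. The first observation is that $\Delta^{-\tt}\W_m=\W_{m-2\tt}$, and more generally $\Delta^{-\tb(\widetilde\Lc)}\W_m=\W_{m-2\tt+2d}$ for a valley $\widetilde\Lc$ of depth $d$ (since $\tb(\widetilde\Lc)=\tt-d$). Thus the four ranges of $m$ relative to $2\tt$ in the statement correspond exactly to the four cases $m-2\tt\ge 0$ even, $>0$ odd, $<0$ odd, $<0$ even of Theorem~\ref{whiteheadpatternclass}, and the valley patterns run through the same list as $d$ increases. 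Since $\K$ is uniformly thick, Lemma~\ref{lem:utpsatelite} writes every element of $\Leg(\W_m(\K))$ as $\Q(\Lc)$ with $\Lc$ a peak and $\Q\in\Leg_V(\W_{m-2\tt})$; because $\W_m$ has winding number zero, $\St_\pm(\Q(\Lc))=(\St_\pm\Q)(\Lc)$, so the destabilization statement follows immediately from the fact (Theorem~\ref{whiteheadpatternclass}) that every Legendrian $\W_{m-2\tt}$ pattern destabilizes to a maximal one.

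Next I would verify the hypotheses needed to apply Item~\ref{mainleg:2A} of Theorem~\ref{thm:mainleg}: $\W_m$ has winding number zero, $-\K=\K$, and $\W_m(\K)$ is assumed to carry oriented (and no other) topological symmetries, so it remains only to check that $\sigma^d\Q=\sigma^d\Q'$ iff $\zeta^d\Q=\zeta^d\Q'$ on $\Leg_V(\W_{m-2\tt+2d})$. Since $\zeta^d,\sigma^d$ preserve $(\reltb_V,\relrot_V)$ and map into $\Leg_V(\W_{m-2\tt})$, this reduces to a statement about the explicit $(z^+,z^-)$--parametrization in the proof of Theorem~\ref{whiteheadpatternclass}, where one sees that both $\sigma^d$ and $\zeta^d$ are injective on the maximal Thurston--Bennequin strata; hence $|\sigma^{d}(\Leg_V(\W_{m-2\tt+2d};t,r))|=|\Leg_V(\W_{m-2\tt+2d};t,r)|$ throughout. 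With this in hand, Item~\ref{mainleg:2A} of Theorem~\ref{thm:mainleg} expresses $|\Leg(\W_m(\K);t,r)|$ as a sum over symmetry classes of peaks of $|\Leg_V(\W_{m-2\tt};t,r)|$, a correction term at rotation number $0$ coming from $\Q\sim f(\Q)$ when $k$ is even, minus a sum over valleys, the valley of depth $d$ contributing $|\Leg_V(\W_{m-2\tt+2d};t,r)|$. The symmetry $\K=-\K$ groups peaks and valleys into $\pm$ pairs, which is what produces the ceilings $\lceil(k+1)/2\rceil$ and the depth multiplicities $n_d$.

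It then remains to substitute the exact counts from Theorem~\ref{whiteheadpatternclass} and to track where the valley index $m-2\tt+2d$ changes sign. In case Item~\ref{it:twistsat4}, for instance, $m-2\tt<0$ is even, $\Leg_V(\W_{m-2\tt};1,0)$ has $(\tfrac{2\tt-m}2+1)^2$ elements, a valley of depth $d<l:=\tfrac{2\tt-m}2$ gives $\W_{m-2\tt+2d}$ with still-negative even index and hence $(\tfrac{2\tt-m-2d}2+1)^2$ elements at $(1,0)$, the valley of depth exactly $l$ gives $\W_0$ with $2$ elements (the source of the $-2n_l$ term), and valleys of greater depth give patterns whose maximal relative Thurston--Bennequin number is below $1$ and so contribute nothing; moving down to non-maximal $(\tb,\rot)$ replaces the squared counts by the linear ones $\tfrac{2\tt-m-2d}2+1$ dictated by the one-sided stabilization behaviour in Theorem~\ref{whiteheadpatternclass}, and after one positive and one negative stabilization everything collapses to $1$, giving the final sums truncated at $l$ and at $h=\min\{a_1,a_2\}$. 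The other three cases are handled identically, reading off the maximal relative Thurston--Bennequin numbers $1-(m-2\tt)$, $-(m-2\tt)-3$, $-3$, $1$ and the corresponding strata from Theorem~\ref{whiteheadpatternclass}.

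The main obstacle I expect is precisely this last, purely combinatorial, step: one must verify for each of the four cases and each relevant $(\tb,\rot)$ that the valley contributions are nonempty exactly in the predicted depth range, so that the depth-sums truncate at the stated cutoffs $l$ and $h$ and no extra terms survive. This requires carefully comparing, for every valley depth $d$, the maximal relative Thurston--Bennequin number of $\W_{m-2\tt+2d}$ (which jumps as $m-2\tt+2d$ passes from negative even to $0$ to positive) against the fixed satellite value $(t,r)$, and confirming that the injectivity of $\sigma^d,\zeta^d$ on the relevant strata makes the subtraction in Item~\ref{mainleg:2A} of Theorem~\ref{thm:mainleg} exactly account for the overcounting between neighbouring peaks. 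None of the individual computations is hard, but keeping the floor/ceiling corrections at rotation number $0$ consistent with the $\Q\sim f(\Q)$ term is where errors are most likely to creep in.
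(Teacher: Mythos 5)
Your proposal follows the paper's own proof essentially step for step: destabilization of satellites via Lemma~\ref{lem:utpsatelite} and destabilization of the patterns $\W_{m-2\tt}$, verification of the hypothesis of Theorem~\ref{thm:mainleg}, application of its Item~(\ref{mainleg:2A}) formula with the $\Q\sim f(\Q)$ correction at rotation number zero, and substitution of the counts from Theorem~\ref{whiteheadpatternclass} while tracking where the valley index $m-2\tt+2d$ crosses zero (including the $-2n_l$ contribution from $\W_0$ and the truncation of the depth sums at $l$ and $h$). The only discrepancy is minor: where you claim $\sigma^d,\zeta^d$ are injective throughout, the paper instead observes that for $d>l$ these maps have a single image rather than being injective; this does not affect any count, since for $d>l$ the sets $\Leg_V(\W_{m-2\tt+2d};t,r)$ appearing in the formula are empty or singletons, and either property suffices to verify the hypothesis of Theorem~\ref{thm:mainleg}.
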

\begin{proof}
We will only spell out the proof of Item~(\ref{it:twistsat4}), the proof for the rest of the statements is simpler. So let $m<2\tt$ even.
First notice, that the patterns $\Delta^{-t}\W_m=\W_{m-2t}$ destabilize, whenever they don't have maximal Thurston-Bennequin number, so the same will hold for $\W_m(\K)$. 

Setting $l=\frac{2\tt-m}2$,
the maps $\sigma^{d},\zeta^d\colon \Leg_V(\W_{m-2\tt+2d})\to \Leg_V(\W_{m-2\tt})$ are injective for all $d$ 
and have a single image when $d>l$. Thus Item~(\ref{mainleg:2A}) of Theorem~\ref{thm:mainleg} can be applied, and the number of Legendrian represenatives of $\W_m(\K)$ with Thurton-Bennequin number $t$ and rotation number $r$ is:
\begin{align*}
\sum_{i=0}^{\left\lfloor\frac{k+1}{2}\right\rfloor}\left\vert\Leg_{V}(\W_{m-2\tt};t,r)\right\vert
& +\mathcal{X}_{2\mid k}\left\vert \frac{\Leg_{V}(\W_{m-2\tt};t,r)}{\Q\sim f(\Q)} \right\vert \\
& - \sum_{i=1, d_i\le l}^{\left\lfloor\frac{k}{2}\right\rfloor}\left\vert\Leg_{V}(\W_{m-2\widetilde{t}_i};t,r)\right\vert 
- \sum_{i=1, d_i >l}^{\left\lfloor\frac{k}{2}\right\rfloor}\mathcal{X}_{(\Leg_V(\W_{m-2\widetilde{t}_i};t,r)\neq\emptyset)},
\end{align*}
Here $\widetilde{\Lc}_i$ is the valley between $\Lc_{i-1}$ and $\Lc_{i}$ of depth $d_i=\frac{r_{i}-r_{i-1}}{2}$, and $\widetilde{t}_i=\tb(\widetilde{\Lc}_i)=\tt-d_i$, the indicator function $\mathcal{X}_{(\Leg_V(\W_{m-2\widetilde{t}_i};t,r)\neq\emptyset)}$ is 1 when $\Leg_V(\W_{m-2\widetilde{t}_i};t,r)\neq\emptyset$ and 0 otherwise. Notice that we have separated the last sum in Theorem~\ref{thm:mainleg} according to the depths of the corresponding valleys (this is because when the valley has depth bigger than $l$ the corresponding set of Legendrian pattens will either be empty or contain one element).  

Next we will understand the action of $f$ on $\Leg_V(\W_{m-2\tt};t,r)$. First notice, that $f$ maps $\Leg_V(\W_{m-2\tt};t,r)$ to $\Leg_V(\W_{m-2\tt};t,-r)$, thus 
if $r\neq 0$, then \[\left|\frac{\Leg_V(\W_{m-2\tt};t,r)}{Q\sim f(Q)}\right|=|\Leg_V(\W_{m-2\tt};t,r)|.\] 
Also, for $t<1$ we have $\Leg_V(\W_{m-2\tt};t,0)=1$, thus $f$ brings the unique representative to itself,  and we have 
 \[\left|\frac{\Leg_V(\W_{m-2\tt};t,0)}{Q\sim f(Q)}\right|=1\]
Using the notation of the proof of Theorem \ref{whiteheadpatternclass} the symmetry of the Legendrian representations of $\W_{m-2\tt}$ with maximal Thurston-Bennequin number is given by 
\[
f\colon \Q_{(z^-,z^+)}\mapsto \Q_{\left(\frac{2\tt-m}2-z^+,\frac{2\tt-m}2-z^-\right)}.
\]
Thus $\Q\sim f(\Q)$ have equivalence classes of size two when neither $z_+$ or $z_-$ equals $\frac 12 \left(\frac{2\tt-m}2+1\right)$, and has a class of size one otherwise (notice this can happen only if $\frac 12 \left(\frac{2\tt-m}2+1\right)$ is an integer and then it can only happen once). This means that
\[\left|\frac{\Leg_V(\W_{m-2\tt};1,0)}{Q\sim f(Q)}\right|=\left\lceil\frac{(\frac{2\tt-m}2+1)^2}2\right\rceil.\]
The maximal Thurston-Bennequin number for $\W_m(\Q)$ is 1, and all such representatives have rotation number 0. Using the above formula for $(r,t)=(0,1)$ we get
\[
\left\lfloor\frac{k+1}{2}\right\rfloor \left(\frac{2\tt-m}2+1 \right)^2
 +\mathcal{X}_{2\mid k}  \left\lceil\frac{(\frac{2\tt-m}2+1)^2}2\right\rceil
 - \sum_{i=1, d_i< l}^{\left\lfloor\frac{k}{2}\right\rfloor}\left(\frac{2\widetilde{t}_i-m}2+1 \right)^2
- \sum_{i=1, d_i=l}^{\left\lfloor\frac{k}{2}\right\rfloor}2
\]
since for $d_i>l$ the maximal Thurston-Bennequin number of $\W_{m-2\tt+2d_i}$ is $1-m+2\tt-2d_i<1$, thus $\Leg_V(\W_{m-2\widetilde{t}_i};1,0)=\emptyset$.  In addition, notice that when $d_i=l$ the space $\Leg(\W_{n-2\widetilde{t}_i}; 1,0)=\Leg(\W_0;1,0)$ has two elements.  The above gives the first formula of Item~(\ref{it:twistsat4}), by simply noting that for integers $b$ and $c$ we have $\left\lfloor\frac{b}2\right\rfloor c+\mathcal{X}_{2\mid b-1}\left\lceil \frac c 2 \right\rceil=\left\lceil \frac{bc}2\right\rceil$.

In the case when the pair $(r,t)$ is on an edge of the mountain range, i.e. $(r,t)=(\pm a,1-a)$, then the formula can be computed similarly:
\[
\left\lfloor\frac{k+1}{2}\right\rfloor \left(\frac{2\tt-m}2+1 \right)
 +\mathcal{X}_{2\mid k}  \left(\frac{2\tt-m}2+1\right)
 - \sum_{i=1, d_i\le l}^{\left\lfloor\frac{k}{2}\right\rfloor}\left(\frac{2\widetilde{t}_i-m}2+1 \right)\]
This again, by $\left\lfloor\frac{b}2\right\rfloor +\mathcal{X}_{2\mid b-1}=\left\lceil \frac{b}2\right\rceil$, agrees with the second formula of Item~(\ref{it:twistsat4}). 
When $(r,t)$ is in the interior of the triangle, i.e.\ $(r,t)=(a_1-a_2,1-a)$ with $a=a_1+a_2$, then all relevant maps $\sigma^{d_i},\zeta^{d_i}$ have a single image, so we just need to understand for which $\widetilde{\Lc}_i$ we get $\Leg_V(\W_{m-2\widetilde{t}_i};t,r)\neq\emptyset$. This happens, when $m-2\widetilde{t}_i\le0$ or if $1-m+2\widetilde{t}_i\le \min\{1-a_1,1-a_2\}$, or equivalently if $d_i\le l$ or $l<d_i\le h$.  Again, the resulting sum agrees with the third formula of Item~(\ref{it:twistsat4}). 
\end{proof}

\begin{example}\label{whdtorus}
To give an example of Theorem~\ref{thm:twistsat} we will consider the Whitehead doubles of the $(-13, 3)$ torus knot $\K$. According to \cite{EtnyreHonda01b} this is a Legendrian simple knot type, the maximal Thurston-Bennequin invariant is $-39$, and there are 8 representatives with maximal Thurston-Bennequin invariant which realize the rotation numbers 
\[
-10, -8,-4,-2, 2, 4,8,10.
\]
In \cite{EtnyreHonda05} it was shown that negative torus knots are uniformly thick. So we can apply Theorem~\ref{thm:twistsat}. Clearly we have $n_1=2$ and $n_2=1$. From this one immediately computes the Legendrian representatives of the Whitehead doubles of $\K$. See Figures~\ref{fig:evenpos} and~\ref{fig:oddpos} for $m\geq -78$. 
For $m<  -78$ the general closed from for the numbers in the mountain range are more complicated, but see Figures~\ref{fig:oddneg} and~\ref{fig:oddnegplus}.
\end{example}

\begin{figure}
\centering
{\tiny\[
\xymatrixrowsep{2em}
\xymatrixcolsep{2em}
\xymatrix{
&&&&&&&8\ar[rd]^{+} \ar[ld]_{-}  &&&&&&\\
&&&&&&4\ar[rd]^{+} \ar[ld]_{-}&  &4\ar[rd]^{+} \ar[ld]_{-}&&&&&\\
&&&&&4\ar[rd]^{+} \ar[ld]_{-}&&2\ar[rd]^{+} \ar[ld]_{-}  &&4\ar[rd]^{+} \ar[ld]_{-}&&&&\\
&&&&4\ar[rd]^{+} \ar[ld]_{-}&&2\ar[rd]^{+} \ar[ld]_{-}&  &2\ar[rd]^{+} \ar[ld]_{-}&&4\ar[rd]^{+} \ar[ld]_{-}&&&\\
&&&4\ar[rd]^{+} \ar[ld]_{-}&&2\ar[rd]^{+} \ar[ld]_{-}&& 1\ar[rd]^{+} \ar[ld]_{-} &&2\ar[rd]^{+} \ar[ld]_{-}&&4\ar[rd]^{+} \ar[ld]_{-}&&\\
&&4\ar@{.}[rd] \ar@{.}[ld]&&2\ar@{.}[rd] \ar@{.}[ld]&&1\ar@{.}[rd] \ar@{.}[ld]&  &1\ar@{.}[rd] \ar@{.}[ld]&&2\ar@{.}[rd] \ar@{.}[ld]&&4\ar@{.}[rd] \ar@{.}[ld]&\\
&&&&&&&&&&&&&
}
\]}
\caption{Legendrian mountain range for the Whitehead double $\W_m(\K)$ of the $(-13,3)$ torus knot $\K$ where $m\geq -78$ is even. The maximal Thurston-Bennequin invariant is $-77-m$. 
}
\label{fig:evenpos}
\end{figure}
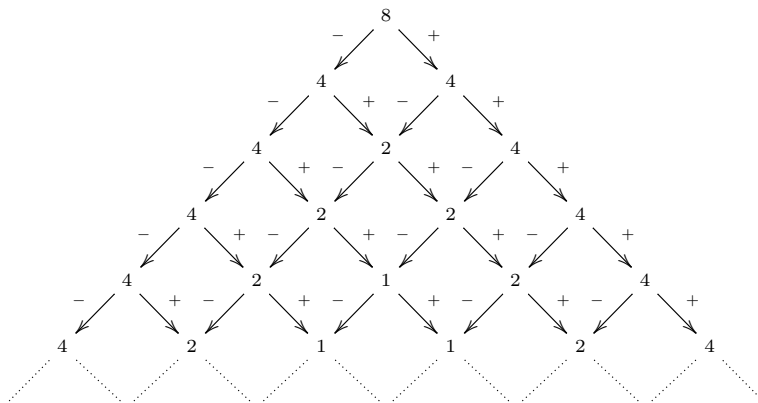

\begin{figure}
\centering
{\tiny\[
\xymatrixrowsep{2em}
\xymatrixcolsep{2em}
\xymatrix{
&&&&&&4\ar[rd]^{+} \ar[ld]_{-}&  &4\ar[rd]^{+} \ar[ld]_{-}&&&&&&\\
&&&&&4\ar[rd]^{+} \ar[ld]_{-}&& 4\ar[rd]^{+} \ar[ld]_{-} &&4\ar[rd]^{+} \ar[ld]_{-}&&&&&\\
&&&&4\ar[rd]^{+} \ar[ld]_{-}&&2\ar[rd]^{+} \ar[ld]_{-}&  &2\ar[rd]^{+} \ar[ld]_{-}&&4\ar[rd]^{+} \ar[ld]_{-}&&&&\\
&&&4\ar[rd]^{+} \ar[ld]_{-}&&2\ar[rd]^{+} \ar[ld]_{-}&& 2\ar[rd]^{+} \ar[ld]_{-} &&2\ar[rd]^{+} \ar[ld]_{-}&&4\ar[rd]^{+} \ar[ld]_{-}&&&\\
&&4\ar[rd]^{+} \ar[ld]_{-}&&2\ar[rd]^{+} \ar[ld]_{-}&&1\ar[rd]^{+} \ar[ld]_{-}&  &1\ar[rd]^{+} \ar[ld]_{-}&&2\ar[rd]^{+} \ar[ld]_{-}&&4\ar[rd]^{+} \ar[ld]_{-}&&\\
&4\ar@{.}[rd] \ar@{.}[ld]&&2\ar@{.}[rd] \ar@{.}[ld]&&1\ar@{.}[rd] \ar@{.}[ld]&& 1\ar@{.}[rd] \ar@{.}[ld] &&1\ar@{.}[rd] \ar@{.}[ld]&&2\ar@{.}[rd] \ar@{.}[ld]&&4\ar@{.}[rd] \ar@{.}[ld]&\\
&&&&&&&  &&&&&&&
}
\]}
\caption{Legendrian mountain range for the Whitehead double $\W_m(\K)$ of the $(-13,3)$ torus knot $\K$ where $m\geq -78$ is odd. The maximal Thurston-Bennequin invariant is $-81-m$.}
\label{fig:oddpos}
\end{figure}

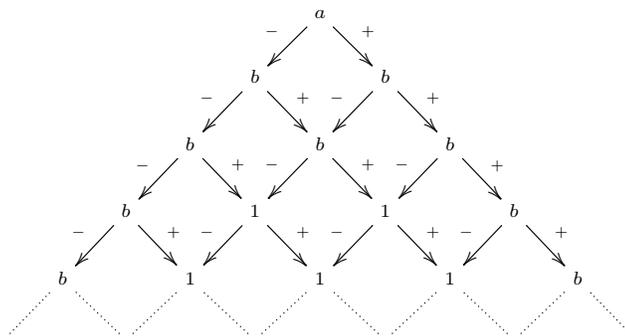
\begin{figure}
\centering
{\tiny\[
\xymatrixrowsep{2em}
\xymatrixcolsep{2em}
\xymatrix{
&&&&&&&a\ar[rd]^{+} \ar[ld]_{-}  &&&&&&\\
&&&&&&b\ar[rd]^{+} \ar[ld]_{-}&  &b\ar[rd]^{+} \ar[ld]_{-}&&&&&\\
&&&&&b\ar[rd]^{+} \ar[ld]_{-}&&b\ar[rd]^{+} \ar[ld]_{-}  &&b\ar[rd]^{+} \ar[ld]_{-}&&&&\\
&&&&b\ar[rd]^{+} \ar[ld]_{-}&&1\ar[rd]^{+} \ar[ld]_{-}&  &1\ar[rd]^{+} \ar[ld]_{-}&&b\ar[rd]^{+} \ar[ld]_{-}&&&\\
&&&b\ar@{.}[rd] \ar@{.}[ld]&&1\ar@{.}[rd] \ar@{.}[ld]&& 1\ar@{.}[rd] \ar@{.}[ld] &&1\ar@{.}[rd] \ar@{.}[ld]&&b\ar@{.}[rd] \ar@{.}[ld]&&\\
&&&&&&&&&&&&&
}
\]}
\caption{Legendrian mountain range for the Whitehead double $\W_m(\K)$ of the $(-13,3)$ torus knot $\K$ where $m<-78$ is odd. The maximal Thurston-Bennequin invariant is $-3$. The numbers $a$ and $b$ are determined by the formulas in Theorem~\ref{thm:twistsat}, for example when $m=-81$, $a=12$ and $b=2$.}
\label{fig:oddnegplus}
\end{figure}

\begin{figure}
\centering
{\tiny\[
\xymatrixrowsep{2em}
\xymatrixcolsep{2em}
\xymatrix{
&&&&&&&12\ar[rd]^{+} \ar[ld]_{-}  &&&&&&\\
&&&&&&6\ar[rd]^{+} \ar[ld]_{-}&  &6\ar[rd]^{+} \ar[ld]_{-}&&&&&\\
&&&&&6\ar[rd]^{+} \ar[ld]_{-}&&2\ar[rd]^{+} \ar[ld]_{-}  &&6\ar[rd]^{+} \ar[ld]_{-}&&&&\\
&&&&6\ar[rd]^{+} \ar[ld]_{-}&&2\ar[rd]^{+} \ar[ld]_{-}&  &2\ar[rd]^{+} \ar[ld]_{-}&&6\ar[rd]^{+} \ar[ld]_{-}&&&\\
&&&6\ar[rd]^{+} \ar[ld]_{-}&&2\ar[rd]^{+} \ar[ld]_{-}&& 1\ar[rd]^{+} \ar[ld]_{-} &&2\ar[rd]^{+} \ar[ld]_{-}&&6\ar[rd]^{+} \ar[ld]_{-}&&\\
&&6\ar@{.}[rd] \ar@{.}[ld]&&2\ar@{.}[rd] \ar@{.}[ld]&&1\ar@{.}[rd] \ar@{.}[ld]&  &1\ar@{.}[rd] \ar@{.}[ld]&&2\ar@{.}[rd] \ar@{.}[ld]&&6\ar@{.}[rd] \ar@{.}[ld]&\\
&&&&&&&&&&&&&
}
\]}
\caption{Legendrian mountain range for the Whitehead double $\W_m(\K)$ of the $(-13,3)$ torus knot $\K$ where $m=-80$. The maximal Thurston-Bennequin invariant is $1$.}
\label{fig:oddneg}
\end{figure}

\subsection{Transverse satellites}
\label{sec:transversebraidsatellite}

We define transverse satellites with respect to Legendrian knots. Let $L\in\Lc\in\Leg(\K)$ be a Legendrian knot let $R\in\mathcal{R}\in\Trans_{V}(\Delta^{-\tb(\Lc)}\Pat)$ be a transverse pattern and let $\psi\colon(V,\xi_V)\to(\nu(L),\xi_{\textrm{st}}\vert_{\nu(L)})$ be a contactomorphism. Then the transverse knot $R(L)=\psi(R)$ is the \emph{transverse satellite with companion $L$ and pattern $R$}. The transverse satellite smoothly represents $\Pat(\K)$, and it is independent of the isotopy classes.
\begin{lemma} Let $L_0, L_1\in\Lc$, and choose standard contact neighborhoods $\nu(L_0)$ and $\nu(L_1)$. Let $R_0, R_1\in \mathcal{R}$  and suppose that $\psi_0\colon (V,\xi_V)\to (\nu(L_0),\xi\vert_{\nu(L_0)})$ and $\psi_1\colon (V,\xi_V)\to (\nu(L_1),\xi\vert_{\nu(L_1)})$ are contactomorphisms (that of course must bring the product framing $\mathbf{l}$ to the Thurston--Bennequin framings). Then $R_0(L_0)$ and $R_1(L_1)$ are transverse isotopic. \qed
\end{lemma}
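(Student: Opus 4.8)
The plan is to reduce the transverse statement to the already-established well-definedness of the \emph{Legendrian} satellite (the corresponding Lemma for Legendrian satellites in Subsection~\ref{subsec:legsatellites}) by passing through Legendrian approximations, exactly as Theorem~\ref{relation} permits. The guiding observation is that a transverse satellite is the transverse push-off of a Legendrian satellite: if $Q$ is a Legendrian approximation of $R$ in $(V,\xi_V)$, so that $R$ is a transverse push-off of $Q$, then, since $\psi$ is a contactomorphism, $R(L)=\psi(R)$ is a transverse push-off of $\psi(Q)=Q(L)$. Thus $Q(L)$ is a Legendrian approximation of $R(L)$. This is immediate once one notes that ``being a transverse push-off of'' is defined purely in terms of the contact structure and is therefore preserved by the contactomorphism $\psi$.

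First I would fix Legendrian approximations $Q_0$ of $R_0$ and $Q_1$ of $R_1$ in $(V,\xi_V)$; since $L_0,L_1\in\Lc$ have the same Thurston--Bennequin invariant, both $Q_0$ and $Q_1$ represent the smooth pattern $\Delta^{-\tb(\Lc)}\Pat$, and stabilization does not change this smooth type. Because $R_0$ and $R_1$ lie in the common transverse class $\mathcal{R}$, Theorem~\ref{relation} (applied in $V$) allows us to choose these approximations so that, for some $k\ge 0$,
\[
\St_-^{k}Q_0 \simeq \St_-^{k}Q_1
\]
as Legendrian patterns in $(V,\xi_V)$. Hence $\St_-^{k}Q_0$ and $\St_-^{k}Q_1$ lie in a single Legendrian isotopy class $\Q'\in\Leg_{V}(\Delta^{-\tb(\Lc)}\Pat)$. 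Applying the well-definedness of the Legendrian satellite to this common class $\Q'$ and to the Legendrian-isotopic companions $L_0,L_1\in\Lc$ gives
\[
(\St_-^{k}Q_0)(L_0) \simeq (\St_-^{k}Q_1)(L_1)
\]
as Legendrian knots in $(S^3,\xi_{\mathrm{st}})$.

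It then remains to interchange stabilization with the satellite operation, i.e.\ to verify that $(\St_-^{k}Q)(L) \simeq \St_-^{k}\bigl(Q(L)\bigr)$ as Legendrian knots in $S^3$. I would check this on front diagrams using the explicit satellite front from Lemma~\ref{lem:legbraidsat}: the pattern front is inserted into a band $N(c)$ along which the companion strand is oriented from left to right, so a negative stabilization of the pattern appears as a negative stabilization of the satellite front, with the sign preserved precisely because the left-to-right orientation along $N(c)$ is preserved. Combining this with the display above yields
\[
\St_-^{k}\bigl(Q_0(L_0)\bigr) \simeq \St_-^{k}\bigl(Q_1(L_1)\bigr).
\]
Since $Q_i(L_i)$ is a Legendrian approximation of $R_i(L_i)$ by the opening paragraph, Theorem~\ref{relation} (now applied in $S^3$) shows that $R_0(L_0)$ and $R_1(L_1)$ are transverse isotopic, as desired.

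The step I expect to be the main obstacle is the sign-preserving commutation $(\St_-^{k}Q)(L)\simeq \St_-^{k}\bigl(Q(L)\bigr)$. Although both operations are local on fronts, some care with orientations is required, especially for patterns of winding number zero, where parallel copies of the companion occur with \emph{both} orientations; one must confirm that the stabilizing zigzag of the pattern sits along a left-to-right oriented arc and therefore retains its sign in the satellite. Everything else---the invariance of the transverse push-off under $\psi$ and the two applications of Theorem~\ref{relation}---is formal.
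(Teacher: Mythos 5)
Your argument is correct, but it follows a genuinely different route from the paper's. The paper gives no argument at all here: the lemma is stated with its proof omitted as immediate, the implicit justification being the same one given for its Legendrian counterpart in Subsection~\ref{subsec:legsatellites} --- standard neighborhoods of Legendrian-isotopic knots are contact isotopic, and any two framing-preserving contactomorphisms $(V,\xi_V)\to(\nu(L),\xi\vert_{\nu(L)})$ differ by a self-contactomorphism of $(V,\xi_V)$ smoothly isotopic to the identity, which by Theorem~\ref{isoIScontacto} and Corollary~\ref{cor:iso} (which the paper notes holds for transverse knots as well) acts trivially on transverse isotopy classes. That direct argument never leaves the ambient contact-isotopy framework and needs no front diagrams or stabilization bookkeeping. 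You instead reduce to the Legendrian well-definedness lemma via Legendrian approximation and Theorem~\ref{relation}, which costs you the extra commutation step $(\St_-^{k}Q)(L)\simeq \St_-^{k}\bigl(Q(L)\bigr)$. That step is true, and the orientation worry you flag resolves itself: by Lemma~\ref{lem:legbraidsat} adding a zigzag to the pattern changes $(\tb,\rot)$ of the satellite by $(-1,-1)$, so the resulting local modification is forced to be a negative stabilization regardless of which way the strand carrying it is oriented (the up/down character of a cusp is determined by the knot's orientation, which is unchanged by insertion into the band $N(c)$). In fact you can bypass the commutation entirely: a negative stabilization $\St_-^{k}Q_i$ of a Legendrian approximation of $R_i$ is again a Legendrian approximation of $R_i$, so by your opening observation $(\St_-^{k}Q_i)(L_i)$ is already a Legendrian approximation of $R_i(L_i)$; since these two satellites are Legendrian isotopic for $i=0,1$, Theorem~\ref{relation} in $S^3$ concludes at once. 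What your approach buys is the explicit compatibility of the satellite operation with approximations and push-offs, a fact the paper uses implicitly later (in the self-linking formula for transverse satellites and in Theorem~\ref{thm:univthicktransversebraid}); what the paper's approach buys is brevity and independence from Theorem~\ref{relation}.
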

This means that the transverse isotopy class $\mathcal{R}(\Lc)$ is well defined.
Using Legendrian approximations and the formulas in Lemma~\ref{lem:legbraidsat}  the self linking number of transverse satellites can be computed as follows.
\begin{lemma} Let $\Lc\in\Leg(\K)$ and $\Q\in\Trans_{V}(\Delta^{\tb(\Lc)}\Pat)$. Then
\[\self(\mathcal{R}(\Lc))=(n^2\tb(\Lc)-n\cdot\rot(\Lc))-\relsl_{V}(\mathcal{R})\]
\end{lemma}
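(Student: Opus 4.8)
The plan is to reduce the transverse computation to the Legendrian one already recorded in Lemma~\ref{lem:legbraidsat}, exploiting the fact that Legendrian approximation is a local, contact--invariant operation and therefore commutes with the satellite construction.

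First I would fix a transverse representative $R\in\mathcal{R}$ and a negative Legendrian approximation $Q$ of $R$ in $(V,\xi_V)$, so that by the definition of the relative self--linking number recalled above we have $\relsl_V(\mathcal{R})=\reltb_V(\Q)-\relrot_V(\Q)$. Since $Q$ and $R$ are smoothly isotopic they represent the same smooth pattern $\Delta^{-\tb(\Lc)}\Pat$, and in particular share the same winding number $n$, so that the $n$ appearing in the target formula is unambiguous.

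The key step is to observe that the Legendrian satellite $Q(\Lc)$ is a (negative) Legendrian approximation of the transverse satellite $\mathcal{R}(\Lc)$. Indeed, the contactomorphism $\psi\co(V,\xi_V)\to(\nu(L),\xi|_{\nu(L)})$ used to define both satellites carries the transverse representative together with its approximation, $(R,Q)$, to the pair $(\psi(R),\psi(Q))=(R(L),Q(L))$; since the approximation relation is detected by the contact germ along the knot (Theorem~\ref{relation}), it is preserved by the contactomorphism $\psi$. Consequently $\self(\mathcal{R}(\Lc))=\tb(Q(L))-\rot(Q(L))$. Substituting the formulas of Lemma~\ref{lem:legbraidsat},
\[
\tb(Q(L))=n^2\tb(\Lc)+\reltb_V(Q), \qquad \rot(Q(L))=n\cdot\rot(\Lc)+\relrot_V(Q),
\]
taking the difference, and replacing $\reltb_V(Q)-\relrot_V(Q)$ by $\relsl_V(\mathcal{R})$ then gives the stated identity.

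The main obstacle---really the only nontrivial point---is the middle step: verifying that the satellite construction respects the transverse-to-Legendrian approximation relation. This rests on the fact that $\psi$ is an honest contactomorphism, well-defined up to contact isotopy as in Subsection~\ref{subsec:legsatellites}, so that it preserves both transversality and the negative--stabilization characterization of approximations from Theorem~\ref{relation}. Once this compatibility is established, the remainder is pure bookkeeping with the already-proven invariant formulas.
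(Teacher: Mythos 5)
Your proof follows exactly the route the paper intends: the paper offers no separate argument for this lemma beyond the sentence preceding it (``Using Legendrian approximations and the formulas in Lemma~\ref{lem:legbraidsat}\dots''), and your write-up supplies the missing details, including the one genuinely non-formal point, namely that the contactomorphism $\psi$ defining both satellites carries an approximation pair $(R,Q)$ in $(V,\xi_V)$ to an approximation pair $(R(L),Q(L))$ in the ambient contact manifold, so that $Q(L)$ is a Legendrian approximation of $R(L)$.

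However, your final bookkeeping does not produce the identity as printed. Carrying out the substitution you describe gives
\[
\self(\mathcal{R}(\Lc))=\tb(Q(L))-\rot(Q(L))=n^2\tb(\Lc)-n\cdot\rot(\Lc)+\relsl_{V}(\mathcal{R}),
\]
with a \emph{plus} sign in front of $\relsl_{V}(\mathcal{R})$, whereas the lemma as stated has a minus sign, so your claim that the computation ``gives the stated identity'' is not literally true. The discrepancy lies in the statement, not in your derivation: with the minus sign, for a winding number zero pattern such as $\W_m$ one would have $\self(\mathcal{R}(\Lc))=-\relsl_{V}(\mathcal{R})$, so the self-linking number of the satellite would be made arbitrarily large by stabilizing the pattern, which is absurd; moreover the plus-sign formula is the one consistent with Theorem~\ref{thm:twistsattr}, where for $m\geq 2\tt$ even the maximal self-linking number of $\W_m(\K)$ equals the maximal relative self-linking number $2\tt-m+1$ of $\Delta^{-\tt}\W_m=\W_{m-2\tt}$. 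You should have flagged that your (correct) computation yields the opposite sign from the statement, i.e.\ that it corrects a typo, rather than asserting agreement.
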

Similarly to the Legendrian case we have the following. 
\begin{lemma}\label{lem:utptrsatelite}
Suppose $\K$ is uniformly thick, and let $\Pat$ be a pattern in $V$. Then any element of  $\Trans(\Pat(\K))$
can be written as $\mathcal{R}(\Lc)$, where $\Lc\in\Leg(\K)$ and $\mathcal{R}\in\Trans_{V}(\Delta^{-\overline{\tb}(\K)}\Pat)$. \qed
\end{lemma}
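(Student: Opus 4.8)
The plan is to follow the proof of Lemma~\ref{lem:utpsatelite} essentially verbatim, substituting a transverse representative for the Legendrian one and checking that transversality is preserved by the maps involved. Given a transverse isotopy class in $\Trans(\Pat(\K))$, I would first fix a transverse representative $R_0$. Since $R_0$ smoothly realizes the satellite $\Pat(\K)$, it is contained in some solid torus $N$ whose core realizes $\K$, namely the smooth satellite solid torus used to build $\Pat(\K)$.

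Next I would invoke the uniform thickness of $\K$. By definition every solid torus realizing $\K$, and in particular $N$, can be contained in a standard neighborhood $\nu(L)$ of a Legendrian representative $L$ of $\K$ with maximal Thurston--Bennequin number $\tt=\overline{\tb}(\K)$. Thus $R_0\subset N\subset \nu(L)$. Because being transverse to $\xi_{\textrm{st}}$ is a pointwise condition, $R_0$ is still a transverse knot when regarded as sitting inside the contact submanifold $(\nu(L),\xi_{\textrm{st}}\vert_{\nu(L)})$.

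Let $\psi\colon (V,\xi_V)\to(\nu(L),\xi_{\textrm{st}}\vert_{\nu(L)})$ be the contactomorphism carrying the product framing $\lon$ to the contact framing $\tau=\lambda+\tt\mu$; this $\psi$ exists and is unique up to contact isotopy since standard neighborhoods of Legendrian knots are well defined. Setting $\mathcal{R}=\psi^{-1}(R_0)$ gives a transverse knot in $(V,\xi_V)$, since $\psi$ is a contactomorphism. The same framing bookkeeping as in Subsection~\ref{subsec:legsatellites} shows that, because $\psi$ sends $\lon$ to $\lambda+\tt\mu$, the class $\mathcal{R}$ smoothly represents $\Delta^{-\tt}\Pat$, so $\mathcal{R}\in\Trans_{V}(\Delta^{-\overline{\tb}(\K)}\Pat)$. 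Finally $\psi(\mathcal{R})=R_0$, which by the definition of the transverse satellite says $\mathcal{R}(\Lc)=R_0$, where $\Lc\in\Leg(\K;\tt)$ is the Legendrian isotopy class of $L$.

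I expect no genuine obstacle in carrying this out; the argument is structurally identical to the Legendrian Lemma~\ref{lem:utpsatelite}. The only points requiring a word of care are that transversality survives both the inclusion $R_0\subset\nu(L)$ and the pullback by $\psi$ (immediate, as transversality is preserved under restriction to contact submanifolds and under contactomorphisms), and the framing computation identifying the smooth knot type of $\mathcal{R}$, which is the same computation already performed in the Legendrian setting.
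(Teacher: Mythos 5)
Your proof is correct and is exactly the argument the paper intends: Lemma~\ref{lem:utptrsatelite} is stated with no written proof precisely because it follows the proof of Lemma~\ref{lem:utpsatelite} verbatim, which is the argument you reproduce (satellite lies in a solid torus representing $\K$, uniform thickness thickens it to a standard neighborhood $\nu(L)$ of a maximal Thurston--Bennequin representative, and the pattern $\psi^{-1}(R_0)$ represents $\Delta^{-\tt}\Pat$ because $\psi$ carries $\lon$ to the contact framing). Your added remarks that transversality is preserved under inclusion into $\nu(L)$ and under the contactomorphism $\psi$ are the only points where the transverse case differs, and you handle them correctly.
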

\subsubsection{Transverse braid satellites}
Braid patterns behave nicely under the satellite operation.
\begin{theorem}\label{thm:univthicktransversebraid}
Let $\K$ be a uniformly thick transversally simple knot type and suppose that $\Pat$ is a braid pattern,  then $\Pat(\K)$ is transversally simple. 
\end{theorem}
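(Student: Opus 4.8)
The plan is to deduce transverse simplicity of $\Pat(\K)$ by combining the surjectivity coming from uniform thickness with the transverse simplicity of the two building blocks — braid patterns in $V$ and the companion $\K$ — and then showing that the only identifications among transverse braided satellites are those forced by the reembedding (valley) relations. Since only simplicity is claimed, it suffices to show that any two transverse representatives with the same self-linking number are transversely isotopic.

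First I would reduce the problem. By Lemma~\ref{lem:utptrsatelite} every class in $\Trans(\Pat(\K))$ can be written as $\mathcal{R}(\Lc)$ with $\Lc\in\Leg(\K;\tt)$ a maximal Thurston--Bennequin representative and $\mathcal{R}\in\Trans_V(\Delta^{-\tt}\Pat)$; note that $\Delta^{-\tt}\Pat$ is again a braid pattern, so by Theorem~\ref{thm:transversed2s1} it is transversely simple and $\mathcal{R}$ is determined by $\relsl_V(\mathcal{R})$. Writing $\Leg(\K;\tt)=\{\Lc_0,\dots,\Lc_k\}$ with $\rot(\Lc_0)<\dots<\rot(\Lc_k)$, the formula $\self(\mathcal{R}(\Lc_i))=n^2\tt-n\rot(\Lc_i)-\relsl_V(\mathcal{R})$ shows that for a \emph{fixed} companion $\Lc_i$ the satellites $\mathcal{R}(\Lc_i)$ are already distinguished by $\self$. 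Thus it remains only to compare satellites built over \emph{different} peaks $\Lc_i$ and $\Lc_{i+1}$.

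The heart of the argument is to show that two such satellites with equal self-linking are transversely isotopic. Here I would use the valley identities $\mathcal{R}(\St_+^d\Lc)=(\zeta^d\mathcal{R})(\Lc)$ and $\mathcal{R}(\St_-^d\Lc)=(\sigma^d\mathcal{R})(\Lc)$ from Section~\ref{subsec:legsatellites} (which hold unconditionally), together with the fact (Lemma~\ref{lem:deltastab}) that on transverse braid patterns $\sigma$ is the invertible operation $\Delta^{-1}$. Because $\K$ is only assumed transversely simple, the peaks $\Lc_i$ and $\Lc_{i+1}$ need not admit a common stabilization directly; however $\St_+^{d}\Lc_i$ and $\St_-^{d}\Lc_{i+1}$ have equal Thurston--Bennequin and rotation numbers, hence equal self-linking, so by transverse simplicity of $\K$ they acquire a common negative stabilization after finitely many further negative stabilizations. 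Since a transverse knot is unchanged by negative stabilization of its Legendrian approximation (Theorem~\ref{relation}), passing to transverse push-offs absorbs these extra negative stabilizations, and the satellites over $\Lc_i$ and over $\Lc_{i+1}$ with matching self-linking become transversely isotopic.

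Finally I would chain these moves. For a fixed value of $\self$, each peak $\Lc_i$ carries at most one candidate pattern, namely the unique transverse braid pattern with the forced value of $\relsl_V$, and the set of peaks for which this pattern actually exists is an interval because the forced $\relsl_V$ decreases monotonically in $\rot(\Lc_i)$ (recall $n>0$ for a braid pattern); the valley moves connect consecutive members of this interval. Combined with the surjectivity of Lemma~\ref{lem:utptrsatelite}, this places every transverse representative of $\Pat(\K)$ with a given self-linking number in a single transverse isotopy class, which is precisely transverse simplicity. I expect the main obstacle to be the middle step: lacking Legendrian simplicity of $\K$, the maximal-$\tb$ companions are genuinely distinct Legendrian knots with non-contactomorphic neighbourhoods, and one must argue carefully that this distinction is washed out upon taking transverse push-offs. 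This is exactly where transverse simplicity of $\K$ does the essential work, and where the transverse case becomes strictly easier than the Legendrian classification of Theorem~\ref{lem:incommonstab} — in particular no symmetry hypotheses are needed, since any additional symmetry-induced identifications can only help establish simplicity.
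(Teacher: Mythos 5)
Your proposal is correct, and it runs on the same engine as the paper's proof: Lemma~\ref{lem:utptrsatelite} for surjectivity, Lemma~\ref{lem:deltastab} to absorb negative stabilizations of the companion into the transverse pattern, transverse simplicity of $\K$ to compare companions that are not Legendrian isotopic, and Theorem~\ref{thm:transversed2s1} to pin the pattern down by its relative self-linking number. The difference is organizational, and the paper's arrangement is leaner. The paper never compares adjacent peaks: it fixes the single peak $\Lc_0$ of smallest rotation number, writes any representative as $\mathcal{R}(\Lc)=(\Delta^a\mathcal{R})(\St_-^a\Lc)$ for $a$ large, uses transverse simplicity of $\K$ to conclude that $\St_-^a\Lc$ is, for $a$ large enough, a stabilization of $\Lc_0$, and hence re-expresses every representative as $\mathcal{R}_0(\Lc_0)$ over one fixed companion; simplicity then follows in one stroke from Theorem~\ref{thm:transversed2s1} and the self-linking formula. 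Since this reduction only ever stabilizes the companion negatively, only the invertible operation $\sigma$ (with inverse $\Delta$) is ever applied to patterns, so no realizability question arises. Your peak-to-peak chaining instead needs the valley identity $(\zeta^d\widetilde{\mathcal{R}})(\Lc_i)=(\sigma^d\widetilde{\mathcal{R}})(\Lc_{i+1})$, and here you left a step implicit: one must actually produce the common pattern $\widetilde{\mathcal{R}}$, i.e.\ check that the forced pattern with the smaller relative self-linking number lies in the image of $\zeta^d$. This does work out --- from $\relsl_{V}(\zeta\mathcal{R})=\relsl_{V}(\mathcal{R})-n^2-n$ and $\relsl_{V}(\sigma\mathcal{R})=\relsl_{V}(\mathcal{R})-n^2+n$ one sees that whenever the forced patterns over both peaks exist, the smaller one sits at least $dn$ stabilizations below the maximum, hence is $\zeta^d$ of something by transverse simplicity of braid patterns --- but it is an extra verification that the paper's one-shot reduction avoids entirely. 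Note also that the mechanism that ``absorbs'' the auxiliary negative stabilizations of the companion is Lemma~\ref{lem:deltastab} applied to the transverse satellite (which you do cite), rather than Theorem~\ref{relation} itself. Your closing observation is on target and matches the paper: only the ``identification'' direction of the satellite structure theory is needed for simplicity, which is exactly why, unlike Theorem~\ref{lem:incommonstab}, no symmetry hypotheses appear in this statement.
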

\begin{proof}
Let $\Lc_0$ be the Legendrian representation of $\K$ with the smallest rotation number amongst the ones with maximal Thurston-Bennequin number. By Lemma \ref{lem:utptrsatelite} any transverse representation $T$ of $\Pat(\K)$ can be written in the form  $R(L)$, where $L$ is a representative of a peak $\Lc$  for the mountain range for $\K$ and $R$ is a transverse represntative of $\Delta^{-\overline{t}}\Pat$.  
Notice that by Lemma \ref{lem:deltastab} for $a>0$ we have $\mathcal{R}(\Lc)=(\Delta^a\mathcal{R})(\St_-^a\Lc)$.
Since $\K$ is transversally simple for $a$ big enough $\St_-^a(\Lc)$ will be the unique Legendrian representative of $\K$ with $(\tb,\rot)=(\tb(\Lc)-a,\rot(\Lc)-a)$. By choosing sufficiently large $a$ we can also assume, that $\St_-^a\Lc$ is a stabilization of $\Lc_0$. This means that for some representative $L_0$ of $\Lc_0$ we have $T\subset \nu(\St_-^aL)\subset \nu(L_0)$, and thus $T=R_0(L_0)$ for some transverse pattern $R_0$ smoothly representing
 $\Delta^{-\overline{t}}\Pat$. We have just proved that any transverse representative of $\Pat(\K)$ can be written in the form $\mathcal{R}_0(\mathcal{L}_0)$. Since $\Pat$ is transversely simple by Theorem~\ref{thm:transversed2s1} all these representatives are distinguished by their self-linking number 
\[\self(\mathcal{R}_0(\mathcal{L}_0))=(n^2\tb(\Lc_0)-n\cdot\rot(\Lc_0))-\relsl_{V}(\mathcal{R}_0).\] Thus $\Pat(\K)$ is indeed transversally simple.
\end{proof}
\subsubsection{Transverse Whitehead doubbles} 
Recall that there is a map $\Leg(\K)$ to $\Trans(\K)$ obtained by transverse push-off and if $\Leg(\K)$ is modded out by negative stabilization then the map becomes a bijection, \cite{EtnyreHonda01b, FuchsTabachnikov97}. Thus a direct consequence of Theorem~\ref{thm:twistsat} is the classification of transverse Whitehead doubles of uniformly thick, Legendrian simple knot types. 

\begin{theorem} \label{thm:twistsattr}
Let $\W_m$ be the Whitehead pattern with $m$ half-twists. And
let $\K$ be a uniformly thick and Legendrian simple knot type for which $-\K=\K$ and such that $\W_m(\K)$ has no topological symmetries (as in Theorem~\ref{lem:incommonstab}). Suppose the maximum Thurston-Bennequin number of $\K$ is denoted $\tt$ and 
\[
\Leg(\K;\tt)=\{\Lc_0,\Lc_1 \dots,\Lc_k\}
\]
where  
\[
\rot(\Lc_0)=r_0<\cdots<\rot(\Lc_k)=r_k.
\]

Let $j$ be the maximal depth of a valley  in the Legendrian mountain range of $\K$ and let $n_d$ be the number of valleys of depth $d$ corresponding to Legendrian knots with negative rotation numbers, so the total number of valleys is $k=2(n_1+\ldots + n_j)+\mathcal{X}_{2\nmid k}$. Here, again $\mathcal{X}_{\text{event}}$ is the indicator function, with value 1 if ``event'' is true and value 0 if ``event'' is false.

All transverse knots in$\Trans(\W_m(\K))$ destabilize to one of the maximal self-linking number representatives. 

\begin{enumerate}
\item If $m\geq 2\tt$ even, then the maximal self-linking number of $\W_m(\K)$ is $2\tt-m+1$. Moreover for any $a\geq 0$ we have 
\[
\left|\Trans(\W_m(\K); 1-(m-\tt)-2a)\right|= \left\lceil \frac{k+1}2 \right \rceil -\sum_{d=1}^an_d.
\] 
\item  If $m>2\tt$ odd, then the maximal self-linking number of $\W_m(\K)$ is $2\tt-m-2$. Moreover for any $a\geq 0$ we have 
\[
\left|\Trans(\W_m(\K); -(m-\tt)-2-2a)\right|=\left\lceil \frac{k+1}2 \right \rceil -\sum_{d=1}^an_d.
\]
\item If  $m<2\tt $ odd,  then the maximal self-linking number of $\W_m(\K)$ is $-3$. Let $l=\left\lfloor\frac{2\tt-m}2\right\rfloor$. Then for any $a\geq 0$ we have
\[
\left|\Trans(\W_m(\K); -3-2a)\right| = \left\lceil \frac{k+1}2 \right \rceil -\sum_{d=1}^ln_d- \sum_{d=l+1}^an_d,
\]
where  the last sum is 0 unless $a>l$.
\item If  $m<2\tt $ even, then the maximal self-linking number of $\W_m(\K)$ is $1$. Let $l=\frac{2\tt-m}2$. There are
\[
\left\lceil\frac{k+1}2 \right\rceil \left(\frac{2\tt-m}{2}+1\right) - \sum_{d=1}^{l} n_d \left(\frac{2\tt-m-2d}2 +1\right) \]
elements in $\Trans(\W_m(\K); 1)$ and for any $a\geq 1$ we have
\[
\left|\Trans(\W_m(\K); 1-2a)\right|=\left\lceil\frac{k+1}2 \right\rceil -\sum_{d=1}^ln_d-
\sum_{d=l+1}^an_d
\]
where the last sum is 0 unless $a>l$.
\end{enumerate}
\end{theorem}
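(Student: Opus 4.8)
The plan is to read off the transverse classification directly from the Legendrian classification in Theorem~\ref{thm:twistsat} by passing to transverse push-offs, as indicated in the sentence preceding the statement. First I would recall that the positive transverse push-off gives a map $\Leg(\W_m(\K))\to\Trans(\W_m(\K))$ which becomes a bijection once the domain is quotiented by negative stabilization \cite{EtnyreHonda01b, FuchsTabachnikov97} (this is the global counterpart of Theorem~\ref{relation}), and that the self-linking number of the push-off of a Legendrian knot $L$ is $\self=\tb(L)-\rot(L)$. Since a negative stabilization lowers both $\tb$ and $\rot$ by one, this quantity is preserved along negative stabilization. Hence transverse representatives with a fixed self-linking number $s$ are in bijection with the Legendrian representatives lying on the anti-diagonal $\tb-\rot=s$ of the mountain range of Theorem~\ref{thm:twistsat}, taken modulo the equivalence generated by negative stabilization.

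The key observation I would exploit is that along a fixed anti-diagonal the number of Legendrian isotopy classes is non-increasing under negative stabilization and therefore eventually constant; because two Legendrian knots have transversely isotopic push-offs exactly when they admit a common negative stabilization (a colimit description of the equivalence), the number of transverse classes with self-linking $s$ equals this stable value. Concretely, writing a point of the mountain range as $(\tb,\rot)=(\tb_{\max}-a,\,a_1-a_2)$ with $a=a_1+a_2$ the numbers of positive and negative stabilizations, one computes $\self=\tb_{\max}-2a_1$, depending only on $a_1$. Thus I would fix $a_1$ and let $a_2\to\infty$ in each of the four Legendrian formulas of Theorem~\ref{thm:twistsat}: since $h=\min\{a_1,a_2\}\to a_1$, the Legendrian count stabilizes to precisely the asserted transverse count. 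For example, in the case $m\ge 2\tt$ even the Legendrian value $\lceil\tfrac{k+1}2\rceil-\sum_{d=1}^{h}n_d$ limits to $\lceil\tfrac{k+1}2\rceil-\sum_{d=1}^{a_1}n_d$, matching the claimed cardinality; the maximal self-linking count is obtained the same way after noting that the $k+1$ rotation-zero peaks collapse already after a single negative stabilization to the value recorded on the negative-stabilization edge of the Legendrian range. The destabilization statement for transverse knots is immediate from the Legendrian destabilization statement together with the push-off correspondence.

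The one case I expect to demand extra care is $m<2\tt$ even, where the top self-linking level $\self=1$ is carried not by isolated peaks but by the entire negative-rotation edge $(\tb,\rot)=(1-a,-a)$; here the relevant collapse is the reduction of the pattern classes $\Q_{(z^+,z^-)}$ from a two-parameter to a one-parameter family under negative stabilization, recorded in Theorem~\ref{whiteheadpatternclass}(\ref{item4}) and propagated through the counting formula of Theorem~\ref{thm:mainleg}(\ref{mainleg:2A}). Reading off the stable Legendrian count on this edge should reproduce $\lceil\tfrac{k+1}2\rceil\!\left(\tfrac{2\tt-m}2+1\right)-\sum_{d=1}^{l}n_d\!\left(\tfrac{2\tt-m-2d}2+1\right)$, and for $a\ge1$ the interior counts limit exactly as above. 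The main obstacle is therefore bookkeeping rather than new contact geometry: one must align the self-linking indexing with the $(a_1,a_2)$ parametrization case by case (including the oriented-symmetry contributions via $f$, which are already absorbed into Theorem~\ref{thm:twistsat}) and verify that no transverse identifications occur beyond those visible as stabilized Legendrian counts — which is exactly what the colimit description of negative-stabilization equivalence guarantees.
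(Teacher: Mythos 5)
Your proposal is correct and takes essentially the same route as the paper: Theorem~\ref{thm:twistsattr} is presented there as a direct consequence of Theorem~\ref{thm:twistsat} together with the fact that transverse push-off induces a bijection from $\Leg(\W_m(\K))$ modulo negative stabilization onto $\Trans(\W_m(\K))$, which is precisely the correspondence you use. Your explicit bookkeeping---parametrizing the mountain range by $(a_1,a_2)$, noting that $\self=\tb-\rot$ of the push-off depends only on $a_1$, and taking the stable value of the Legendrian counts as $a_2\to\infty$ along each anti-diagonal---is exactly the computation the paper leaves implicit.
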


\def\cprime{$'$} \def\cprime{$'$}
\providecommand{\bysame}{\leavevmode\hbox to3em{\hrulefill}\thinspace}
\providecommand{\MR}{\relax\ifhmode\unskip\space\fi MR }
\providecommand{\MRhref}[2]{%
  \href{http://www.ams.org/mathscinet-getitem?mr=#1}{#2}
}
\providecommand{\href}[2]{#2}

\end{document}